\tikzset{
    >=stealth',
    punkt/.style={
           rectangle,
           rounded corners,
           draw=black, very thick,
           text width=6.5em,
           minimum height=2em,
           text centered},
    pil/.style={
           ->,
           thick,
           shorten <=2pt,
           shorten >=2pt,}
}
\tikzset{every loop/.style={min distance=2mm,in=225,out=135,looseness=10}}
\tikzset{->-/.style={decoration={
  markings,
  mark=at position #1 with {\arrow{>}}},postaction={decorate}}}
\newcommand{\arrowdraw}[3]{\draw[->-=#1, thick] (#2) to (#3);}
\newtheorem{thm}{Theorem}[section]
\newtheorem{pro}[thm]{Proposition}
\newtheorem{lem}[thm]{Lemma}
\newtheorem{cor}[thm]{Corollary}
\theoremstyle{definition}
\newtheorem{defn}[thm]{Definition}
\newtheorem*{ackn}{Acknowledgements}
\theoremstyle{remark}
\newtheorem{rmk}[thm]{Remark}
\newtheorem{prb}[thm]{Problem}
\numberwithin{equation}{section}
\newcommand{\lb}{\langle}
\newcommand{\rb}{\rangle}
\def\J{\mathscr{J}}
\def\D{\mathscr{D}}
\def\R{\mathscr{R}}
\def\L{\mathscr{L}}
\def\H{\mathscr{H}}
\def\es{\varnothing}
\def\La{\Leftarrow}
\def\Ra{\Rightarrow}
\def\Lra{\Leftrightarrow}
\def\ol#1{\overline{#1}}
\def\gen#1{\langle{#1}\rangle}
\def\pre#1#2{\langle{#1}\,|\,{#2}\rangle}
\def\Sch{Sch\"u\-tzen\-ber\-ger }
\newcommand{\gh}{\H}
\newcommand{\gr}{\R}
\DeclareMathOperator\Mon{Mon} \DeclareMathOperator\Inv{Inv} \DeclareMathOperator\Gp{Gp} 
\DeclareMathOperator\pref{pref}  \DeclareMathOperator\red{red} 
  \DeclareMathOperator\id{id}
\DeclareMathOperator\MonRC{MonRC}
\begin{document}


\title{On right units of special inverse monoids} 


\subjclass[2010]{20M05; 20M18, 20F05}


\thanks{The research of the first named author is supported by the Personal Grant F-121 ``Problems of combinatorial semigroup and group theory'' of the Serbian Academy of Sciences and Arts. 
The research of the second named author was supported by the EPSRC Fellowship Grant EP/V032003/1 ``Algorithmic, topological and geometric aspects of infinite groups, monoids and inverse semigroups".}

\keywords{Special inverse monoid; Right units; Right cancellative monoid; RC-presentation}


\maketitle

\begin{center}
IGOR DOLINKA%
\footnote{Department of Mathematics and Informatics, University of Novi Sad, Trg Dositeja Obra\-do\-vi\-\'ca 4, 21101 Novi Sad, Serbia.
\\ \emph{Email address:} \texttt{dockie@dmi.uns.ac.rs}.}
\ and \ 
ROBERT D. GRAY%
\footnote{School of Engineering, Mathematics and Physics, University of East Anglia, Norwich NR4 7TJ, UK.
\\ \emph{Email address:} \texttt{Robert.D.Gray@uea.ac.uk}.} 
\end{center}

\markleft{IGOR DOLINKA AND ROBERT D.\ GRAY}


\begin{abstract}
We study the class of monoids that arise as the submonoid of right units of finitely presented special inverse monoids. 
Classical results of Makanin show that the monoids of right units in finitely presented special monoids decompose as a free product of the group of units and a finite rank free monoid. 
Gray and Ru\v{s}kuc (2024) gave the first example of a finitely presented special inverse monoid whose submonoid of right units does not admit such a decomposition, which left open the question of determining the structure of such monoids. 
In the first part of this paper we prove a general result which shows that the only instances where the right units of a finitely presented special inverse monoid can admit such a free product decomposition is when their group of units is finitely presented. 
This implies that the typical behaviour of the right units is not to admit such a free product decomposition. 
In showing this, we establish some general results about finite generation and presentability of subgroups of special inverse monoids. 
In particular, we give an exact characterisation of when an arbitrary subgroup is finitely generated in terms of connectedness properties of unions of its cosets in its $\R$-class, and also a characterisation of when an arbitrary subgroup is finitely presented. 
We also give a sufficient condition for finite generation and presentability of an arbitrary subgroup given in terms of a geometric finiteness property called boundary width. 
As a consequence, we show that the classes of monoids of right units of finitely presented special inverse monoids and prefix monoids of finitely presented groups are independent, in the sense that neither of them is contained in the other. 
In the second part of the paper, we show that every finitely generated submonoid of a finitely RC-presented monoid is isomorphic to a submonoid $N$ of a finitely presented special inverse monoid $M$ such that $N$ is a submonoid of the right units of $M$, and $N$ contains the group of units of $M$. 
This result generalises and extends the classification of groups of units of finitely presented special inverse monoids recently obtained by Gray and Kambites (2025). 
From this, we derive a number of surprising properties of RC-presentations for right cancellative monoids contrasting the classical theory of monoid presentations.
\end{abstract}


\section{Introduction} \label{sec:intro}

The positive solution of the decidability of the word problem for one-relator groups, proved by Magnus \cite{Ma1,Ma2} almost a century ago (see \cite{LSch} for a more contemporary treatment), is one the cornerstones of combinatorial group theory. 
The study of one-relator groups remains a highly active area of research in contemporary geometric group theory; see e.g.\ \cite{JZL1,JZL2,LW1,LW2}.
Magnus' solution to the word problem immediately raised the question whether 
an analogous result might be true for one-relator \emph{monoids} $\Mon\pre{A}{u=v}$ (where $u,v$ are two words from the free monoid $A^*$), and this problem amazingly 
still remains open. Numerous strides have been made towards this goal, most notably by S.I.Adyan \cite{Adj} and his students \cite{AO}, and many connections were 
established with other decision problems for various algebraic structures, some of them to be mentioned shortly. For example, one important case for which Adyan 
provided a solution for the word problem is the one of \emph{special} one-relator monoids $\Mon\pre{A}{w=1}$ (see e.g.\ \cite{Lal,Zh1,Zh2} for shorter and more 
modern proofs of this result) and today the structure of such monoids is better understood. Efforts to solve the decidability problem for one-relator monoids have paved the way for the arrival and establishment of combinatorial/geometric semigroup theory as a field of study. 
See \cite{CF} for a recent comprehensive overview of the current state of the art regarding the word problem for one-relator monoids.

One relatively small but extremely far reaching realisation, which completely changed the course of the quest to solve the one-relator monoid problem, came at the turn of 
the century from Ivanov, Margolis and Meakin \cite{IMM}. They have shown that there is yet another class of algebras (besides groups and semigroups) that
plays an indispensable role in this area: \emph{inverse} semigroups/monoids. An inverse monoid is a monoid equipped with an additional unary operation $\hbox{}^{-1}$
subject to the laws $(ab)^{-1}=b^{-1}a^{-1}$, $(a^{-1})^{-1}=a$, $aa^{-1}a=a$ and $aa^{-1}bb^{-1}=bb^{-1}aa^{-1}$, with the latter effectively telling us that
idempotent elements commute, thus rendering the (semigroup-theoretical) inverse for each element unique (but not necessarily meaning that $aa^{-1}=a^{-1}a=1$,
the latter condition describing the elements of the group of \emph{units} of the inverse monoid). The class of inverse monoids has free objects, elegantly
described by Munn \cite{Munn} and Scheiblich \cite{Sch}, and this makes inverse monoid presentations possible. See the monographs \cite{Law,Pet} for background 
in inverse semigroup/monoid theory as well as \cite{CP,How} for general semigroup theory. 

Bearing in mind that the paper \cite{AO} reduced the general one-relator monoid problem to two special cases of presentations, namely $\Mon\pre{a,b}{aub=ava}$ and 
$\Mon\pre{a,b}{aub=a}$ (both of them yielding right cancellative monoids), it was shown in \cite{IMM} that, for example, the word problem for monoids of the 
first type reduces to the word problem for the special inverse monoid 
$\Inv\pre{a,b}{auba^{-1}v^{-1}a^{-1}=1}$, as the initial monoid embeds into the latter inverse monoid. 
An analogous result is true for the monoids of the second type. 
Now, since both one-relator groups $\Gp\pre{A}{w=1}$ and special one-relator monoids $\Mon\pre{A}{w=1}$ have decidable word
problems, it was tempting to conjecture that a similar result holds for special one-relator inverse monoids $\Inv\pre{A}{w=1}$ (which would then instantly
solve the one-relator monoid problem). Surprisingly, recently this was shown to be false by the second-named author of this paper in \cite{Gr-Inv},
which, because of the specific form of the counterexamples, still does not invalidate the approach set forth 
by \cite{IMM}. Furthermore, as shown in \cite{GR}, the structure of finitely presented special inverse monoids is significantly less well-behaved than that
of special (plain) monoids. Once again, right unit monoids played a key role in \cite{Gr-Inv}, since in the counterexample constructed the membership for
this submonoid was algorithmically undecidable, rendering the word problem undecidable as well (because then there is no algorithm to determine whether
$ww^{-1}=1$ holds in our inverse monoid for a given word $w$ over the generators).

Going back to groups, an inverse monoid given by a presentation maps homomorphically onto the group given by the same presentation, and that natural homomorphism
maps the monoid of right units of the initial inverse monoid to the \emph{prefix monoid} of the group: the submonoid generated by all elements represented by
the prefixes of relator words. When the initial special inverse monoid enjoys the nice structural feature of being \emph{$E$-unitary} then the restriction of this
homomorphism to the monoid of right units is actually an isomorphism, and the word problem of the inverse monoid in question reduces the the membership problem for
the prefix monoid in the corresponding group \cite{IMM}. In addition, for the second of the remaining cases of the one-relator monoid problem, Guba \cite{Guba} constructs another 
group (unlike just described, not obviously linked to the initial presentation) so that the word problem for the initial monoid reduces to the prefix membership
problem for the group. We direct the reader to \cite{Mea,DG21} for an overview of the numerous connections between monoids, groups, and inverse monoids of 
the flavour just described. However, it certainly becomes immediately apparent that the monoids of right units in finitely presented special inverse monoids and 
the prefix monoids of finitely presented groups are exceptionally relevant objects of study in the course of understanding the complexity and the essence of 
the one-relator monoid problem.

The main departing point for this paper is the previous work \cite{DG23} of the authors, together with the quite recent work \cite{GK}. Namely, each prefix monoid 
of a finitely presented group is necessarily finitely generated and group-embeddable (and thus a \emph{recursively presented} group-embeddable monoid, in the sense 
of Higman's Embedding Theorem, see \cite{DG23,LSch}). However, not every monoid with those properties arises as a prefix monoid: for example, a group arises as 
a prefix monoid if and only if it is finitely presented \cite[Lemma 3.4]{DG23}. Still, one of the main results of \cite{DG23}, recalled here as Theorem \ref{thm:prefix} 
below, states that when a finitely generated submonoid $M$ of a finitely presented group does not arise as a prefix monoid, this can be ``fixed'' just by adding 
a free monoid of finite rank as a free factor, and all ranks starting from a certain lower bound $\mu_M$ work. Now, in \cite[Section 5]{DG23} some partial results 
on monoids of right units of finitely presented special inverse monoids were gathered, with the (implicit) hope that a similar---if not entirely analogous---result 
might hold for this class. In particular, by \cite[Theorem 5.3]{DG23} this class includes all finitely RC-presented monoids (see the next section for basics on 
RC-presentations of right cancellative monoids), and a finitely generated group belongs to it if and only if it is finitely presented \cite[Lemma 5.2]{DG23}.
In particular, this shows that not every finitely generated submonoid of a finitely RC-presented monoid arises as the monoid of right units of a finitely presented 
special inverse monoid: any finitely generated group that is recursively but not finitely presented serves as a counterexample.

In the first part of the paper we show that conjectures of such type fail in a dramatic fashion: namely, it turns out (see Theorem \ref{thm:nofreeprod} below)
that if $G$ is a finitely generated group and $T$ a finitely generated monoid with a trivial group of units (and this includes the possibility of free monoids 
of finite rank) then the free product $G*T$ arises as right unit monoid of a finitely presented special inverse monoid only if $G$ is finitely presented. 
In other words, these right units monoids do not admit a free decomposition with respect to their group of units unless the group has a finite presentation. 
This result is shown in Section \ref{sec:freeprod}, and to be able to prove it first we must establish some general results about finite generation and 
presentability of subgroups of special inverse monoids. 
These results which are given in Section \ref{sec:boundary} are expressed in terms of the geometry of Sch\"{u}tzenberger graphs
As a consequence, the classes of monoids of right units of finitely presented special inverse monoids and of prefix monoids of finitely presented
groups are independent, in the sense that neither is contained in the other. Also, this provides a wealth of non-group examples of finitely generated submonoids 
of finitely RC-presented (right cancellative) monoids that do not arise as the monoid of right units of a finitely presented special inverse monoids.

In Section \ref{sec:gk} we show that the class of finitely generated submonoids of finitely RC-presented monoids is still very much intertwined with the class of 
monoids of right units of a finitely presented special inverse monoids: we show that for each monoid $T$ from the former class there is a monoid $R$ from the
latter class with the property that $R$ contains a submonoid that contains the entire group of units of $R$ and is isomorphic to $T$. This 
generalises and extends the classification of groups of units of finitely presented special inverse monoids given in   
\cite[Theorem 4.1]{GK}, and indeed we use methods inspired by the paper \cite{GK} to show this. We start by generalising a key construction
from \cite{GK} and proceed by explicitly determining an RC-presentation for the monoid of right units of the special inverse monoids in question.

In the concluding section, this RC-presentation is utilised to show that 
the class of right unit monoids of finitely presented special inverse monoids
does contain right cancellative monoids that are not finitely RC-presented,
and we exhibit a concrete example. This is accompanied with a revisitation of a construction from \cite[Section 6]{GR} (a paper that inspired the work \cite{GK})
for which we determine the monoid of right units. This monoid turns to be always finitely RC-presented, even though, as shown in \cite{GR}, it might be not finitely
presented as a monoid. Besides this, we derive further conclusions demonstrating sharp contrasts between the theories of plain monoid presentations and RC-presentations.


\section{Basic notions and summary of results} \label{sec:summary}

\subsection{Semigroup theory basics}

A fundamental tool in studying the structure of semigroups are five equivalence relations $\R,\L,\J,\H,\D$ called \emph{Green's relations}. Here we
give their definition for a monoid $S$:
$$
a\,\R\,b \Lra aS=bS, \quad a\,\L\,b \Lra Sa=Sb, \quad a\,\J\,b \Lra SaS=SbS.
$$
Further, $\H=\R\cap\L$ and $\D=\R\vee\L$, which is just $\R\circ\L$ as it may be shown that $\R$ and $\L$ commute. We say that $a\in S$ is \emph{regular} if $a=axa$ for some $x\in S$, 
furthermore every regular element $a\in S$ has an \emph{inverse} $x\in S$ such that $axa=a$ and $xax=x$. In a single $\D$-class, either all elements are
regular or none of them; in the former case, all inverses of an element $a$ are contained in $D_a$, the $\D$-class of $a$ (we use similar notation for
classes of other relations). If $x$ is an inverse of $a$ then $ax\in R_a$ and $xa\in L_a$ are idempotents, and all idempotents in a $\D$-class arise in 
this way; hence, in regular $\D$-classes each $\R$-class and each $\L$-class contain at least one idempotent. It is useful to think of a $\D$-class as a
rectangular scheme where rows represent $\R$-classes, the columns are $\L$-classes, and individual boxes are $\H$-classes. Each such ``box'' contains at
most one idempotent, and when it does it turns out to be a maximal subgroup of the monoid/semigroup in question. Every maximal subgroup arises in
this way.  

An element $a$ of a monoid $S$ is called a \emph{right unit} if $ax=1$
for some $x\in S$, and left unit is defined dually. An element that is both a left and right unit it called a \emph{unit}, and the collection of all such elements 
forms a group called the \emph{group of units} of the monoid $S$. The set of right units of $S$ forms a submonoid. From the definitions it is immediate that 
the group of units is equal to the $\gh$-class of the identity element $1$, and the monoid of right units is equal to the $\gr$-class of $1$. The submonoid 
of right units of any monoid is easily seen to be a right cancellative monoid.      

When every element of a monoid $S$ has a unique inverse then $S$ is called an \emph{inverse monoid}. In that case, all $\D$-classes are regular, and it
follows that they must be square in the sense that each $\D$-class contains the same number of $\R$- and $\L$-classes, and each $\R$-/$\L$-class contains
precisely one idempotent. An alternative approach is to consider inverse monoids as algebraic structures of the signature $(2,1,0)$, denoting the unique
inverse of $a$ by $a^{-1}$; then it can be shown that inverse monoids form a variety of unary monoids defined by the identities $(x^{-1})^{-1}=x$, 
$(xy)^{-1}=y^{-1}x^{-1}$, $xx^{-1}x=x$ and $xx^{-1}yy^{-1}=yy^{-1}xx^{-1}$. 
An element $a$ of an inverse monoid $S$ is now a right unit of $S$ if and only if $aa^{-1}=1$, and $a$ is a unit of $S$ if and only if $aa^{-1}=a^{-1}a=1$.

If $S$ is an inverse monoid then the family of congruences $\rho$ on $S$ such that $S/\rho$ is a group is certainly non-empty (as it contains the 
total relation) and it is not difficult to show that it is closed under arbitrary intersections. Therefore, there is a smallest congruence $\sigma$, with 
respect to containment, and so $S/\sigma$ is the \emph{greatest group image} of $S$. Clearly, in $\sigma$ (as indeed in all congruences $\rho$ in the 
considered family) all idempotents of $S$ must be contained in a single class. When $\sigma$ is \emph{idempotent-pure}, that is, when $E(S)$, the set of 
all idempotents of $S$, forms a $\sigma$-class (i.e.\ no non-idempotent elements of $S$ is $\sigma$-related  to an idempotent one), we say that $S$ is 
\emph{$E$-unitary}. 

\subsection{Presentations}

Let $A$ be a non-empty alphabet. The set $A^*$ of all words (finite sequences of letters) is then the free monoid on $A$, with the operation of concatenation
and the empty word as the identity element. To define the free group and the free inverse monoid we first ``double'' the alphabet to $\ol{A}=A\cup A^{-1}$.
The \emph{free group} $FG(A)$ on $A$ is defined on the subset of $\ol{A}^*$ consisting of \emph{reduced words} that is, words without occurrences of subwords
of the form $aa^{-1}$, $a^{-1}a$, $a\in A$. Given $w\in \ol{A}^*$ we have the \emph{reduced form} $\red(w)$ of $w$ obtained from $w$ by successively removing 
such subwords (if any), and so the operations in $FG(A)$ are given by $u\cdot v=\red(uv)$ and $(a_1\dots a_k)^{-1}=a_k^{-1}\dots a_1^{-1}$ for all 
$a_1,\dots,a_k\in\ol{A}$, where $(a^{-1})^{-1}=a$ for all $a\in A$. Finally, the \emph{free inverse monoid} $FIM(A)$ is obtained as the quotient of 
the free monoid $\ol{A}^*$ by the so-called \emph{Wagner congruence}, generated by the pairs $(uu^{-1}u,u)$ and $(uu^{-1}vv^{-1},vv^{-1}uu^{-1})$ for all 
$u,v\in \ol{A}^*$. An elegant geometric description of $FIM(A)$, with elements represented as finite connected birooted subgraphs of the Cayley graph of $FG(A)$, 
was given by Munn \cite{Munn} (see also Scheiblich \cite{Sch}); such graphs are often called \emph{Munn trees}.

The philosophy behind defining algebraic structures by \emph{presentations} is that we specify a set of \emph{generators} which serves as the alphabet of the
corresponding free object, and then the structure is determined by \emph{defining relations} (or \emph{relators}) so that we take a quotient of the free object
by the congruence naturally determined by the relations. For example, we write $M=\Mon\pre{A}{u_i=v_i\; (i\in I)}$  if $M\cong A^*/\theta$ where $\theta$ is 
the congruence of the free monoid $A^*$ generated by the set of pairs of words $\{(u_i,v_i):\ i\in I\}$. Similarly, $T=\Inv\pre{A}{u_i=v_i\; (i\in I)}$ defines 
the quotient of the free inverse monoid $FIM(A)$ by the congruence on $FIM(A)$ generated by the set of pairs of words $\{(u_i,v_i):\ i\in I\}$, while we have
$G=\Gp\pre{A}{w_i=1\; (i\in I)}$ if $G\cong FG(A)/N$ where $N$ is a normal subgroup of $FG(A)$ generated (as a normal subgroup) by the words  $w_i$, $i\in I$. 
We say that a monoid, inverse monoid, or a group is \emph{finitely
presented} if it admits a presentation of the corresponding kind with finitely many generators and finitely many defining relations. A monoid or an inverse
monoid is \emph{special} if it admits a presentation in which all the defining relations are of the form $w=1$. 

\begin{defn}\label{defn:prefix}
Let $G$ be a finitely presented group. We say that $M$ is a \emph{prefix monoid in $G$} if there exists a finite presentation of $G$,
$$
G = \Gp\pre{A}{w_i=1\; (i\in I)},
$$
such that $M$ is isomorphic to the submonoid of the group $G$ generated by all elements represented by prefixes of $w_i$, $i\in I$.
A monoid $M$ is said to be a \emph{prefix monoid} if it is a prefix monoid in some finitely presented group. We denote by $\mathcal{P}$ the class
of all prefix monoids. 
\end{defn}

\begin{defn}\label{defn:ru}
Let $M$ be a finitely presented special inverse monoid,
$$
M = \Inv\pre{A}{w_i=1\; (i\in I)}.
$$
Then $\mathrm{RU}(M) = \{ m \in M: mm^{-1}=1 \}$ is the (plain) submonoid of $M$ of right units of $M$ (or the \emph{RU-monoid of} $M$). We call a monoid an 
\emph{RU-monoid} if it is isomorphic to the monoid of right units of some finitely presented special inverse monoid. We denote by $\mathcal{RU}$ the class of 
all RU-monoids.
\end{defn}

As mentioned before, all members of $\mathcal{P}$ are group-embeddable and finitely generated, and all members of $\mathcal{RU}$ are right cancellative.
Furthermore, 
every monoid in $\mathcal{RU}$ is finitely generated which may be shown by using the
argument in the proof of \cite[Proposition 4.2]{IMM},
demonstrating that the RU-monoid of $M = \Inv\pre{A}{w_i=1\; (i\in I)}$ is generated by the set of all elements represented by prefixes of the relator words $w_i$,
$i\in I$. So, since it easily follows that the greatest group image of $M$ is the group given by the same presentation as $M$, namely, $G=\Gp\pre{A}{w_i=1\; (i\in I)}$,
in the natural (surjective) homomorphism $M\to G$, the RU-monoid of $M$ maps onto the prefix monoid of $G$ with respect to the presentation in question.
Note that the prefix monoid is in general sensitive to the choice of the presentation of a particular group. It also follows from the results of \cite{IMM} that when 
$M$ is $E$-unitary, the restriction of the natural homomorphism $M\to G$ to the RU-monoid of $M$ is an isomorphism between $\mathrm{RU}(M)$ and the prefix monoid.

Bearing in mind the fact that RU-monoids are right cancellative, there is yet another type of presentation that can be very useful and convenient in studying 
the class $\mathcal{RU}$. These are the so-called \emph{RC-presentations} \cite{Cain, CRR} 
(introduced by Adjan \cite{Adj} who called them \emph{right-cancellative presentations}).
Namely, for an alphabet $A$ and a family of pairs 
$\mathfrak{R}=\{(u_i,v_i):\ i\in I\}$ of words from $A^*$ we write
$$
M = \MonRC\pre{A}{u_i=v_i,\; i\in I}
$$
if $M\cong A^*/\mathfrak{R}^{\mathrm{RC}}$ where $\mathfrak{R}^{\mathrm{RC}}$ is the intersection of all congruences $\rho$ of $A^*$ containing $\mathfrak{R}$ 
with the property that $A^*/\rho$ is right cancellative. Similarly to the case of the greatest group image of an inverse monoid, it is not difficult to show
that the intersection of a family of congruences with the property that the corresponding quotient is right cancellative has the same property. The paper 
\cite{CRR} also specifies syntactic rules corresponding to this notion. Namely, it turns out that two words $u,v\in A^*$ represent the same element of $M$ 
if and only if there is a \emph{right cancellative $\mathfrak{R}$-chain} from $u$ to $v$:
$$
u=u_0\to u_1 \to \dots \to u_l=v
$$
with $u_0,u_1,\dots,u_l\in (A\cup A^\mathsf{R})^*$ ($A^\mathsf{R}$ being a disjoint copy of $A$) such that for all $0\leq i<l$, we have $u_i=p_iq_ir_i$ and 
$u_{i+1}=p_iq_i'r_i$ for some $p_i\in A^*$, $r_i\in (A\cup A^\mathsf{R})^*$ such that one of the following holds:
\begin{itemize}
\item[(a)] either $(q_i,q_i')\in\mathfrak{R}$ or $(q_i',q_i)\in\mathfrak{R}$;
\item[(b)] $q_i$ is empty, while $q_i'=aa^\mathsf{R}$ for some $a\in A$;
\item[(c)] $q_i=aa^\mathsf{R}$ for some $a\in A$, while $q_i'$ is empty.
\end{itemize}
These steps are called \emph{$\mathfrak{R}$-steps}, (right) \emph{insertions}, and (right) \emph{deletions}, respectively. Note that since $u,v\in A^*$, to each insertion step 
$u_i\to u_{i+1}$ introducing the letter $a^\mathsf{R}$ corresponds a deletion $u_{i\delta}\to u_{i\delta+1}$, with $i<i\delta$, where this letter $a^\mathsf{R}$ is removed.
The corresponding insertion and deletion steps form a stack-like structure, that is, if $u_i\to u_{i+1}$ and $u_j\to u_{j+1}$ are both insertion steps with $i<j$ then
$j\delta<i\delta$. 

\begin{defn}\label{defn:rc12}
We denote by $\mathcal{RC}_1$ the class of all finitely generated, right cancellative monoids that embed into some finitely RC-presented monoid. On the other hand, if
$$S=\MonRC\pre{A}{u_i=v_i,\; i\in I}$$ 
for a finite set $A$ we say that $S$ is \emph{recursively RC-presented} if $\{(u_i,v_i):\ i\in I\}\subseteq A^*\times A^*$ is a  
recursively enumerable set of pairs of words. By $\mathcal{RC}_2$ we denote the class of all  right cancellative monoids that are recursively RC-presented. 
It is not difficult  to see (either by employing the previous definition of the right cancellative $\mathfrak{R}$-chain or e.g.\ \cite[Exercise 2.6.12]{How} 
and some basic techniques of recursion theory) that $\mathcal{RC}_1\subseteq\mathcal{RC}_2$. Indeed, the one-sided analogue of  \cite[Exercise 2.6.12]{How} can be used 
to show every finitely RC-presented monoid is recursively presented as a monoid. Hence any monoid $T$ in $\mathcal{RC}_1$ embeds in a recursively presented monoid. 
Since every finitely generated submonoid of a recursively presented monoid is itself recursively presented, it follows that the right cancellative monoid $T$ is 
recursively presented as a monoid. Then we can take any recursive presentation for $T$ and that presentation will also be a recursive RC-presentation for $T$, 
thus $T$ is recursively RC-presented.  
\end{defn}

Clearly, the notion of ``recursively presented'' can be in a straightforward manner extended to  finitely generated groups, monoids, and inverse monoids.

\begin{rmk}
It is not difficult to see that we have $\mathcal{P}\subseteq\mathcal{RC}_1$. Indeed, by definition every prefix monoid $M$ is finitely generated and embeds into a finitely presented group $G$. Hence, $M$ is right cancellative. On the other hand, the finitely presented group $G$ also has a finite monoid presentation, $G=\Mon\pre{A}{\mathfrak{R}}$. But then also $G=\MonRC\pre{A}{\mathfrak{R}}$, showing that $M\in\mathcal{RC}_1$.
\end{rmk}

\begin{rmk}
The already mentioned Higman Embedding Theorem \cite{LSch} states that a finitely generated group embeds into a finitely presented one if and only if it is
recursively presented. An analogous result for monoids and monoid presentations was established by Murski\u{\i} \cite{Mur}, and for inverse monoids and inverse monoid
presentations by Belyaev \cite{Bel}. However, at present there is no such result for right cancellative monoids with respect to RC-presentations, and therefore
we do not know if $\mathcal{RC}_1=\mathcal{RC}_2$ holds or there is a proper containment between the two classes. It can be shown, however, that $\mathcal{RC}_2$
coincides with the class of recursively presented monoids that happen to be right cancellative. For this reason, we must have $\mathcal{RU}\subseteq\mathcal{RC}_2$. 
Indeed, if $T$ is an RU-monoid then $T$ is finitely generated and by definition embeds into a finitely presented inverse monoid, and it is known (and, again, not difficult 
to show) that any such inverse monoid $M$ is recursively presented as an ordinary monoid, which means by \cite{Mur} that $M$ embeds into a finitely presented one. 
It follows that $T$ is recursively presented as a monoid and any recursive monoid presentation for $T$ will also be a recursive RC-presentation for $T$. Hence 
$T$ belongs to $\mathcal{RC}_2$.  
\end{rmk}

\subsection{Summary of results}

A key fact (see \cite{CF}) in the study of finitely presented special monoids is that their right units decompose as $U \ast X^*$ where $U$ is the group of units and $X^*$ is 
a finite rank free monoid. This structural information about the right units 
has been applied to prove numerous results about special monoids e.g. their topological and homological finiteness properties \cite{GS}.
In \cite{DG23} a similar link was discovered between prefix monoids of finitely presented groups and monoids of the form $S\ast X^*$. In a slightly rephrased form, the main
pertinent result is as follows.

\begin{thm} \label{thm:prefix}
Let $S$ be a finitely generated monoid that embeds into a finitely presented group.
\begin{itemize}
\item[(1)] If $S$ is finitely presented then $S\in\mathcal{P}$ (see \cite[Proposition 3.5]{DG23}).
\item[(2)] If $S$ is not finitely presented then there exists a finite set $C$ such that $S\ast C^*\in\mathcal{P}$ (see \cite[Theorem 3.6]{DG23}).
\end{itemize}
\end{thm}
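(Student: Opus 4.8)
The plan is to realise $S$ (respectively $S\ast C^*$) as the submonoid generated by the prefixes in a carefully chosen \emph{finite} group presentation, exploiting one simple engine: if a relator has the balanced form $uv^{-1}$ with $u,v$ positive words over the chosen generators and $u=v$ holds in the group, then every prefix of $uv^{-1}$ is positive. Indeed, a prefix reaching $a$ letters into the $v^{-1}$-part equals, in the group, $u\,(\text{suffix of }v)^{-1}=v\,(\text{suffix of }v)^{-1}$, which is a prefix of $v$ and hence positive. Thus balanced relators contribute only positive prefixes, and more generally any relator all of whose prefixes evaluate to positive elements keeps the prefix monoid inside the positive submonoid generated by the letters. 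For part (1), since $S$ embeds into a group it is group-embeddable, so the canonical map into its universal group $U(S)=\Gp\pre{X}{u_k=v_k}$ is injective; and because $S$ is finitely presented as a monoid, $U(S)$ is finitely presented as a group by the same finite data. I would take the finite presentation $\Gp\pre{X}{\, xx^{-1}=1,\ u_kv_k^{-1}=1\,}$ of $U(S)$, where $x$ ranges over $X$ and $k$ over the finite index set. The trivial relators $xx^{-1}$ ensure each generator $x$ occurs as a prefix, while the balanced relators contribute only positive prefixes by the engine above. Hence the prefix monoid is exactly the submonoid $\langle X\rangle$ of $U(S)$, i.e.\ the embedded image of $S$, giving $S\in\mathcal{P}$.

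For part (2) the obstruction is precisely that $U(S)$ need not be finitely presented when $S$ is not, so the balanced-relator construction cannot be performed with finitely many relators. The idea is to replace $U(S)$ by the finitely presented overgroup supplied by the hypothesis: $S$ embeds in a finitely presented group $H$, and every (possibly infinite) defining relation of $S$ already holds in $H$ and is therefore a consequence of the finite defining data of $H$. I would fix a finite generating set $B=\{s_1,\dots,s_n,d_1,\dots,d_t\}$ of $H$ extending a monoid generating set $s_1,\dots,s_n$ of $S$, introduce a finite set $C$ of fresh free letters, and work inside the finitely presented group $G=H\ast F(C)$, into which $S\ast C^*$ embeds as the submonoid generated by the $s_j$ and the letters of $C$. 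The task is then to produce, via Tietze transformations, a finite presentation of $G$ over generators lying in $S\ast C^*$ in which every relator is \emph{prefix-positive} (every prefix evaluates into $S\ast C^*$), using the free letters of $C$ to guard the auxiliary generators $d_l\notin S$ and to rewrite the finitely many defining relators of $H$ into prefix-positive form.

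The main obstacle is exactly this rewriting, where three requirements pull against one another: the relators must be finite in number and prefix-positive (which forces one to add relations and fresh letters), yet they must create no new equality between positive words in the $s_j$ (which would collapse $S$ onto a proper quotient and destroy the isomorphism with $S\ast C^*$). The free factor $C$ is what reconciles these demands: each inverse excursion needed to encode a defining relator of $H$ is arranged to begin and end at a free letter, so that intermediate prefixes cancel back into $S\ast C^*$ while no spurious positive relation among the $s_j$ is introduced. I expect the number of free generators required — the lower bound $\mu_M$ mentioned in the introduction — to emerge from counting the auxiliary generators $d_l$ and defining relators of $H$ that must be guarded in this way. Once the prefix-positive finite presentation is in hand, the remaining checks are routine: that it still presents $G$ (hence a finitely presented group), that $S\ast C^*$ embeds, and that the prefixes generate exactly $S\ast C^*$, yielding $S\ast C^*\in\mathcal{P}$.
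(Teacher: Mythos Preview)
The paper does not prove this theorem at all: it is stated in the summary-of-results subsection purely as a recollection of \cite[Proposition~3.5]{DG23} and \cite[Theorem~3.6]{DG23}, with the citations given in the statement itself and no argument supplied. So there is no proof in the present paper to compare your attempt against.

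That said, your sketch for part~(1) is essentially correct and is indeed the standard mechanism: the ``balanced relator'' observation that every prefix of $uv^{-1}$ with $u=v$ in the group equals a prefix of $v$ is exactly what makes the prefix monoid of $\Gp\pre{X}{xx^{-1}=1,\ u_kv_k^{-1}=1}$ coincide with the positive submonoid $\langle X\rangle\cong S$. One small point worth making explicit is why $S\to U(S)$ is injective: this follows because the embedding $S\hookrightarrow G$ into \emph{some} group factors through $U(S)$.

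For part~(2) your outline is a plan rather than a proof. You correctly identify the obstruction (the universal group $U(S)$ need not be finitely presented) and the remedy (pass to $H\ast F(C)$ and use the free letters to absorb the non-$S$ generators $d_l$), but the actual construction---producing a specific finite prefix-positive presentation and verifying simultaneously that (i) it still presents $H\ast F(C)$, (ii) no spurious relations among the $s_j$ are created, and (iii) the prefixes generate exactly $S\ast C^*$---is where all the work lies, and you have not carried it out. Your expectation that the bound $\mu_M$ ``emerges from counting'' is plausible but unproved here. If you want a complete argument you will need to consult \cite{DG23} directly, since the present paper offers none.
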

We know from the results in \cite{DG23} that in the second case above we cannot always take $C = \es$: this is e.g.\ the case with all non-finitely presented groups 
\cite[Lemma 3.4]{DG23}. Hence, both monoids of right units of finitely presented special monoids and prefix monoids of finitely presented groups admit decompositions 
as a free product of a monoid from a particular class together with a finite rank free monoid. 

A natural question that arises next is: To what extent is the same true for RU-monoids? Namely, is there an analogous ``free product decomposition'' description for 
RU-monoids (relative to their group of units)? As it turns out, the answer is a categorical ``no''. We are going to show that the only instances where such a description is 
possible is when the group of units is finitely presented, so that Theorem \ref{thm:prefix} fails quite dramatically for RU-monoids.

In this paper we are going to clarify the relationship between the classes $\mathcal{P}$ and $\mathcal{RU}$. It is instantly clear that $\mathcal{RU}\not\subseteq\mathcal{P}$
since by \cite[Theorem 5.3]{DG23} all finitely RC-presented monoids are RU-monoids, and there is such a right cancellative monoid that is not group-embeddable and hence not in 
$\mathcal{P}$ (see the example after Corollary 5.4 in \cite{DG23}). A far less obvious fact, that will follow from the results we prove in this paper, is that 
$\mathcal{P}\not\subseteq\mathcal{RU}$, so that the two classes considered are incomparable with respect to containment. Namely, the main result of Section \ref{sec:freeprod}, 
Theorem \ref{thm:nofreeprod}, will be that if $M$ is a finitely presented special inverse monoid with group of units $U$ and RU-monoid $R$ such that $R\cong U\ast T$ for some 
finitely generated monoid $T$ with a trivial group of units, then $U$ must be finitely presented. In particular, this also holds when $T$ is a free monoid of finite rank, and 
so when $H$ is a finitely generated group that is not finitely presented there exists a finite set $C$ such that $H\ast C^*\in\mathcal{P}$ but $H\ast C^*\not\in\mathcal{RU}$. 
As a byproduct, this yields numerous examples of finitely generated submonoids of finitely RC-presented monoids (including examples that are not groups) that do not belong to 
$\mathcal{RU}$.

In order to be able to prove this, we shall establish in Section \ref{sec:boundary} some general results about finite generation and presentability of subgroups of 
special inverse monoids. There has been a lot of interest recently in the subgroup structure of special inverse monoids particularly in understanding their groups of units 
and more generally their maximal subgroups; see e.g.\ \cite{GK} and \cite{GR}. In particular those results show that the group of units of a finitely presented special 
inverse monoid need not be finitely presented, and also that the maximal subgroups of such a monoid need not even be finitely generated. As a consequence, for finitely 
presented special monoids there is interest in understanding under what conditions finitely generation and presentability are inherited by the (maximal) subgroups---in 
particular the group of units---from a finitely presented special inverse monoid. Some sufficient conditions ensuring this do exist in the literature, e.g.\ having 
finitely many $\H$-classes in $\R$-classes, but those results are useless for studying special inverse monoids since that condition essentially never holds; see the first paragraph of Section~\ref{sec:boundary} below for a more precise explanation of this. 
So other weaker sufficient conditions are needed when studying sugbroups of special inverse monoids. Here we identify a geometric condition called \emph{boundary width} which: (a) generalises the finite index results e.g.\ from 
\cite{ruvskuc1999presentations}, but also (b) is sufficient for finite generation and presentability to be inherited. In particular, we provide:
\begin{itemize} 
\item an exact characterisation of when an arbitrary subgroup is finitely generated in terms of connectedness properties of unions of $\H$-classes in its $\R$-class;     
\item an exact characterisation of when an arbitrary subgroup is finitely presented in terms of the (Vietoris--)Rips complexes built on the graph arising in the previous point;     
\item a sufficient condition for finite generation and presentability of an arbitrary subgroup given in terms of a geometric finiteness property, called boundary width, 
which says something about the way the graph in the previous bullet points embeds in the \Sch graph; 
\item a result relating the geometry of the Cayley graph of the right units with the \Sch graph $S\Gamma(1)$ of the identity, more precisely we show these 
spaces are quasi-isometric. 
\end{itemize}

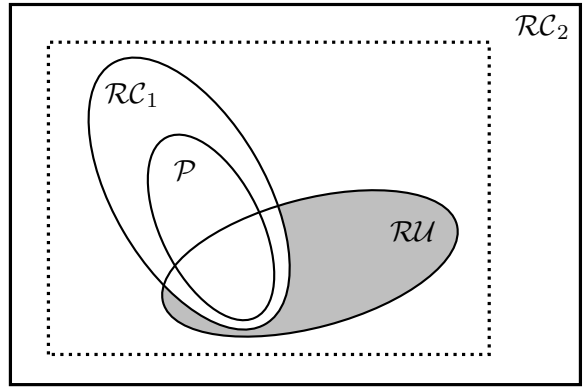
\begin{figure}[tb] 
\begin{center}
\begin{tikzpicture} 
\draw[very thick] (-0.5,-0.54) rectangle (7,4.5);
\draw[very thick, dotted] (0,-0.14) rectangle (5.8,4); 
\draw node at (6.5,4.2) {$\mathcal{RC}_2$};
\fill[gray!50, rotate=15](3.6,0.14) ellipse (2 and 0.85);
\draw node at (4.8,1.5) {$\mathcal{RU}$};
\filldraw[fill=white, thick, rotate=30](2.6,0.8) ellipse (1 and 2);
\draw node at (1.1,3.3) {$\mathcal{RC}_1$};
\filldraw[fill=white, thick, rotate=26](2.6,0.45) ellipse (0.66 and 1.33);
\draw node at (1.8,2.3) {$\mathcal{P}$};
\draw[thick, rotate=15](3.6,0.14) ellipse (2 and 0.85);
\end{tikzpicture} 
\caption{A Venn diagram of the classes $\mathcal{P}$ (Definition~\ref{defn:prefix}), $\mathcal{RU}$ (Definition~\ref{defn:ru}), and the classes $\mathcal{RC}_1$, $\mathcal{RC}_2$ of finitely generated right cancellative monoids from Definition~\ref{defn:rc12}. It is not known if the shaded area within the oval representing $\mathcal{RU}$ is empty or not, and the same applies to space between $\mathcal{RC}_1$ and $\mathcal{RC}_2$. The white areas within the ovals representing $\mathcal{P}$ and $\mathcal{RC}_1$ are positively non-empty. The dotted line represents the class considered in Theorem~\ref{thm:dense}: the finitely generated submonoids of members of $\mathcal{RU}$ containing their group of units.} \label{fig1}
\end{center}
\end{figure}

The results described above show that the $\mathcal{RU}$ is not equal to the class $\mathcal{RC}_1$ of all finitely generated submonoids of finitely RC-presented monoids.
(See Figure \ref{fig1} for an illustration of the relationships between these various
classes.)
They also show that given a monoid $N$ in $\mathcal{RC}_1$ in general there is no finite set $C$ such that $N \ast C^* \in \mathcal{RU}$. 
However, in Section \ref{sec:gk} we show that there is still a very close connection between the classes $\mathcal{RC}_1$ and $\mathcal{RU}$ so that the former
is ``dense'' in the latter, in the following sense: we shall prove that every finitely generated submonoid of a finitely RC-presented monoid is isomorphic to a 
submonoid $N$ of a finitely presented special inverse monoid $M$ such that (a) $N$ is a submonoid of the right units of $M$, and (b) $N$ contains the group of units of $M$. 
To achieve this, we generalise a key construction from \cite{GK} of a finitely presented special inverse monoid by taking a pair $(S,T)$ consisting of a finitely
RC-presented (right cancellative) monoid $S$ and its finitely generated submonoid $T$ as input data, instead of a pair $(G,H)$ consisting of a finitely presented 
group $G$ and its finitely generated subgroup $H$. Then, we proceed to find an explicit RC-presentation for their RU-monoids. In this way, we are able to show in 
Section \ref{sec:fp} that $\mathcal{RU}$ does contain monoids that are not finitely RC-presented. Also, in this final section we determine a presentation for the monoid 
of right units of special inverse monoids $M_{Q,W}$ introduced in \cite[Section 6]{GR}, which starts from a finitely presented group $K_Q$ and its finitely generated 
submonoid $T_W$. It turns out that the right unit monoid in question is precisely the greatest right cancellative image of the HNN-like Otto-Pride extension \cite{PO,GS} 
with respect to the embedding $T_W\to K_Q$. As a consequence, somewhat unexpectedly we show that there is a finitely presented $E$-unitary special inverse monoid with 
submonoid of right units that is finitely RC-presented but not finitely presented as a monoid.


\section{Boundary width: a sufficient condition for finite presentability} \label{sec:boundary}

In this section we give a sufficient condition that, when satisfied by a maximal subgroup $H$, suffices to imply that $H$ is finitely presented.  
The only results in the literature along these lines are \cite{ruvskuc1999presentations} (see also \cite{steinberg2003topological} for a topological proof) where it is shown 
that if the $\gr$-class of $H$ has only finitely many $\gh$-classes then $H$ will be finitely presented. However, that condition is too strong when studying special inverse monoids: 
by standard semigroup theory \cite{CP,How} if 
the $\gr$-class $R_1$ in $M=\Inv\pre{A}{w_1=1,\dots,w_k=1}$ has finitely many $\gh$-classes then it must have just one $\gh$-class, 
in which case (see \cite[Proposition 2.5]{gray2023subgroups}) $M$ is the free product of a group and a free inverse monoid. In other words, in any interesting example, the group 
of units of a special inverse monoid will never satisfy the finitely many $\gh$-classes in its $\gr$-class finiteness condition. 

So, to mend this situation, we introduce a finiteness condition on group $\gh$-classes that is better suited for the study of special inverse monoids. The notion we introduce 
here will properly generalise the finitely many $\gh$-classes condition in the sense that any group $\gh$-class that has finitely many $\gh$-classes in its $\gr$-class will satisfy it. 
Also, the theory presented here is valid for arbitrary inverse monoids, not just those defined by special presentations. 

In the next section we will see how the general theory we develop here can be applied to prove results about the possible structure of RU-monoids.  

For an undirected graph $\Gamma$ and vertices $x,y \in V(\Gamma)$ we use $d(x,y)$ to denote the distance between $x$ and $y$ in $\Gamma$ viewed as a metric space in the usual way.     
We view edges in a graph $\Gamma$ as coming in pairs (in the sense of Serre \cite{Serre}), 
paths in graphs are compatible sequences of edges, and we use $\iota$ and $\tau$ for the initial and terminal vertices of edges and paths.    
Usually our notation for graphs will be $\Gamma = \Gamma(V,E,\iota,\tau)$ where $V$ is the set of vertices, $E$ is the set of edges and $\iota$ and $\tau$ are maps $E\to V$.   

\begin{defn}[Boundary width (for undirected graphs)]
Let $X \subseteq V(\Gamma)$ be a set of vertices in a graph $\Gamma$. We call $(x,y)\in X\times X$ a \emph{boundary pair} of $X$ if there is a path $\pi = e_1e_2\dots e_m$ with 
the following properties: $\iota e_1=x$, $\tau e_m=y$, and $\iota e_2, \iota e_3,\dots, \iota e_m$ all belong to $V(\Gamma)\setminus X$.  In this case we call $\pi$ a \emph{boundary path} 
associated to the boundary pair $(x,y)$. Define 
\[
\beta(X) = \sup \{ d(x,y):\ (x,y) \mbox{ is a boundary pair} \}
\]
where $d(x,y)$ is the distance between $x$ and $y$ in the graph $\Gamma$. We call $\beta(X)$ the \emph{boundary width} of $X$. We say that $X$ has the \emph{finite boundary width} 
if $\beta(X)$ is finite. 
\end{defn}

Note that if $X \subseteq V(\Gamma)$ has finite boundary width this means that if $\pi$ is any path that begins in $X$, ends in $X$, and all intermediate vertices are outside of $X$, 
then the point that $\pi$ reenters $X$ must be close (i.e.\ globally bounded by $\beta(X)$) to the point where it left.        

\begin{defn}[Finite ball covers]
Let $\Gamma = \Gamma(V,E,\iota,\tau)$ be a graph and let $\Delta$ be a subset of $V$. 
Let 
\[
\Delta_r = \bigcup_{v \in \Delta} \mathcal{B}_r(v)
\]
where $\mathcal{B}_r(v) = \{ u \in V: d(u,v) \leq r \} $ denotes the ball of radius $r$ around $v$. We say that $\Delta$ has a \emph{finite ball cover with finite boundary width} if there exists $r \geq 0$ 
such that $\Delta_r$ has finite boundary width. 
We say that $\Delta$ has a \emph{connected finite ball cover} if there exists $r \geq 0$ such that the subgraph of $\Delta$ induced by $\Delta_r$ is a connected graph.  
\end{defn}

The notion of finite ball cover is needed since there are examples where the subset $\Delta$ has infinite boundary width, but admits a finite ball cover with finite boundary width. 
Due to this, it will be necessary to pass to finite ball covers in our applications. 

Before proving the key result Lemma~\ref{lem_QSI} below, we first record two preliminary lemmas that will be employed in the proof of that lemma.  

\begin{lem}\label{lem:Fact1} 
Let $\Gamma$ be a graph and let $\Delta \subseteq V(\Gamma)$. 
\begin{enumerate} 
\item If $\Delta$ has finite boundary width then any finite ball cover of $\Delta$ will also have finite boundary width.   
More generally, if $\Omega \subseteq \Delta \subseteq \Gamma$ and $\Delta$ has finite boundary width then for any finite ball cover $\overline{\Omega}$ of $\Omega$, if 
$\Delta \subseteq \overline{\Omega}$, then $\overline{\Omega}$ has finite boundary width.  
\item If $\Delta$ has a connected finite ball cover then any finite ball cover of $\Delta$ will also have a connected finite ball cover.  
\end{enumerate}  
\end{lem}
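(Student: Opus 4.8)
The plan is to bound the boundary width of the enlarged set directly in terms of $\beta(\Delta)$ and the cover radius, and to reduce the connectedness statement to a clean identity on iterated ball covers. For part~(1) it suffices to treat the general statement, since the first assertion is the case $\Omega = \Delta$ (where $\Delta \subseteq \Delta_r$ holds automatically). So write $\overline{\Omega} = \Omega_r$ with $\Omega \subseteq \Delta \subseteq \overline{\Omega}$ and $\beta(\Delta) < \infty$, and take an arbitrary boundary pair $(x,y)$ of $\overline{\Omega}$ with boundary path $\pi = e_1\cdots e_m$ and vertices $x = v_0, v_1, \ldots, v_m = y$, where $v_1, \ldots, v_{m-1} \notin \overline{\Omega}$. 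Since $\Delta \subseteq \overline{\Omega}$, these intermediate vertices lie outside $\Delta$ as well. The obstruction is that $x$ and $y$ themselves may lie in $\overline{\Omega} \setminus \Delta$, so $(x,y)$ need not be a boundary pair of $\Delta$. To repair this I use that $x, y \in \Omega_r$ to pick $a, b \in \Omega \subseteq \Delta$ with $d(a,x) \le r$ and $d(b,y) \le r$, fix geodesics $P_a$ (from $a$ to $x$) and $P_b$ (from $y$ to $b$), and form the walk $W = P_a\,\pi\,P_b$ from $a$ to $b$.

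The heart of the argument is a trimming step on $W$: let $p$ be the last vertex of $\Delta$ met along $W$ before $v_1$ and $q$ the first vertex of $\Delta$ met after $v_{m-1}$; both exist because $a, b \in \Delta$. By maximality every vertex strictly between $p$ and $q$ on $W$ lies outside $\Delta$, so the subwalk from $p$ to $q$ witnesses $(p,q)$ as a genuine boundary pair of $\Delta$, whence $d(p,q) \le \beta(\Delta)$. Since $p$ lies on $P_a$ or equals $x$, and $q$ lies on $P_b$ or equals $y$, we get $d(x,p) \le r$ and $d(y,q) \le r$, and the triangle inequality gives $d(x,y) \le 2r + \beta(\Delta)$. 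As $(x,y)$ was arbitrary, $\beta(\overline{\Omega}) \le \beta(\Delta) + 2r < \infty$. I expect this trimming --- extracting from the extended walk a boundary pair of $\Delta$ whose endpoints are within $r$ of $x$ and $y$ --- to be the only genuine obstacle; the degenerate case $m = 1$ has no intermediate vertices and is dispatched by $d(x,y) \le 1$.

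For part~(2) the plan is to reduce everything to the identity $(\Delta_s)_{s'} = \Delta_{s+s'}$, valid for all $s, s' \ge 0$. The inclusion $\subseteq$ is the triangle inequality; for $\supseteq$, given $u \in \Delta_{s+s'}$ and $w \in \Delta$ with $d(u,w) \le s+s'$, take a geodesic from $w$ to $u$ and let $v$ be its vertex at distance $\min(s, d(u,w))$ from $w$, so that $v \in \Delta_s$ and $d(u,v) \le s'$. I would then prove that enlarging preserves connectedness: if a vertex set $Y$ induces a connected subgraph, so does $Y_\rho$ for every $\rho \ge 0$, because each $u \in Y_\rho$ is joined to some $w \in Y$ by a geodesic all of whose vertices stay within distance $\rho$ of $w$ and hence lie in $Y_\rho$. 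Consequently $\Delta_r$ connected forces $\Delta_t$ connected for every $t \ge r$. Now, given any finite ball cover $\Delta_s$ of $\Delta$, choosing $s' = r$ yields $(\Delta_s)_{s'} = \Delta_{s+r}$, which is connected since $s + r \ge r$; thus $\Delta_s$ admits a connected finite ball cover. Parts~(1) and~(2) are essentially independent, and I anticipate no real difficulty in part~(2) beyond setting up the ball-cover identity carefully.
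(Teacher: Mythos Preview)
Your proof is correct and follows essentially the same approach as the paper's: for part~(1) you extend the boundary path by short paths into $\Delta$ and trim to extract a $\Delta$-boundary pair within distance $r$ of each original endpoint, exactly as the paper does, and for part~(2) your identity $(\Delta_s)_{s'}=\Delta_{s+s'}$ together with the observation that enlarging preserves connectedness is precisely the mechanism the paper uses (and in fact your formulation is slightly cleaner than the paper's more ad hoc choice of radii).
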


\begin{proof}
(1) 
Suppose that $\Delta$ has finite boundary width and let $\overline{\Delta}$ be the cover of $\Delta$ with balls of radius $r$.     
Let $\pi$ be a boundary path for the set $\overline{\Delta}$ with endpoints $x$ and $y$ in $\overline{\Delta}$.         
Fix $x',y' \in \Delta$ with $d(x,x') \leq r$ and $d(y,y') \leq r$. Let $\sigma$ be a path from $x'$ to $x$ of length at most $r$, and let $\tau$ be a path from $y$ to $y'$ 
of length at most $r$. The composition $\sigma \pi \tau$ is a path from $x' \in \Delta$ to $y' \in \Delta$ and since all vertices of $\pi$ apart from the endpoints belong 
to $\Gamma \setminus \overline{\Delta}$ it follows that that these vertices of $\pi$ also belong to $\Gamma \setminus \Delta$. It follows that there are subpaths $\sigma_1$ 
of $\sigma$ and $\tau_1$ or $\tau$ such that $\sigma_1 \pi \tau_1$ is a subpath of $\sigma \pi \tau$ where $\sigma_1 \pi \tau_1$ is a boundary path for $\Delta$. Since 
$\Delta$ has finite boundary width it follows that there is a global bound $K$ such that the distance in $\Gamma$ between the initial and terminal vertices of 
$\sigma_1 \pi \tau_1$ is at most $K$. It then follows that 
\[
d(x,y) \leq 2r + K. 
\]     
Since $\pi$ was an arbitrary boundary path of $\overline{\Delta}$ it follows that $\overline{\Delta}$ has finite boundary width with upper bound $2r + K$. 

The more general statement can be proved using exactly the same argument. Indeed, suppose that 
$\Omega \subseteq \Delta \subseteq \Gamma$ and $\Delta$ has finite boundary width, and let $\overline{\Omega}$ be a finite ball cover $\overline{\Omega}$ of $\Omega$, 
with balls of radius $r$ satisfying $\Delta \subseteq \overline{\Omega}$. Let $\pi$ be a boundary path for the set $\overline{\Omega}$ with endpoints $x$ and $y$ in 
$\overline{\Omega}$. Fix $x',y' \in \Omega \subseteq \Delta$ with $d(x,x') \leq r$ and $d(y,y') \leq r$. Let $\sigma$ be a path from $x'$ to $x$ of length at most $r$, 
and let $\tau$ be a path from $y$ to $y'$ of length at most $r$. The composition $\sigma \pi \tau$ is a path from $x' \in \Delta$ to $y' \in \Delta$ and since all 
vertices of $\pi$ apart from the endpoints belong to $\Gamma \setminus \overline{\Omega}$ it follows that since $\Delta \subseteq \overline{\Omega}$ these vertices of 
$\pi$ also belong to $\Gamma \setminus \Delta$. It follows that there are subpaths $\sigma_1$ of $\sigma$ and $\tau_1$ or $\tau$ such that $\sigma_1 \pi \tau_1$ is 
a subpath of $\sigma \pi \tau$ where $\sigma_1 \pi \tau_1$ is a boundary path for $\Delta$. Since $\Delta$ has finite boundary width it follows that there is 
a global bound $K$ such that the distance in $\Gamma$ between the initial and terminal vertices of $\sigma_1 \pi \tau_1$ is at most $K$. It then follows that 
\[
d(x,y) \leq 2r + K. 
\]     
Since $\pi$ was an arbitrary boundary path of $\overline{\Omega}$ it follows that $\overline{\Omega}$ has finite boundary width with upper bound $2r + K$. 

(2) 
Let $\overline{\Delta}$ be a connected finite ball cover of $\Delta$ by balls of radius $r$. Let $\Omega$ be the cover of $\Delta$ by balls of radius $n$.   
If we choose $m$ such that $n+m > r$ and take the cover $Q$ of $\Omega$ by balls of radius $m$, then we claim that $Q$ is a connected finite ball cover of $\Omega$. 
Indeed, by definition $Q$ is a finite ball cover, so all that remains is to show that it is connected. 
For any $x \in Q$ there is a path in $Q$ of length at most $m$ from $x$ to a vertex $x'$ in $\Omega$.     
Then from $x'$ there is a path in $\Omega$, which is also a path in $Q$, of length at most $n$ to a vertex $x''$ in $\Delta$. So there is a path in $Q$ of length 
at most $n+m$ from $x$ to $x'' \in \Delta$. Furthermore for any two vertices $y,z \in \Delta$ by definition there is a path in $\overline{\Delta}$ from $y$ to $z$. 
But since $n+m > r$ it follows that $\overline{\Delta}$ is contained in $Q$. It follows that for any two vertices $x_1, x_2 \in Q$ there is a path from $x_1$ to 
$x_1'' \in \Delta$, a path from $x_2$ to $x_2'' \in \Delta$ and then a path between $x_1''$ and $x_2''$ in $Q$. Thus $Q$ is connected, as required.
\end{proof}

The next result says that increasing the radius of ball covers cannot destroy either of the good properties that they have.  

\begin{lem}\label{lem:Fact2} 
Let $\Gamma$ be a graph and let $\Delta \subseteq V(\Gamma)$. 
\begin{enumerate} 
\item If $\Delta$ has a finite ball cover with finite boundary width then any cover of $\Delta$ by larger finite radius balls also has finite boundary width.    
\item If $\Delta$ has a connected finite ball cover then any cover of $\Delta$ by larger finite radius balls is also a connected finite ball cover. 
\end{enumerate}  
\end{lem}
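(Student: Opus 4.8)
The plan is to reduce both parts to Lemma~\ref{lem:Fact1} by exploiting a simple ``telescoping'' identity for iterated ball covers. First I would establish that for all $r' \ge r \ge 0$,
\[
\Delta_{r'} = (\Delta_r)_{r'-r},
\]
i.e.\ covering $\Delta$ by balls of radius $r'$ produces the same vertex set as first covering $\Delta$ by balls of radius $r$ and then covering the resulting set by balls of radius $r'-r$. The inclusion $\supseteq$ is immediate from the triangle inequality. For $\subseteq$ I would take $w \in \Delta_{r'}$, choose $v \in \Delta$ with $d(w,v) \le r'$, and (assuming $d(w,v) > r$; otherwise $w \in \Delta_r$ already) pick the vertex $u$ at distance exactly $r$ from $v$ along a geodesic from $v$ to $w$; then $u \in \mathcal{B}_r(v) \subseteq \Delta_r$ and $d(u,w) = d(v,w) - r \le r'-r$, so $w \in (\Delta_r)_{r'-r}$. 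The key point is that in a graph geodesics realise every intermediate integer distance, so such a $u$ exists. In particular this exhibits $\Delta_{r'}$ as a finite ball cover of $\Delta_r$.

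For part (1) I would argue as follows. Suppose $\Delta_r$ has finite boundary width. Since $\Delta_{r'} = (\Delta_r)_{r'-r}$ is a finite ball cover of $\Delta_r$, applying Lemma~\ref{lem:Fact1}(1) with $\Delta_r$ in the role of $\Delta$ yields at once that $\Delta_{r'}$ also has finite boundary width. This disposes of the first assertion with essentially no further work.

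For part (2) I would give a short direct connectivity argument rather than invoke Lemma~\ref{lem:Fact1}(2), since the latter only guarantees that $\Delta_{r'}$ \emph{has} a connected finite ball cover, not that its own induced subgraph is connected. Assuming the subgraph induced by $\Delta_r$ is connected and $r' \ge r$, I would take any $x \in \Delta_{r'}$, fix $v \in \Delta$ with $d(x,v) \le r'$, and consider a geodesic $x = p_0, p_1, \dots, p_k = v$; every $p_i$ satisfies $d(p_i,v) \le d(x,v) \le r'$, hence $p_i \in \mathcal{B}_{r'}(v) \subseteq \Delta_{r'}$, so this geodesic is a path inside the subgraph induced by $\Delta_{r'}$ joining $x$ to a vertex of $\Delta$. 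Thus every vertex of $\Delta_{r'}$ lies in the same component as some vertex of $\Delta$; since $\Delta \subseteq \Delta_r \subseteq \Delta_{r'}$ and the subgraph induced by $\Delta_r$ is connected, all of $\Delta$ lies in a single component, and therefore the subgraph induced by $\Delta_{r'}$ is connected.

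The argument is essentially routine; the only place requiring any care is the telescoping identity, and specifically the existence of a vertex at distance exactly $r$ from $v$ along a geodesic to $w$. I expect this to be the main (and only minor) obstacle, since it relies on the integer-valued graph metric and the existence of geodesics between vertices of a common component, but it presents no genuine difficulty.
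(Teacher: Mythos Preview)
Your proof is correct and follows essentially the same strategy as the paper: for part~(1) both establish the telescoping identity $\Delta_{r'} = (\Delta_r)_{r'-r}$ and then invoke Lemma~\ref{lem:Fact1}(1), and for part~(2) both connect an arbitrary vertex of $\Delta_{r'}$ to $\Delta$ via a geodesic lying in a single ball, then use the connectedness of $\Delta_r \subseteq \Delta_{r'}$. Your treatment of the telescoping identity is in fact somewhat more explicit than the paper's.
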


\begin{proof} 
(1) 
Suppose that the cover $\Delta_1$ by balls of radius $r$ has finite boundary width and let $\Delta_2$ be the ball cover by balls of radius $r+s$.
We need to show that $\Delta_2$ has finite boundary width.
 
Let $\Delta_3$ be the ball cover of $\Delta$ by balls of radius $r+s$. We claim that $\Delta_3 = \Delta_2$. 

$(\Delta_3 \subseteq \Delta_2)$ If $x \in \Delta_3$ then there is a path from $x$ to a vertex in $\Delta$ of length at most $r+s$. We can then split this path 
into two, giving a path of length at most $s$ from $x$ to a vertex in $\Delta_1$ followed by a path of length at most $r$ from that vertex in $\Delta_1$ to 
a vertex in $\Delta$. Hence $x \in \Delta_2$.        
 
$(\Delta_2 \subseteq \Delta_3)$ If $x \in \Delta_2$ then there is a path of length at most $s$ from $x$ to a vertex $y \in \Delta_1$. Then there is a path of length 
at most $r$ from $y$ to a vertex $z \in \Delta$. Composing these we conclude that $x \in \Delta_3$.         

Since $\Delta_2 = \Delta_3$ it follows that $\Delta_2$ is a finite ball cover of $\Delta$. Hence by   Lemma~\ref{lem:Fact1} it follows that $\Delta_2$ has finite 
boundary width, as required.   

(2)
Let $\Delta_1$ be a connected finite ball cover of $\Delta$ by balls of radius $r$, and let $\Delta_2$ be a cover of $\Delta$ by balls of radius $r+s$. Then given 
any two vertices $x,y$ in $\Delta_2$ there is a path from $x$ to a vertex $x_1 \in \Delta$, and a path from $y$ to a vertex $y_1 \in \Delta$. Then there is a path 
in $\Delta_1$, and thus also in $\Delta_2$ since $r+s \geq r$, from $x_1$ to $y_1$. Composing these paths we see there is a path from $x$ to $y$ in $\Delta_2$, as required.
\end{proof}

For the next result we need some terminology and notation for quasi-isometries of graphs and metric spaces. We follow terminology and notation from \cite{ghysinfinite}. 

By a \emph{$(\lambda, \epsilon, \mu)$-quasi-isometry} $f: (X,d) \rightarrow (X',d')$  with $\lambda \geq 1$ and $\epsilon, \mu \geq 0$ we mean one that satisfies 
\[
\frac{1}{\lambda} d(x,y) - \epsilon 
\leq
d'(f(x),f(y)) 
\leq
\lambda d(x,y) + \epsilon 
\] 
and $f(X) \subseteq X'$ is $\mu$-quasi-dense with constant $\mu$.    

Following \cite[Exercise 10.6]{ghysinfinite} we note that if two spaces $X$ and $X'$ are quasi-isometric then there is a pair of quasi-isometries $f:X \rightarrow X'$ 
and $g:X' \rightarrow X$ both satisfying the inequalities above, and also such that $d(g(f(x)),x) \leq \mu$ and $d'(f(g(x')),x') \leq \mu$. So $f$ and $g$ are 
quasi-inverses of each other. Note the neither $f$ nor $g$ need be injective nor surjective.   

\begin{lem}\label{lem:newPreImage}
Let $f:\Gamma \rightarrow \Gamma'$ and $g: \Gamma' \rightarrow \Gamma$ be $(\lambda, \epsilon, \mu)$ quasi-isometries that are quasi-inverses of each other satisfying    
$d(g(f(x)),x) \leq \mu$ and $d'(f(g(x')),x') \leq \mu$.
If $X \subseteq V(\Gamma)$ has finite boundary width then $Y = g^{-1}(X) \subseteq V(\Gamma')$ has a finite ball cover $\overline{Y}$ with finite boundary width.     
\end{lem}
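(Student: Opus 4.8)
The plan is to push a boundary path of a ball cover of $Y$ forward through $g$, turn it into a genuine path in $\Gamma$, and bound the distance between its endpoints using the finite boundary width of a suitable enlargement of $X$; the quasi-isometry inequalities then transport this bound back to $\Gamma'$. Throughout put $c = \lambda + \epsilon$, so that $d(g(\iota e), g(\tau e)) \le c$ for every edge $e$ of $\Gamma'$, since adjacent vertices lie at distance $1$. Since $X$ has finite boundary width, Lemma~\ref{lem:Fact1}(1) shows that the ball cover $X_\mu = \bigcup_{x \in X}\mathcal{B}_\mu(x)$ also has finite boundary width; write $B = \beta(X_\mu)$. I will first prove that the ball cover $\overline{Z} = (g^{-1}(X_\mu))_R$ has finite boundary width for $R = \lambda(2\mu + c + \epsilon)$, and then descend to $Y = g^{-1}(X) \subseteq g^{-1}(X_\mu)$.

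The heart of the argument is a \emph{margin estimate}: if $u \in V(\Gamma')$ satisfies $g(u) \in X_{\mu + c}$, then $u \in \overline{Z}$. Indeed, choosing $x_0 \in X$ with $d(g(u), x_0) \le \mu + c$ and setting $v = f(x_0)$, the quasi-inverse bound $d(g(f(x_0)), x_0) \le \mu$ gives $g(v) \in X_\mu$, so $v \in g^{-1}(X_\mu)$, while $d(g(u), g(v)) \le 2\mu + c$ and the left quasi-isometry inequality yield $d'(u,v) \le \lambda(2\mu + c) + \lambda\epsilon = R$. Taking the contrapositive, every vertex outside $\overline{Z}$ maps under $g$ to a point at distance more than $\mu + c$ from $X$. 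Now take any boundary pair $(x,y)$ of $\overline{Z}$ with boundary path $\pi = e_1 \cdots e_m$, so that $\iota e_2, \dots, \iota e_m \notin \overline{Z}$. Pushing the vertices $g(\iota e_i)$ forward and joining consecutive ones by geodesics of length at most $c$ produces a path $\widehat{\pi}$ in $\Gamma$ from $g(x)$ to $g(y)$ whose interior waypoints all lie at distance more than $\mu + c$ from $X$; because each joining geodesic has length at most $c$, every point of $\widehat{\pi}$ between the first and last interior waypoint stays at distance more than $\mu$ from $X$, so that portion avoids $X_\mu$ entirely.

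It remains to extract a genuine boundary path for $X_\mu$. Since $x, y \in \overline{Z}$ there are $v_0, v_1 \in g^{-1}(X_\mu)$ with $d'(x,v_0), d'(y,v_1) \le R$, and prepending and appending short paths gives a path from $g(v_0) \in X_\mu$ to $g(v_1) \in X_\mu$. Let $a$ be the last vertex of this path lying in $X_\mu$ before the first interior waypoint, and $b$ the first vertex in $X_\mu$ after the last interior waypoint; by the previous paragraph every vertex strictly between $a$ and $b$ lies outside $X_\mu$, so $(a,b)$ is a boundary pair of $X_\mu$ and hence $d(a,b) \le B$. Bounding the two end segments by constants depending only on $R$ and $c$, we obtain a uniform bound on $d(g(x), g(y))$, which the left quasi-isometry inequality converts into a uniform bound on $d'(x,y)$. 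As $(x,y)$ was arbitrary, $\overline{Z}$ has finite boundary width. Finally, since $Y \subseteq g^{-1}(X_\mu) \subseteq \overline{Z}$ and, by $\mu$-quasi-density of $g$, the set $\overline{Z}$ lies within a bounded neighbourhood of $Y$, a sufficiently large ball cover $\overline{Y} = Y_{R'}$ contains $\overline{Z}$; Lemma~\ref{lem:Fact1}(1) applied to $Y \subseteq \overline{Z} \subseteq \overline{Y}$ then shows that $\overline{Y}$ has finite boundary width.

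The main obstacle is precisely the possibility of \emph{dips}: a priori the pushed-forward path could re-enter $X$ repeatedly through the interior joining geodesics, and each re-entry would force a separate appeal to finite boundary width, yielding a bound that grows with the length of $\pi$. This is exactly what the margin estimate rules out. By choosing the ball-cover radius $R$ large enough to absorb the quasi-inverse defect $\mu$, and by passing from $X$ to its neighbourhood $X_\mu$ (legitimate by Lemma~\ref{lem:Fact1}(1)), the interior waypoints are forced far enough from $X$ that the short joining geodesics cannot reach $X_\mu$; hence the relevant portion of $\widehat{\pi}$ is a single excursion and one application of $\beta(X_\mu) < \infty$ suffices.
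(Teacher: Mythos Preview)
Your margin-estimate argument is sound and correctly shows that $\overline{Z} = (g^{-1}(X_\mu))_R$ has finite boundary width. The gap is the final descent step: you assert that ``by $\mu$-quasi-density of $g$, the set $\overline{Z}$ lies within a bounded neighbourhood of $Y$'', and this is false in general; quasi-density of $g$ says nothing about how $g^{-1}(X_\mu)$ sits relative to $g^{-1}(X)$. Concretely, take $\Gamma = \Gamma' = \mathbb{Z}$, $f = \id$, and $g(n) = n$ for $n$ even, $g(n) = n+1$ for $n$ odd (so $\lambda = \epsilon = \mu = 1$), and let $X = \{0\} \cup \{2n+1 : n \in \mathbb{Z}\}$, which has $\beta(X) = 2$. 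Since $g$ takes only even values one finds $Y = g^{-1}(X) = \{0, -1\}$, whereas $X_\mu = X_1 = \mathbb{Z}$, so $g^{-1}(X_\mu) = \mathbb{Z}$ and hence $\overline{Z} = \mathbb{Z}$. No finite ball cover of the two-point set $Y$ can contain $\overline{Z}$, so the chain $Y \subseteq \overline{Z} \subseteq \overline{Y}$ you need for Lemma~\ref{lem:Fact1}(1) is impossible to arrange. The difficulty is structural: fattening $X$ to $X_\mu$ \emph{before} pulling back can make $g^{-1}(X_\mu)$ vastly larger than any neighbourhood of $g^{-1}(X)$, because points of $X$ not in $\im(g)$ may nonetheless have their $\mu$-neighbourhoods meeting $\im(g)$ in many places.

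The paper avoids this transfer step altogether by working from the outset with a ball cover $\overline{Y} = Y_M$ of $Y$ itself (taking $M = \lambda(\lambda + 2\epsilon) + 1$) and bounding boundary pairs of $\overline{Y}$ directly: interior vertices of a boundary path of $\overline{Y}$ are far from $Y$ in $\Gamma'$, so their $g$-images are far from $g(Y) \subseteq X$, and the pushed-forward path is then completed to a single boundary path of $X$. Since $\overline{Y}$ is already the object one wants, no descent from an auxiliary set is needed.
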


\begin{proof}
Let $\overline{Y} \subseteq V(\Gamma')$ be a ball cover of $Y$ with balls of radius $M = \lambda(\lambda + 2\epsilon)+1$. We claim that $\overline{Y}$ has finite boundary width. 
Let $(x_1, x_2, \ldots, x_k)$ be the sequence of vertices in a boundary path for $\overline{Y}$ with $x_0, x_k \in \overline{Y}$. We need to bound $d'(x_0,x_k)$. For each $i$, 
let $p_i$ be a path from $g(x_i)$ to $g(x_{i+1})$ with length $|p_i| \leq \lambda + \epsilon$. This is possible since $d'(x_i,x_{i+1})=1$ which implies 
$d(g(x_i),g(x_{i+1})) \leq \lambda \cdot d'(x_i,x_{i_1}) + \epsilon = \lambda + \epsilon$. 
For every vertex $y \in Y$ and for all $i$ we have $d'(x_i,y) \geq M$. Indeed each of $x_1, \ldots, x_{k-1}$ is distance greater than $M$ from $y$ which then implies that 
$x_0$ and $x_k$ are each distance at least $M$ from $y$. Applying $g$ this implies 
\[ 
d(g(x_i),g(y)) \geq \frac{M}{\lambda} - \epsilon > \frac{\lambda(\lambda + 2 \epsilon)}{\lambda} - \epsilon = \lambda + \epsilon.  
\]                          
Since $X = g(Y)$ it follows that for all $i$ and all $x \in X$ we have $d(g(x_i),x) > \lambda + \epsilon$. That is $d(g(x_i),X) > \lambda + \epsilon$. Since 
$|p_i| = \lambda + \epsilon$ it follows that for all $i$ the path $p_i$ has empty intersection with $X$. Choose $z \in Y$ with $d'(z,x_0) \leq M$ and choose $t \in Y$ with 
$d'(t,x_k) \leq M$; this is possible since $x_0, x_k \in \overline{Y}$. Now 
\[
d(g(x_0),g(z)) \leq \lambda \cdot M + \epsilon =\lambda(\lambda(\lambda+2\epsilon)+1)+\epsilon 
\] 
and 
\[
d(g(x_k),g(t)) \leq \lambda \cdot M + \epsilon =\lambda(\lambda(\lambda+2\epsilon)+1)+\epsilon.
\]
It follows that there exist vertices $\alpha, \beta \in X$ and paths $\pi$ from $\alpha$ to $g(x_0)$ and $\sigma$ from $g(x_k)$ to $\beta$ such that each of $\pi$ and $\sigma$ 
has length at most $\lambda(\lambda(\lambda+2\epsilon)+1)+\epsilon$ and the composition $\pi \circ p_1 \circ p_2 \circ \ldots \circ p_k \circ \sigma$ is a boundary path of $X$ 
in $\Gamma$ from $\alpha \in $ to $\beta \in X$. Since $X$ has finite boundary width with upper bound $\kappa$, say, it follows that $\delta(\alpha,\beta) \leq \kappa$. But this 
implies 
\begin{align*} 
d(g(x_0),g(x_k)) 
& \leq |\pi| + \kappa + |\sigma| \\
& \leq 
2\cdot(\lambda(\lambda(\lambda+2\epsilon)+1)+\epsilon) + \kappa.
  \end{align*}
This can then be used to bound $d'(x_0,x_k)$ as follows: 
\[
2\cdot(\lambda(\lambda(\lambda+2\epsilon)+1)+\epsilon) + \kappa
\geq d(g(x_0),g(x_k)) \geq
\frac{1}{\lambda} d(x_0,x_k) - \epsilon 
\]   
which implies 
\[
d(x_0,x_k) \leq 
\lambda \cdot (2\cdot(\lambda(\lambda(\lambda+2\epsilon)+1)+\epsilon) + \kappa) + \lambda \epsilon = K.
\]
Since $(x_0, x_1, \ldots, x_k)$ was a sequence of vertices in an arbitrary boundary path for $\overline{Y}$ we conclude that $\overline{Y}$ has finite boundary width with 
fixed upper bound $K$ given in the displayed equation above. This completes the proof of the lemma.         
\end{proof}

\begin{lem}\label{lem_QSI}
Let $f: \Gamma \rightarrow \Gamma'$ be a quasi-isometry between graphs. Let $\Delta\subseteq V(\Gamma)$. Then the following hold:  
\begin{enumerate}
\item If the graph $\Delta$ has a finite ball cover with finite boundary width in $\Gamma$ then $f(\Delta)$ has a finite ball cover with finite boundary width in $\Gamma'$.
\item If the graph $\Delta$ has a connected finite ball cover in $\Gamma$ then $f(\Delta) \subseteq \Gamma'$ has a connected finite ball cover in $\Gamma'$.
\end{enumerate}
\end{lem}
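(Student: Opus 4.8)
The plan is to prove both statements by transferring the two finiteness properties through $f$ and a fixed quasi-inverse, using the three preliminary lemmas essentially as black boxes. First I fix a quasi-inverse $g\colon\Gamma'\to\Gamma$ of $f$, so that $f$ and $g$ are $(\lambda,\epsilon,\mu)$-quasi-isometries satisfying $d(g(f(x)),x)\le\mu$ and $d'(f(g(x')),x')\le\mu$, as supplied by the discussion following \cite[Exercise 10.6]{ghysinfinite}. For part (1) the idea is to compare $f(\Delta)$ with a preimage set to which Lemma~\ref{lem:newPreImage} directly applies. By hypothesis there is $r\ge 0$ with $\Delta_r$ of finite boundary width, and by Lemma~\ref{lem:Fact2}(1) I may enlarge $r$ and assume $r\ge\mu$. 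Putting $X=\Delta_r$ and $Y=g^{-1}(X)\subseteq V(\Gamma')$, Lemma~\ref{lem:newPreImage} yields a finite ball cover $\overline{Y}=Y_M$ of $Y$ having finite boundary width. The two inclusions I would establish are: (i) $f(\Delta)\subseteq Y$, and (ii) $Y\subseteq f(\Delta)_D$ for a suitable constant $D$. For (i), if $v\in\Delta$ then $d(g(f(v)),v)\le\mu\le r$, so $g(f(v))\in\Delta_r=X$, i.e.\ $f(v)\in g^{-1}(X)=Y$. For (ii), if $u\in Y$ then $g(u)\in\Delta_r$, say $d(g(u),v)\le r$ with $v\in\Delta$; applying $f$ and using $d'(f(g(u)),u)\le\mu$ gives $d'(u,f(v))\le\mu+\lambda r+\epsilon=:D$, whence $u\in f(\Delta)_D$.

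With these inclusions in hand I finish part (1) by invoking the general form of Lemma~\ref{lem:Fact1}(1). From (i) we get $f(\Delta)\subseteq Y\subseteq\overline{Y}$, and from (ii) we get $\overline{Y}=Y_M\subseteq f(\Delta)_{D+M}$. I then apply Lemma~\ref{lem:Fact1}(1) with the set of finite boundary width taken to be $\overline{Y}$, the inner set $\Omega=f(\Delta)$, and the finite ball cover $\overline{\Omega}=f(\Delta)_{D+M}$ of $\Omega$: the required hypotheses $f(\Delta)\subseteq\overline{Y}\subseteq f(\Delta)_{D+M}$ are exactly what was just verified, so the lemma gives that $f(\Delta)_{D+M}$ has finite boundary width. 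Hence $f(\Delta)$ has a finite ball cover with finite boundary width, as required.

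For part (2) no quasi-inverse is needed; the argument is a direct push-forward of a connecting path. By hypothesis there is $r\ge 0$ such that the subgraph induced by $\Delta_r$ is connected, and I would set $R=\lambda(r+1)+2\epsilon$. Given $x_1=f(v_1),x_2=f(v_2)\in f(\Delta)$ with $v_1,v_2\in\Delta$, I choose a path $v_1=w_0,w_1,\dots,w_n=v_2$ inside $\Delta_r$. Since consecutive $w_i$ are at distance $1$, their images satisfy $d'(f(w_i),f(w_{i+1}))\le\lambda+\epsilon$, and each $w_i\in\Delta_r$ forces $f(w_i)\in f(\Delta)_{\lambda r+\epsilon}$. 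Concatenating, for each $i$, a $\Gamma'$-path of length at most $\lambda+\epsilon$ from $f(w_i)$ to $f(w_{i+1})$ produces a path from $x_1$ to $x_2$ every vertex of which lies within $\lambda+\epsilon$ of some $f(w_i)$, hence inside $f(\Delta)_R$. Finally, any $z\in f(\Delta)_R$ is joined to a vertex of $f(\Delta)$ by a path of length at most $R$ that stays inside $f(\Delta)_R$. Combining these observations shows $f(\Delta)_R$ is connected, so $f(\Delta)$ has a connected finite ball cover.

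The main obstacle is part (1): neither "finite ball cover" nor "finite boundary width" is inherited by arbitrary subsets, so one cannot simply restrict $\overline{Y}$ down to $f(\Delta)$. The crux is to sandwich a ball cover of $f(\Delta)$ between $\overline{Y}$ and $f(\Delta)$ up to bounded distance, which is precisely the scenario handled by the general form of Lemma~\ref{lem:Fact1}(1); aligning the inclusions and constants (using the reduction $r\ge\mu$ and the two quasi-inverse estimates) is where the care lies. Part (2) is comparatively routine once one tracks how far the images of the intermediate path vertices can stray from $f(\Delta)$.
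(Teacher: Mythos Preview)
Your proof is correct and follows essentially the same approach as the paper: for part~(1) you pass to a preimage under the quasi-inverse, invoke Lemma~\ref{lem:newPreImage}, and then sandwich via the general form of Lemma~\ref{lem:Fact1}(1), while for part~(2) you push forward a connecting path and control how far intermediate vertices stray from $f(\Delta)$. Your version is, if anything, slightly more explicit with the constants (e.g.\ arranging $r\ge\mu$ up front and writing out $R=\lambda(r+1)+2\epsilon$) than the paper's, but the underlying argument is the same.
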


\begin{proof} 
Let $f:\Gamma \rightarrow \Gamma'$ and $g: \Gamma' \rightarrow \Gamma$ be $(\lambda, \epsilon, \mu)$ quasi-isometries that are quasi-inverses of each other satisfying    
$d(g(f(x)),x) \leq \mu$ and $d'(f(g(x')),x') \leq \mu$.

(1)
Let $\overline{\Delta}$ be a finite ball cover of $\Delta$ such that $\overline{\Delta}$ has finite boundary width. 
Let $\overline{\overline{\Delta}}$ be a finite ball cover of $\Delta$ with radius chosen large enough such that $g(f(\overline{\Delta})) \subseteq \overline{\overline{\Delta}}$.  
This is possible due to the quasi-density constant $\mu$.  
Then Lemma~\ref{lem:Fact2} (1) implies that $\overline{\overline{\Delta}}$ has finite boundary width.
Let $\Omega = g^{-1}(\overline{\overline{\Delta}}) \subseteq \Gamma'$, and observe that $F(\Delta) \subseteq \Omega$.   

Since $\overline{\overline{\Delta}}$ has finite boundary width, applying Lemma~\ref{lem:newPreImage}, it follows that $\Omega = g^{-1}(\overline{\overline{\Delta}}) 
\subseteq \Gamma'$ has a finite cover $\overline{\Omega}$ finite boundary width. 

To complete the proof of part (1) of this lemma choose a finite ball cover $\overline{f(\Delta)}$ of $f(\Delta)$ such that $\overline{\Omega} \subseteq \overline{f(\Delta)}$. 
Such a choice is possible by the definitions and the fact that $f$ and $g$ are quasi-isometries that are quasi-inverses of each other.  
Since $\overline{\Omega}$ had finite boundary width it then follows from the general statement in Lemma~\ref{lem:Fact1}(1) using the sets $f(\Delta) \subseteq \overline{\Omega} 
\subseteq \overline{f(\Delta)}$ that $\overline{f(\Delta)}$ has finite boundary width, as required.   

(2) 
Let $\overline{\Delta}$ be a connected finite ball cover of $\Delta$. 
Let $\Omega$ be the cover of $f(\overline{\Delta})$ by balls of radius $\lambda + \epsilon$ and let $\overline{f(\Delta)}$ be a finite ball cover of $f(\Delta)$ such that 
$\Omega \subseteq \overline{f(\Delta)}$. This is possible by the definitions and quasi-isometry properties of $f$ and $g$.    
For any edge $e$ in $\overline{\Delta}$ with endvertices $x$ and $y$, we have $d'(f(x),f(y)) \leq \lambda d(x,y) + \epsilon = \lambda + \epsilon$. Hence there is a path 
in $\Omega$ from $f(x)$ to $f(y)$. 

It follows that for every $u,v \in f(\Delta) \subseteq f(\overline{\Delta})$ there is a path in $\Omega$ from $u$ to $v$.     
By definition for every $z \in \overline{f(\Delta)} $ there is a path in $\overline{f(\Delta)}$ from $z$ to a vertex in $f(\Delta)$.
Combining these facts we conclude that $\overline{f(\Delta)}$ is connected and thus is a connected finite ball cover of $f(\Delta)$, as required. 
\end{proof}

The fact that the finite boundary width property behaves well with respect to quasi-isometry will be important when passing between properties of the Cayley graph of the 
right unit monoid and those of the \Sch graph $S\Gamma(1)$ which are non-isomorphic, yet quasi-isometric (this will be proved below).  

We now introduce some definitions that will allow the notions above to be applied to inverse monoids.  

In the following definition $H$ is an arbitrary subgroup of $M$ (not just a group $\gh$-class), and a \emph{coset} is defined as in \cite{ruvskuc1999presentations}: 
this is a right translate $Hs$ of $H$ that is also a subset of the $\gr$-class of the monoid that contains $H$. In the case that $H$ is a group $\gh$-class the cosets are exactly 
the $\gh$-classes in the $\gr$-class that contains $H$.

Also, it is convenient at this point to recall (e.g.\ from \cite{IMM,GK}) the notions of the \emph{(right) Cayley graph} of an inverse monoid $M$ with a specified generating set $A$, 
and of the \emph{(right) \Sch graph} of an element $m\in M$. Namely, the right Cayley graph $\Gamma_A(M)$ has vertex set $M$ and a directed edge labelled by $a \in \ol{A}= A \cup A^{-1}$ 
from $s$ to $sa$ for all $s \in M$. Then the (right) \Sch graph $S\Gamma_A(m)$ of an element $m\in M$  is the strongly connected component of the right Cayley graph $\Gamma_A(M)$ 
containing $m$. It follows readily that $S\Gamma_A(m_1)=S\Gamma_A(m_2)$ (meaning that their underlying sets of vertices are equal as subsets of $M$) if and only if $m_1\,\R\,m_2$ 
and in this sense we may speak about the \Sch graph $S\Gamma_A(R)$ of an $\R$-class $R$. If $R$ is an $\gr$-class of a monoid $M$ we shall write $S\Gamma_A(R)$ to denote the 
\Sch graph of the $\R$-class $R$. In particular, the vertices of $S\Gamma_A(1)$ are precisely the right units of $S$. The \Sch graph $S\Gamma_A(m)$ has a natural structure of 
a metric space where the distance between $x,y \in S\Gamma_A(m)$ is defined to be the shortest length of a word $w$ in $(A \cup A^{-1})^*$ such that $xw = y$. This does define 
a metric space since as $x \gr y$ we have $x = xww^{-1} = yw^{-1}$.    

\begin{defn}\label{def_bouncy}
Let $M$ be an inverse monoid generated by a finite set $A$. Let $H$ be a subgroup of $M$. 
If $\{H_i:\ i\in F\}$ is a finite set of right cosets of $H$ such that $H$ is a subset of $\Delta = \bigcup_{i \in F} H_i $ then we call $\Delta$ a \emph{finite cover} 
of the subgroup $H$. 
We say that $\Delta$ has \emph{finite boundary width} if it has this property as a subset of the \Sch graph $S\Gamma_A(R)$ of the $\gr$-class $R$ of $M$ that contains $H$.   
Similarly, we say that $\Delta$ is \emph{connected} if the subgraph of $S\Gamma_A(R)$ induced on the set $\Delta$ is a connected graph.    
\end{defn}

\begin{rmk}
In Definition~\ref{def_bouncy}, whether or not a finite cover $\Delta$ is connected can depend on the choice of finite generating set $A$ for the inverse monoid $M$. Similarly, 
whether or not $\Delta$ has finite boundary width can also depend on the choice of finite generating set $A$. In contrast, we shall see in Remark~\ref{rmk:bobmark} and in the proof 
of Theorem~\ref{thm:SMApplication} below that whether $H$ admits some finite cover $\Delta$ with finite boundary width is independent of the choice of finitely generating set for $M$, 
and similarly whether $H$ admits a finite cover $\Delta$ that is connected is also independent of the choice of finite generating set for $M$.            
\end{rmk}

\begin{lem}\label{lem:CosetsGettingBigger}
Let $M$ be an inverse monoid generated by a finite set $A$, let $H$ be a subgroup of $M$, and suppose that $\Delta = \bigcup_{i \in J} H_i $ is a finite cover of $H$ with finite 
boundary width. Then
\begin{enumerate}
\item For any finite set $J' \supseteq J$, if the union of cosets $\Delta' = \bigcup_{i \in J'} H_i $ is connected then $\Delta'$ has finite boundary width. 
\item For every finite set $J' \supseteq J$ there is a finite set $J'' \supseteq J'$  such that the union of cosets $\Delta'' = \bigcup_{i \in J''} H_i $ is connected.
\end{enumerate}
\end{lem}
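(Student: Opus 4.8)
The plan is to base both parts on the left action of $H$ on the \Sch graph $\Gamma = S\Gamma_A(R)$, where $R$ is the $\gr$-class containing $H$. Let $f$ denote the identity of $H$, so that $f$ is the unique idempotent of $R$ and $R = R_f$. First I would record the following structural facts, each a short computation. For every $h \in H$ the map $x \mapsto hx$ restricts to a label-preserving automorphism of $\Gamma$: if $xx^{-1} = f$ then $(hx)(hx)^{-1} = h f h^{-1} = f$, so $hx \in R$, and $h(xa) = (hx)a$ for $a \in \ol A$; hence left translation by $h$ is an isometry of $\Gamma$. This action is free (if $hx = x$ then $hf = f$, so $h = f$), and for each coset $H_i = Hs_i$ we have $h \cdot H_i = H_i$ with $H$ acting transitively on $H_i$. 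In particular $H$ preserves $\Delta$ and $\Delta'$ setwise and acts transitively on each of their cosets. These symmetries are the engine of the whole argument.

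For part (1) the key point is that connectedness forces the finitely many extra cosets to lie uniformly close to $\Delta$. Fix $e \in J' \setminus J$ and a base vertex $t_e \in H_e$. Since $\Delta'$ is connected there is a path $\rho_e$ inside $\Delta'$ from $t_e$ to some $w_e \in \Delta$; translating by $h \in H$ gives a path $h\rho_e$ of the same length from $ht_e$ to $hw_e \in \Delta$, using that $\Delta$ is $H$-invariant. As $h$ ranges over $H$ the points $ht_e$ exhaust $H_e$, so every vertex of $H_e$ lies within $|\rho_e|$ of $\Delta$. Setting $\ell = \max_e |\rho_e|$ over the finitely many extra cosets yields $\Delta \subseteq \Delta' \subseteq \Delta_\ell$. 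I would then run the boundary-path-extension argument already used in the proof of Lemma~\ref{lem:Fact1}(1): given a boundary path of $\Delta'$ with endpoints $x,y$, prepend and append paths of length $\le \ell$ landing in $\Delta$, trim the result to a boundary path of $\Delta$ (its interior avoids $\Delta' \supseteq \Delta$), and conclude $d(x,y) \le 2\ell + \beta(\Delta)$. Hence $\beta(\Delta') \le 2\ell + \beta(\Delta) < \infty$.

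Part (2) is where finite boundary width does the essential work, and it is the step I expect to be the main obstacle. The crucial intermediate claim is that finite boundary width makes $H$ finitely generated by short elements. Put $\rho = \max_{i \in J} d(f, s_i)$ and $D = \beta(\Delta) + 2\rho$, and let $S = \{ g \in H : d(f,g) \le D \}$, which is finite since $\Gamma$ is locally finite. To prove $H = \langle S \rangle$, take $h \in H$ and a path in $\Gamma$ from $f$ to $h$, and list its vertices lying in $\Delta$ as $f = p_0, p_1, \dots, p_n = h$; consecutive $p_k, p_{k+1}$ are adjacent or are joined by a complement-excursion, hence form a boundary pair of $\Delta$, so $d(p_k, p_{k+1}) \le \beta(\Delta)$. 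Writing $p_k = \eta_k s_{j_k}$ with $\eta_k \in H$ (and base $s_{j_0} = f$, so $\eta_0 = f$ and $\eta_n = h$), the elements $g_k = \eta_k^{-1}\eta_{k+1} \in H$ satisfy $d(f, g_k) = d(\eta_k, \eta_{k+1}) \le \rho + \beta(\Delta) + \rho = D$ by moving $\eta_k \to p_k \to p_{k+1} \to \eta_{k+1}$; thus $g_k \in S$, and $\prod_k g_k = \eta_0^{-1}\eta_n = h$. With $H = \langle S \rangle$ in hand I would build $J''$ by collecting the cosets met by a finite family of paths: a geodesic $\pi_g$ from $f$ to $g$ for each $g \in S$, together with a path $\theta_c$ from $f$ into $H_c$ for each $c \in J'$. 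Connectedness of $\Delta'' = \bigcup_{i \in J''} H_i$ then follows by $H$-equivariance: since each $h \in H$ fixes every coset setwise, every $H$-translate of $\pi_g$ or $\theta_c$ again lies in $\Delta''$; the translates of the $\pi_g$ connect all of $H$ to $f$ because $S$ generates $H$, and transitivity of $H$ on each coset, applied to the translates of the $\theta_c$, then connects every vertex of every coset of $\Delta''$ to $f$.

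The hard part is the short-generators claim, and it is exactly here that finite boundary width is indispensable. Without it $H$ may fail to be finitely generated, and then no finite union of cosets can be connected: a finite connected union of cosets is a regular $H$-cover of a finite graph (as $H$ acts freely with finite quotient), forcing $H$ to be a quotient of a finitely generated free group. Thus finite boundary width is precisely the hypothesis that rescues part (2), and the computation bounding $d(\eta_k, \eta_{k+1})$ by $D$ is the one place where it is consumed.
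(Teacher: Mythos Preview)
Your proof is correct, and for part~(1) it coincides with the paper's argument: both bound the distance from $\Delta'$ to $\Delta$ by a constant (you make the role of $H$-equivariance explicit, the paper leaves it implicit), then extend and trim a boundary path of $\Delta'$ to one of $\Delta$.

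For part~(2) you take a genuinely different route. The paper argues purely geometrically: it forms the ball cover $\Delta_\kappa$ with $\kappa=\beta(\Delta)$, observes that any path in $\Gamma$ between two points of $\Delta$ decomposes into subpaths lying in $\Delta$ and boundary paths of $\Delta$, replaces each boundary subpath by a path of length at most $\kappa$ (hence lying in $\Delta_\kappa$), and concludes that the finite collection $\overline J$ of cosets meeting $\Delta_\kappa$ gives a connected $\overline\Delta$; it then enlarges the radius to swallow $J'$. You instead first prove that $H$ is generated by the finite set $S=\{g\in H:d(f,g)\le \beta(\Delta)+2\rho\}$ via a telescoping argument along the $\Delta$-visits of a path from $f$ to $h$, and then assemble $\Delta''$ from the cosets hit by geodesics to generators and to representatives of $J'$, using $H$-equivariance to propagate connectivity. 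Your approach is longer but yields more: the finite generation of $H$ that you extract is essentially the content of the later Theorem~\ref{thm:SMApplication} (proved there via \v Svarc--Milnor), so you are previewing that result with a direct combinatorial argument. The paper's approach is more economical for this lemma in isolation, needing no detour through the algebraic structure of $H$, but yours makes transparent exactly why finite boundary width is the right hypothesis --- as your closing remark on free covers nicely underlines.
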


\begin{proof} 
(1) 
Let $\Gamma = \Gamma_A(R)$ be the \Sch graph of the $\gr$-class $R$ that contains the group $H$. From the assumptions it follows that there is a constant $K$ such that for every vertex 
$v \in \Delta'$ we have $d(v,\Delta) \leq K$. Now every boundary path $\tau = (x_0, x_1, \ldots, x_k)$ of $\Delta'$ can be extended using paths $\pi, \sigma$ in $\Delta'$ 
with $|\pi| \leq K$ and $|\sigma| \leq K$ such that $\pi \circ \tau \circ \sigma$ is a boundary path of $\Delta$. Since $\Delta$ has finite boundary width, $\kappa$ say, 
it follows that $d(x_0,x_k)$ is at most $\kappa + 2K$. Hence $\Delta'$ has finite boundary width with upper bound $\kappa + 2K$.   

(2)
Let $\Gamma = \Gamma_A(R)$ be the \Sch graph of the $\gr$-class $R$ that contains the group $H$. Suppose that the boundary width of $\Delta$ is bounded above by $\kappa$. Let $\Delta_\kappa$ 
be the cover of $\Delta$ by balls of radius $\kappa$. Let $\overline{J} \supseteq J$ be the finite collection of cosets that the set $\Delta_\kappa$ intersects and define  
$\overline{\Delta} = \bigcup_{i \in \overline{J}} H_i $. We claim that $\overline{\Delta}$ induces a connected subgraph of $\Gamma$. To see this observe that 
$\Delta \subseteq \Delta_\kappa \subseteq \overline{\Delta}$. Given any two vertices $x,y \in \Delta$ we can take any path $\pi$ in $\Gamma$ from $x$ to $y$ and decompose it 
into $\pi = \pi_1 \ldots \pi_m$ where each $\pi_i$ is either a path in $\Delta$ or $\pi_i$ is a boundary path of $\Delta$. 
This just involves tracing the path $\pi$ and recording the points where we leave the set $\Delta$, and the points where we re-enter $\Delta$.     
Each boundary path $\pi_i$ can be replaced by a path in $\Gamma$ between two vertices of $\Delta$ of length at most $\kappa$. Hence that entire path is in $\Delta_\kappa$. 
Hence doing this for each $\pi_i$ we conclude that there is a path entirely in $\Delta_\kappa$ from $x$ to $y$. Finally it follows from the definitions that for every vertex 
$z \in \overline{\Delta}$ there is a path in $\overline{\Delta}$ from $z$ to a vertex in $\Delta$. This completes the proof that $\overline{\Delta}$ is connected.             

Now take a $\Delta_\upsilon$ be the cover of $\Delta$ be balls of radius $\upsilon$ where $\upsilon > \kappa$ is chosen large enough that $\Delta_\upsilon$ intersects 
all of the costs $\overline{J} \cup J'$ and then let $J''$ be the finite set of cosets that intersect with $\Delta_\upsilon$.         
Then arguing as in the previous paragraph we conclude that $\Delta'' = \bigcup_{i \in J''} H_i $ is connected.
\end{proof}

\begin{cor}\label{cor:connectedCover}
Let $M$ be a finitely generated inverse monoid and let $H$ be a subgroup of $M$. If $H$ has a finite cover with finite boundary width then every finite cover of $H$ itself 
has a finite connected cover with finite boundary width. 
\end{cor}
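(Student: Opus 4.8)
The plan is to reduce everything to the two parts of Lemma~\ref{lem:CosetsGettingBigger}, the only real content being to enlarge the given cover so that it simultaneously sits above a cover of finite boundary width and is itself connected. Fix a finite generating set $A$ for $M$ and interpret boundary width and connectedness with respect to the \Sch graph $S\Gamma_A(R)$ of the $\gr$-class $R$ that contains $H$. By the hypothesis of the corollary there is a finite cover $\Delta_0 = \bigcup_{i \in I_0} H_i$ of $H$ with finite boundary width.

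Now let $\Delta_* = \bigcup_{i \in I_*} H_i$ be an arbitrary finite cover of $H$; I must produce a finite cover of $H$ that contains $\Delta_*$, is connected, and has finite boundary width. First I would pass to the union of the two index sets: setting $I_1 = I_0 \cup I_*$ and $\Delta_1 = \bigcup_{i \in I_1} H_i$, this is again a finite cover of $H$ and it contains both $\Delta_0$ and $\Delta_*$. The key point is that $I_1 \supseteq I_0$, so $\Delta_1$ sits above the cover whose finite boundary width we are given.

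Next I would invoke Lemma~\ref{lem:CosetsGettingBigger}(2) with $J = I_0$ and $J' = I_1$: this yields a finite index set $I_2 \supseteq I_1$ such that $\Delta_2 = \bigcup_{i \in I_2} H_i$ is connected. Since also $I_2 \supseteq I_0$ and $\Delta_2$ is connected, Lemma~\ref{lem:CosetsGettingBigger}(1) (applied with $J = I_0$ and $J' = I_2$) shows that $\Delta_2$ has finite boundary width. Thus $\Delta_2$ is a finite cover of $H$ that is connected, has finite boundary width, and satisfies $\Delta_* \subseteq \Delta_1 \subseteq \Delta_2$. As $\Delta_*$ was an arbitrary finite cover of $H$, this is exactly the required conclusion.

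There is no genuine obstacle here beyond bookkeeping with the index sets: the substance is carried entirely by Lemma~\ref{lem:CosetsGettingBigger}, and the one subtlety to respect is that both applications of that lemma require the enlarged index set to contain $I_0$, which is precisely why I form $I_1 = I_0 \cup I_*$ at the outset rather than working with $\Delta_*$ directly.
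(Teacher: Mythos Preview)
Your proof is correct and follows essentially the same route as the paper: form the union $I_0 \cup I_*$, apply Lemma~\ref{lem:CosetsGettingBigger}(2) to get a connected enlargement, then apply Lemma~\ref{lem:CosetsGettingBigger}(1) to conclude it has finite boundary width. The only difference is notational, and your version is arguably a bit more explicit about why the union $I_0 \cup I_*$ must be formed before invoking the lemma.
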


\begin{proof} 
Let $\Delta = \bigcup_{i \in J} H_i $ be a finite cover of $H$ with finite boundary width.
Let $\Delta_1 = \bigcup_{i \in J_1} H_i $ be some other finite cover of $H$.
Then applying Lemma~\ref{lem:CosetsGettingBigger} with $J' = J \cup J_1$ we conclude there is a finite set $J'' \supseteq J'$  such that the union of cosets 
$\Delta'' = \bigcup_{i \in J''} H_i $ is connected. It then follows from Lemma~\ref{lem:CosetsGettingBigger}(1) that $\Delta''$ is a finite connected cover of 
$\Delta \cup \Delta_1$ with finite boundary width, by which we mean $\Delta''$ is a finite connected cover of $H$ with finite boundary width and 
$\Delta \cup \Delta_1 \subseteq \Delta''$.       
\end{proof}

Putting the results above together gives us the following key proposition.
 
\begin{pro}\label{prop_SC}
Let $M$ be a finitely generated inverse monoid and let $H$ be a subgroup of $M$. Then the following are equivalent: 
\begin{enumerate}
\item $H$ (as a subset of its Sch\"{u}tzenberger graph) has a finite ball cover with finite boundary width;
\item $H$ (as a subset of its Sch\"{u}tzenberger graph) has a connected finite ball cover with finite boundary width;
\item $H$ has a finite cover with finite boundary width;
\item $H$ has a connected finite cover with finite boundary width.
\end{enumerate}
\end{pro}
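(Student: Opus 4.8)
The plan is to prove the cycle of implications
\[
(1) \Rightarrow (3) \Rightarrow (4) \Rightarrow (2) \Rightarrow (1),
\]
noting that $(2)\Rightarrow(1)$ and $(4)\Rightarrow(3)$ are immediate from the definitions (a connected finite ball/coset cover with finite boundary width is in particular such a cover), and that $(3)\Rightarrow(4)$ is exactly Corollary~\ref{cor:connectedCover}. The whole proof rests on two elementary geometric facts relating the two kinds of cover inside the \Sch graph $\Gamma = S\Gamma_A(R)$ of the $\gr$-class $R$ containing $H$, with $e$ the idempotent identity of $H$. First, I would record that the cosets of $H$ partition $R$: for any $u \in R$ we have $uu^{-1}=e$, whence for each $h \in H$ one gets $(hu)(hu)^{-1}=h(uu^{-1})h^{-1}=heh^{-1}=hh^{-1}=e$, so $Hu \subseteq R$ is a coset containing $u=eu$. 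Second, since distances in $\Gamma$ are measured by word length, each coset $Hs$ lies inside the ball cover $\Delta_{|s|}$ of $H$ (because $d(h,hs)\le |s|$ for all $h \in H$), while conversely every point $u$ of a ball cover $\Delta_r$ has the form $u=vw$ with $v\in H$ and $|w|\le r$, so its coset equals $Hvw=Hw$ and is determined by $w$; as there are only finitely many words of length at most $r$, the set $\Theta$ of cosets meeting $\Delta_r$ is finite and satisfies $\Delta_r \subseteq \Theta$.

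Combining these yields, in both directions, a sandwich $\Delta_r \subseteq \Theta \subseteq \Delta_{r'}$ for a suitable $r' \ge r$. For $(3)\Rightarrow(1)$: given a coset cover $\Theta$ with finite boundary width, the inclusion $\Theta\subseteq \Delta_{r'}$ lets me apply the general statement in Lemma~\ref{lem:Fact1}(1) with $\Omega = H$, $\Delta = \Theta$ and $\overline{\Omega}=\Delta_{r'}$ (a genuine ball cover of $H$ with $\Theta\subseteq\Delta_{r'}$), concluding that $\Delta_{r'}$ has finite boundary width, which is $(1)$. For $(4)\Rightarrow(2)$ I run the same argument to see that $\Delta_{r'}$ has finite boundary width, and then check directly that $\Delta_{r'}$ is connected: every $u\in\Delta_{r'}$ is joined to $H$ by a geodesic all of whose vertices stay within $r'$ of $H$ and hence lie in $\Delta_{r'}$, while any two points of $H$ are joined inside the connected set $\Theta\subseteq\Delta_{r'}$; so $\Delta_{r'}$ is a connected finite ball cover with finite boundary width.

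The crux, and the step I expect to be the main obstacle, is $(1)\Rightarrow(3)$: starting from a ball cover $\Delta_r$ of finite boundary width I must show that the coset cover $\Theta$ built from it also has finite boundary width. The difficulty is that $\Theta$ is a union of cosets rather than a ball cover, so Lemma~\ref{lem:Fact1}(1) cannot be quoted verbatim. I would instead re-run its sandwiching argument: given a boundary path $\pi$ of $\Theta$, its interior lies outside $\Theta\supseteq\Delta_r$, hence outside $\Delta_r$, while its endpoints lie in $\Theta\subseteq\Delta_{r'}$ and so are within $r'$ of $H\subseteq\Delta_r$. Prepending and appending geodesics of length at most $r'$ to $H$ and then extracting the maximal subpath whose interior avoids $\Delta_r$ produces a boundary path of $\Delta_r$, whose endpoints are therefore at distance at most $\kappa=\beta(\Delta_r)$. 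This bounds the distance between the endpoints of $\pi$ by $2r'+\kappa$, giving $\beta(\Theta)\le 2r'+\kappa<\infty$. Since this argument uses only the two sandwich inclusions $\Delta_r \subseteq \Theta \subseteq \Delta_{r'}$ and not that $\Theta$ is itself a ball cover, it goes through, the cycle closes, and all four conditions are equivalent.
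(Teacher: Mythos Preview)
Your proof is correct and follows essentially the same approach as the paper: you establish the identical cycle $(1)\Rightarrow(3)\Rightarrow(4)\Rightarrow(2)\Rightarrow(1)$, invoking Corollary~\ref{cor:connectedCover} for $(3)\Rightarrow(4)$ and Lemma~\ref{lem:Fact1}(1) for $(4)\Rightarrow(2)$. Your treatment is in fact more detailed than the paper's: for $(1)\Rightarrow(3)$ the paper simply asserts that the coset cover ``may be verified'' to have finite boundary width, whereas you spell out the sandwich $\Delta_r\subseteq\Theta\subseteq\Delta_{r'}$ and the explicit $2r'+\kappa$ bound; similarly you justify the connectivity of $\Delta_{r'}$ in $(4)\Rightarrow(2)$ where the paper leaves it implicit. (The separate $(3)\Rightarrow(1)$ argument in your second paragraph is redundant for the cycle but harmless.)
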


\begin{proof} 
(3) $\Rightarrow$ (4): This follows from  Corollary~\ref{cor:connectedCover}. 

(4) $\Rightarrow$ (2): If $\Delta = \bigcup_{i \in J} H_i $ is a connected finite cover with finite boundary width then we can take a finite ball cover $\Omega$ 
of $H$ such that $\Omega \supseteq \Delta$ then $\Omega$ is a connected finite ball cover of $H$ and $\Omega$ has finite boundary width by Lemma~\ref{lem:Fact1}(1). 

(2) $\Rightarrow$ (1): This is immediate from the definitions. 

(1) $\Rightarrow$ (3): If $\Omega \subseteq \Gamma$, where $\Gamma$ is the Sch\"{u}tzenberger graph, is a finite ball cover of $H$ with finite boundary width, then if we let 
$\Delta = \bigcup_{i \in J} H_i $ be the finite collection of cosets of $H$ that $\Omega$ intersects then it may be verified that $\Delta$ is a finite cover of $H$ with finite boundary width.              
\end{proof}

\begin{rmk}\label{rmk:bobmark}
Whether or not one (and hence all) of the four equivalent conditions given in the statement of Proposition~\ref{prop_SC} hold for a subgroup $H$ is independent of the choice of finite generating 
set for the containing inverse monoid $M$ in the following sense. If $A$ and $B$ are two finite generating sets for $M$ then by \cite[Proposition 5]{gray2013groups} if $R$ is the $\gr$-class containing 
$H$ then $S\Gamma_A(R)$ is quasi-isometric to $S\Gamma_B(R)$ where the identity map on $R$ is a quasi-isometry. Then since the identity map fixes the subset $H$, by Lemma~\ref{lem_QSI}(1) it follows 
that condition (1) of Proposition~\ref{prop_SC} holds with respect to the generating set $A$ if and only if it holds with respect to the generating set $B$. This means that the in the equivalent 
conditions of proposition is one of abstract subgroups of finitely generated inverse monoids, and we can refer to it without mentioning any particular finite generating set for the inverse monoid.  
\end{rmk}

Before looking at the issue of finite presentability we deal with the question of finite generation. We state the result for inverse monoids, but it may also be shown to hold for 
monoids in general using a similar proof.   

The following result is of particular interest since it has recently been proved \cite{GK} that a maximal subgroup of a special inverse monoid need not be 
finitely generated. 

\begin{thm}\label{thm:SMApplication} 
Let $M$ be a finitely generated inverse monoid and let $H$ be a subgroup of $M$. Let $\Gamma$ be the \Sch graph containing $H$. Then $H$ is finitely generated 
if and only if $H$ admits a finite connected cover, that is, there is a finite cover $H \subseteq \Delta = \bigcup_{i \in F} H_i $ such that the subgraph of $\Gamma$ 
induced by $\Delta$ is connected. In this case the graph induced by $\Delta$ is quasi-isometric to the Cayley graph of the group $H$.       
\end{thm}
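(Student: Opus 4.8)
The plan is to realise the abstract subgroup $H$ as a group acting on its \Sch graph and then invoke the \v{S}varc--Milnor lemma. Let $R$ be the $\gr$-class containing $H$, let $\Gamma = S\Gamma_A(R)$ be its \Sch graph (connected by definition, and locally finite of degree at most $2|A|$ since $A$ is finite), and let $e$ denote the identity idempotent of $H$. The first and most important step is to show that \emph{left} multiplication gives a free action of $H$ on $\Gamma$ by graph automorphisms whose orbits are exactly the cosets of $H$. The relevant inverse-monoid computations are: for $x \in R$ one has $xx^{-1}=e$ (as $x\,\gr\,e$ and each $\gr$-class of an inverse monoid has a unique idempotent), whence $ex=x$; for $h \in H$ and $x\in R$ one computes $(hx)(hx)^{-1}=hxx^{-1}h^{-1}=heh^{-1}=e$, so $hx\in R$ and $H$ does act on the vertex set $R$; this action is free, since $hx=x$ forces $he=e$, i.e.\ $h=e$; and because the edges of $\Gamma$ have the form $x\to xa$ with $a\in\ol{A}$, left multiplication sends edges to edges and so acts by automorphisms (left and right multiplication commuting). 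Finally the orbit of $x$ is precisely the coset $Hx$, matching the definition of coset used here.

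With this in place, a finite cover $\Delta=\bigcup_{i\in F}H_i$ is exactly an $H$-invariant set of vertices consisting of finitely many orbits. For the direction assuming $H$ admits a finite connected cover $\Delta$, I would note that $\Delta$ is then a connected, locally finite graph on which $H$ acts freely, by isometries (the path metric is preserved), and cocompactly (the quotient graph $\Delta/H$ has $|F|$ vertices and, by local finiteness, finitely many edges). A free action by automorphisms of a locally finite graph is automatically properly discontinuous, so the \v{S}varc--Milnor lemma applies and yields simultaneously that $H$ is finitely generated and that the orbit map $h\mapsto he$ is a quasi-isometry from the Cayley graph of $H$ onto $\Delta$. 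This settles the ``if'' direction together with the final quasi-isometry claim.

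For the converse, suppose $H=\gen{h_1,\dots,h_k}$ with each $h_j\in H\subseteq R$ a vertex of $\Gamma$. Using connectedness of $\Gamma$, choose for each $j$ a finite path $p_j$ in $\Gamma$ from $e$ to $h_j$, and let $\Delta_0$ be the finite set of all vertices lying on some $p_j$; its induced subgraph is connected since all the $p_j$ share the vertex $e$, and it contains $e,h_1,\dots,h_k$. I would then set $\Delta=H\Delta_0=\bigcup_{g\in H}g\Delta_0$, which equals the union of the finitely many cosets met by $\Delta_0$ and hence is a finite cover of $H$. Connectedness of $\Delta$ follows from a translate-overlap argument: each $g\Delta_0$ is connected (image of $\Delta_0$ under an automorphism), and since $h_j\in\Delta_0\cap h_j\Delta_0$ one gets $g\Delta_0\cap gh_j\Delta_0\neq\es$ for all $g$ and all $j$; as the $h_j$ generate $H$ these overlaps connect all translates along a copy of the Cayley graph of $H$, so $\Delta$ is connected.

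The main obstacle is the first step: correctly setting up the left $H$-action and verifying, via the inverse-monoid identities above, that it is free, acts by automorphisms, and has the cosets as orbits; everything afterwards is a fairly standard geometric argument. A secondary point to handle with care is the hypothesis of the \v{S}varc--Milnor lemma --- one must observe that a connected locally finite graph is a proper geodesic space and that freeness already gives proper discontinuity --- after which both finite generation and the quasi-isometry to the Cayley graph of $H$ drop out at once. (Alternatively, the ``if'' direction admits a direct Reidemeister--Schreier argument using a spanning tree of the finite connected quotient $\Delta/H$, but the \v{S}varc--Milnor route has the advantage of delivering the quasi-isometry statement for free.)
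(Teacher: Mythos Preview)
Your proof is correct. The ($\Leftarrow$) direction and the quasi-isometry claim match the paper's argument essentially verbatim: the paper also sets up the left $H$-action on $\Delta$, observes it is proper and cocompact, and invokes the \v{S}varc--Milnor lemma (citing \cite{gray2013groups} and \cite{BridsonHaefliger}). Your more explicit verification that $hx\in R$, that the action is free, and that orbits coincide with cosets is a welcome expansion of what the paper leaves implicit.

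Where you genuinely diverge is in the ($\Rightarrow$) direction. The paper does not construct a cover directly in the given generating set $A$. Instead it enlarges $A$ to a generating set $A'$ containing a finite generating set $X$ for $H$; with respect to $A'$ the set $H$ itself is trivially a connected finite cover. The paper then uses its earlier machinery (Lemma~\ref{lem_QSI}(2) on quasi-isometry invariance together with Proposition~\ref{prop_SC}) and the fact, from \cite{gray2013groups}, that the identity map is a quasi-isometry between $S\Gamma_{A'}(R)$ and $S\Gamma_A(R)$, to transfer the connected-cover property back to the original $\Gamma$. Your argument instead stays in $\Gamma$ throughout: you pick paths $p_j$ from $e$ to the generators $h_j$, take $\Delta_0$ to be their union, and set $\Delta=H\Delta_0$, checking connectedness by the translate-overlap argument. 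This is more elementary and self-contained --- it avoids the quasi-isometry transfer lemmas entirely --- at the cost of being slightly more hands-on. The paper's route, on the other hand, makes the independence from the choice of generating set explicit and reuses machinery already in place. One small point: in your overlap argument you should note (as you implicitly do) that $\Delta_0\cap h_j^{-1}\Delta_0\neq\varnothing$ as well, so that words in the $h_j^{\pm1}$ can be traversed; this follows immediately from $h_j\in\Delta_0\cap h_j\Delta_0$ by applying $h_j^{-1}$.
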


\begin{proof} 
($\Rightarrow$) 
Suppose that $H$ is finitely generated. Then we can choose a generating set $A$ for $M$ that contains a subset $X \subseteq A$ such that $X$ is a finite generating set 
for $H$. Then with respect to the generating set $X$ the set $H$ itself is a finite cover of $H$. 
As we have already mentioned in Remark~\ref{rmk:bobmark} the results of 
 \cite{gray2013groups} imply that for any generating set 
$B$ of $M$ the Sch\"{u}tzenberger graph $\Gamma_B$ containing $H$ taken with respect to $B$ is quasi-isometric to that same graph taken $\Gamma_A$  with respect to $A$, 
where the identity map of $M$ induces a quasi-isometry. It then follows from Lemma~\ref{lem_QSI}(2) that with respect to 
any finite generating set $B$ the group $H$ admits a finite connected cover in $\Gamma_B$ which combined with Proposition~\ref{prop_SC} completes the proof.        

($\Leftarrow$) 
This is proved using an argument exactly analogous to the proofs of \cite[Theorem~2 and Theorem~3]{gray2013groups}. 
Indeed, suppose that $H \subseteq \Delta = \bigcup_{i \in F} H_i $ 
is a finite connected cover of $H$. Then $H$ acts on the graph $\Delta$ by left multiplication and since the cover is finite it follows that this is a proper co-compact action 
by isometries. It then follows from the \v Svarc--Milnor Lemma \cite{BridsonHaefliger} that $H$ is finitely generated and is quasi-isometric to $\Delta$.      
\end{proof}

\begin{cor}
Let $H$ be a subgroup an inverse monoid $M$ generated by a finite set $A$. 
If $H$ is finitely generated then $S\Gamma_A(H)$ contains a quasi-isometric copy of the Cayley graph of $H$ as an induced subgraph.    
\end{cor}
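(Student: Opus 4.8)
The plan is to obtain the statement as an immediate consequence of Theorem~\ref{thm:SMApplication}, whose final clause already produces exactly the required induced subgraph. First I would observe that $S\Gamma_A(H)$ is well defined: since $H$ is a subgroup with identity $e$, every $h\in H$ satisfies $hh^{-1}=e$, so all elements of $H$ lie in the single $\R$-class $R_e$, and $S\Gamma_A(H)$ is the \Sch graph $\Gamma = S\Gamma_A(R_e)$ of that $\R$-class, with vertex set $R_e$ and $H$ sitting inside it as a subset.

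Next, using the hypothesis that $H$ is finitely generated, I would invoke the forward direction of Theorem~\ref{thm:SMApplication} to produce a finite connected cover $H \subseteq \Delta = \bigcup_{i\in F} H_i$, where each $H_i$ is a coset of $H$ and is therefore, by the definition of a cover in Definition~\ref{def_bouncy}, a subset of $R_e = V(\Gamma)$. The defining property of this cover is that the subgraph of $\Gamma$ induced on the vertex set $\Delta$ is connected. The last sentence of Theorem~\ref{thm:SMApplication} then guarantees that this induced subgraph is quasi-isometric to the Cayley graph of $H$.

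Finally I would simply note that $\Delta \subseteq V(S\Gamma_A(H))$, so the subgraph of $\Gamma$ induced on $\Delta$ is precisely an induced subgraph of $S\Gamma_A(H)$; being quasi-isometric to the Cayley graph of $H$, it is the desired quasi-isometric copy. The only point requiring care is to confirm that the object furnished by Theorem~\ref{thm:SMApplication} is genuinely an \emph{induced} subgraph and carries the stated quasi-isometry, but both of these features are built into that theorem, so no real obstacle arises. The substance of the corollary is entirely carried by Theorem~\ref{thm:SMApplication}, and the proof amounts to a one-line deduction.
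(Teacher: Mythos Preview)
Your proposal is correct and matches the paper's approach: the corollary is stated immediately after Theorem~\ref{thm:SMApplication} without proof, precisely because it is a one-line deduction from the final clause of that theorem, exactly as you describe.
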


\begin{rmk} 
Since the group of units of a special inverse monoid is finitely generated this shows that for any finitely presented special inverse monoid there is a quasi-isometric copy 
of the Cayley graph of its group of units in $S\Gamma(1)$.  
\end{rmk}

Since finite presentability is a geometric property (see \cite[Proposition 10.18]{ghysinfinite}) we can extend Theorem~\ref{thm:SMApplication} to a statement about 
finite presentability. 

\begin{thm}\label{thm:SMApplication:FP} 
Let $M$ be a finitely generated inverse monoid and let $H$ be a subgroup of $M$. Let $\Gamma$ be the \Sch graph containing $H$. 
Then $H$ is finitely generated if and only if $H$ admits a finite connected cover, that is, there is a finite cover $H \subseteq \Delta = \bigcup_{i \in F} H_i $ such that 
the subgraph of $\Gamma$ induced by $\Delta$ is connected. In this case the graph induced by $\Delta$ is quasi-isometric to the Cayley graph of the group $H$.       
Moreover, the group $H$ is finitely presented if and only if the graph $\Gamma$ has the property that: 
\[
L(k)\text{ generates the fundamental group }\pi_1(\Delta)\text{ for $k$ large enough,}
\]  
where $L(k)$ is the set of all loops in $\Delta$ based at the idempotent $e \in H$ of the form $p q p^{-1}$ such that $q$ is a loop of length at most $k$.      
\end{thm}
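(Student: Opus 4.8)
The plan is to reduce finite presentability of $H$ to a coarse (combinatorial) simple-connectivity property of the finite connected cover $\Delta$, using that $\Delta$ is quasi-isometric to the Cayley graph of $H$. The finite generation half of the statement, together with the quasi-isometry $\Delta \simeq \mathrm{Cay}(H)$ and the proper cocompact action of $H$ on $\Delta$ by left multiplication, is already supplied by Theorem~\ref{thm:SMApplication}, so I only need to treat the ``moreover'' clause. Throughout, $\Delta = \bigcup_{i\in F} H_i$ denotes the finite connected cover and $e\in H$ the idempotent.

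First I would reformulate the condition on $L(k)$ topologically. For each $k$ let $\Delta^{(k)}$ denote the $2$-complex obtained from $\Delta$ by attaching a $2$-cell along every loop of length at most $k$. A direct computation in the free group $\pi_1(\Delta,e)$ shows that the subgroup generated by $L(k)$ coincides with the normal closure of the classes of loops of length at most $k$: fixing, for each short loop $q$ at a vertex $v$, one path $p_0$ from $e$ to $v$, every element $[pqp^{-1}]$ of $L(k)$ equals $g\,[p_0qp_0^{-1}]\,g^{-1}$ with $g=[pp_0^{-1}]$, and as $p$ varies $g$ ranges over all of $\pi_1(\Delta,e)$. Hence
\[
\langle L(k)\rangle = \ker\bigl(\pi_1(\Delta,e)\to\pi_1(\Delta^{(k)},e)\bigr),
\]
so $L(k)$ generates $\pi_1(\Delta)$ if and only if $\Delta^{(k)}$ is simply connected. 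Thus the asserted condition is precisely that $\Delta$ be \emph{coarsely simply connected}, i.e.\ that $\Delta^{(k)}$ be simply connected for some (equivalently all large) $k$.

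For the direction assuming the $L(k)$-condition, suppose $\Delta^{(k)}$ is simply connected. The action of $H$ on $\Delta$ by left multiplication is by isometries and is proper and cocompact, since $\Delta$ is a finite union of cosets, each a single $H$-orbit (this is exactly the action used in the proof of Theorem~\ref{thm:SMApplication}); it extends to $\Delta^{(k)}$ because $H$ permutes the loops of length at most $k$, and there are only finitely many $H$-orbits of vertices, edges, and loops of length at most $k$, so $\Delta^{(k)}$ has cocompact $2$-skeleton modulo $H$. A group acting properly and cocompactly by cell permutations on a simply connected complex with cocompact $2$-skeleton is finitely presented (\cite{BridsonHaefliger}); hence $H$ is finitely presented.

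For the converse, suppose $H$ is finitely presented, say $H=\langle X\mid R\rangle$ with $R$ finite. Then $\mathrm{Cay}(H)$ is coarsely simply connected: every loop is a product of conjugates of the relators, each of length at most $k_0=\max_{r\in R}|r|$, so coning off loops of length at most $k_0$ trivialises $\pi_1(\mathrm{Cay}(H))$. Since coarse simple connectivity is a quasi-isometry invariant among connected graphs of bounded geometry (this is the content of finite presentability being a geometric property, \cite[Proposition~10.18]{ghysinfinite}), and $\Delta$ has bounded geometry by cocompactness and is quasi-isometric to $\mathrm{Cay}(H)$ by Theorem~\ref{thm:SMApplication}, it follows that $\Delta$ is coarsely simply connected; thus $\Delta^{(k)}$ is simply connected for some $k$, and by the reformulation above $L(k)$ generates $\pi_1(\Delta)$. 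I expect the main obstacle to be exactly this transfer of coarse simple connectivity across $\Delta\simeq\mathrm{Cay}(H)$: a quasi-isometry does not preserve the coned complexes on the nose, so one must push a loop of $\Delta$ over to $\mathrm{Cay}(H)$, fill it by short loops there, and pull the filling back with a uniformly bounded blow-up of the constant $k$. The required uniform control is precisely what bounded geometry, supplied by the cocompact $H$-action, guarantees, and this is where the geometric hypotheses do the real work.
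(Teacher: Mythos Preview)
Your proposal is correct and follows essentially the same approach as the paper: the paper gives no detailed proof of this theorem at all, merely remarking that since finite presentability is a geometric property \cite[Proposition~10.18]{ghysinfinite} one can extend Theorem~\ref{thm:SMApplication} to a statement about finite presentability, and then observing in a remark that the $L(k)$ condition is equivalent to the Rips complex $\mathrm{Rips}_r(\Delta)$ being simply connected for $r$ large. Your reformulation via $\Delta^{(k)}$ is precisely this Rips-complex statement, and your two directions (proper cocompact action on a simply connected complex for one direction, quasi-isometry invariance of coarse simple connectivity for the other) are exactly the standard ingredients behind the cited geometric invariance of finite presentability.
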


\begin{rmk} 
The condition involving $L(k)$ given in the previous theorem can alternatively be expressed as saying that the Rips complex $\mathrm{Rips}_r(\Delta)$ of $\Delta$ is 
simply connected for $r$ large enough; see \cite[Chapter~4]{de2000topics}.
\end{rmk}

We now prove the key result showing that if one (and hence all) of the equivalent conditions in Proposition~\ref{prop_SC} is satisfied this will suffice to prove that 
the subgroup inherits the properties of being finitely generated or presented from the monoid $M$.    

\begin{thm}\label{thm_monhybrid_inv}
Let $M$ be a finite generated inverse monoid and let $H$ be a subgroup of $M$ which has a finite cover with finite boundary width. Then $H$ is finitely generated. 
Moreover, if $M$ is finitely presented (as an inverse monoid) then the group $H$ is finitely presented. 
\end{thm}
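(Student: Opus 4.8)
The plan is to prove the two assertions in turn, obtaining finite generation immediately from the structural results already in place and then bootstrapping to finite presentability through the geometry of the \Sch graph. For the finite generation statement, note that the hypothesis is exactly condition~(3) of Proposition~\ref{prop_SC}; that proposition then supplies condition~(4), so $H$ admits a connected finite cover $\Delta = \bigcup_{i \in F} H_i$ with finite boundary width. In particular $\Delta$ is a finite connected cover of $H$, so Theorem~\ref{thm:SMApplication} applies and shows that $H$ is finitely generated and that the subgraph induced on $\Delta$ is quasi-isometric to $\mathrm{Cay}(H)$. This half needs no further work.

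For finite presentability I would verify the criterion of Theorem~\ref{thm:SMApplication:FP} for the cover $\Delta$ just produced: it suffices to show that $L(k)$ generates $\pi_1(\Delta)$ for all large $k$, equivalently (by the remark following that theorem) that $\mathrm{Rips}_r(\Delta)$ is simply connected for all large $r$. The new input that forces this to hold is the finite presentability of $M$. I would first record its geometric meaning: writing $\Gamma = S\Gamma_A(R)$ for the \Sch graph of the $\gr$-class $R$ containing $H$, the complex obtained from $\Gamma$ by attaching one $2$-cell for each application of a defining relator is simply connected, with $2$-cell boundaries of length bounded by the maximal relator length $D$. Consequently $\Gamma$ is coarsely simply connected: every loop in $\Gamma$ is, up to homotopy in $\Gamma$, a product of conjugates (by paths in $\Gamma$) of loops of length at most $D$.

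The heart of the proof is to transfer this coarse filling from $\Gamma$ into $\Delta$, and this is exactly where finite boundary width enters. The crucial reformulation is that if $\beta = \beta(\Delta)$ is finite then every connected component $C$ of $\Gamma \setminus \Delta$ meets $\Delta$ in a set of $\Gamma$-diameter at most $\beta$, since any two vertices of $\Delta$ adjacent to the same component $C$ form a boundary pair. Fixing a finite ball cover $\Delta_r$ with $r \geq \beta$, one checks that a $\Gamma$-geodesic between two $\beta$-close vertices of $\Delta$ is contained in $\Delta_r$; this yields at once that $\Delta_r$ is quasi-isometrically embedded in $\Gamma$ and that the map $\rho \colon \Gamma \to \Delta_r$ which is the identity on $\Delta$ and collapses each component $C$ of $\Gamma \setminus \Delta$ to a chosen vertex of $\Delta$ adjacent to $C$ is coarsely Lipschitz and restricts to a map on $\Delta_r$ at bounded distance from the identity. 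Pushing the $\Gamma$-filling of an arbitrary loop in $\Delta_r$ through $\rho$ then produces a filling inside $\Delta_r$ by conjugates of loops of length bounded in terms of $D$, $\beta$ and $r$, so $\mathrm{Rips}_R(\Delta_r)$ is simply connected for $R$ large. Since $\Delta_r$ is a finite ball cover of $\Delta$ it is quasi-isometric to $\Delta$, and hence to $\mathrm{Cay}(H)$ by Theorem~\ref{thm:SMApplication} and Lemma~\ref{lem_QSI}; as finite presentability is a quasi-isometry invariant, the criterion of Theorem~\ref{thm:SMApplication:FP} holds and $H$ is finitely presented.

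The step I expect to be the main obstacle is precisely this transfer. Finite boundary width constrains only the re-entry points of excursions as measured in the ambient graph $\Gamma$, and tells us nothing directly about distances computed inside $\Delta$, so a naive retraction onto $\Delta$ need not be Lipschitz for the intrinsic metric and a loop of $\Delta$ filled in $\Gamma$ could a priori require a disc ranging arbitrarily far from $\Delta$. The device that overcomes this is to pass to the thickening $\Delta_r$ and to use the bounded-diameter-of-component-boundaries form of finite boundary width, which confines the relevant geodesics and filling discs to a fixed neighbourhood of $\Delta$. A secondary point needing care is the external input that finite presentability of $M$ gives simple connectivity of the \Sch complex, which I would isolate and cite from the existing theory of \Sch complexes rather than reprove here.
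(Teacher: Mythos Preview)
Your approach is correct but takes a genuinely different route from the paper. The paper proves finite generation by explicitly writing down a finite generating set $Y$ in terms of coset representatives and a finite set $W$ of ``boundary words'' $w_{x,y}$ (Lemma~\ref{lem:fgBoundary}), rather than by invoking Theorem~\ref{thm:SMApplication}; for finite presentability it does \emph{not} use the Rips-complex criterion at all, but instead applies the Reidemeister--Schreier rewriting machinery of \cite{CRRT}. Concretely, it sets up rewriting and representation maps $\phi,\psi$ to obtain an infinite monoid presentation for $H$ with five families of relations (coming from $\mathfrak{R}$, from $\alpha\alpha^{-1}\alpha=\alpha$, from commuting idempotents, etc.), and then works through each family to show it is a consequence of a finite subset, with the bulk of the effort devoted to the Wagner-congruence families (3) and (4). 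Your argument instead packages all of this into the single geometric statement that the \Sch complex is coarsely simply connected with bounded cell size, and then uses the coarse retraction $\rho$ (well-defined precisely because of finite boundary width) to pull the filling back into $\Delta_r$. What the paper's approach buys is explicitness and self-containment: one gets actual generators and relations for $H$, and the argument does not rely on the simple connectivity of \Sch complexes as a black box. What your approach buys is brevity and conceptual clarity: once one accepts that simple connectivity input (indeed available from \cite{steinberg2003topological} or Stephen's procedure), the whole finite-presentability step reduces to a standard coarse-geometry transfer, and the role of finite boundary width becomes transparent as exactly the condition needed to make $\rho$ coarsely Lipschitz. The one place where your sketch should be tightened is the filling-transfer step: since $\rho$ is only a vertex map and not a graph morphism, you should spell out that edges of a van Kampen diagram in $\Gamma$ are replaced by geodesics in $\Delta_r$ of length $\leq\beta$, and that the annulus between $\ell$ and $\rho(\ell)$ is triangulated by cells of size $\leq r+\beta$; but this is routine.
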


The rest of this section will be devoted to the proof of Theorem~\ref{thm_monhybrid_inv}.
Let $M = \Inv\pre{A}{\mathfrak{R}}$ where $A$ is a finite set, and if $M$ is finitely presented we also take $\mathfrak{R}$ to be a finite set. Then as a monoid $M$ is defined by the following infinite monoid presentation: 
\[ \Mon\pre{A, A^{-1}}{\mathfrak{R}, 
\; \alpha \alpha^{-1} \alpha = \alpha,  
\; \alpha \alpha^{-1} \beta \beta^{-1} = \beta \beta^{-1}  \alpha \alpha^{-1} 
\; (\alpha, \beta \in (A \cup A^{-1})^*) }. \]

Let $\{H_i:\ i \in I\}$ be the collection of all cosets of $H$ in the $\gr$-class $R$ containing $H$.
By Proposition~\ref{prop_SC} and the assumptions in the theorem there is a finite collection of right cosets $\{H_j:\ j \in J\}$, where $J\subseteq I$ is a finite set, with $1 \in J$ and $H_1 = H$, such that the union $X = \bigcup_{j \in J} H_j$ has finite boundary width, say $\kappa$, and also the subgraph of the \Sch graph induced on $X$ is a connected graph. 
(We note here that we have $1 \in I$ and write $H = H_1$ but this does not mean that $H$ is the entire $\gh$-class of the identity element of $M$, that is, $H$ is not necessarily the group of units of $M$ here.)

As is not difficult to show, the right multiplicative action of $M$ on $R \cup \{ 0 \}$ induces an action of $M$ on the set $I \cup \{0\}$ which we denote by $(i,m) \mapsto i \cdot m \in I \cup \{0\}$. It is important to note that $J \cup \{0\}$ need not be a subact of this act since typically we can right multiply a coset from $X$ to get a coset belonging to $R$ but not to $X$. 

For each $i$ we choose some word $r_i \in (A \cup A^{-1})^*$ such that $H r_i = H_i$; in particular we choose $r_1 = 1$ for $H r_1 = H_1 r_1 = H_1 = H$. Also set $r_i' = r_i^{-1} \in (A \cup A^{-1})^*$.
Note that for any $h\in H$, $i\in I$ and $s\in M$ such that $i\cdot s\in I$ we have
$h\,\R\,hr_i\,\R\,hr_is$, implying $hh^{-1}=hr_i(hr_i)^{-1}=hr_is(hr_is)^{-1}$ and so
\begin{align*}
hr_iss^{-1} &= hr_i(hr_i)^{-1}hr_iss^{-1} = hr_iss^{-1}r_i^{-1}h^{-1}hr_i\\
&= hr_is(hr_is)^{-1}hr_i = hh^{-1}hr_i = hr_i
\end{align*}
holds in $M$. 
In particular, $hr_ir_i'=h$ holds for all $h\in H$, $i\in I$.
The elements $r_i, r_i'$ are called \emph{coset representatives} of $H$.   
Note that it also follows that for all $k \in H_i$ we have $k r_i' r_i = k$. Indeed, we can write $k = h r_i$ for some $h \in H = H_1$ and then 
$$
k r_i' r_i = h r_i r_i' r_i = h r_i = k. 
$$

Our next aim will be to define a set of words over $A\cup A^{-1}$,
$$
W = \{w_{x,y}:\ x,y\in X,\ (x,y)\text{ is a boundary pair }\},
$$
such that $w_{x,y}$ labels a path of minimal possible length from $x$ to $y$ in the \Sch graph $S\Gamma(R)$. By our assumptions, all words from $W$ will have a length $\leq\kappa$, and therefore the set $W$ is necessarily finite. We are going to show that we can assume that the set $W$ enjoys some further nice properties which will greatly facilitate further work. 

\begin{lem}\label{lem:wprop}
The set of words $W$ may be chosen so that it has the following two properties:
\begin{enumerate}
\item whenever $w_{x_1,y_1}$ and $w_{x_2,y_2}$ are in $W$ with $x_1, x_2 \in H_j$ and $y_1,y_2 \in H_k$ for some $j,k \in J$ and there exists an element $s \in M$ such that $x_1s=y_1$ and $x_2s=y_2$ (so that $j\cdot s=k$), then  
$$er_j w_{x_1,y_1} = er_j w_{x_2,y_2}$$
holds in $M$, where $e$ is the idempotent of $M$ which is the identity element of the subgroup $H$;
\item $w_{y,x} \equiv w_{x,y}^{-1}$ holds for all boundary pairs $(x,y)$.
\end{enumerate}
\end{lem}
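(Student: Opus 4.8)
The plan is to handle the two requirements separately: property (2) is secured by a deliberate \emph{symmetric} choice of the geodesic words, whereas property (1) will turn out to hold \emph{automatically} for any admissible choice of words, purely as a consequence of the inverse-monoid structure. Throughout I write $w_{x,y}$ both for the chosen word and for the element of $M$ it represents, and I use the defining feature of path words: a path from $x$ to $y$ in $S\Gamma(R)$ labelled by $w_{x,y}$ means exactly that $x\,w_{x,y}=y$ holds in $M$ (with all intermediate products staying in $R$).

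For property (2) I would first recall the standard fact that the edges of $S\Gamma(R)$ come in inverse pairs. Concretely, if $s,sa\in R$ for some $a\in\ol{A}$, then $saa^{-1}=s$: on the one hand $saa^{-1}\leq s$ in the natural partial order, since $aa^{-1}$ is an idempotent below $1$ and multiplication is order-compatible; on the other hand a short computation gives $(saa^{-1})(saa^{-1})^{-1}=saa^{-1}s^{-1}=(sa)(sa)^{-1}=ss^{-1}$ (the last equality because $sa\,\R\,s$), so $saa^{-1}\,\R\,s$. Being both $\R$-related to $s$ and $\leq$-below it forces $saa^{-1}=s$. Hence the letter $a^{-1}$ provides an edge from $sa$ back to $s$, and reversing any path while inverting its labels turns a path from $x$ to $y$ into a path from $y$ to $x$ spelling the formal inverse word. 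As distance is symmetric, the reverse of a geodesic word from $x$ to $y$ is a geodesic word from $y$ to $x$. I would therefore build $W$ by selecting, for each unordered boundary pair $\{x,y\}$ with $x\neq y$, a single geodesic word $w_{x,y}$ (of length $d(x,y)\leq\kappa$) and then \emph{defining} $w_{y,x}:=w_{x,y}^{-1}$; for a degenerate pair $(x,x)$ I take the empty word, which is self-inverse. All words then have length at most $\kappa$, so $W$ is finite, and property (2) holds by construction.

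For property (1), fix boundary pairs $(x_1,y_1)$ and $(x_2,y_2)$ with $x_1,x_2\in H_j$, $y_1,y_2\in H_k$, and an $s\in M$ with $x_1s=y_1$ and $x_2s=y_2$. Since $w_{x_i,y_i}$ labels a path from $x_i$ to $y_i$ we have $x_i\,w_{x_i,y_i}=y_i=x_i s$ in $M$ for $i=1,2$. As $x_i\in H_j=Hr_j$, write $x_i=h_i r_j$ with $h_i\in H$, and recall that in the subgroup $H$ the monoid inverse coincides with the group inverse, so $h_i^{-1}h_i=e$. Left-multiplying the equality $h_i r_j\,w_{x_i,y_i}=h_i r_j\,s$ by $h_i^{-1}$ gives
\[
e\,r_j\,w_{x_i,y_i}=e\,r_j\,s\qquad(i=1,2),
\]
whence $e\,r_j\,w_{x_1,y_1}=e\,r_j\,s=e\,r_j\,w_{x_2,y_2}$, which is precisely property (1). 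This argument uses nothing about $W$ beyond the path identity $x_i\,w_{x_i,y_i}=y_i$, so it is valid for every admissible choice of geodesic words; in particular it remains valid for the symmetric choice made above, and the two properties are compatible.

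The only genuinely delicate point is realising that property (1) imposes no constraint on the choice of words at all: the crux is the cancellation $h_i^{-1}h_i=e$, which removes the dependence on the individual representative $h_i$ and leaves the common value $e\,r_j\,s$, determined solely by $j$ and $s$. Beyond this observation and the routine verification of $saa^{-1}=s$ underpinning the edge-pairing used in property (2), every remaining step is bookkeeping.
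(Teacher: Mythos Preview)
Your proof is correct, and in fact cleaner than the paper's. The paper takes a more laborious route: it fixes, for each pair $(j,s)$ with $j\cdot s\in J$, a single geodesic word $\beta_{j,s}$ labelling a shortest path $er_j\leadsto er_js$, introduces a linear order on $X$, and then defines $w_{x,y}\equiv\beta_{j,s}$ when $x<y$ (and $w_{x,y}\equiv w_{y,x}^{-1}$ otherwise). This forces words for ``parallel'' boundary pairs to be literally identical as words when the orderings agree, and the paper must then separately verify well-definedness (independence from the choice of $s$) and handle the mixed-ordering case $j=k$, $x_1<y_1$, $y_2<x_2$ by an explicit computation showing $er_j\beta_{j,s}=er_j\beta_{j,s^{-1}}^{-1}$ in $M$. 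Your observation that property~(1) holds \emph{automatically} for any path words --- via the cancellation $h_i^{-1}h_ir_jw_{x_i,y_i}=er_jw_{x_i,y_i}=er_js$ --- bypasses all of this machinery. The paper's construction buys literal equality of words in most cases (rather than merely equality in $M$ after left-multiplying by $er_j$), but the subsequent applications in Lemmas~\ref{lem:fgBoundary}--\ref{lem:claim1} only ever use properties~(1) and~(2) as stated, so nothing is lost by your simpler argument.
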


\begin{proof}
We begin by choosing, for any pair $(j,s)$, ($j\in J$, $s\in M$) such that $j\cdot s\in J$, a word $\beta_{j,s}\in (A\cup A^{-1})^*$ labelling a shortest path in $S\Gamma(R)$ from $er_j\leadsto er_js$. Throughout the proof, these words will be fixed.

Next, we define a linear order $<$ on the set $X$ by fixing a linear order $\prec$ in the set $J$, linear orders $<_j$ on the cosets $H_j$, $j\in J$, and then taking the ordered sum of ordered sets $(H_j,<_j)$ with respect to $(J,\prec)$. 

Now assume that $(x,y)\in X\times X$ is a boundary pair. If $x=y$ then we set $w_{x,x}=1$, the empty word. Otherwise, assume first that $x<y$. Since $x\,\R\,y$, we have $y=xs$ for some $s\in M$, so if $x\in H_j$ then $y\in H_{j\cdot s}$. In this case we set $w_{x,y}\equiv \beta_{j,s}$. If, however, $x>y$ then we define $w_{x,y}$ to be the word $w_{y,x}^{-1}$.
Since $ys^{-1}=xss^{-1}=hr_jss^{-1}=hr_j=x$, the latter word is just $\beta_{j\cdot s,s^{-1}}^{-1}$.
We need to prove that this definition is logically correct, i.e.\ that it does not depend on the particular choice of $s\in M$, and also that it satisfies the required property (1) (the property (2) is immediate from the definition).

Indeed, assume first that we also have $y=xt$ for some $t\in M$ (so $j\cdot t=j\cdot s$). Since we assume that $x\in H_j$, it follows that $x=hr_j$ for some $h\in H = H_1$. Hence, $y=xs=xt$ implies that $hr_jt=hr_js$, so $er_jt=h^{-1}hr_jt=h^{-1}hr_js=er_js$. Thus we have $\beta_{j,t}\equiv\beta_{j,s}$. 
As we have already observed, $ys^{-1}=xss^{-1}=hr_jss^{-1}=hr_j=x$, and similarly we have $x=yt^{-1}$, so we can also conclude that $\beta_{j\cdot t,t^{-1}}\equiv\beta_{j\cdot s,s^{-1}}$ and thus
$\beta_{j\cdot t,t^{-1}}^{-1}\equiv\beta_{j\cdot s,s^{-1}}^{-1}$. Therefore, the definition of the word $w_{x,y}$ does not depend on the ``intervening'' element $s$.

Also, $w_{x,y}$ indeed labels a path from $x$ to $y$ in $S\Gamma(R)$ because, by definition, $er_j\beta_{j,s} = er_js$ holds in $M$ and so, if $x<y$, $xw_{x,y}=hr_j\beta_{j,s} = her_j\beta_{j,s}=her_js=xs=y$. On the other hand, if $x>y$, then, upon writing $y=kr_{j\cdot s}$ for some $k\in H$ we have
$$
xw_{x,y} = ys^{-1}\beta_{j\cdot s,s^{-1}}^{-1} = ker_{j\cdot s}s^{-1}\beta_{j\cdot s,s^{-1}}^{-1} = ker_{j\cdot s}\beta_{j\cdot s,s^{-1}}\beta_{j\cdot s,s^{-1}}^{-1} = kr_{j\cdot s} = y.
$$
It is straightforward to see that in both cases the path labelled by $w_{x,y}$ must be one of the shortest paths between $x$ and $y$, for otherwise there would be a path $er_j\leadsto er_js$ shorter than the length of $\beta_{j,s}$ (or a path $er_{j\cdot s}\leadsto er_{j\cdot s}s^{-1}$ shorter than the length of $\beta_{j\cdot s,s^{-1}}^{-1}$).

Now assume that $x_1,x_2,y_1,y_2$ are as described in the property (1). If $j\prec k$, or $j=k$ and $x_1<_jy_1$, $x_2<_jy_2$, then both words $w_{x_1,y_1}$ and $w_{x_2,y_2}$ are equal to $\beta_{j,s}$, and if $k\prec j$, or $j=k$ and $y_1<_jx_1$, $y_2<_jx_2$, they are both equal to $\beta_{j\cdot s,s^{-1}}^{-1}$. So, let $j=k$ with $x_1<_jy_1$, $y_2<_jx_2$, or $y_1<_jx_1$, $x_2<_jy_2$. Then $\{w_{x_1,y_1},w_{x_2,y_2}\} = \{\beta_{j,s},\beta_{j,s^{-1}}^{-1}\}$.  
Since by assumption $er_js=er_j\beta_{j,s}\in H_j$ we have $er_js=hr_j$ for some $h\in H$, thus
$$
er_js\beta_{j,s^{-1}} = her_j\beta_{j,s^{-1}} = her_js^{-1} = er_jss^{-1}=er_j.
$$
Hence,
$$
er_j\beta_{j,s^{-1}}^{-1} = her_j\beta_{j,s^{-1}}\beta_{j,s^{-1}}^{-1} = her_j = er_js = er_j\beta_{j,s}
$$
holds in $M$, which implies that in $M$ we have $er_jw_{x_1,y_1} = er_jw_{x_2,y_2}$ in all cases, as required.
\end{proof}

In the next lemma we identify a natural finite monoid generating set $Y$ for the group $H$. This deals with the finite generation part in the statement of Theorem~\ref{thm_monhybrid_inv}. 

\begin{lem}\label{lem:fgBoundary} 
Let 
$M = \Inv\pre{A}{\mathfrak{R}}$ where $A$ is a finite set and let $H$ be a subgroup of $M$ with idempotent $e \in H$. 
Suppose that $H$ has a finite cover with finite boundary width. Then with the above definitions and notation      
\[
Y = \{ 
e r_j w r_{j \cdot w}' : w \in W, j \in J \text{ and } j \cdot w \in J
\}
\]
is a finite monoid generating set for the group $H$. In particular, the group $H$ is finitely generated.   
\end{lem}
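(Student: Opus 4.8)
The plan is to verify three things in turn, following the definitions and notation already set up before the lemma: that $Y$ is finite, that $Y\subseteq H$, and that $Y$ generates $H$; only the last of these is substantive. First I would dispose of finiteness. Since $X$ has boundary width bounded by $\kappa$, every $w_{x,y}\in W$ labels a shortest path of length at most $\kappa$; as these are words over the finite alphabet $A\cup A^{-1}$, the set $W$ is finite, and since $J$ is finite, so is $Y$. This is the only place the boundary-width hypothesis enters.

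Next I would check $Y\subseteq H$. Fix a generator $er_j w r_{j\cdot w}'$ with $j,\,j\cdot w\in J$. Since $e\in H$ we have $er_j\in Hr_j=H_j$, hence $er_j w\in H_{j\cdot w}=Hr_{j\cdot w}$, so we may write $er_j w = h'r_{j\cdot w}$ with $h'\in H$. The coset-representative relation $h'r_{j\cdot w}r_{j\cdot w}'=h'$ (established before the lemma) then gives $er_j w r_{j\cdot w}' = h'\in H$, as required.

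The crux is the generating claim, and here I would use the connectedness of $X$ together with a telescoping argument. Take an arbitrary $h\in H$. Since $e,h\in H_1\subseteq X$ and the subgraph of $S\Gamma(R)$ induced on $X$ is connected, I choose a path $e=v_0\to v_1\to\cdots\to v_n=h$ lying entirely in $X$. Each $v_i$ lies in some coset $H_{j_i}$ with $j_i\in J$ (and $j_0=j_n=1$); write $v_i=h_ir_{j_i}$ with $h_i\in H$, so $h_0=e$ and $h_n=h$. For each $i$ the pair $(v_{i-1},v_i)$ is a boundary pair (it is a single edge, so the intermediate-vertex condition is vacuous), whence $w_i:=w_{v_{i-1},v_i}\in W$; and $v_{i-1}w_i=v_i$ shows $j_{i-1}\cdot w_i=j_i\in J$, so that $g_i:=er_{j_{i-1}}w_i r_{j_i}'\in Y$. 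The key identity is $h_{i-1}g_i=h_i$: using $h_{i-1}e=h_{i-1}$ one has $h_{i-1}er_{j_{i-1}}=h_{i-1}r_{j_{i-1}}=v_{i-1}$, and then
\[
h_{i-1}g_i = v_{i-1}w_i r_{j_i}' = v_i r_{j_i}' = h_i r_{j_i} r_{j_i}' = h_i,
\]
the last step being the coset-representative relation. Iterating gives $h=h_n=h_0 g_1 g_2\cdots g_n = e\,g_1\cdots g_n = g_1\cdots g_n$, since each $g_i\in H$ and $e$ is the identity of $H$. Thus every element of $H$ is a product of elements of $Y$.

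I expect the main difficulty to be purely organisational rather than conceptual: one must track the coset indices $j_i$ accurately and invoke the relation $hr_ir_i'=h$ at exactly the right points so that the product telescopes cleanly. The two genuine inputs—connectedness of $X$ (used once, to produce the path) and finite boundary width (used once, for finiteness of $W$)—enter in a clearly separated way. I would note that properties (1) and (2) of Lemma~\ref{lem:wprop} are not required for finite generation; they are there to serve the finite-presentability half of Theorem~\ref{thm_monhybrid_inv}.
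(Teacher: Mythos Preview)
Your proof is correct, but it takes a genuinely different route from the paper's. The paper does not use the connectedness of $X$ at all in this lemma: instead it fixes an \emph{arbitrary} word $\gamma\in(A\cup A^{-1})^+$ representing $h$, traces the path $e\gamma$ in $S\Gamma(R)$ (which may leave $X$), and decomposes $\gamma\equiv\gamma_1\cdots\gamma_k$ at the successive re-entry points into $X$. Each segment $\gamma_{m+1}$ then witnesses a boundary pair $(er_{j_m},er_{j_m}\gamma_{m+1})$, and replacing $\gamma_{m+1}$ by the canonical word $\overline{\gamma_{m+1}}\in W$ gives the same telescoping product of elements of $Y$. Your argument instead exploits connectedness to choose a path lying entirely in $X$, so that every step is a single edge and hence trivially a boundary pair; the telescoping is then identical.

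What each approach buys: yours is shorter and cleaner for the bare finite-generation claim, and (as you correctly note) needs neither property of Lemma~\ref{lem:wprop}. The paper's approach is chosen deliberately because it works for \emph{every} word $\gamma$ representing an element of $H$, not just those tracing paths inside $X$; this is exactly what is required to define the rewriting map $\phi$ used immediately afterwards in the finite-presentability half of Theorem~\ref{thm_monhybrid_inv}. So the paper is paying a small price in directness here to set up machinery it will need, whereas your route would require a separate construction of $\phi$ later.
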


\begin{proof} 
We shall now prove that $Y$ is a finite generating set for the subgroup $H$. The proof that $Y$ generates $H$ will also reveal how the rewriting mapping for the finite presentabtility proof will need to be defined.  

Let $h \in H$. Write $h = \gamma \in (A \cup A^{-1})^+$ so $h = e\gamma$ in $M$. Observe that for every decomposition $\gamma \equiv \gamma' \gamma''$ of $\gamma$ we have $e \gamma' \in X$ if and only if $1 \cdot \gamma' \in J$. Now decompose the word 
$
\gamma \equiv \gamma_1 \ldots \gamma_k
$
uniquely such that each $\gamma_i$ is non empty and $e \gamma_1 \ldots \gamma_n \in X$ for all $1 \leq n \leq k $ while $e\gamma' \not\in X$ for every other non-empty prefix $\gamma'$ of $\gamma$.         

Next note that every pair $(e, e\gamma_1)$ and $(e \gamma_1 \ldots \gamma_m, e \gamma_1 \ldots \gamma_{m+1})$ is a boundary pair.    
It then follows from the definitions that $(er_1, er_1 \gamma_1) = (e, e \gamma_1)$ is a boundary pair and also 
$(er_{1\cdot \gamma_1 \ldots \gamma_m}, er_{1\cdot \gamma_1 \ldots \gamma_{m}} \gamma_{m+1})$ is a boundary pair for all $m$. 

Now for each $m$ we define 
\[
\overline{\gamma_{m+1}} = 
w_{
er_{1\cdot \gamma_1 \ldots \gamma_m}, er_{1\cdot \gamma_1 \ldots \gamma_{m}} \gamma_{m+1}},
\]
and 
\[
\overline{\gamma_{1}} = 
w_{er_1, er_1 \gamma_1},
\]
Then it follows from the definitions that 
\[
h = \gamma = e \gamma = 
e \gamma_1 \ldots \gamma_k
\]
and 
\begin{eqnarray*}
 e {\gamma_1} \ldots {\gamma_k}  & = &
  e r_1 {\gamma_1} r_{1 \cdot {\gamma_1}}' r_{1 \cdot {\gamma_1}} {\gamma_2} \ldots {\gamma_k} \\
& = &  \left(e r_1 {\gamma_1} r_{1 \cdot {\gamma_1}}'\right) e r_{1 \cdot {\gamma_1}} {\gamma_2} \ldots {\gamma_k} \\
& = & \ldots \\  
& = &  
\left(e r_1 {\gamma_1} r_{1 \cdot {\gamma_1}}'\right) 
\left(e r_{1 \cdot {\gamma_1}} {\gamma_2} r_{1 \cdot {\gamma_1} {\gamma_2}}'\right) 
\ldots 
\left(e r_{1 \cdot {\gamma_1} \ldots {\gamma_{k-1}}} 
{\gamma_k} 
r_{1 \cdot {\gamma_1} \ldots {\gamma_k}}'\right)  \\
& = &  
\left(e r_1 \overline{\gamma_1} r_{1 \cdot {\gamma_1}}'\right) 
\left(e r_{1 \cdot {\gamma_1}} \overline{\gamma_2} r_{1 \cdot {\gamma_1} {\gamma_2}}'\right) 
\ldots 
\left(e r_{1 \cdot {\gamma_1} \ldots {\gamma_{k-1}}} 
\overline{\gamma_k} 
r_{1 \cdot {\gamma_1} \ldots {\gamma_k}}'\right) \in \lb Y \rb,
\end{eqnarray*}
as required. This completes the proof that $Y$ is a finite generating set for $H$.   
\end{proof}

Now we move on to the finite presentability proof in the statement of Theorem~\ref{thm_monhybrid_inv}. Again, following the proof in \cite{ruvskuc1999presentations} for this we need to introduce a new alphabet $B$ corresponding to the generators $Y$ and then set up the so-called rewriting and representation mappings in order to be able to apply semigroup Reidemeister--Schrieier rewriting theory.      

For this we define
\[
B = \{ 
[j,w] :
w \in W, j \in J  \text{ and } j \cdot w \in J
\}
\]
and define the \emph{rewriting mapping}
\[
\phi: \{ 
(j,u): j \in J, u \in (A \cup A^{-1})^* \text{ and } j \cdot u \in J   
\} \rightarrow (B \cup B^{-1})^*
\]
inductively in the following way. When $u = \epsilon$ the empty word we define $\phi(j,\epsilon) = 1$.  When $u$ is not the empty word we decompose 
$u \equiv u'u''$ where $u'$ is the shortest non-empty prefix of $u$ with the property that $j \cdot u' \in J$.  
(Note that $u'$ is just a prefix of $u$ here and the dash notation has no relation to the $r_i'$ notation that was used above when defining coset representatives.)    
Then we inductively define 
\[ \phi(j,u) = \phi(j,u'u'') = 
\phi(j, u') \phi(j \cdot u', u'') = 
[j,\overline{u'}] \phi(j \cdot u', u'') \]
Here the bar map is defined, following the finite generation proof above, as follows. It follows from the definitions that $(er_j, er_ju')$ is a boundary pair so we define
\[\overline{u'} = w_{er_j, er_ju'} \in W. \]

Note that in the special case that $e r_j = er_ju'$ in $M$ it follows from the definitions that  
\[\overline{u'} = w_{er_j, er_ju'} = 1, \]
the empty word. Note in particular that the bar map sends $u'$ to a word $\overline{u'} \in W$ that satisfies 
\[
e r_j u' = e r_j \overline{u'}
\]
in the inverse monoid $M$. 
Note that this definition of bar matches exactly the definition of the bar map in the proof of Lemma~\ref{lem:fgBoundary}.

We want to use Theorem~2.1 from the paper \cite{CRRT} next, and for that we also need to define the, so-called, \emph{representation mapping}. 
We define the representation mapping 
\[
\psi: B^* \rightarrow A^*  
\]
to be the unique homomorphism extending the map
\[
[j,w] \mapsto e r_j w r_{j \cdot w}' 
\]
for each $[j,w] \in B$, where $e$ now denotes some fixed word over $A \cup A^{-1}$ that represents the unique idempotent in the $\gh$-class containing $H$.      

Next we state a helpful formula that follows from the definitions and shows that the map $\phi$ behaves like a homomorphism. 
Let $j \in J$ and let $w_1, w_2 \in (A \cup A^{-1})^*$ such that $j \cdot w_1 \in J$ and $j \cdot w_1w_2 \in J$. Then 
$j \cdot w_1 \in J$ and $(j \cdot w_1) \cdot w_2 \in J$ so $\phi(j \cdot w_1, w_2) \in B^*$ is defined and we have 
\begin{equation}\label{eqn:useful:phi:hom}
    \phi(j, w_1w_2) \equiv  \phi(j,w_1)\phi(j \cdot w_1, w_2) 
\end{equation}
in $B^*$. 

Let $\mathcal{L}(A,H)$ denote the set of all words from $(A\cup A^{-1})^*$ representing an element of $H$.
We define a map $\phi:\mathcal{L}(A,H) \to B^* $ by setting $\phi(w) = \phi(1,w)$. It is then straightforward to check that $\phi$ and $\psi$ are rewriting and representation mappings in the sense defined in \cite{CRRT}. 
Namely, for all $\gamma \in \mathcal{L}(A,H)$ we have that $\psi \phi(\gamma) = \gamma$ holds in $M$, which may be proved by checking that $\psi \phi(\gamma)$ equals exactly the expression appearing in the last line of the final displayed equation in the proof of Lemma~\ref{lem:fgBoundary}.    
Thus, it follows we can use \cite[Theorem~2.1]{CRRT} to write down an infinite monoid presentation for group $H$. After writing down that infinite presentation the remainder of the proof will be concerned with showing that there is a finite set of relations holding in $H$ from which all the defining relations in this infinite presentation are consequences.  

By \cite[Theorem~2.1]{CRRT} the group $H$ is defined by the monoid presentation 
with generating set $B$ and the following infinite families of defining relations:    
\begin{enumerate}
\item $b = \phi(\psi(b))$  for all $b \in B$, and $\phi(e) = 1$;      
\item $\phi(w_1 u w_2) = \phi(w_1 v w_2)$ where $w_1, w_2 \in (A \cup A^{-1})^*$, 
$(u=v) \in \mathfrak{R}$, $w_1 u w_2 \in \mathcal{L}(A,H)$;     
\item $\phi(w_1 \alpha \alpha^{-1} \alpha  w_2) = \phi(w_1 \alpha w_2)$ 
where $w_1, w_2, \alpha \in(A \cup A^{-1})^* $ and $w_1 \alpha w_2 \in \mathcal{L}(A,H)$;
\item $\phi(w_1 \alpha \alpha^{-1} \beta \beta^{-1}  w_2) = \phi(w_1 \beta \beta^{-1} \alpha \alpha^{-1}  w_2)$ 
where $w_1, w_2, \alpha \in(A \cup A^{-1})^* $ and we have $w_1 \alpha \alpha^{-1} \beta \beta^{-1}  w_2 \in \mathcal{L}(A,H)$;
\item $\phi(w_1 w_2) = \phi(w_1) \phi(w_2)$ where $w_1, w_2 \in \mathcal{L}(A,H)$.
\end{enumerate}
We shall now go through each of these five families of relations in turn proving that in each case they can be replaced and then derived as consequences of another finite set of defining relations.  
Before doing this we now state and prove two lemmas that will be useful for the proof.  

\begin{lem}\label{lem:claim2} 
Let $j \in J$ and $\alpha \in (A \cup A^{-1})^*$ such that $j \cdot \alpha \in J$. Then both  $\phi(j, \alpha) \in B^*$ and $\phi(j \cdot \alpha, \alpha^{-1}) \in B^*$ are defined and  
\[
\phi(j, \alpha)\phi(j \cdot \alpha, \alpha^{-1}) = 1.
\]
holds in $H$.  
Furthermore all of these relations are consequences of the following finite set of relations holding in $H$: 
$b_1=b_2$ for $b_1,b_2 \in B$ such that $\psi(b_1)=\psi(b_2)$ in $M$, and 
$b_1b_2=1$ for $b_1,b_2 \in B$ such that $\psi(b_1b_2)=1$ in $M$. 
\end{lem}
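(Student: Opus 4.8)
The plan is to prove the two assertions in turn, both resting on the evaluation of $\phi$ under $\psi$, on the splitting rule \eqref{eqn:useful:phi:hom}, and on the identity $hr_iss^{-1}=hr_i$ (valid whenever $i\cdot s\in I$) established above, which I shall call the \emph{return identity}. Before anything else I would check that both words are defined: $\phi(j,\alpha)\in B^*$ is defined because $j\cdot\alpha\in J$ makes the inductive definition terminate (at each step the whole remaining word is a legitimate choice of prefix reaching $J$), and $\phi(j\cdot\alpha,\alpha^{-1})\in B^*$ is defined because $(j\cdot\alpha)\cdot\alpha^{-1}=j\cdot(\alpha\alpha^{-1})=j\in J$ by the return identity with $i=j$, $s=\alpha$.

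For the first assertion I would first record the evaluation formula
\[
e\,\psi(\phi(j,u))=er_j\,u\,r_{j\cdot u}'\qquad\text{in }M,
\]
valid for every $j\in J$ and $u$ with $j\cdot u\in J$. This follows by induction along the block decomposition of $u$, the single-block step using $\psi([j,\overline{u'}])=er_j\overline{u'}r_{j\cdot u'}'=er_ju'r_{j\cdot u'}'$ together with Lemma~\ref{lem:wprop} (giving $er_j\overline{u'}=er_ju'$) and the identities $kr_i'r_i=k$, $hr_ir_i'=h$. Now \eqref{eqn:useful:phi:hom} gives $\phi(j,\alpha)\phi(j\cdot\alpha,\alpha^{-1})\equiv\phi(j,\alpha\alpha^{-1})$ in $B^*$, and the formula with $u=\alpha\alpha^{-1}$ yields $e\,\psi(\phi(j,\alpha\alpha^{-1}))=er_j\alpha\alpha^{-1}r_j'=er_jr_j'=e$, the penultimate equality being the return identity. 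Since the evaluated word represents an element of $H$, it is fixed by left multiplication by $e$, so it represents $e$; that is, $\phi(j,\alpha)\phi(j\cdot\alpha,\alpha^{-1})=1$ in $H$.

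For the second assertion I would induct on the number $n$ of blocks of $\alpha$ relative to $j$ (the factorisation $\alpha\equiv\alpha_1\cdots\alpha_n$ in which each $\alpha_i$ is the shortest nonempty word with $j\cdot(\alpha_1\cdots\alpha_i)\in J$). The case $n=0$ is trivial. The decisive base case $n=1$ relies on the observation that for any factorisation $\alpha\equiv\alpha'\alpha''$ the return identity gives $(j\cdot\alpha)\cdot(\alpha'')^{-1}=j\cdot\alpha'$; hence the cosets met along $\alpha^{-1}$ from $j\cdot\alpha$ are precisely those met along $\alpha$ from $j$, in reverse order, so $\alpha^{-1}$ is again a single block from $j\cdot\alpha$. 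Consequently both $\phi(j,\alpha)=[j,\overline\alpha]$ and $\phi(j\cdot\alpha,\alpha^{-1})=[j\cdot\alpha,\overline{\alpha^{-1}}]$ are single generators, and the computation of part one (specialised to a single block) shows that $\psi([j,\overline\alpha][j\cdot\alpha,\overline{\alpha^{-1}}])$ represents the identity of $H$; thus this is one of the defining relations $b_1b_2=1$ in the prescribed finite set. For $n\ge2$ I would peel off the first block using \eqref{eqn:useful:phi:hom}, whose coset hypotheses are again checked by the return identity (notably $(j\cdot\alpha)\cdot(\alpha_2\cdots\alpha_n)^{-1}=j\cdot\alpha_1\in J$), to obtain
\[
\phi(j,\alpha)\phi(j\cdot\alpha,\alpha^{-1})\equiv[j,\overline{\alpha_1}]\bigl(\phi(j\cdot\alpha_1,\alpha_2\cdots\alpha_n)\,\phi(j\cdot\alpha,(\alpha_2\cdots\alpha_n)^{-1})\bigr)\phi(j\cdot\alpha_1,\alpha_1^{-1}).
\]
The bracketed middle factor is exactly the lemma's expression for the $(n-1)$-block word $\alpha_2\cdots\alpha_n$ based at $j\cdot\alpha_1$, so by the inductive hypothesis it is a consequence of the finite relations and may be deleted, leaving the single-block expression $[j,\overline{\alpha_1}]\,\phi(j\cdot\alpha_1,\alpha_1^{-1})$, which is disposed of by the base case.

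I expect the main obstacle to be the base case, and specifically the need to notice that inversion reverses the sequence of cosets visited and therefore preserves the single-block property. This is what guarantees that both $\phi$'s collapse to single generators so that the single pairwise relation $b_1b_2=1$ applies; without it one is left with a product of several generators with no evident way to cancel it using only the pairwise relations of the finite set. The remaining work — verifying the coset side conditions for each invocation of \eqref{eqn:useful:phi:hom} so that every $\phi$ that appears is genuinely defined — is routine but must be carried out carefully, and all of it reduces to repeated use of the return identity.
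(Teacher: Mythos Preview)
Your proof is correct and follows the same inductive strategy as the paper (induction on the number of blocks of $\alpha$, equivalently on $|\phi(j,\alpha)|$). Your presentation is in fact somewhat cleaner: in the base case you show directly that $\psi\bigl([j,\overline\alpha][j\cdot\alpha,\overline{\alpha^{-1}}]\bigr)=e$ in $M$, so the relation $[j,\overline\alpha][j\cdot\alpha,\overline{\alpha^{-1}}]=1$ already lies in the prescribed finite family $b_1b_2=1$, whereas the paper first invokes a $b_1=b_2$ relation (coming from Lemma~\ref{lem:wprop}(1)) to rewrite $[j\cdot\alpha,\overline{\alpha^{-1}}]$ before applying a $b_1b_2=1$ relation. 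Both routes are valid; yours simply bypasses the intermediate step. Your inductive step, peeling off the first block via \eqref{eqn:useful:phi:hom} with the coset hypothesis $(j\cdot\alpha)\cdot(\alpha_2\cdots\alpha_n)^{-1}=j\cdot\alpha_1$ verified by the return identity, is exactly the ``straightforward inductive argument'' the paper alludes to but does not spell out.
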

\begin{proof}
This lemma is proved by induction on the length of the word $\phi(j,\alpha) \in B^*$ using the inductive definition of the map $\phi$. 
The base case is when $j \cdot \alpha' \not\in J$ for any proper non-empty prefix $\alpha'$ of $\alpha$.
This implies that $(j \cdot \alpha) \cdot \gamma\not\in J$ for any proper non-empty prefix $\gamma$ of $\alpha^{-1}$.  
In this case $\phi(j,\alpha) = [j, w_{er_j,er_j\alpha}]$ and   
\[\phi(j \cdot \alpha, \alpha^{-1}) = 
[j \cdot \alpha, w_{er_{j \cdot \alpha}, er_{j \cdot \alpha}\alpha^{-1}}]  
= [j \cdot \alpha, w_{er_{j \cdot \alpha}\alpha^{-1},er_{j \cdot \alpha}}^{-1}]  
\] 
by definition of the words $w_{k,l}$ and Lemma \ref{lem:wprop}. Under the representation mapping we have 
\[
\psi([j, w_{er_j,er_j\alpha}]) = e r_j w_{er_j,er_j\alpha} r_{j \cdot \alpha}^{-1}   
\]
and
\[
\psi([j \cdot \alpha, w_{er_{j \cdot \alpha}\alpha^{-1},er_{j \cdot \alpha}}^{-1}]) = 
e r_{j \cdot \alpha} w_{er_{j \cdot \alpha}\alpha^{-1},er_{j \cdot \alpha}}^{-1} r_j^{-1}
\]
Since $er_j\alpha\alpha^{-1}=er_j$ and $er_{j \cdot \alpha}\alpha^{-1}\alpha = er_{j \cdot \alpha}$ holds in $M$, by Lemma \ref{lem:wprop} we have
$$
er_{j\cdot\alpha}w_{er_{j \cdot \alpha}\alpha^{-1},er_{j \cdot \alpha}}^{-1} = er_{j\cdot\alpha}w_{er_j\alpha\alpha^{-1},er_j\alpha}^{-1} = er_{j\cdot\alpha}w_{er_j,er_j\alpha}^{-1}
$$
in $M$. 
Multiplying the right-hand side of the expression for $\psi([j, w_{er_j,er_j\alpha}])$, the right-hand side of the previous equation, and $r_j^{-1}$, gives the following equalities in the inverse monoid $M$:
\begin{align*}
e r_j w_{er_j,er_j\alpha} r_{j \cdot \alpha}^{-1} e r_{j \cdot \alpha} w_{er_j,er_j\alpha}^{-1} r_j^{-1}
&=  
e r_j w_{er_j,er_j\alpha} r_{j \cdot \alpha}^{-1} r_{j \cdot \alpha} w_{er_j,er_j\alpha}^{-1} r_j^{-1} \\
&= 
e r_j w_{er_j,er_j\alpha} r_{j \cdot \alpha}^{-1} r_{j \cdot \alpha} w_{er_j,er_j\alpha}^{-1} r_j^{-1}
= e.
\end{align*}
The lemma then follows by a straightforward inductive argument using the definition of the mapping $\phi$. 
The last sentence in the lemma follows by observing that the  
two families of relations 
\[
[j \cdot \alpha, w_{er_{j \cdot \alpha}\alpha^{-1},er_{j \cdot \alpha}}^{-1}] = [j \cdot \alpha, w_{er_j,er_j\alpha}^{-1}]
\]
and
\[
[j, w_{er_j,er_j\alpha}] [j \cdot \alpha, w_{er_j,er_j\alpha}^{-1}]=1
\]
are in the finite set of relations given in the statement of the lemma. 
This completes the proof. 
\end{proof}

\begin{lem}\label{lem:claim1}
Let $j \in J$ and $\alpha \in (A \cup A^{-1})^*$ such that for every proper non-empty prefix $\alpha'$ of $\alpha$ we have $j \cdot \alpha' \not\in J$. Furthermore suppose that $er_j \alpha = er_j$ 
Then $\phi(j,\alpha) = 1$ holds in the group $H$. Furthermore, there is a finite set of relations holding in $H$ from which all these relations are consequences.         
\end{lem}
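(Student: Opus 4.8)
The plan is to show that, under the stated hypotheses, the word $\phi(j,\alpha)$ does not really involve any nontrivial rewriting at all: it collapses to a single letter of $B$, and that letter evaluates to the identity of $H$. So unlike Lemma~\ref{lem:claim2}, no inductive argument on the length of $\phi(j,\alpha)$ is needed here; everything is extracted directly from the definitions.

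First I would note that the hypothesis $er_j\alpha=er_j$ forces $j\cdot\alpha=j\in J$ (since $er_j\in H_j$ and $er_j\alpha=er_j$ stays in the same coset), so $\phi(j,\alpha)$ is defined. Next, the assumption that $j\cdot\alpha'\notin J$ for every proper non-empty prefix $\alpha'$ of $\alpha$ says exactly that, in the inductive clause defining $\phi$, the shortest non-empty prefix $u'$ of $\alpha$ with $j\cdot u'\in J$ is $\alpha$ itself, with remainder $u''=\epsilon$. Hence
\[
\phi(j,\alpha)=[j,\overline{\alpha}]\,\phi(j\cdot\alpha,\epsilon)=[j,\overline{\alpha}],\qquad \overline{\alpha}=w_{er_j,\,er_j\alpha}.
\]
The key point is then that, because $er_j\alpha=er_j$ \emph{as vertices of the \Sch graph}, the boundary pair $(er_j,er_j\alpha)$ is the degenerate pair $(er_j,er_j)$; by the convention $w_{x,x}=1$ recorded in Lemma~\ref{lem:wprop} this gives $\overline{\alpha}=w_{er_j,er_j}=1$, the empty word. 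Therefore $\phi(j,\alpha)$ is literally the single generator $[j,1]\in B$ (which lies in $B$ since $1\in W$ and $j\cdot 1=j\in J$).

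It remains to see that $[j,1]=1$ in the group $H$, and here I would simply apply the representation map: $\psi([j,1])=er_j\cdot 1\cdot r_j'=er_jr_j'=e$, using the identity $hr_ir_i'=h$ (with $h=e$) established before Lemma~\ref{lem:wprop}. As $e$ represents the identity of $H$, the generator $[j,1]$ represents $1_H$, whence $\phi(j,\alpha)=1$ in $H$. For the finite-set claim, observe that every relation $\phi(j,\alpha)=1$ of the form treated here is, as a word over $B$, identical to $[j,1]=1$ for some $j\in J$. Since $J$ is finite, the collection $\{\,[j,1]=1 : j\in J\,\}$ — each of which holds in $H$ by the computation just given — already contains all of them; no derivation is even required, as each such relation is one of these finitely many. (Note these cannot be obtained from the finite set of Lemma~\ref{lem:claim2}, since $\psi([j,1])=e\neq 1$ in $M$, so they are genuinely added as new valid relations of $H$.)

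I do not expect a real obstacle here; the only thing requiring care is the bookkeeping between a \emph{word} over $A\cup A^{-1}$ and the \emph{element} of $M$ it represents. Specifically one must use that $er_j\alpha$ and $er_j$ coincide as vertices of the \Sch graph (so the bar map returns the empty word) and that $\phi$ is evaluated via its inductive clause with $u'=\alpha$, $u''=\epsilon$. Once these are pinned down, the statement follows from the definitions of $\phi$, $\psi$, and the coset representatives, together with the degenerate-pair convention of Lemma~\ref{lem:wprop}.
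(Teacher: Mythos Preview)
Your proof is correct and in fact more direct than the paper's. You both reach $\phi(j,\alpha)=[j,1]$ in the same way, via the degenerate boundary pair and the convention $w_{x,x}=1$ from Lemma~\ref{lem:wprop}. Where you diverge is in the second half: you simply compute $\psi([j,1])=er_jr_j'=e$, conclude that $[j,1]$ represents $1_H$, and take $\{[j,1]=1:j\in J\}$ as the finite set---each relation of the lemma is literally one of these, so nothing further is needed. The paper instead invokes the type-(1) defining relation $b=\phi\psi(b)$ of the infinite presentation to write $[j,1]=\phi(er_jr_j^{-1})=\phi(1,e)\phi(1,r_j)\phi(j,r_j^{-1})$, and then uses $\phi(e)=1$ (also a type-(1) relation) together with Lemma~\ref{lem:claim2} to kill the remaining factors. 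So the paper's route shows that the relations $[j,1]=1$ are \emph{already} consequences of finite families needed elsewhere in the proof of Theorem~\ref{thm_monhybrid_inv}, whereas your route introduces a fresh finite family but avoids any appeal to Lemma~\ref{lem:claim2}. For the lemma as stated either approach is perfectly adequate; your parenthetical remark that the relations cannot be obtained from the Lemma~\ref{lem:claim2} set alone is correct, though the paper shows they \emph{can} be obtained once the type-(1) relations are also available.
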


\begin{proof}
It follows from the definitions that
\[
\phi(j,\alpha) = [j,w_{er_j,er_j\alpha}] = [j,1].   
\]
Then in the defining relations we have, by applying \eqref{eqn:useful:phi:hom} and noting that $1\cdot e=1$,
\[
[j,1] = \phi\psi([j,1]) = \phi(e r_j 1 r_j') = 
\phi(1,e r_j  r_j^{-1}) = 
\phi(1,e)\phi(1,r_j) \phi(j,r_j^{-1}).     
\]
The fact that 
$\phi(1,r_j) \phi(j,r_j^{-1})=1$ holds and can be deduced from a finite set of relations follows from Lemma~\ref{lem:claim2}. Furthermore $\phi(1,e)=1$ is a defining relation in the presentation.  
This completes the proof of the lemma. 
\end{proof}

We now have everything in place to prove Theorem~\ref{thm_monhybrid_inv}.

\begin{proof}[Proof of Theorem~\ref{thm_monhybrid_inv}]
From the discussion above, and maintaining that notation, the group $H$ is defined by an infinite monoid presentation with finite generating set $B$ and the five families of relations (1)--(5) listed above.    
To prove the theorem it will suffice to work through each of theses five families of relations in turn and show that for each of them there is a finite set of relations over $B$ holding in $H$ from which they can all be derived as consequences.   

\smallskip

(1) Since $B$ is a finite, this is a finite set of defining relations. 

\smallskip

(2) Consider the relation $\phi(w_1 u w_2) = \phi(w_1 v w_2)$ where   $w_1, w_2 \in (A \cup A^{-1})^*$,  $(u=v) \in \mathfrak{R}$, $w_1 u w_2 \in \mathcal{L}(A,H)$.
Decompose the word $w_1 u w_2$ as 
\[ w_1 u w_2 \equiv w_1' w_1'' u w_2' w_2''    \]
such that 
$w_1'$ is the longest prefix of $w_1$ such that $1 \cdot w_1' \in J$, and
$w_2'$ is the shortest prefix of $w_2$ 
such that   
$1 \cdot w_1 u w_2' \in J$.
Since $u=v$ in $M$ implies $w_1 u = w_1 v$ in $M$, it follows that when we decompose the word 
$w_1 v w_2$ in the same way we actually obtain  
\[ w_1 v w_2 \equiv w_1' w_1'' v w_2' w_2'' \] 
with the same decompositions of $w_1 \equiv w_1'w_1''$ and $w_2 \equiv w_2' w_2''$.  
From this and \eqref{eqn:useful:phi:hom} we conclude that 
\[
\phi(w_1uw_2) \equiv 
\phi(1,w_1') \phi(1\cdot w_1', w_1'' u w_2') \phi(1\cdot w_1uw_2', w_2'')
\]
and 
\[
\phi(w_1vw_2) \equiv 
\phi(1,w_1') \phi(1\cdot w_1', w_1'' v w_2') \phi(1\cdot w_1vw_2', w_2'').
\]
But $u=v$ in $M$ implies $1\cdot w_1uw_2' = 1\cdot w_1vw_2'$ and thus 
$\phi(1\cdot w_1uw_2', w_2'') \equiv \phi(1\cdot w_1vw_2', w_2'')$. 
Thus the relation  
$\phi(1\cdot w_1', w_1'' u w_2') = \phi(1\cdot w_1', w_1'' v w_2')$ 
holds in $H$ and the relation   
$\phi(w_1uw_2) = \phi(w_1vw_2)$ is a consequence of it.  
Furthermore it follows from the definition of $w_1'$ and $w_2'$ that   
$|\phi(1\cdot w_1', w_1'' u w_2')| \leq |u|$ and  
$|\phi(1\cdot w_1', w_1'' v w_2')| \leq |v|$.
This therefore gives a finite set of relations over $B$ that hold in $H$ from which all the relations in the family (2) are consequences. 

\smallskip

(5) Jumping to the set of relations (5) for a moment, these are in fact all trivial relations (that is, equalities as words) since from Equation~\eqref{eqn:useful:phi:hom} we have   \[
\phi(w_1w_2) \equiv \phi(1, w_1)\phi(1 \cdot w_1,w_2) 
\equiv \phi(1,w_1)\phi(1,w_2) \equiv \phi(w_1)\phi(w_2).
\]
Thus we can remove all of the relations (5) from the presentation.

\smallskip

This just leaves the two families of relations (3) and (4) to consider.

\smallskip

(3) $\phi(w_1 \alpha \alpha^{-1} \alpha  w_2) = \phi(w_1 \alpha w_2)$ where $w_1, w_2, \alpha \in(A \cup A^{-1})^* $ and $w_1 \alpha w_2 \in \mathcal{L}(A,H)$.

\smallskip

We decompose   
\[
w_1 \alpha w_2
\equiv 
w_1'w_1'' \alpha w_2'w_2''
\] 
such that
$w_1 \equiv w_1'w_1''$, $w_2 \equiv w_2' w_2''$;     
$w_1'$ is the longest prefix of $w_1$ such that $ew_1' \in X$ (or equivalently $1 \cdot w_1' \in J$); $w_2'$ is the shortest prefix of $w_2$ such that $ew_1\alpha w_2' \in X$. 

First suppose that $e w_1\alpha' \not\in X$ for every prefix $\alpha'$ of $\alpha$.
Then  
\begin{align*} 
\phi(w_1 \alpha w_2) & \equiv  
\phi(1,w_1')\phi(1\cdot w_1', w_1'' \alpha w_2')\phi(1\cdot w_1 \alpha w_2', w_2''),   
\end{align*}
while 
\begin{align*} 
\phi(w_1 \alpha\alpha^{-1}\alpha  w_2) 
& \equiv   \phi(1,w_1')\phi(1\cdot w_1', w_1'' \alpha\alpha^{-1}\alpha w_2')\phi(1\cdot w_1 \alpha\alpha^{-1}\alpha w_2', w_2'') \\   
& \equiv   \phi(1,w_1')\phi(1\cdot w_1', w_1'' \alpha\alpha^{-1}\alpha w_2')\phi(1\cdot w_1 \alpha w_2', w_2''),    
\end{align*}
where 
$\phi(1\cdot w_1', w_1'' \alpha w_2') \in B$
and 
$\phi(1\cdot w_1', w_1'' \alpha\alpha^{-1}\alpha w_2') \in B$. 
So in this case the relation is a consequence of the finite set of relations holding in $H$ given in the
statement of Lemma~\ref{lem:claim2}.

Otherwise, we decompose 
\[
\alpha \equiv \alpha_1 \alpha_2 \alpha_3 
\]
such that 
$\alpha_1$ is the shortest prefix of $\alpha$ such that $e w_1\alpha_1 \in X$ and      
$\alpha_1\alpha_2$ is the longest prefix of $\alpha$ such that $ew_1\alpha_1\alpha_2 \in X$.   
Note that now $w_2'$ is also the shortest prefix of $w_2$ such that $ew_1\alpha\alpha^{-1} \alpha w_2' \in X$. 
We can now use this decomposition to compute $\phi(w_1 \alpha \alpha^{-1} \alpha  w_2)$ and $\phi(w_1 \alpha w_2)$ and compare them. Computing the former, we have
\begin{align*} 
\phi(w_1 \alpha \alpha^{-1} \alpha  w_2) & \equiv 
 \phi(1, w_1') \phi(1 \cdot w_1', w_1''\alpha_1)\phi(1 \cdot w_1 \alpha_1, \alpha_2) \\ 
& \quad\quad\, \phi(1 \cdot w_1 \alpha_1\alpha_2, \alpha_3\alpha_3^{-1}) \phi(1 \cdot w_1 \alpha \alpha_3^{-1}, \alpha_2^{-1}) \\
& \quad\quad\quad \phi(1 \cdot w_1 \alpha \alpha_3^{-1}\alpha_2^{-1}, \alpha_1^{-1} \alpha_1)
  \phi(1 \cdot w_1 \alpha \alpha^{-1} \alpha_1, \alpha_2) \\
& \quad\quad\quad\quad  \phi(1 \cdot w_1 \alpha \alpha^{-1} \alpha_1 \alpha_2, \alpha_3 w_2') 
\phi(1 \cdot w_1 \alpha \alpha^{-1} \alpha w_2', w_2''). 
\end{align*}
Next observe that from the definitions it follows that Lemma~\ref{lem:claim1} may be applied to show that $\phi(1 \cdot w_1 \alpha_1\alpha_2, \alpha_3\alpha_3^{-1})=1$ and $\phi(1 \cdot w_1 \alpha \alpha_3^{-1}  \alpha_2^{-1}, \alpha_1^{-1} \alpha_1)=1$ and both of these relations are consequences of the finite set of relations holding in $H$ given in the statement of Lemma~\ref{lem:claim1}.
Furthermore, it follows from the definitions that Lemma~\ref{lem:claim2} may be applied to deduce that 
\[
\phi(1 \cdot w_1 \alpha_1, \alpha_2) \phi(1 \cdot w_1 \alpha \alpha_3^{-1}, \alpha_2^{-1}) = 1
\]
holds in the $H$ and it is a consequence of the finite set of relations holding in $H$ identified in the statement of Lemma~\ref{lem:claim2}.  
We also have
\[\phi(1 \cdot w_1 \alpha \alpha^{-1} \alpha_1, \alpha_2) \equiv \phi(1 \cdot w_1 \alpha_1, \alpha_2), \]   
\[\phi(1 \cdot w_1 \alpha \alpha^{-1} \alpha_1 \alpha_2, \alpha_3 w_2') \equiv \phi(1 \cdot w_1 \alpha_1 \alpha_2, \alpha_3 w_2') \]
and
\[ \phi(1 \cdot w_1 \alpha \alpha^{-1} \alpha w_2', w_2'') \equiv \phi(1 \cdot w_1 \alpha w_2', w_2''). \]
Now from all the observations and claims above it follows that after including the finitely many relations from Lemmas~\ref{lem:claim1} and \ref{lem:claim2} in our presentation, 
all of the relations of type (3) will follow from that finite collection, since from the observations above we can use those relations to deduce:  
\begin{align*} 
\phi(w_1 \alpha \alpha^{-1} \alpha  w_2) &= \phi(1, w_1') \phi(1 \cdot w_1', w_1''\alpha_1)
 \phi(1 \cdot w_1 \alpha_1, \alpha_2) \\
& \quad\quad\quad
\phi(1 \cdot w_1 \alpha_1 \alpha_2, \alpha_3 w_2') 
\phi(1 \cdot w_1 \alpha w_2', w_2'')  \\
& \equiv \phi(w_1 \alpha  w_2).  
\end{align*}
This deals with the family of relations (3). 

\smallskip

(4) $\phi(w_1 \alpha \alpha^{-1} \beta \beta^{-1}  w_2) = \phi(w_1 \beta \beta^{-1} \alpha \alpha^{-1}  w_2)$ where $w_1, w_2, \alpha \in(A \cup A^{-1})^* $ and we have $w_1 \alpha \alpha^{-1} \beta \beta^{-1}  w_2 \in \mathcal{L}(A,H)$. 

\smallskip

For each such relation we shall show that there is a relation over $B$ between a pair of words each of length at most three, from which the original relation is a consequence when taken together with the finite families of relations from Lemmas~\ref{lem:claim1} and \ref{lem:claim2}. Then since $B$ is finite this allows us to replace this infinite family of relations by a finite family of relations. 
There are several cases depending on 
whether or not $\alpha$ has a prefix $\alpha'$ satisfying $ew_1\alpha' \in X$ and 
whether $\beta$ has a prefix $\beta'$ satisfying $ew_1\beta' \in X$. We will give the proof for the case that such prefixes $\alpha'$ and $\beta'$ both exist. The other cases can be handled similarly as in the proof of (3) above.  

Consider the word $w_1 \alpha \alpha^{-1} \beta \beta^{-1} w_2$. We decompose this word as  
\[w_1' w_1'' \alpha_1 \alpha_2 \alpha_3 \alpha_3^{-1} \alpha_2^{-1} \alpha_1^{-1} \beta_1 \beta_2 \beta_3 \beta_3^{-1} \beta_2^{-1} \beta_1^{-1} w_2' w_2'' \]
where
$w_1 \equiv w_1'w_1''$, $w_2 \equiv w_2' w_2''$, $\alpha \equiv \alpha_1 \alpha_2 \alpha_3$ and $\beta \equiv \beta_1 \beta_2 \beta_3$;     
$w_1'$ is the longest prefix of $w_1$ such that $ew_1' \in X$ (or equivalently $1 \cdot w_1' \in J$);      
$w_2'$ is the shortest prefix of $w_2$ such that $ew_1\alpha\alpha^{-1} \beta \beta^{-1} w_2' \in X$;
$\alpha_1$ is the shortest prefix of $\alpha$ such that $e w_1\alpha_1 \in X$;      
$\alpha_1\alpha_2$ is the longest prefix of $\alpha$ such that $ew_1\alpha_1\alpha_2 \in X$;   
$\beta_1$ is the shortest prefix of $\beta$ such that $e w_1\alpha \alpha^{-1} \beta_1 \in X$; 
$\beta_1\beta_2$ is the longest prefix of $\beta$ such that $e w_1\alpha \alpha^{-1} \beta_1 \beta_2 \in X$. 

It follows from the definitions and assumptions that $\beta_1$ is also the shortest prefix of $\beta$ such that $e w_1 \beta_1 \in X$ since $e w_1 \alpha \alpha^{-1} = ew_1$ in the monoid since by assumption $w_1 \alpha \alpha^{-1} \beta \beta^{-1}  w_2 \in \mathcal{L}(A,H)$ and so all of prefixes $p$ of the word $w_1 \alpha \alpha^{-1} \beta \beta^{-1}  w_2$ have the property that $ep\, \R \, e$. 
It also follows from the definitions and assumptions that $\beta_1\beta_2$ is the longest prefix of $\beta$ such that $e w_1 \beta_1\beta_2 \in X$ since $e w_1 \alpha \alpha^{-1} = ew_1$ in the monoid. 
Finally, note that it follows from the definitions and assumptions that $w_2'$ is also the shortest prefix of $w_2$ such that $w_1w_2' \in X$ and also the shortest prefix of $w_2$ such that $w_1\beta \beta^{-1} \alpha \alpha^{-1} w_2' \in X$.
We can now use this decomposition to compute $\phi(w_1 \alpha \alpha^{-1} \beta \beta^{-1}  w_2)$ and $\phi(w_1 \beta \beta^{-1} \alpha \alpha^{-1}  w_2)$ and compare them. We have
\begin{align*} 
\phi(w_1 \alpha \alpha^{-1} \beta \beta^{-1}  w_2) & \equiv 
 \phi(1, w_1') \phi(1 \cdot w_1', w_1''\alpha_1)\phi(1 \cdot w_1 \alpha_1, \alpha_2) \\ 
& \quad\quad\, \phi(1 \cdot w_1 \alpha_1\alpha_2, \alpha_3\alpha_3^{-1})\phi(1 \cdot w_1 \alpha \alpha_3^{-1}, \alpha_2^{-1}) \\
& \quad\quad\quad
\phi(1 \cdot w_1 \alpha \alpha_3^{-1}\alpha_2^{-1}, \alpha_1^{-1} \beta_1) \phi(1 \cdot w_1 \alpha \alpha^{-1} \beta_1, \beta_2) \\
& \quad\quad\quad\quad
\phi(1 \cdot w_1 \alpha \alpha^{-1} \beta_1 \beta_2, \beta_3 \beta_3^{-1}) 
\phi(1 \cdot w_1 \alpha \alpha^{-1} \beta \beta_3^{-1}, \beta_2^{-1})\\
& \quad\quad\quad\quad\quad \phi(1 \cdot w_1 \alpha \alpha^{-1} \beta \beta_3^{-1}\beta_2^{-1}, \beta_1^{-1}w_2')\\
& \quad\quad\quad\quad\quad\quad \phi(1 \cdot w_1 \alpha \alpha^{-1} \beta \beta^{-1} w_2', w_2'').
\end{align*}

Next observe that from the definitions it follows that Lemma~\ref{lem:claim1} may be applied to show that 
$\phi(1 \cdot w_1 \alpha_1\alpha_2, \alpha_3\alpha_3^{-1}) = 1$
 and 
$\phi(1 \cdot w_1 \alpha \alpha^{-1} \beta_1 \beta_2, \beta_3 \beta_3^{-1})=1$
 and both of these relations are consequences of the finite set of relations holding in $H$ given in the statement of Lemma~\ref{lem:claim1}.
Furthermore, it follows from the definitions that Lemma~\ref{lem:claim2} may be applied to deduce that 
\[
\phi(1 \cdot w_1 \alpha_1, \alpha_2)\phi(1 \cdot w_1 \alpha \alpha_3^{-1}, \alpha_2^{-1})=1 
\]
and also 
\[
\phi(1 \cdot w_1 \alpha \alpha^{-1} \beta_1, \beta_2)\phi(1 \cdot w_1 \alpha \alpha^{-1} \beta \beta_3^{-1}, \beta_2^{-1})=1
\]
hold in the $H$ and they are consequences of the finite set of relations holding in $H$ identified in the statement of Lemma~\ref{lem:claim2}.  

It follows that the equality  
\begin{align*} 
 \phi(w_1 \alpha \alpha^{-1} \beta \beta^{-1}  w_2) & =
\phi(1, w_1') \phi(1 \cdot w_1', w_1''\alpha_1) 
\phi(1 \cdot w_1 \alpha \alpha_3^{-1}\alpha_2^{-1}, \alpha_1^{-1} \beta_1) \\
& \quad \quad \phi(1 \cdot w_1 \alpha \alpha^{-1} \beta \beta_3^{-1}\beta_2^{-1}, \beta_1^{-1}w_2') 
\phi(1 \cdot w_1 \alpha \alpha^{-1} \beta \beta^{-1} w_2', w_2'')
\end{align*}
holds in $H$ and is a consequence of the  finitely many relations from Lemmas~\ref{lem:claim1} and \ref{lem:claim2} in our presentation.  

It then follows from the definitions and observations above that if carry out the same process on the word $w_1 \beta \beta^{-1} \alpha \alpha^{-1} w_2$ we obtain that  
\begin{align*} 
\phi(w_1 \beta \beta^{-1} \alpha \alpha^{-1}  w_2) & = 
\phi(1, w_1') \phi(1 \cdot w_1', w_1''\beta_1)
 \phi(1 \cdot w_1 \beta \beta_3^{-1}\beta_2^{-1}, \beta_1^{-1} \alpha_1)  \\
& \quad \quad \phi(1 \cdot w_1 \beta \beta^{-1} \alpha \alpha_3^{-1}\alpha_2^{-1}, \alpha_1^{-1}w_2') 
\phi(1 \cdot w_1 \beta \beta^{-1} \alpha \alpha^{-1} w_2', w_2'') 
\end{align*}
holds in $H$ and is a consequence of the  finitely many relations from Lemmas~\ref{lem:claim1} and \ref{lem:claim2} in our presentation.  
From the definitions it follows that
\[
\phi(1 \cdot w_1 \alpha \alpha^{-1} \beta \beta^{-1} w_2', w_2'') \equiv \phi(1 \cdot w_1 \beta \beta^{-1} \alpha \alpha^{-1} w_2', w_2'') 
\]
in $B^*$. Hence we have that the relation   
\begin{align*} 
& \phi(1 \cdot w_1', w_1''\alpha_1) \phi(1 \cdot w_1 \alpha \alpha_3^{-1}\alpha_2^{-1}, \alpha_1^{-1} \beta_1) \phi(1 \cdot w_1 \alpha \alpha^{-1} \beta \beta_3^{-1}\beta_2^{-1}, \beta_1^{-1}w_2') \\ 
& \quad = \phi(1 \cdot w_1', w_1''\beta_1) \phi(1 \cdot w_1 \beta \beta_3^{-1}\beta_2^{-1}, \beta_1^{-1} \alpha_1) \phi(1 \cdot w_1 \alpha \beta^{-1} \alpha \alpha_3^{-1}\alpha_2^{-1}, \alpha_1^{-1}w_2') 
  \end{align*}
holds in the group $H$ and the relation $\phi(w_1 \alpha \alpha^{-1} \beta \beta^{-1}  w_2) = \phi(w_1 \beta \beta^{-1} \alpha \alpha^{-1}  w_2)$ is a consequence of it. But it follows from the definitions that both sides of this relation have length at most three as words over the alphabet $B$. Since $B$ is finite this means that by taking all relations that hold in $H$ and have length bounded above by six (together with the finite collection of relations arising from Lemmas~\ref{lem:claim1} and \ref{lem:claim2}) 
we will have a finite set of relations holding in $H$ from which all the relations in the set (4) are consequences.

\smallskip

This covers all the relations in the infinite presentation for the group $H$ and thus completes the proof that $H$ is finitely presented as a monoid, and so completes the proof of the theorem that $H$ is a finitely presented group.    
\end{proof}

\begin{rmk} 
There is an analogue of Theorem~\ref{thm_monhybrid_inv} where $S$ is a finitely presented monoid that can be proved using similar methods. We do not include that 
result here since the focus of this paper is on inverse monoid presentations. Note that since a finitely presented inverse monoid need not be finitely presented as 
a monoid, the two results require separate proofs like the corresponding pair of results in \cite{ruvskuc1999presentations}.
It turns out in fact that for any finitely presented special monoid $M$ if $H$ is the group of units of $M$ then $H$ has finite boundary width. This gives an alternative proof of a result of Makanin \cite{Mak} that the group of units of a finitely presented special monoid is a finitely presented group.         
One of the key differences between the theory of special inverse monoids and special monoids is that in general the group of units will not have finite boundary width, since in general it need not be finitely presented.   
\end{rmk}

\begin{rmk} 
Theorem~\ref{thm_monhybrid_inv} generalises \cite[Corollary 4.7]{ruvskuc1999presentations} since if a subgroup has finite index in the sense defined there then one can take the its complete set of right cosets as the finite cover with finite boundary width. 
\end{rmk}


\section{Finite presentability of the group of units in the case when the right units admit a free product decomposition} \label{sec:freeprod}

We now apply the general results from the previous section to prove the following result announced earlier.  

\begin{thm} \label{thm:nofreeprod}
Let $M$ be a finitely presented special inverse monoid with group of units $U$. Suppose that $\mathrm{RU}(M)\cong U\ast T$ for some finitely generated
monoid $T$ with a trivial group of units. Then $U$ must be finitely presented. 
\end{thm}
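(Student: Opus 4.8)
The plan is to obtain the conclusion as a direct application of Theorem~\ref{thm_monhybrid_inv} to the subgroup $H = U$, the group of units of $M$, whose $\gr$-class is exactly $R_1 = \mathrm{RU}(M)$. Since $M$ is finitely presented as an inverse monoid, Theorem~\ref{thm_monhybrid_inv} will yield that $U$ is finitely presented the moment we verify that $U$ has a \emph{finite cover with finite boundary width}; by Proposition~\ref{prop_SC} it is equivalent, and more convenient, to check instead that $U$, viewed as a subset of the vertex set of its Sch\"utzenberger graph $S\Gamma(1)$, has a finite ball cover with finite boundary width. The vertex set of $S\Gamma(1)$ is precisely $R_1 = \mathrm{RU}(M)$, and under the isomorphism $\mathrm{RU}(M) \cong U \ast T$ the subgroup $U$ is carried to the group of units of $U \ast T$; since $T$ has trivial group of units, a syllable-length argument shows that the group of units of the monoid free product $U \ast T$ is exactly the left factor $U$, so the two copies of $U$ match up. I also record at the outset that $U$ is finitely generated: the monoid retraction $U \ast T \to U$ that is the identity on $U$ and sends $T$ to $1$ exhibits $U$ as a retract of the finitely generated monoid $\mathrm{RU}(M)$, so the image of a finite generating set generates $U$.

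First I would set up the geometry on the abstract free product side. Fix finite generating sets $S_U$ of $U$ and $S_T$ of $T$, so that $S = S_U \cup S_T$ generates $U \ast T \cong \mathrm{RU}(M)$, and work in the undirected right Cayley graph $\Gamma$ of $\mathrm{RU}(M)$ with respect to $S$. For a normal form of an element $w \in U \ast T$ I would track two quantities: its \emph{$T$-syllable count} and its \emph{initial $U$-syllable} (the first $U$-block of the normal form, possibly trivial). The key input of the hypothesis is that $T$ has trivial group of units, so no generator can cancel a $T$-syllable; consequently right multiplication by a generator never decreases the $T$-syllable count. Two consequences follow. First, $U$ is exactly the set of vertices of $T$-syllable count $0$, and one can neither reach $U$ from a vertex of positive count by traversing an edge forwards, nor leave $U$ by traversing an edge backwards. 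Second, every edge joining two vertices of positive $T$-syllable count preserves the initial $U$-syllable, since such an edge only ever modifies the last syllable of a normal form.

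Next I would use these two observations to bound the boundary width of $U$ in $\Gamma$. Let $(x,y)$ be a boundary pair of $U$ realised by a boundary path $e_1 \cdots e_m$; if $m = 1$ then trivially $d(x,y) \le 1$, so assume $m \ge 2$. By the first observation the initial edge $e_1$ must be traversed forwards, landing at a vertex $v_1 = x t_1$ of $T$-syllable count $1$ whose initial $U$-syllable is $x$; dually the final edge $e_m$ must be traversed backwards, from a vertex $v_{m-1} = y t_m$ of positive count whose initial $U$-syllable is $y$. All intermediate vertices $v_1, \dots, v_{m-1}$ have positive $T$-syllable count, so by the second observation the initial $U$-syllable is constant along them, forcing $x = y$ and hence $d(x,y) = 0$. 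Therefore $\beta(U) \le 1$ in $\Gamma$, so $U$ has finite boundary width there. It remains to transfer this to $S\Gamma(1)$: the Cayley graph $\Gamma$ is quasi-isometric to $S\Gamma(1)$ (combining the quasi-isometry between the Cayley graph of the right units and $S\Gamma(1)$ established in this paper with the quasi-isometry invariance of the undirected Cayley graph of $\mathrm{RU}(M)$ under change of finite generating set) by a quasi-isometry that is the identity on the common vertex set $R_1$, hence fixes $U$. By Lemma~\ref{lem_QSI}(1) the property of having a finite ball cover with finite boundary width passes from $\Gamma$ to $S\Gamma(1)$; thus condition~(1) of Proposition~\ref{prop_SC} holds for $U$, hence also condition~(3), and Theorem~\ref{thm_monhybrid_inv} applies to give that $U$ is finitely presented.

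The step I expect to be the main obstacle is the transfer across the quasi-isometry: one must be sure that the quasi-isometry between $S\Gamma(1)$ and the Cayley graph of $\mathrm{RU}(M)$ can be taken to fix the vertex set so as to fix $U$ (and, more precisely, that the abstract subset $U \subseteq R_1$ used in the free-product computation coincides with $U$ as a union of cosets in the sense of Definition~\ref{def_bouncy}), so that Lemma~\ref{lem_QSI} and Proposition~\ref{prop_SC} genuinely apply. The purely combinatorial heart of the argument---the invariance of the initial $U$-syllable along excursions outside $U$, which collapses every nontrivial boundary pair to the diagonal---is exactly the place where the assumption that $T$ has trivial group of units is indispensable, and it is what fails for a general special inverse monoid whose group of units is not finitely presented.
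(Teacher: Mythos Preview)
Your proposal is correct and follows essentially the same route as the paper's proof: establish finite boundary width of $U$ inside the Cayley graph of $U\ast T$ via normal forms (this is the content of Lemma~\ref{lem:free:prod:boundary}), transfer it to $S\Gamma(1)$ via the identity-on-vertices quasi-isometry of Lemma~\ref{lem:QSIR1} together with Lemma~\ref{lem:ChangeGenSet}, and then invoke Proposition~\ref{prop_SC} and Theorem~\ref{thm_monhybrid_inv}. Your normal-form argument is in fact more explicit than the paper's one-line appeal to normal forms; the only point to make fully precise is that the step ``no generator can cancel a $T$-syllable'' uses not just that $T$ has trivial units but also that $T$ is right cancellative, which is automatic here since $T$ embeds in $\mathrm{RU}(M)$.
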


\begin{cor}
Let $M$ be a finitely presented special inverse monoid with group of units $U$. Suppose that $\mathrm{RU}(M)\cong U\ast X^*$ for some finite set $X$.
Then $U$ (and so $\mathrm{RU}(M)$) must be finitely presented.
\end{cor}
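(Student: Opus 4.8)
The plan is to obtain this as an immediate specialisation of Theorem~\ref{thm:nofreeprod}, taking $T = X^*$. First I would verify that the free monoid $X^*$ of finite rank satisfies the hypotheses imposed on $T$ there. Finite generation is immediate since $X$ is finite. For the group of units, observe that if $u,v \in X^*$ satisfy $uv = 1$ in $X^*$, then comparing lengths forces $|u| = |v| = 0$, so $u = v = 1$; hence the empty word is the only invertible element and the group of units of $X^*$ is trivial. With $T = X^*$ the hypotheses of Theorem~\ref{thm:nofreeprod} hold, and we conclude directly that $U$ is finitely presented.

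It then remains to justify the parenthetical assertion that $\mathrm{RU}(M) \cong U \ast X^*$ is itself finitely presented. For this I would invoke the standard fact that a free product (coproduct) of two monoids each finitely presented as a monoid is again finitely presented: if $U = \Mon\pre{P}{\rho}$ with $P$ a finite alphabet and $\rho$ a finite set of relations, and $X^*$ is the free monoid on the finite set $X$, with $P$ and $X$ disjoint, then $U \ast X^*$ is presented by $\Mon\pre{P \cup X}{\rho}$, obtained simply by taking the disjoint union of the two presentations and adding no further relators. Since $U$ is now known to be a finitely presented group it is also finitely presented as a monoid (introduce a formal inverse for each generator together with the two relations expressing that it is a two-sided inverse), and $X^*$ is trivially finitely presented as the free monoid on the finite set $X$. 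Hence $U \ast X^* \cong \mathrm{RU}(M)$ is finitely presented.

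I do not anticipate any genuine obstacle here, as the substantive content is entirely carried by Theorem~\ref{thm:nofreeprod}: the corollary is a direct instance of it once $T$ is taken to be a finite-rank free monoid. The only points worth a brief remark are the (routine) verification that $X^*$ has trivial group of units and the (equally routine) observation that finite presentability of monoids is preserved under forming free products.
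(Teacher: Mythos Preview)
Your proposal is correct and matches the paper's approach: the paper states this corollary immediately after Theorem~\ref{thm:nofreeprod} with no proof, treating it as the evident specialisation $T = X^*$. Your additional justification that $X^*$ has trivial group of units and that the free product of finitely presented monoids is finitely presented is routine and fills in exactly the details the paper leaves implicit.
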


As we have mentioned in the previous section, given a special inverse monoid 
$$
M=\Inv\pre{A}{w_1=1,\dots,w_k=1}
$$
there are two graphs that one can naturally associate with the right units $\mathrm{RU}(M)$ (coinciding with $R_1$, the $\R$-class of $1$ in $M$). 
Firstly there is $S\Gamma_A(1)$, the right \Sch graph of $1$ (labelled by the generators of $M$ and their inverses). 
Secondly since, as explained in the paragraph after Definition~\ref{defn:ru}, the right units $\mathrm{RU}(M)$ are finitely generated by the set of all elements represented by 
prefixes of the relator words, there is the Cayley graph of the monoid $\mathrm{RU}(M)$ with respect to this finite generating set. 
Here, the right Cayley graph of a monoid $T$ with respect to a finite generating set $B$ has vertex set $T$ and directed edges from $t$ to $tb$ for all $t \in T$ and $b \in B$.   
In fact we only want to work with the underlying undirected graphs of each of these directed labelled graphs.

We want to prove the following key result which shows a close relationship between the geometries of these two graphs.

\begin{lem}\label{lem:QSIR1} 
Let $M$ be a finitely presented special inverse monoid  
\[
M = \Inv\pre{A}{w_1=1,\dots,w_k=1}.
\]
Let $\Gamma$ be the underlying undirected graph of $S\Gamma_A(1)$ and let $\Delta$ be the underlying undirected graph of the right Cayley graph of the submonoid 
$R_1=\mathrm{RU}(M)$ of right units with respect to the finite generating set $X$ of all prefixes of all defining relations $w_i$, $1 \leq i \leq k$.
Then the identity map $\iota: R_1 \rightarrow R_1$ defines a quasi-isometry $\iota: \Gamma \rightarrow \Delta$.    
\end{lem}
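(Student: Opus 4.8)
The plan is to exploit the fact that $\Gamma$ and $\Delta$ have literally the same vertex set, namely $R_1 = \mathrm{RU}(M)$, and that the comparison map is the identity $\iota$. Since $\iota$ is a bijection it is automatically $0$-quasi-dense, so the whole content is to compare the two metrics $d_\Gamma$ and $d_\Delta$. First I would record the \emph{prefix-closure} fact: if $s\in R_1$ and $sw\in R_1$ for $w\in(A\cup A^{-1})^*$, then every prefix $sw'$ of the reading also lies in $R_1$. This is because $sw'(sw')^{-1}$ is an idempotent, hence $\leq 1$, while $sw = sw'w''$ gives $(sw)(sw)^{-1}\leq (sw')(sw')^{-1}$ in the natural partial order; together with $(sw)(sw)^{-1}=1$ this forces $1\leq (sw')(sw')^{-1}\leq 1$, so $sw'\in R_1$. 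Consequently any word over $A\cup A^{-1}$ realizing $xw=y$ traces a genuine path in $\Gamma$, and the graph metric of $\Gamma$ coincides with the word pseudometric $d_\Gamma(x,y)=\min\{|w|:xw=y\text{ in }M\}$.

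Next I would do the easy \emph{compression} inequality. Let $L=\max_i|w_i|$. Every $x\in X$ is a prefix of some relator, hence a word over $A\cup A^{-1}$ of length at most $L$; reading it from any $s\in R_1$ traces a $\Gamma$-path of length $\leq L$ from $s$ to $sx$ (staying in $R_1$ by prefix-closure), and reading $x^{-1}$ from $sx$ returns to $s$ since $x\in R_1$ gives $xx^{-1}=1$. Thus each $\Delta$-edge is a $\Gamma$-path of length $\leq L$, so a $\Delta$-path of length $m$ yields a $\Gamma$-path of length $\leq Lm$, giving $d_\Gamma(x,y)\leq L\,d_\Delta(x,y)$, i.e. $\tfrac1L d_\Gamma\leq d_\Delta$. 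Note also that $X$ is \emph{finite}, since there are finitely many relators each of bounded length.

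The crux is the opposite \emph{expansion} inequality, which I would reduce to a single claim: there is a constant $C$ such that every $\Gamma$-edge $s\xrightarrow{a} sa$ (with $a\in A\cup A^{-1}$ and $s,sa\in R_1$) satisfies $d_\Delta(s,sa)\leq C$; summing along a $\Gamma$-geodesic then yields $d_\Delta\leq C\,d_\Gamma$. To attack this claim I would use two observations. First, $R_1$ acts on both $\Gamma$ and $\Delta$ by left multiplication through graph morphisms (the product of right units is a right unit, and left multiplication sends edges to edges), left multiplication by any $s$ is $1$-Lipschitz for $d_\Delta$, and $\iota$ is $R_1$-equivariant; hence for any generator $a$ that is \emph{itself} a right unit, the edge $s\to sa$ is the $s$-translate of the edge $1\to a$, giving $d_\Delta(s,sa)\leq d_\Delta(1,a)$, uniformly bounded over the finitely many such $a$. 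Second, the remaining edges are those with $a\notin R_1$ yet $sa\in R_1$, where $s$ ``repairs'' the idempotent $aa^{-1}<1$ since $s(aa^{-1})s^{-1}=1$. Such a move can only occur because the reading of $s$ terminates along an occurrence of $a$ inside (the inverse of) a relator, so the relevant relator cycle should supply prefixes $p,q\in X$ with $sp=(sa)q$ in $M$, which produces a length-two detour $s\to sp=saq\leftarrow sa$ in $\Delta$ and hence $d_\Delta(s,sa)\leq 2$.

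The hard part will be exactly the second type of edge: producing, uniformly in $s$, the bounded prefix-generator detour. The difficulty is that $R_1$ has no left cancellation (so one cannot simply cancel $s$ to turn $sp=saq$ into a relation among $p,a,q$) and is not homogeneous, so the relator occurrence must be located intrinsically. I expect this to require the detailed relator/\Sch graph structure — for instance via Stephen's iterative construction of $S\Gamma_A(1)$, where every edge lies on a read of a relator word — together with right cancellativity in $R_1$ to anchor the detour at $s$ and the finiteness of $X$ to make the bound uniform. Once the claim is established, combining $\tfrac1L d_\Gamma\leq d_\Delta\leq C\,d_\Gamma$ shows that $\iota:\Gamma\to\Delta$ is a $(\lambda,\epsilon,\mu)$-quasi-isometry with $\lambda=\max(L,C)$ and $\epsilon=\mu=0$, as required.
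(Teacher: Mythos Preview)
Your overall strategy is correct and matches the paper's: the identity map is quasi-dense with $\mu=0$, the compression inequality $d_\Gamma\leq L\,d_\Delta$ follows exactly as you say, and you correctly identify that the expansion inequality is the substantive step and that Stephen's iterative construction of $S\Gamma_A(1)$ is the right tool. However, you explicitly leave the hard case open (``The hard part will be\ldots'', ``I expect this to require\ldots''), so the proposal is a plan rather than a proof.

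The paper's execution of the expansion step is slightly different from what you sketch, and cleaner: there is no case split on whether $a\in R_1$. One builds $S\Gamma_A(1)$ by starting from a single vertex, iteratively attaching a loop labelled $w_i$ at every vertex for every $i$, and then bi-determinising the resulting graph $T$. Every edge of $S\Gamma_A(1)$ therefore has a preimage edge in $T$, and in $T$ any edge is the last edge of a path from the root reading a product of relator prefixes. Hence for \emph{every} $\Gamma$-edge $\{x,xa\}$ one obtains prefixes $p_1,\dots,p_m\in X$ with $x=p_1\cdots p_m$ and $xa=p_1\cdots p_{m-1}(p_ma)$, where $p_ma$ is also a prefix. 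The length-two $\Delta$-detour is then
\[
x=p_1\cdots p_m \;\xleftarrow{\,p_m\,}\; p_1\cdots p_{m-1} \;\xrightarrow{\,p_ma\,}\; xa,
\]
giving $d_\Delta(x,xa)\leq 2$ uniformly. Note this is a ``meet at a common predecessor'' detour, not the ``meet at a common successor'' detour $sp=(sa)q$ you propose; the predecessor version is what falls out naturally from Stephen's tree, and it subsumes your Case~1 without needing the separate Lipschitz argument. Your prefix-closure observation (that intermediate vertices $sw'$ stay in $R_1$) is correct and is used implicitly in the paper as well.
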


\begin{proof} 
Since $\iota$ is a bijection it is immediate that the map $\iota:\Gamma \rightarrow \Delta$ is quasi-dense with quasi-density constant $\mu=0$.   

Next, consider an adjacent pair of vertices in $\Delta$. This pair has the form $\{ x, xp \}$ where $x \in R_1$ and $p$ is a prefix of one of the defining relations. 
Hence $d_{\Gamma}(x,xp) \leq \lambda$ where $\lambda$ is the maximum length of a defining relator in the finite presentation for $M$. Then by induction for any pair of 
vertices $x,y \in R_1$ we have 
\[d_{\Gamma}(x,y) \leq \lambda\cdot d_{\Delta}(x,y).\]  

For the inequality in the other direction it is best to consider the construction of $S\Gamma(1)$ via Stephen's procedure. Let $T$ be the directed labelled graph obtained 
by starting with a single vertex, adjoining loops labelled by $w_i$ at that vertex (for each $1 \leq i \leq k$) and then repeating this iteratively for every newly created 
vertex. Then it follows from \cite{Stephen} that $S\Gamma(1)$ is the graph obtained by bi-determinising the graph $T$. This means that for every edge $e$ in the graph 
$S\Gamma(1)$ we can choose some preimage edge $e'$ in $T$  such that $e'$ maps to $e$ after bi-determinising $T$. This is because no new edges are created when 
bi-determinising $T$ so every edge in $S\Gamma(1)$ has at least one preimage in $T$. Now we can read a product of prefixes of defining relations in the tree of balls 
that traverses $e$ as its last edge. It follows that given any edge $e = \{ x, xa \}$ in $\Gamma = S\Gamma(1)$ we can write 
\begin{align*} 
x  &=_M p_1 p_2 \ldots p_k \\
xa &=_M p_1 p_2 \ldots (p_ka) 
\end{align*}
where all $p_i$ and $p_ka$ are prefixes of defining relators. From this it follows that 
\[
d_{\Delta}(x,xa) \leq 2.
\]
Since in $\Delta$ we have edges 
\[
p_1 \ldots p_{k-1} p_k
\xleftarrow{p_k}
p_1 \ldots p_{k-1} 
\xrightarrow{p_k a}
p_1 \ldots p_{k-1} p_k a 
\]  
where $p_k, p_ka \in X$ are prefixes of defining relators. It follows by induction that 
\[d_{\Delta}(x,y) \leq 2\cdot d_{\Gamma}(x,y).\]  

Putting the two inequalities together along with the quasi-density observation this completes the proof that the identity map defines a quasi-isometry between $\Gamma$ and $\Delta$.  
\end{proof}

\begin{lem}\label{lem:ChangeGenSet}
Let $M$ be a monoid generated by finite generating sets $A$ and $B$. Then the identity map on $M$ defines a quasi-isometry between the undirected right Cayley graph of $M$ with 
respect to $A$ and the undirected right Cayley graph of $M$ with respect to $B$.  
\end{lem}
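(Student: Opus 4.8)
The plan is to run the classical argument that the Cayley graphs of a finitely generated group with respect to two finite generating sets are quasi-isometric via the identity map, adapting it to monoids by being careful that right multiplication cannot be inverted, so that one must work with the \emph{underlying undirected} graphs throughout. Write $d_A$ and $d_B$ for the path metrics on the undirected right Cayley graphs of $M$ with respect to $A$ and $B$. First I would note that, since $A$ generates $M$, every element is a product of letters of $A$ and so is joined to the identity by a directed, hence undirected, path; thus the $A$-graph is connected and $d_A$ takes finite values, and likewise $d_B$. Since $\iota$ is a bijection it is surjective and hence $0$-quasi-dense, so it remains only to establish two-sided Lipschitz bounds with additive constant $0$.

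Next I would fix, for each $a \in A$, a word $\omega_a$ over $B$ with $a =_M \omega_a$, and set $\lambda_1 = \max_{a \in A}|\omega_a|$, which is finite because $A$ is finite. The key step is to bound the $d_B$-length of an arbitrary edge of the $A$-graph: such an edge joins a pair $\{t, ta\}$, and writing $\omega_a = b_1 \cdots b_m$ with $m \le \lambda_1$ gives a directed $B$-path $t, tb_1, \ldots, tb_1 \cdots b_m = ta$ of length $m$, so $d_B(t, ta) \le \lambda_1$. Because $d_B$ is symmetric this bound applies no matter which way the edge is traversed in an undirected path; this is precisely the point where I would use that we have passed to undirected graphs, since in a monoid $t$ need not be recoverable from $ta$ by right multiplication.

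From here the conclusion is routine: given $x,y \in M$ I would take a $d_A$-geodesic $x = v_0, \ldots, v_n = y$ with $n = d_A(x,y)$, apply the edge bound to each consecutive pair, and use the triangle inequality to get $d_B(x,y) \le \lambda_1\, d_A(x,y)$. Running the symmetric argument with a choice of $A$-words for the letters of $B$ and $\lambda_2 = \max_{b \in B}|\omega_b|$ gives $d_A(x,y) \le \lambda_2\, d_B(x,y)$. Taking $\lambda = \max(\lambda_1, \lambda_2, 1) \ge 1$ then yields
\[
\tfrac{1}{\lambda}\, d_A(x,y) \le d_B(x,y) \le \lambda\, d_A(x,y),
\]
so that $\iota$ is a $(\lambda, 0, 0)$-quasi-isometry in the sense recalled before Lemma~\ref{lem:newPreImage}.

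I do not expect any genuine obstacle here: the entire content beyond the classical group case is the asymmetry of right multiplication in a monoid, and this is neutralised simply by working with the underlying undirected graphs, so that $d_A$ and $d_B$ are symmetric and each chosen generator-word can be read in either direction. The only mild care required is to confirm connectedness of both graphs (so the metrics are finite) and to include the harmless safeguard $\lambda \ge 1$ in the final constant.
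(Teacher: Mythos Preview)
Your proposal is correct and is exactly the standard argument the paper has in mind: the paper's own proof simply states that this is ``straightforward to prove using an argument analogous to the proof of \cite[Proposition~4]{gray2013groups}'', and what you have written out is that analogous argument in full detail. In particular your care about working in the undirected graph so that each rewritten generator-word can be traversed in either direction is precisely the monoid-specific point.
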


\begin{proof}
This is straightforward to prove using an argument analogous to the proof of \cite[Proposition~4]{gray2013groups}.       
\end{proof}

The remaining step is as follows.

\begin{lem}\label{lem:free:prod:boundary} 
Let $S = U \ast V$ be a free product of two finitely generated monoids $U$ and $V$ and let $\Gamma$ be the undirected Cayley graph of $S$ with respect to a generating set $X \cup Y$ 
where $X \subseteq U$ generates $U$ and $Y \subseteq V$ generates $V$. Then $U \subseteq \Gamma$ has finite boundary width.       
\end{lem}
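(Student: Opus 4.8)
The plan is to prove the much stronger quantitative statement that $\beta(U)\le 1$, by exploiting the normal form in the free product $S=U\ast V$ to build a retraction onto $U$ that is locally constant away from $U$. Recall that every element $s\in S$ has a unique normal form $s=c_1c_2\cdots c_n$ as an alternating product of nonidentity syllables drawn from $U$ and from $V$, with the empty product equal to $1$, and that the factors embed; under this description the copy of $U$ we must control is exactly $U=\{1\}\cup\{s:\ s\text{ has a single syllable lying in }U\}$. I may assume without loss of generality that $1\notin X\cup Y$, since the identity only contributes loops.

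First I would introduce the projection $\pi\colon S\to U$ defined by $\pi(s)=c_1$ when the first syllable $c_1$ of the normal form of $s$ lies in $U$, and $\pi(s)=1$ otherwise (in particular when $s=1$ or when $s$ begins with a $V$-syllable). The whole argument rests on the behaviour of $\pi$ along edges of the undirected Cayley graph $\Gamma$ taken with respect to $X\cup Y$. Since the directed edges of $\Gamma$ arise from right multiplication by generators, each edge joins some pair $t$ and $tg$ with $g\in X\cup Y$; the key structural fact, used throughout, is that in a free product right multiplication by a single generator affects only the last syllable of the normal form --- deleting it, replacing it, or appending a new one --- while leaving all earlier syllables, and in particular the first syllable, untouched whenever the result is nonempty.

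The two local statements I would establish are: \textbf{(A)} if $t$ and $tg$ are adjacent and both lie outside $U$, then $\pi(t)=\pi(tg)$; and \textbf{(B)} if one endpoint $u$ of an edge lies in $U$ and the other endpoint $s$ lies outside $U$, then $\pi(s)=u$. Each is proved by a short finite case analysis on normal forms, splitting according to whether $g\in X$ or $g\in Y$, whether the relevant last syllable lies in the same factor as $g$, and, in the cancelling case $c_ng=1$, according to the length of the result. Because the edge is undirected one must run the analysis in both orientations (treating the edge as $tg=t'$ for whichever endpoint plays the role of $t$). For \textbf{(B)} the excluded cases are precisely those with $g\in X$: there the $U$-element $u$ would satisfy $ug\in U$, or symmetrically $s$ would be forced into $U$, contradicting the hypothesis that the other endpoint lies outside $U$; the surviving cases all yield $\pi(s)=u$.

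With \textbf{(A)} and \textbf{(B)} in hand the lemma is immediate. Let $e_1\cdots e_m$ be any boundary path for $U$, with consecutive vertices $u=v_0,v_1,\dots,v_{m-1},v_m=u'$ where $u,u'\in U$ and $v_1,\dots,v_{m-1}\notin U$. If $m=1$ then $u,u'$ are adjacent, so $d(u,u')\le 1$. If $m\ge 2$, then \textbf{(B)} applied to the first edge gives $\pi(v_1)=u$ and applied to the last edge gives $\pi(v_{m-1})=u'$, while \textbf{(A)} applied along the interior edges $e_2,\dots,e_{m-1}$, all of whose endpoints lie outside $U$, gives $\pi(v_1)=\cdots=\pi(v_{m-1})$; hence $u=u'$ and $d(u,u')=0$. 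Thus every boundary pair of $U$ is at distance at most $1$, so $\beta(U)\le 1<\infty$. I expect the only genuine obstacle to be the bookkeeping inside \textbf{(A)} and \textbf{(B)}: because we work with monoids rather than groups the generators have no inverses, so a backward traversal of an edge corresponds to an element that right-multiplies onto the current vertex, and one must check that the first syllable survives the possible cancellation $c_ng=1$ as well as the length-preserving and length-increasing multiplications. This is elementary, but needs care to ensure no case that could shift $\pi$ is missed; keeping the analysis anchored to the observation that right multiplication touches only the final syllable is what makes it routine.
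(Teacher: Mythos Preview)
Your proof is correct and takes essentially the same approach as the paper's one-line argument, namely that normal forms in the free product force the endpoints of any boundary path that actually leaves $U$ to coincide; your projection $\pi$ onto the first syllable together with the local claims \textbf{(A)} and \textbf{(B)} is precisely the explicit mechanism that makes that sentence rigorous, and your careful treatment of both orientations of each undirected edge is exactly what the monoid (as opposed to group) setting demands. Your bound $\beta(U)\le 1$ is in fact the technically accurate one, since under the paper's definition a single edge between two elements of $U$ already counts as a boundary path; the paper's asserted value of $0$ tacitly ignores this degenerate $m=1$ case, but of course this has no bearing on finiteness.
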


\begin{proof}
It follows from the normal forms in the free product $S = U \ast V$ that if $(x_0, \ldots, x_k)$ is a boundary path for $U$ in $\Gamma$ with $x_0, x_k \in U$ then we must have 
$x_0 = x_k$. Hence $U$ has finite boundary width $0$ in this case.         
\end{proof}

\begin{proof}[Proof of Theorem \ref{thm:nofreeprod}]
Let $R = \mathrm{RU}(M)$ and fix an isomorphism $\phi: U \ast T \rightarrow R$.  Since by assumption $T$ has a trivial group of units, it follows that the group of units of $U \ast T$ is $U$. 
Hence $\phi$ maps the first factor $U$ of $U \ast T$ isomorphically to the group of units $U$ of $R$. 
Let $\Omega$ by the underlying undirected graph of the right Cayley graph of $R$ with respect to some finite generating set of the form $\phi(Y) \cup \phi(X)$ where $Y$ is a finite generating 
set for $U$ and $X$ is a finite generating set for $T$ where $1 \not\in X$. Let $\Gamma = S\Gamma_A(1)$ be the \Sch graph of $R$.  
Combining Lemma~\ref{lem:QSIR1} and Lemma~\ref{lem:ChangeGenSet} the identity map $\iota$ on $R$ defines a quasi-isometry between $\Omega$ and $\Gamma$.  Now $\iota$ maps $U=H_1 \subseteq R_1$ 
to the factor $U$ of  $U \ast T$, and since in $\Omega$ the factor $U$ has finite boundary width (by Lemma~\ref{lem:free:prod:boundary} due to the free product structure $U \ast T$) it follows 
from  Lemma~\ref{lem_QSI} and Proposition~\ref{prop_SC} that $U=H_1 \subseteq \Gamma$ has a finite cover with finite boundary width. It then follows from Theorem~\ref{thm_monhybrid_inv}  
that $U$ is finitely presented, as required. 
\end{proof} 

Since any finitely presented group has also a finite monoid presentation, we get the result announced in the introduction.

\begin{cor}
If $H$ is a finitely generated but not finitely presented group then $H\ast C^*\not\in\mathcal{RU}$ for any finite set $C$.
\end{cor}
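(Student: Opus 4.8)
The plan is to derive this as an immediate consequence of Theorem~\ref{thm:nofreeprod}, arguing by contradiction. Suppose that $H \ast C^* \in \mathcal{RU}$ for some finite set $C$. By definition of the class $\mathcal{RU}$ there is a finitely presented special inverse monoid $M$ together with an isomorphism $\phi: H \ast C^* \to \mathrm{RU}(M)$. Let $U$ denote the group of units of $M$.

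First I would verify that the hypotheses of Theorem~\ref{thm:nofreeprod} are met with $T = C^*$. A free monoid of finite rank is finitely generated and has trivial group of units, since a length argument shows that the only invertible element of $C^*$ is the empty word. Consequently the group of units of the free product $H \ast C^*$ is exactly the first factor $H$. On the other side, the group of units of the monoid $\mathrm{RU}(M)$ coincides with $U$: every unit of $M$ lies in the $\R$-class $R_1 = \mathrm{RU}(M)$, and an element of $R_1$ has a two-sided inverse inside $R_1$ precisely when it is a unit of $M$. Since the group of units is preserved by isomorphisms, $\phi$ restricts to an isomorphism $H \to U$, so in particular $U \cong H$.

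Transporting the free product decomposition across the isomorphism $U \cong H$, I would rewrite $\mathrm{RU}(M) \cong U \ast T$ with $T = C^*$, where $U$ is the group of units of $M$ and $T$ is finitely generated with trivial group of units. Theorem~\ref{thm:nofreeprod} then forces $U$ to be finitely presented, and hence $H \cong U$ is finitely presented as well. This contradicts the standing assumption that $H$ is not finitely presented, and the contradiction establishes the corollary.

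The only step requiring any genuine care is the bookkeeping showing that the first factor $H$ of $H \ast C^*$ matches the group of units $U$ of $M$ under $\phi$; all the real work has already been carried out in proving Theorem~\ref{thm:nofreeprod}, so I do not anticipate any substantive obstacle here.
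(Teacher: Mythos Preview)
Your argument is correct and matches the paper's approach: the paper states this corollary without proof, treating it as an immediate consequence of Theorem~\ref{thm:nofreeprod}, and what you have written is precisely the routine verification that makes this immediate.
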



\section{The right unit monoids in the Gray-Kambites construction} \label{sec:gk}

As we have announced earlier, the principal aim in this section is to prove the following result which generalises \cite[Theorem~4.1]{GK}.

\begin{thm}\label{thm:dense}
Every finitely generated submonoid of a finitely RC-presented mo\-noid is isomorphic to a 
submonoid $N$ of a finitely presented special inverse monoid $M$ such that 
\begin{itemize}
    \item[(a)] $N$ is a submonoid of the right units of $M$, and 
    \item[(b)] $N$ contains the group of units of $M$. 
\end{itemize}
\end{thm}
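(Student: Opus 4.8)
The plan is to generalise the construction of Gray and Kambites \cite{GK}, feeding it a pair $(S,T)$ consisting of a finitely RC-presented monoid $S$ and a finitely generated submonoid $T\leq S$, in place of a pair $(G,H)$ of a finitely presented group and a finitely generated subgroup. Fix a finite RC-presentation $S=\MonRC\pre{A}{u_i=v_i\ (i\in I)}$ and finitely many words $\gamma_1,\dots,\gamma_n\in A^*$ whose images generate $T$. First I would write down a finitely presented special inverse monoid $M=M(S,T)$ over an alphabet consisting of $A$ together with finitely many auxiliary ``stable'' letters, all of whose relators have the special form $w=1$. The relators of $S$ are incorporated in the familiar way (a relation $u=v$ is encoded by a relator $uv^{-1}=1$, exactly as in the reduction of \cite{IMM}), while the auxiliary letters are used to realise the inclusion $T\hookrightarrow S$ as an HNN-like \emph{Otto--Pride extension} \cite{PO,GS}: their role is to make the words $\gamma_k$ behave as right units and, crucially, to prevent the elements of $S\setminus T$ from becoming units of $M$.

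The core of the argument is then to determine the submonoid of right units $\mathrm{RU}(M)=R_1$ explicitly. I would compute an explicit RC-presentation for $\mathrm{RU}(M)$, which I expect to be describable as (the greatest right cancellative image of) the Otto--Pride extension associated to the embedding $T\to S$. For this one analyses the $\gr$-class of $1$ either through the \Sch graph $S\Gamma_A(1)$ built by Stephen's procedure \cite{Stephen} (as in Lemma~\ref{lem:QSIR1}) or, more efficiently, directly through right-cancellative $\mathfrak{R}$-chains in the sense of \cite{CRR}: the stack-like pairing of insertion and deletion steps recalled in Section~\ref{sec:summary} is what lets one read off which words represent right units. Inside this RC-presented monoid one then locates the submonoid $N$ generated by the images of $\gamma_1,\dots,\gamma_n$.

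It remains to verify the two defining properties. That $N$ is a submonoid of the right units is built into the construction; the substance is that the natural surjection $T\to N$ is an \emph{isomorphism}. Injectivity is where right cancellativity does the work: a coincidence of two words over the $\gamma_k$ in $\mathrm{RU}(M)$ should be pulled back, via the RC-chain combinatorics, to a coincidence already holding in $S$, whence the two words represent the same element of $T$. For condition (b), one shows $U(M)\subseteq N$ by proving that every unit of $M$ is represented by a word that the chain combinatorics confine to $T$; concretely, the group of units should turn out to be exactly the group of units of $T$ (sitting inside $N$), which recovers \cite[Theorem~4.1]{GK} when $S$ is a group and $T=H$ is a subgroup.

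The step I expect to be the main obstacle is precisely this control of the group of units. Since the first part of the paper shows that the group of units of a finitely presented special inverse monoid can be highly pathological, one must design the auxiliary Otto--Pride letters rigidly enough that no element of $S\setminus T$ can be completed to a two-sided unit, while still leaving $T$ embedded in the right units. Pinning down $U(M)$ and simultaneously proving injectivity of $T\to N$ are the two places where the explicit RC-presentation of $\mathrm{RU}(M)$, rather than soft geometric arguments, seems to be indispensable.
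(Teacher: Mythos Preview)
Your high-level instinct—generalise the Gray--Kambites construction to a pair $(S,T)$—is exactly right, and the paper does precisely this. But your sketch conflates two \emph{different} constructions that the paper keeps carefully separate, and the one you describe in detail will not deliver the theorem.

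You propose to encode $u_i=v_i$ via $u_iv_i^{-1}=1$ and to realise $T\hookrightarrow S$ as an HNN-like Otto--Pride extension with stable letters. That is the $M_{Q,W}$ construction of Section~\ref{sec:fp}, which requires the ambient structure $K_Q$ to be a \emph{group} (the relators $a_pa_p^{-1}=a_p^{-1}a_p=1$ are essential). It does not adapt to a merely RC-presented $S$, and even in the group case its group of units is $U(T_W)\ast K_Q$, not $U(T)$, so condition~(b) would fail in the form you predict. The construction actually used for Theorem~\ref{thm:dense} is $M_{S,T}$ (Definition~\ref{def:mST}), with auxiliary letters $p_0,\dots,p_k,z,d$ and relators such as $p_iu_id^{-1}v_i^{-1}p_i^{-1}=1$; the $d$ letter is what makes the encoding of an RC relation $u_i=v_i$ work without inverses in $S$.

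The injectivity of $T\to N$ and the identification of $U(M)$ are also not obtained via RC-chain combinatorics. The paper writes down an explicit (infinite) RC-presentation $Q$ for $\mathrm{RU}(M_{S,T})$ (Theorem~\ref{main-GK}), builds an auxiliary labelled graph $\Omega$ that receives a morphism from $S\Gamma(1)$ (Proposition~\ref{pro:mst-om}), and then modifies the Cayley graph of $Q$ into a graph $\Gamma'$ which is shown to be bi-deterministic and to admit all relators as closed walks; this is what forces the surjection $Q\to\mathrm{RU}(M_{S,T})$ to be injective. The embedding of $T$ (Lemma~\ref{lem-T}) and the computation $U(M_{S,T})\cong U(T)$ (Lemma~\ref{lem-U}) are then read off from $\Omega$ together with the auxiliary quotient $N$ from \cite{GK}. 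So while your overall strategy is on target, the specific mechanism you outline would need to be replaced by the $M_{S,T}$ machinery and its graph-morphism arguments.
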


Note in the case that $N$ is a group properties (a) and (b) in Theorem~\ref{thm:dense} imply that $N$ is the group of units of the inverse monoid $M$, and then since every finitely presented group 
is clearly a finitely RC-presented monoid, Theorem~\ref{thm:dense} recovers the result \cite[Theorem~4.1]{GK}.  

\begin{rmk}
Note that the conditions in the statement of Theorem~\ref{thm:dense} imply that the submonoid $N$ that we construct is a union of $\gh$-classes of $M$ that are themselves all contained in the 
$\gr$-class of $1$ in $M$. 
\end{rmk}
 
\begin{rmk}
Note that if $M$ is a finitely presented special inverse monoid and $N$ is a finitely generated submonoid of the right units of $M$ such that $N$ contains the 
group of units of $M$, then $N \in \mathcal{RC}_2$. Thus, if $\mathcal{RC}_1 = \mathcal{RC}_2$ then it would follow from Theorem~\ref{thm:dense} that the class 
of finitely generated submonoids of right units of finitely presented special inverse monoids containing the group of units is in fact equal to the class of recursively RC-presented monoids.  
\end{rmk}
        
We begin by recalling the original construction from the paper \cite{GK}, based on a finite monoid presentation of a group and a finitely generated subgroup.
Let $G$ a finitely presented group and $H \leq G$ a finitely generated subgroup. One can assume that $G$ is given by a finite monoid presentation 
$$G=\Mon\pre{A}{r_1=1,\dots,r_k=1},$$ 
that $H$ is generated by the subset $B\subseteq A$, and, moreover, that for any letter $a\in A$ we have at our disposal a formal inverse $\ol{a}\in A$ with $\ol{\ol{a}}=a$ 
while the relation $a\ol{a}=1$ is included in the set of defining relations of $G$. With these assumptions the authors in \cite{GK} introduce a special inverse monoid $M_{G,H}$
generated by $A$ and $p_0,p_1,\dots,p_k,z,d$ and defined by the relations
\begin{align*}
& p_i a p_i^{-1} p_i a^{-1} p_i^{-1} = 1 & (a\in A,\ i=0,1,\dots,k) \\
& p_i r_i d^{-1} p_i^{-1} = 1 & (i=0,1,\dots,k) \\
& p_0dp_0^{-1} = 1 & \\
& zbz^{-1}zb^{-1}z^{-1} = 1 & (b\in B) \\
& z \left(\prod_{i=0}^k p_i^{-1}p_i\right) z^{-1} = 1.
\end{align*}
It was shown in \cite{GK} that the group of units of $M_{G,H}$ is isomorphic to $H$ and that it consists precisely of elements represented by words of the form 
$zwz^{-1}$, $w\in B^*$, with 
$$(zuz^{-1})(zvz^{-1})=zuvz^{-1}$$ 
holding in $M_{G,H}$ for all $u, v \in B^*$. This was the key step in establishing the result that the class of groups of units
of finitely presented special inverse monoids coincides with the finitely generated, recursively presented groups.

Furthermore, a directed deterministic graph $\Omega$ edge-labelled by the generating symbols of $M_{G,H}$ was constructed in the same paper that turns out to be
a copy of the \Sch graph  $S\Gamma(1)$ of $M_{G,H}$ (Actually, it was remarked in \cite{GK} that a bit more work is needed to prove this labelled graph isomorphism, 
even though many pertinent properties of $S\Gamma(1)$ were actually proved in the paper.) 

Now we present a generalisation of the previous construction which takes a finitely RC-presented right cancellative monoid and a finitely generated submonoid. In this more general setup, 
we determine an RC-presentation for the corresponding monoid of right units and show that in general it need not be finite.

\begin{defn} \label{def:mST}
Let $S$ be a finitely RC-presented monoid and let $T$ be a finitely generated submonoid of $S$. Let 
$$
S = \MonRC\pre{A}{u_i=v_i\; (1\leq i\leq k)}
$$
be a finite RC-presentation for $S$ such that $T$ is generated by a finite subset $B$ of $A$.  
This can always be achieved by adding in relations of the form $b = w_b \in A^*$ holding in $S$, one for each $b \in B$, which transforms the original RC-presentation into one with the desired property. 

Then we define $M_{S,T}$ to be the finitely presented special inverse monoid generated by $\Sigma=A\cup\{p_0,p_1,\dots,p_k,z,d\}$ subject to the defining relations
\begin{align*}
& p_i a p_i^{-1} p_i a^{-1} p_i^{-1} = 1 & (a\in A,\ i=0,1,\dots,k) \\
& p_i u_i d^{-1} v_i^{-1} p_i^{-1} = 1 & (i=1,\dots,k) \\
& p_0dp_0^{-1} = 1 & \\
& zbz^{-1}zb^{-1}z^{-1} = 1 & (b\in B) \\
& z \left(\prod_{i=0}^k p_i^{-1}p_i\right) z^{-1} = 1.
\end{align*}
\end{defn}

From \cite{IMM,GR} it follows that the elements represented by all prefixes of all relator words of a special inverse monoid form a generating set for the monoid
of its right units. 
Applying \cite[Corollary 3.2]{Gr-Inv} to replace prefixes of relator words by their reduced forms together with the fact that if $uv=1$ is a defining relator then the prefix $u$ is equal to $v^{-1}$ in the monoid, it may be shown that every prefix of a relator word is equal in $M_{S,T}$ to one of the following words: 
\begin{align*}
& p_i, p_iap_i^{-1} & (a\in A,\ 0\leq i\leq k), \\
& p_i\xi_i & (\xi_i\in\pref(u_i)\cup\pref(v_i),\ 1\leq i\leq k), \\
& z, zbz^{-1} & (b\in B), \\
& zp_i^{-1} & (0\leq i\leq k).
\end{align*}
It follows that this is a set of words representing the elements of the generating set of $\mathrm{RU}(M_{S,T})$.
Note that $z$ is redundant in this generating set, as $z=(zp_0^{-1})p_0$ holds in $M_{S,T}$. 
Also note that the elements represented by words of the form $p_i\xi_i$, 
$\xi_i\in\pref(u_i)\cup\pref(v_i)$, are redundant generators as, whenever $\xi_i=a_{1,i}\dots a_{l_i,i}$, we have
$$
p_i\xi_i = (p_ia_{1,i}p_i^{-1})\dots (p_ia_{l_i,i}p_i^{-1}) p_i
$$
holding in $M_{S,T}$ and in $\mathrm{RU}(M_{S,T})$. In conclusion, the previous discussion yields the following result.

\begin{lem}\label{lem:mst-gen}
The elements represented by words $p_i, zp_i^{-1}, p_iap_i^{-1}$ ($0\leq i\leq k$, $a\in A$) and $zbz^{-1}$ ($b\in B$) in $M_{S,T}$ form a generating set for $\mathrm{RU}(M_{S,T})$.
\end{lem}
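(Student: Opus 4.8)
The plan is to take as the starting point the fact, due to Ivanov--Margolis--Meakin \cite{IMM} (see also \cite{GR}), that $\mathrm{RU}(M_{S,T})$ is generated by the elements represented by all prefixes of all defining relators of the presentation in Definition~\ref{def:mST}. The whole argument then reduces to showing that each such prefix is equal in $M_{S,T}$ to a product of the words $p_i$, $zp_i^{-1}$, $p_iap_i^{-1}$ ($0\le i\le k$, $a\in A$) and $zbz^{-1}$ ($b\in B$); since each of these words is itself a prefix of a relator (of the first, fifth, first and fourth family respectively) it already lies in $\mathrm{RU}(M_{S,T})$, so this suffices. The two tools I would use throughout are \cite[Corollary~3.2]{Gr-Inv}, which allows one to replace a prefix of a relator by its freely reduced word in $M_{S,T}$, and the elementary observation recalled above that for any factorisation $uv$ of a relator with $uv=1$ the prefix $u$ equals $v^{-1}$ in $M_{S,T}$.

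First I would run through the five relator families and reduce their prefixes. The prefixes of $p_iap_i^{-1}p_ia^{-1}p_i^{-1}=1$ reduce, by \cite[Corollary~3.2]{Gr-Inv}, to $p_i$, $p_ia$, $p_iap_i^{-1}$ (and the empty word); here $p_ia=(p_iap_i^{-1})p_i$ because $p_iap_i^{-1}p_i$ freely reduces to $p_ia$. The prefixes of $p_0dp_0^{-1}=1$ give $p_0$ and $p_0d=p_0$, and for $zbz^{-1}zb^{-1}z^{-1}=1$ one obtains $z$, $zb=(zbz^{-1})z$ and $zbz^{-1}$, while the prefixes of $z(\prod_{i=0}^k p_i^{-1}p_i)z^{-1}=1$ reduce by free reduction to the words $z$ and $zp_j^{-1}$ ($0\le j\le k$). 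The only genuinely delicate family is the second one, $p_iu_id^{-1}v_i^{-1}p_i^{-1}=1$: its prefixes that stop inside $u_i$ are exactly the words $p_i\xi_i$ with $\xi_i\in\pref(u_i)$, whereas for a prefix that runs past $d^{-1}$ and into $v_i^{-1}$ I would apply the prefix/suffix observation to the corresponding factorisation, which rewrites such a prefix as $p_i$ times a prefix of $v_i$; hence every prefix of this family is again of the form $p_i\xi_i$ with $\xi_i\in\pref(u_i)\cup\pref(v_i)$, together with the endpoint value $p_iu_id^{-1}v_i^{-1}=p_i$.

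It then remains to discard the redundant words $z$ and $p_i\xi_i$. For the former, the prefix $zp_0^{-1}p_0$ of the fifth relator freely reduces to $z$, so $z=(zp_0^{-1})p_0$ is a product of two surviving generators. For the latter, writing $\xi_i=a_1\cdots a_l$, I would establish by induction on $l$ the identity
\[
p_i\xi_i = (p_ia_1p_i^{-1})(p_ia_2p_i^{-1})\cdots(p_ia_lp_i^{-1})\,p_i
\]
in $M_{S,T}$, the base case being the relation $p_iap_i^{-1}p_i=p_ia$ noted above and the inductive step using it once more to collapse the leading factor. Putting all of this together shows that the four families of words listed in the statement generate $\mathrm{RU}(M_{S,T})$.

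The main obstacle, as indicated, is the treatment of the second relator family together with the telescoping identity for $p_i\xi_i$. The subtlety is that these equalities must be established in the inverse monoid $M_{S,T}$ itself, and not merely in its greatest group image, where $p_i^{-1}p_i$ need not equal $1$; the essential point is that \cite[Corollary~3.2]{Gr-Inv} guarantees that the relevant prefixes agree with their free reductions inside $M_{S,T}$, so that the nontrivial idempotents $p_i^{-1}p_i$ produced when multiplying the conjugates $p_iap_i^{-1}$ do not obstruct the collapse. Once this is in place, the remainder is routine bookkeeping over the five families of prefixes.
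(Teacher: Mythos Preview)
Your proposal is correct and follows essentially the same approach as the paper: start from the Ivanov--Margolis--Meakin fact that prefixes of relator words generate $\mathrm{RU}(M_{S,T})$, use \cite[Corollary~3.2]{Gr-Inv} together with the $u=v^{-1}$ observation to reduce each prefix to one of a short list of words, and then discard $z$ and the $p_i\xi_i$ via the identities $z=(zp_0^{-1})p_0$ and $p_i\xi_i=(p_ia_1p_i^{-1})\cdots(p_ia_lp_i^{-1})p_i$. Your treatment is somewhat more explicit in running through the five relator families and in flagging the reason the telescoping identity is valid in the inverse monoid, but there is no substantive difference in method.
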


Here is our main technical result of this section. It gives a presentation for $\mathrm{RU}(M_{S,T})$ with respect to the generating set identified in the previous lemma.

\begin{thm}\label{main-GK}
Let $S$ be a finitely RC-presented monoid and let $T$ be a finitely generated submonoid of $S$ and let $M_{S,T}$ be the finitely presented special inverse monoid defined above.  
Let $Q$ be the right cancellative monoid RC-presented by generators $p_i,q_i$ ($0\leq i\leq k$), $a^{(i)}$ ($a\in A$, $0\leq i\leq k$), $b^{(z)}$ ($b\in B$), 
and relations
\begin{align*}
& q_i w^{(i)} p_i = q_0 w^{(0)} p_0 & (w\in A^*,\ 1\leq i\leq k) \\
& q_i u^{(i)} = q_i v^{(i)} & (u,v\in A^*\text{ such that }u=v\text{ holds in }S,\ 0\leq i\leq k) \\
& q_i b^{(i)} = b^{(z)} q_i & (b\in B,\ 0\leq i\leq k)
\end{align*}
where the word $w^{(i)}$ is obtained from $w\in A^*$ by replacing each letter $a\in A$ by $a^{(i)}$, and, similarly, the word $u^{(z)}$ is obtained from $u\in B^*$
by replacing each letter $b\in B$ by $b^{(z)}$. Then $\mathrm{RU}(M_{S,T})\cong Q$ and 
the elements $b^{(z)}$, $b\in B$, generate a submonoid of $Q$ isomorphic to $T$.
In particular, $T$ embeds into $\mathrm{RU}(M_{S,T})$.
\end{thm}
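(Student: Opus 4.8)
The plan is to prove the isomorphism $\mathrm{RU}(M_{S,T})\cong Q$ by constructing an explicit surjective homomorphism $\bar\theta\colon Q\to \mathrm{RU}(M_{S,T})$ and then showing it is injective, and to handle the statement about $T$ by a separate retraction argument carried out entirely inside $Q$. Throughout I would write $f_i=p_i^{-1}p_i$. First I would set up $\bar\theta$ on generators by $p_i\mapsto p_i$, $q_i\mapsto zp_i^{-1}$, $a^{(i)}\mapsto p_iap_i^{-1}$ and $b^{(z)}\mapsto zbz^{-1}$, matching the generating set of $\mathrm{RU}(M_{S,T})$ identified in Lemma~\ref{lem:mst-gen}. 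The basic algebra is to record the consequences of the relators of $M_{S,T}$: each relator has the form $w=1$ with $w$ beginning in some $p_i$ or in $z$, so by the inverse-monoid implication $xy=1\Rightarrow xx^{-1}=1$ every $p_i$ and $z$ is a right unit, $p_ip_i^{-1}=zz^{-1}=1$; moreover the relator $z(\prod_j f_j)z^{-1}=1$ forces $z^{-1}z\le f_j$, whence $zf_j=z$ and $zf_jz^{-1}=1$ for every $j$. These identities place all four families of images in $\mathrm{RU}(M_{S,T})$, and since they generate it the induced map of free monoids is onto. It then remains to verify that the three defining relation families of $Q$ hold between the images; this is a direct computation in $M_{S,T}$ using the identities above together with the page-gluing relators $p_0dp_0^{-1}=1$ and $p_iu_id^{-1}v_i^{-1}p_i^{-1}=1$ (which identify the common hub reached by the various $q_i$ and force the sandwich $q_i(\,\cdot\,)p_i$ to be independent of the page index $i$). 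As $\mathrm{RU}(M_{S,T})$ is right cancellative, the universal property of the RC-presentation of $Q$ then upgrades this to a well-defined surjective homomorphism $\bar\theta\colon Q\to\mathrm{RU}(M_{S,T})$.

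The main obstacle is injectivity, which I would establish by generalising the graph-theoretic argument of \cite{GK}. The idea is to construct, from the data $(S,T)$, an explicit directed, deterministic, $\ol\Sigma$-edge-labelled graph $\Omega$ whose vertices model the elements of $Q$ in terms of their RC-normal forms, generalising the graph built in \cite{GK} for the case of a group and a subgroup. One then verifies that $\Omega$ is a folded automaton in which every defining relator of $M_{S,T}$ labels a closed path at every vertex, that is, that $\Omega$ is complete for the presentation. By Stephen's characterisation of Schützenberger graphs as the unique complete folded automata approximable from the base vertex \cite{Stephen}, it then follows that $S\Gamma(1)\cong\Omega$. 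Since the vertices of $S\Gamma(1)$ are precisely the right units of $M_{S,T}$ (by \cite{IMM}; see the paragraph after Definition~\ref{defn:ru}) and right multiplication by the generators encodes the multiplication of $\mathrm{RU}(M_{S,T})$, reading this structure off $\Omega$ identifies $\mathrm{RU}(M_{S,T})$ with $Q$ and shows $\bar\theta$ is a bijection. The delicate part, and the place where the RC-setting genuinely departs from \cite{GK}, is to lay out the vertices of $\Omega$ so that the monoid relations $u_i=v_i$ of $S$ (rather than group relators $r_i=1$) are exactly reflected, and to confirm that reading off right-unit multiplication returns precisely the three relation families of the RC-presentation of $Q$ and imposes nothing further; checking that $\Omega$ is folded and complete is where most of the work lies.

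Finally, for the statement about $T$ I would argue directly inside $Q$, independently of the isomorphism above. Define $\rho\colon Q\to S$ on generators by $a^{(i)}\mapsto a$, $b^{(z)}\mapsto b$ and $p_i,q_i\mapsto 1$; each defining relation of $Q$ is then immediately satisfied in $S$ (the first family collapses to $w=w$, the second to the relation $u=v$ of $S$, the third to $b=b$), so since $S$ is right cancellative this is a well-defined homomorphism with $\rho(w^{(z)})=w$ in $S$ for all $w\in B^*$. Conversely, the relations $q_ib^{(i)}=b^{(z)}q_i$ give $q_iw^{(i)}=w^{(z)}q_i$ for all $w\in B^*$ by induction, so if $w_1=w_2$ holds in $S$ (equivalently in $T$, with $w_1,w_2\in B^*$) then $w_1^{(z)}q_0=q_0w_1^{(0)}=q_0w_2^{(0)}=w_2^{(z)}q_0$, and right cancelling $q_0$ yields $w_1^{(z)}=w_2^{(z)}$; hence $w\mapsto w^{(z)}$ is a well-defined homomorphism $T\to Q$. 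Composing it with $\rho$ recovers the identity on $T$, so it is injective and the submonoid $\langle b^{(z)}:b\in B\rangle$ of $Q$ is isomorphic to $T$. The concluding assertion that $T$ embeds into $\mathrm{RU}(M_{S,T})$ is then immediate from $Q\cong\mathrm{RU}(M_{S,T})$, or equivalently via $b\mapsto zbz^{-1}$.
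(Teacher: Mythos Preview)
Your surjection $\bar\theta$ is exactly the paper's $\psi$, and your argument for the $T$-embedding is correct and in fact cleaner than the paper's: the paper proves Lemma~5.17 by reading paths in the auxiliary graph $\Omega$, whereas your retraction $\rho\colon Q\to S$ together with the commutation $q_0w^{(0)}=w^{(z)}q_0$ and right cancellation handles both directions purely algebraically inside $Q$.

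The injectivity argument, however, has a genuine gap. Showing that a folded $\ol\Sigma$-labelled graph $\Omega$ has every relator readable as a closed path at every vertex does \emph{not} yield $S\Gamma(1)\cong\Omega$; by \cite[Lemma~3.3]{GK} (equivalently Stephen's theory) it only yields a labelled-graph \emph{morphism} $S\Gamma(1)\to\Omega$. There are many complete folded automata that are proper quotients of $S\Gamma(1)$, so ``complete + folded'' cannot characterise the Sch\"utzenberger graph, and your appeal to Stephen at this point does not go through. To conclude injectivity of $\bar\theta$ you would still need to prove that this morphism is injective on vertices, which is precisely the hard step.

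The paper circumvents this by a two-graph strategy that you have conflated into one. It keeps the zone-based $\Omega$ (your ``generalised GK graph'') only as an auxiliary target of a morphism $\xi\colon S\Gamma(1)\to\Omega$, used to prove structural facts about the Cayley graph $\Gamma$ of $Q$ (disjointness of the vertex classes $\mathcal Z,\Pi_0,\dots,\Pi_k$; Lemma~5.13). Separately it builds a \emph{second} graph $\Gamma'$ by editing $\Gamma$ itself---so that $V(\Gamma')=Q$ by construction---and proves $\Gamma'$ is bi-deterministic and complete. Then the morphism $\phi\colon S\Gamma(1)\to\Gamma'$ exists, and because vertices of $\Gamma'$ are literally elements of $Q$, one can trace: if $U\psi=V\psi=X$ then the $\Sigma$-words $u\bar\psi,v\bar\psi$ both reach $X$ in $S\Gamma(1)$, hence both reach $X\phi$ in $\Gamma'$, hence (unwinding the edge-edits) $u$ and $v$ both reach $X\phi$ in $\Gamma$, forcing $U=V$. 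The point is that one never needs $S\Gamma(1)\cong\Gamma'$, only the morphism plus the fact that $\Gamma'$ already lives on $Q$. Your single graph $\Omega$ with ``vertices modelling elements of $Q$'' would have to play both roles simultaneously, and the hard content---proving bi-determinism of the $A$-labelled and $d$-labelled edges you add---is exactly where the paper invokes $\Omega$ and the morphism $\xi$ (Lemmas~5.12--5.15 and Subsection~5.5). That machinery is missing from your outline.
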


First of all, we make a general observation that parallels a well-known fact from the theory of ordinary monoid presentations.

\begin{lem}\label{lem:rc}
Let $M=\MonRC\pre{A}{u_i=v_i\; (i\in I)}$ and $\xi:A^*\to N$ a monoid homomorphism, where $N$ is a right cancellative monoid, such that $u_i\xi = v_i\xi$ holds for all $i\in I$. Then there is a homomorphism $\xi':M\to N$ such that $\xi=\nu\xi'$, where $\nu$ if the canonical homomorphism $A^*\to M$.
\end{lem}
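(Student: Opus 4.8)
The plan is to run the standard universal-property argument for quotient presentations, adapted here to the right cancellative setting. Recall that $M = A^*/\mathfrak{R}^{\mathrm{RC}}$, where by definition $\mathfrak{R}^{\mathrm{RC}}$ is the intersection of all congruences $\rho$ on $A^*$ that contain $\mathfrak{R} = \{(u_i,v_i):\ i \in I\}$ and have the property that the quotient $A^*/\rho$ is right cancellative. The whole point will be to exhibit one particular congruence belonging to this intersecting family through which $\xi$ manifestly factors, namely the kernel congruence of $\xi$.

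First I would consider the kernel congruence $\Ker\xi = \{(u,v) \in A^* \times A^* :\ u\xi = v\xi\}$. This is a congruence on $A^*$, and the standard factorisation of $\xi$ yields an isomorphism $A^*/\Ker\xi \cong \im\xi$, where $\im\xi$ is the submonoid of $N$ generated by $A\xi$. Since $N$ is right cancellative and right cancellativity is clearly inherited by submonoids (if $ab = ac$ holds in $\im\xi$ then the same equality holds in $N$, forcing $b = c$), the quotient $A^*/\Ker\xi$ is right cancellative. Moreover, the hypothesis $u_i\xi = v_i\xi$ for all $i \in I$ says exactly that $\mathfrak{R} \subseteq \Ker\xi$.

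Thus $\Ker\xi$ is one of the congruences occurring in the intersection defining $\mathfrak{R}^{\mathrm{RC}}$, and hence $\mathfrak{R}^{\mathrm{RC}} \subseteq \Ker\xi$. Since the defining congruence of $M$ is contained in $\Ker\xi$, the map $\xi$ is constant on each $\mathfrak{R}^{\mathrm{RC}}$-class: indeed, whenever $w_1\nu = w_2\nu$ we have $(w_1,w_2) \in \mathfrak{R}^{\mathrm{RC}} \subseteq \Ker\xi$, so $w_1\xi = w_2\xi$. Consequently the assignment $(w\nu)\xi' = w\xi$ is a well-defined map $\xi' : M \to N$, and it is a monoid homomorphism because $\nu$ is a surjective homomorphism and $\xi$ is a homomorphism. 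By construction $w(\nu\xi') = (w\nu)\xi' = w\xi$ for every $w \in A^*$, so $\xi = \nu\xi'$, as required.

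I expect no serious obstacle here, this being an analogue of a well-known fact for ordinary monoid presentations. The only point that genuinely requires care is verifying that $\Ker\xi$ really does qualify as a member of the intersecting family, i.e.\ that its quotient is right cancellative; and this reduces to the elementary observation that submonoids of right cancellative monoids are again right cancellative, together with the isomorphism $A^*/\Ker\xi \cong \im\xi \leq N$.
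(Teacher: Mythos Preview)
Your proof is correct, apart from a harmless slip: in checking that submonoids inherit right cancellativity you wrote $ab=ac\Rightarrow b=c$, which is left cancellation; you want $ba=ca\Rightarrow b=c$. The argument is of course unchanged.

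Your route is genuinely different from the paper's and in fact more direct. The paper proceeds syntactically: it introduces $\ol{M}=\Mon\pre{A}{u_i=v_i\;(i\in I)}$, invokes the one-sided analogue of \cite[Exercise~2.6.12]{How} to obtain an explicit \emph{monoid} presentation for $M$ (namely, the set of all pairs $(\alpha,\beta)$ with $\alpha\gamma=\beta\gamma$ in $\ol{M}$ for some $\gamma$), and then verifies that $\xi$ respects every such relation by cancelling $\gamma\xi$ in $N$. You instead work semantically, using the very definition of $\mathfrak{R}^{\mathrm{RC}}$ as an intersection: since $\Ker\xi$ contains $\mathfrak{R}$ and $A^*/\Ker\xi\cong\im\xi\leq N$ is right cancellative, $\Ker\xi$ is one of the congruences being intersected, hence $\mathfrak{R}^{\mathrm{RC}}\subseteq\Ker\xi$. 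Your argument is self-contained and avoids external citations; the paper's approach has the incidental by-product of an explicit monoid presentation for $M$, though that is not used elsewhere in the proof.
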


\begin{proof}
Let $\ol{M}=\Mon\pre{A}{u_i=v_i\; (i\in I)}$. Then $M$ is a homomorphic image of $\ol{M}$. In fact, the one-sided, right cancellative version of \cite[Exercise 2.6.12]{How} implies that all relations $\alpha=\beta$, $\alpha,\beta\in A^*$, with the property that $\alpha\gamma = \beta\gamma$ holds in $\ol{M}$ for some $\gamma\in A^*$ provide a \emph{monoid} presentation of $M$. In particular, this includes all relations of the form $u_i=v_i$, $i\in I$.

Now assume that $(\alpha,\beta)$ is a pair of words from $A^*$ with the property that $\alpha\gamma = \beta\gamma$ holds in $\ol{M}$ for some $\gamma\in A^*$. Then
$$
(\alpha\xi)(\gamma\xi) = (\alpha\gamma)\xi = (\beta\gamma)\xi =  (\beta\xi)(\gamma\xi)
$$
holds in $N$, and, since it is right cancellative, we conclude that $\alpha\xi = \beta\xi$. The lemma now follows immediately e.g.\ from \cite[Proposition 1.2.1]{RuskucPhD}.
\end{proof}

As a first immediate application, we have the following conclusion.

\begin{pro}\label{pro:psi}
The map $\psi_0$ from the set of generating symbols of $Q$ to the generators of $\mathrm{RU}(M_{S,T})$ from Lemma \ref{lem:mst-gen}, given by  
\begin{equation}\label{subst}
p_i\mapsto p_i,\quad q_i\mapsto zp_i^{-1}, \quad a^{(i)}\mapsto p_iap_i^{-1}, \quad b^{(z)}\mapsto zbz^{-1}
\end{equation}
($0\leq i\leq k$, $a\in A$, $b\in B$), extends to a surjective homomorphism $\psi:Q\to\mathrm{RU}(M_{S,T})$.
\end{pro}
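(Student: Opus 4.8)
The plan is to obtain $\psi$ as a factorisation of the evident free-monoid homomorphism through the RC-presentation of $Q$ via Lemma~\ref{lem:rc}. Writing $\Sigma_Q$ for the set of generating symbols of $Q$, let $\xi\colon \Sigma_Q^*\to \mathrm{RU}(M_{S,T})$ be the homomorphism determined by the assignment \eqref{subst}. This lands in $\mathrm{RU}(M_{S,T})$ because, by Lemma~\ref{lem:mst-gen}, each of $p_i$, $zp_i^{-1}$, $p_iap_i^{-1}$ and $zbz^{-1}$ is a right unit of $M_{S,T}$. Since the submonoid of right units of any monoid is right cancellative, $N=\mathrm{RU}(M_{S,T})$ is right cancellative, so Lemma~\ref{lem:rc} applies the moment we verify that $\xi$ identifies the two sides of every defining relation of $Q$; the resulting map is the desired $\psi$, and it is surjective because its image already contains the full generating set of $\mathrm{RU}(M_{S,T})$ supplied by Lemma~\ref{lem:mst-gen}.

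Before checking the relations I would record the computational facts that drive every case. An element $a$ of an inverse monoid is a right unit precisely when $aa^{-1}=1$; hence from the relators of $M_{S,T}$ (and the prefix description of right units) we obtain $p_ip_i^{-1}=1$, $zz^{-1}=1$ and $(p_iap_i^{-1})(p_ia^{-1}p_i^{-1})=1$. Writing $f_i=p_i^{-1}p_i$, the identity $p_ip_i^{-1}=1$ makes the map $x\mapsto p_i^{-1}xp_i$ multiplicative, which collapses $\xi(q_iw^{(i)}p_i)=z\,(f_ia_1f_i)(f_ia_2f_i)\cdots(f_ia_mf_i)$ for $w=a_1\cdots a_m$. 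The key derived identity is $zf_iz^{-1}=1$: since $E=\prod_{j=0}^k f_j\le f_i$ in the natural partial order, and the relation $z\bigl(\prod_{i=0}^k p_i^{-1}p_i\bigr)z^{-1}=1$ gives $zEz^{-1}=1$, monotonicity of multiplication yields $1=zEz^{-1}\le zf_iz^{-1}\le zz^{-1}=1$, and $1$ is maximal in the natural partial order. In particular this shows $\xi(q_i)$ is a right unit, as $\xi(q_i)\xi(q_i)^{-1}=zf_iz^{-1}=1$.

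With these in hand I would treat the three families in turn. The commutation relations $q_ib^{(i)}=b^{(z)}q_i$ reduce under \eqref{subst} to an identity between $zf_ibp_i^{-1}$ and $zbz^{-1}zp_i^{-1}$, which I would settle using $zf_iz^{-1}=1$ together with the relator $zbz^{-1}zb^{-1}z^{-1}=1$. For the transfer relations $q_iw^{(i)}p_i=q_0w^{(0)}p_0$ the collapsed form above shows both sides are $z$ times a product of elements $f_\bullet a_jf_\bullet$, so the content is that passing from index $i$ to index $0$ does not change the element; this is exactly where $zf_iz^{-1}=1$ (for every $i$) is used to absorb the idempotents $f_i$ after pre-multiplication by $z$. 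For the relations $q_iu^{(i)}=q_iv^{(i)}$ with $u=v$ in $S$, I would argue by induction along a right cancellative $\mathfrak{R}$-chain witnessing $u=v$ in $S=\MonRC\pre{A}{u_j=v_j}$: a defining-relator step is handled by the relators $p_iu_jd^{-1}v_j^{-1}p_i^{-1}=1$ and $p_0dp_0^{-1}=1$ (which is where the auxiliary generator $d$ is eliminated), an insertion or deletion of a pair $aa^{\mathsf R}$ corresponds to inserting or deleting $(p_iap_i^{-1})(p_ia^{-1}p_i^{-1})=1$, and the right cancellations along the chain are legitimate because $\mathrm{RU}(M_{S,T})$ is right cancellative.

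The main obstacle is precisely this relation-checking step, and within it the transfer relations and the $S$-relations. The difficulty is that conjugation by $p_i$ is not a homomorphism of $M_{S,T}$, so the idempotents $f_i$ genuinely appear and must be removed using the relation $z\bigl(\prod_{i=0}^k p_i^{-1}p_i\bigr)z^{-1}=1$; and for the second family one must carry the elimination of $d$ and the insertion/deletion bookkeeping uniformly along an arbitrary RC-chain. These are computations internal to $M_{S,T}$ (equivalently, path computations in its \Sch graph $S\Gamma(1)$), whereas the passage from the verified relations to the homomorphism $\psi$ and its surjectivity is purely formal via Lemmas~\ref{lem:rc} and~\ref{lem:mst-gen}.
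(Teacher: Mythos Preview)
Your approach is exactly the paper's: extend $\psi_0$ to a free-monoid map into the right cancellative monoid $\mathrm{RU}(M_{S,T})$, check that the defining relations of $Q$ are respected, and invoke Lemma~\ref{lem:rc}; surjectivity then comes from Lemma~\ref{lem:mst-gen}. The paper's own proof is a single sentence (``straightforward\dots repeated use of \cite[Corollary~3.2]{Gr-Inv}''), whereas you spell out the verification in considerably more detail, including the nice derivation of $zf_iz^{-1}=1$ via the natural partial order.

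One slip to flag in your treatment of the $S$-relations: you write ``the relators $p_iu_jd^{-1}v_j^{-1}p_i^{-1}=1$'', but Definition~\ref{def:mST} only provides relators with \emph{matching} indices, namely $p_ju_jd^{-1}v_j^{-1}p_j^{-1}=1$ for $1\le j\le k$. When checking $q_iu^{(i)}=q_iv^{(i)}$ the generator-index $i$ and the relation-index $j$ (from the RC-chain step $u_j\leftrightarrow v_j$) are in general different, so there is no relator with mixed indices to appeal to directly. The fix is to first reduce (using \cite[Corollary~3.2]{Gr-Inv}, or equivalently your identity $zf_i=z$) the claim $\xi(q_iu^{(i)})=\xi(q_iv^{(i)})$ to the index-free statement $zu=zv$ in $M_{S,T}$; the RC-chain argument then uses the genuine relator $p_ju_jd^{-1}v_j^{-1}p_j^{-1}=1$ together with $p_0dp_0^{-1}=1$ (which yields $zd=z$) and the first family of relators (which make insertions/deletions of $aa^{-1}$ harmless after premultiplication by $z$). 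With that correction your sketch goes through.
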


\begin{proof}
It is straightforward to verify that if $\psi_0'$ is the homomorphism 
$$\Sigma_0^*\to \mathrm{RU}(M_{S,T})$$
extending $\psi_0$, where 
$$
\Sigma_0 = 
\{ p_i,q_i\, (0\leq i\leq k), \; a^{(i)}\, (a\in A, 0\leq i\leq k), \; b^{(z)}\, (b\in B) \},
$$
then each defining relation of $Q$ turns into a true equality in $M_{S,T}$ under $\psi_0'$. (We make repeated use of \cite[Corollary 3.2]{Gr-Inv} to see this.) The proposition now follows from the previous lemma.
\end{proof}

In the remainder of the section, the homomorphism $\psi:Q\to\mathrm{RU}(M_{S,T})$ extending $\psi_0$ will be fixed, as well as the alphabet $\Sigma_0$. To prove the first part of the theorem, we need to show that $\psi$ is in fact injective. For improved clarity of
the exposition, the proof of injectivity of $\psi$ will be done over the next five subsections; then the remaining two subsections of this section will deal, respectively, with the second part of the theorem showing that $T$ embeds into $\mathrm{RU}(M_{S,T})$, and the completion of the proof of Theorem \ref{thm:dense}.

\subsection{The graph $\Omega$}

In the proof of \cite[Theorem 4.1]{GK}, a $\Sigma$-edge-labelled graph $\Omega$ was defined in a recursive manner, which turns out to be a morphic image of $S\Gamma(1)$, the \Sch graph of right units of  the special inverse monoid defined in that theorem  (but which, in reality, coincides with the \Sch graph in question). Here we define a similar infinite graph, also denoted by $\Omega$, in a recursive fashion, as follows.

The graph $\Omega$ has a root vertex from which there is edge labelled $x$ to a copy of the graph $\Omega_x$ defined below, for each $x\in\{z,p_0,\dots,p_k\}$.
For each $0\leq i\leq k$, a \emph{$p_i$-zone} $\Omega_{p_i}$ consists of the Cayley graph of the free monoid $A^*$ (with respect to the generating set $A$) whose
root vertex is the empty word along with:
\begin{itemize}
\item at each vertex and for each $x\in\{z,p_0,\dots,p_k\}$ an edge labelled $x$ going out with a copy of $\Omega_x$ at the far end;
\item at each vertex distinct from the root an edge labelled $p_i$ coming in, with a copy of $\Omega_{p_i^{-1}}$ at the far end;
\item if $i=0$ there is a loop labelled $d$ at each vertex; and
\item if $1\leq i\leq k$ for each pair of vertices $W',W''$ 
(in general, in this section we denote graph vertices by capital letters to distinguish them from words) 
such that there is a vertex $W$ in this $p_i$-zone with the property that $W',W''$ are endpoints of the paths starting from $W$ labelled by $u_i$ and $v_i$, 
respectively, an edge from $W''$ to $W'$ labelled $d$.
\end{itemize}
Let $\Gamma_S$ denote the directed labelled right Cayley graph $\Gamma_S$ of $S$ with with respect to the generating set $A$.  
A \emph{$z$-zone} $\Omega_z$ consists of a copy of $\Gamma_S$ together with
\begin{itemize}
\item at each vertex and for each $x\in\{z,p_0,\dots,p_k\}$ an edge labelled $x$ going out with a copy of $\Omega_x$ at the far end;
\item at each vertex and for each $x\in\{p_0,\dots,p_k\}$ an edge labelled $x$ coming in with a copy of $\Omega_{x^{-1}}$ at the far end;
\item at the each vertex of the copy of $\Gamma_S$ corresponding to an element of $T$ an edge labelled $z$ coming in with a copy of $\Omega_{z^{-1}}$ 
at the far end; and
\item at each vertex a loop labelled $d$.
\end{itemize}
Finally, for $x\in\{z,p_0,\dots,p_k\}$ an \emph{$x^{-1}$-zone} $\Omega_{x^{-1}}$ consists of a single root vertex with 
\begin{itemize}
\item for each $y\in\{z,p_0,\dots,p_k\}$ such that $y\neq x$ an edge labelled $y$ going out with a copy of $\Omega_y$ at the far end.
\end{itemize}
See Figure \ref{fig_omega} for an illustration of a part of the graph $\Omega$.

%
\begin{figure}[tb]
\begin{center}
\begin{tikzpicture}[scale=.3,  
TRectangle/.style ={draw, rectangle, thick, minimum height=8em, minimum width=8em}]
\filldraw (-4,0) circle (7pt);
\draw[thick, rounded corners] (8.5,8.5) -- (-0.2,8.5) -- (-0.2,-0.2) -- (8.5,-0.2)--cycle;
\draw[thick, rounded corners] (-10,2) -- (-10-4, 2+8) -- (-10+4, 2+8) -- (-10,2);
\draw[thick, rounded corners, fill=gray!20] (-0.2,-0.2) -- (8.5,-0.2) -- (-0.2,8.5) --cycle;
\node at (3.5,7.1) {\scriptsize $\Gamma_{S} \setminus \Gamma_{T}$};
\node at (-8.7,13) {\scriptsize $p_0$};
\node at (-10.1,7) {\scriptsize $A^*$};
\node at (-5.4,13) {\scriptsize $p_k$};
\node at (-8,-0.5) {\scriptsize $p_0$};
\node at (-7.1,13.1) {\scriptsize $\ldots$};
\node at (-10.3,13) {\scriptsize $z$};
\node at (-3.4,8.6) {\scriptsize $p_i$};
\node at (-8.1,2.3) {\scriptsize $p_i$};
\node at (-6.2,2.6) {\scriptsize $\iddots$};
\node at (2,4.1) {\scriptsize $\Gamma_{T}$};
\node at (-9,1.2) {\scriptsize $\vdots$};
\node at (-6.2,4.3) {\scriptsize $p_k$};
\arrowdraw{0.5}{12,4}{6,6};
\arrowdraw{0.6}{2,2}{4,-4};
\arrowdraw{0.6}{-8,8}{-10,14};
\arrowdraw{0.6}{6,6}{6,12};
\arrowdraw{0.6}{6,6}{8,12};
\arrowdraw{0.5}{12,6}{6,6};
\arrowdraw{0.6}{12,6}{18,4};
\arrowdraw{0.6}{-8,8}{-8,14};
\arrowdraw{0.6}{2,2}{2,-4};
\arrowdraw{0.6}{2,2}{0,-4};
\arrowdraw{0.5}{12,8}{6,6};
\arrowdraw{0.5}{-4,2}{2,2};
\arrowdraw{0.6}{6,6}{4,12};
\arrowdraw{0.5}{-4,4}{2,2};
\arrowdraw{0.5}{-4,0}{2,2};
\arrowdraw{0.6}{-4,0}{-10,0};
\arrowdraw{0.6}{12,6}{18,6};
\arrowdraw{0.6}{-4,0}{-10,2};
\arrowdraw{0.6}{-4,0}{-6,6};
\arrowdraw{0.6}{-8,8}{-6,14};
\arrowdraw{0.5}{-2,8}{-8,8};
\arrowdraw{0.6}{12,6}{18,8};
\arrowdraw{0.6}{12,6}{14,0};
\node at (-3,3.2) {\scriptsize $\vdots$};
\node at (5.2,11) {\scriptsize $p_0$};
\node at (3.8,11) {\scriptsize $z$};
\node at (10.5,5.6) {\scriptsize $\vdots$};
\node at (6.9,11) {\scriptsize $\ldots$};
\node at (4.2+4.3,11) {\scriptsize $p_k$};
\node at (-0.2,-3) {\scriptsize $z$};
\node at (-2,1.4) {\scriptsize $p_0$};
\node at (1.4,-3) {\scriptsize $p_0$};
\node at (2.8,-3) {\scriptsize $\ldots$};
\node at (0.3+4.2,-3) {\scriptsize $p_k$};
\node at (-2,4) {\scriptsize $p_k$};
\node at (16.5,6.5) {\scriptsize $p_0$};
\node at (16.5,5.5) {\scriptsize $\vdots$};
\node at (-2,3.8-3.6) {\scriptsize $z$};
\node at (10.5,8.2) {\scriptsize $p_0$};
\node at (10.5,7.1) {\scriptsize $\vdots$};
\node at (15,3) {\scriptsize $\iddots$};
\node at (11.4,6.5) {\scriptsize $p_i$};
\node at (10.5,3.9) {\scriptsize $p_k$};
\node at (16.5,8) {\scriptsize $z$};
\node at (19.2,3.5) {\scriptsize $p_j \; \; \; \; (j \neq i)$};
\node at (12.9,1) {\scriptsize $p_k$};
\end{tikzpicture}
\end{center}
\caption{An illustration of a typical part of the graph $\Omega$ constructed in this subsection, including the root vertex marked in black. The square represents a $z$-zone (of the root vertex, in this instance) with the copy of the Cayley graph $\Gamma_S$ of $S$ partitioned so that the shaded triangle corresponds to the submonoid $T$. The triangle on the left represents a $p_i$-zone. The $d$-labelled edges are omitted from this sketch.} \label{fig_omega}
\end{figure}
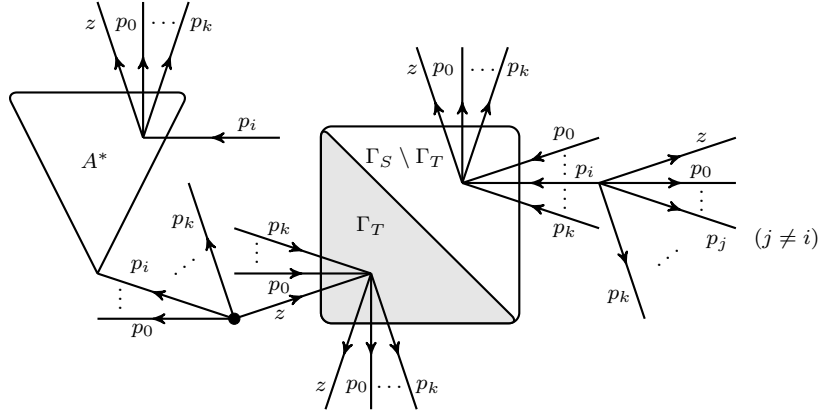
%

\begin{lem}\label{lem:prop-om}
The graph $\Omega$ constructed above has the following properties:
\begin{itemize}
\item[(i)] $\Omega$ is bi-deterministic, meaning that no vertex has two edges going out labelled by the same letter or two edges coming in labelled by the same letter. 
\item[(ii)] Every vertex of $\Omega$ apart from its root belongs either to a $p_i$-zone for some $0\leq i\leq k$, or to some $z$-zone, or to an $x^{-1}$-zone for some $x\in\{z,p_0,\dots,p_k\}$.
\item[(iii)] All zones in the definition of $\Omega$ are disjoint.
\item[(iv)] A vertex belongs to some $z$-zone if and only if it has enges coming in labelled by $p_i$ and $p_j$ for some indices $i\neq j$.
\end{itemize}
\end{lem}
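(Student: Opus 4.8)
The plan is to prove all four statements together by reading off the recursive construction, organising the zones into a rooted tree in which each non-root zone is glued to its parent along a single labelled edge, which I will call its \emph{entry edge}. The root of this tree of zones is the global root vertex of $\Omega$, its children are the zones $\Omega_x$ for $x\in\{z,p_0,\dots,p_k\}$, and in general the children of a zone $Z$ are the zones adjoined at the vertices of $Z$. I would first settle (ii) and (iii), then the substantive statement (i), and finally deduce (iv) from the edge-count obtained while proving (i).

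Statements (ii) and (iii) together just assert that the non-root vertices of $\Omega$ are the disjoint union of the vertex sets of the zones. This is immediate from the recursion once it is phrased as an induction on the stage at which a vertex is introduced: every vertex other than the global root is created as a vertex of exactly one freshly adjoined copy of a $p_i$-zone, a $z$-zone, or an $x^{-1}$-zone, and the copies adjoined at each stage are pairwise disjoint and disjoint from everything built before. I expect this to be routine bookkeeping.

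The real work is (i). Here I would argue by a case analysis on the zone containing a given vertex $v$, tallying the incoming and outgoing edges at $v$ label by label and checking that each label occurs at most once in each direction. For an internal vertex of a $p_i$-zone the internal edges are those of the Cayley graph of $A^*$ together with the $d$-edges; determinism and co-determinism of the $A^*$-part come from unique prefix and suffix factorisation in the free monoid, and the $d$-edges are controlled because, for $1\le i\le k$, a vertex is of the form $Wv_i$ (resp.\ $Wu_i$) in at most one way, while for $i=0$ the $d$-edges are loops. For an internal vertex of a $z$-zone the internal edges come from $\Gamma_S$: out-determinism is automatic for a Cayley graph, and \emph{co-determinism is exactly where right cancellativity of $S$ is used}, since $W'a=W''a$ in $S$ forces $W'=W''$, so each vertex has at most one incoming $a$-edge. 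The labels in $\{z,p_0,\dots,p_k,d\}$ then contribute at most one incoming and one outgoing edge each by construction. The main obstacle I anticipate is the careful treatment of the zone-boundary vertices: at the root of each zone the entry edge already occupies the incoming slot of its label, and one must confirm that the internal incoming edges of that same label are placed only at non-root vertices (as is made explicit for the $p_i$-zones by the clause ``distinct from the root''), so that no vertex receives two like-labelled edges. Verifying this compatibility at every type of entry edge is the delicate point of the proof.

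Finally, (iv) follows from the edge-tally assembled in (i). The forward implication is direct: by construction every vertex of a $z$-zone has an incoming edge labelled $p_x$ for all $x\in\{p_0,\dots,p_k\}$, and hence incoming edges labelled $p_i$ and $p_j$ with $i\ne j$. For the converse I would run through the vertex types supplied by (ii): the global root has no incoming edges, the root of an $x^{-1}$-zone is a source and so has no incoming edges, and every vertex of a $p_i$-zone receives $p$-labelled incoming edges only of the single label $p_i$ (namely the unique incoming edge from a copy of $\Omega_{p_i^{-1}}$, or the entry edge at the zone root). Consequently a vertex carrying two incoming edges of distinct $p$-labels can only lie in a $z$-zone, which gives the converse and completes the proof.
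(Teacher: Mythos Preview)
Your proposal is correct and follows essentially the same approach as the paper, which dispatches the lemma in a single sentence: property (i) is attributed to the right cancellativity of $S$, and (ii)--(iv) to ``a direct inspection of the definition of $\Omega$''. Your write-up is a careful unpacking of exactly this inspection, and you correctly isolate the one non-trivial ingredient---that co-determinism of the $A$-labelled edges in a $z$-zone requires $S$ to be right cancellative---which is precisely what the paper singles out.
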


\begin{proof}
The property (i) is a consequence of the right cancellativity of $S$, while properties (ii)--(iv) follow from a direct inspection of the definition of $\Omega$.
\end{proof}

Much in the same way as in the proof of \cite[Theorem 4.1]{GK}, 
we argue that every relator defining $M_{S,T}$ can be read around a closed path in $\Omega$ at every vertex. 
For all the relations not involving the letter $d$ this is routine to verify.  
For the second family of relations in the presentation of $M_{S,T}$, if following an edge labelled $p_i$ from 
the given vertex takes us into a $z$-zone, then we arrive within a copy of the Cayley graph of $S$, and so reading  $u_i$ and $v_i$ from the latter vertex takes us 
to the same vertex, which, in addition, has a loop labelled $d$. In other words, there is a closed path starting with the vertex at which we enter the $z$-zone around 
which one can read $u_id^{-1}v_i^{-1}$; reading $p_i^{-1}$ takes us back to the initial vertex. On the other hand, if by traversing a certain edge 
labelled $p_i$ we land in a $p_i$-zone, say at the vertex $W$, then the paths in $\Omega_{p_i}$ starting at $W$ and labelled by $u_i$ and $v_i$ take us to some vertices 
$W'$ and $W''$, respectively, and the definition of $\Omega_{p_i}$ postulates that there is an edge labelled $d$ from $W''$ to $W'$. So, similarly as above, there is 
a cycle starting with $W$ around which we read $u_id^{-1}v_i^{-1}$ and reading $p_i^{-1}$ takes us back to the vertex from which we have started our journey. 
The relator of the form $p_0 d p_0^{-1}$ can similarly be shown to be readable in $\Omega$ around a cycle from every vertex. 
Hence, as a direct consequence of \cite[Lemma 3.3]{GK} we have the following conclusion.

\begin{pro}\label{pro:mst-om}
For every vertex $U$ of $\Omega$ there is a labelled graph morphism from the \Sch graph $S\Gamma(1)$ of $M_{S,T}$ to $\Omega$ taking the root vertex of $S\Gamma(1)$ to $U$.
\end{pro}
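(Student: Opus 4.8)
The plan is to obtain the morphism as an immediate application of the universal property of Schützenberger graphs recorded in \cite[Lemma 3.3]{GK}: if $\Omega$ is a bi-deterministic edge-labelled graph in which every defining relator of the special inverse monoid labels a closed path based at \emph{every} vertex, then for each vertex $U$ there is a label-preserving graph morphism $S\Gamma(1)\to\Omega$ sending the root to $U$. Consequently the whole argument reduces to verifying these two hypotheses for the graph $\Omega$ constructed above, after which the conclusion is automatic for all $U$ simultaneously. Bi-determinism is precisely Lemma~\ref{lem:prop-om}(i), which rests on the right cancellativity of $S$ (this is what makes $\Gamma_S$ co-deterministic), so the real content is the readability of the relators.

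For the readability I would proceed family by family. The relators of the form $ww^{-1}$, namely $p_i a p_i^{-1}\,p_i a^{-1}p_i^{-1}$ and $zbz^{-1}\,zb^{-1}z^{-1}$, together with $z\bigl(\prod_{i=0}^{k}p_i^{-1}p_i\bigr)z^{-1}$ (whose middle factors $p_i^{-1}p_i$ are each a round trip out to an $\Omega_{p_i^{-1}}$-zone and back along the attaching edge), close automatically in a bi-deterministic graph once one checks that the initial segment $w$ is actually readable from the given vertex. This uses two structural facts about $\Omega$: every vertex carries out-edges labelled by all of $z,p_0,\dots,p_k$ --- the seemingly missing $x$-edge at the root of an $x^{-1}$-zone being supplied by the attaching edge back to its parent --- and a $z$-edge re-enters $\Gamma_S$ only at vertices corresponding to elements of $T$, so that $zbz^{-1}$ really is readable for every $b\in B$.

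The crux will be the $d$-relators $p_iu_id^{-1}v_i^{-1}p_i^{-1}$ (and the degenerate $p_0dp_0^{-1}$), since here the cycle genuinely closes for different reasons depending on which zone the leading $p_i$-edge enters. If it enters a $p_i$-zone, the construction of $\Omega_{p_i}$ places a $d$-edge from the endpoint of the $v_i$-path to the endpoint of the $u_i$-path out of each vertex, so reading $u_id^{-1}v_i^{-1}$ returns to the starting vertex; if it enters a $z$-zone, we land inside a copy of $\Gamma_S$, and the relation $u_i=v_i$ holding in $S$ forces the $u_i$- and $v_i$-paths to terminate at the same vertex, which carries a $d$-loop, so again the cycle closes. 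Having confirmed readability at every vertex in all cases, I would conclude by invoking \cite[Lemma 3.3]{GK}. I expect this $d$-relator case analysis to be the only delicate point, precisely because it is the single place where both the right cancellativity of $S$ and the defining relations $u_i=v_i$ of the RC-presentation are simultaneously needed.
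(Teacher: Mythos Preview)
Your proposal is correct and follows essentially the same approach as the paper: both reduce the proposition to \cite[Lemma 3.3]{GK} by citing Lemma~\ref{lem:prop-om}(i) for bi-determinism and then verifying readability of each relator at every vertex, with the non-$d$ relators dismissed as routine and the $d$-relators handled by exactly the same two-case analysis (landing in a $p_i$-zone versus a $z$-zone). Your write-up is in fact slightly more explicit than the paper's on why the ``routine'' cases actually go through (e.g.\ the observation about the attaching edge supplying the missing out-edge at an $x^{-1}$-zone root), but the structure and key ideas are identical.
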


\subsection{Tweaking the Cayley graph of $Q$}

Let $\Gamma$ be the right Cayley graph for the given presentation of $Q$ in the statement of Theorem~\ref{main-GK}. This is a $\Sigma_0$-labelled graph which is bi-deterministic as a consequence of the right cancellativity of $Q$. 

Our ultimate aim is to prove that $\mathrm{RU}(M_{S,T})$ is isomorphic to $Q$. Our approach to proving this will be to first compare the right Cayley graph $\Gamma$ of $Q$ with the Sch\"{u}tzenberger graph $S\Gamma(1)$. This will be done in this subsection where we give a process of modifying the graph $\Gamma$ by adding and removing edges to obtain a $\Sigma$-labelled graph denoted $\Gamma'$ that we shall later go on to prove is isomorphic to $S\Gamma(1)$.               
To describe this process we first begin by defining several types of vertices in the graph $\Gamma$.
\begin{itemize}
\item 
$U\in V(\Gamma)$ \emph{is of type-$z$} if there is a walk in $\Gamma$ from its root to $U$ labelled by a word of the form $uq_iw^{(i)}p_i$ for some $w\in A^*$ and some $u\in\Sigma_0^*$
(and any $0\leq i\leq k$, bearing in mind that $q_iw^{(i)}p_i=q_0w^{(0)}p_0$ holds in $Q$). Let $\mathcal{Z}$ be the set of all vertices of this type.
\item 
$U\in V(\Gamma)$ \emph{is of type-$p_i$ ($0\leq i\leq k$)} if there is a walk in $\Gamma$ from its root to $U$ labelled by a word of the form $v w^{(i)}p_i$ for some $w\in A^*$ and some
$v\in\Sigma_0^*$ which is either the empty word or ends with a letter $\alpha\not\in\{q_i\}\cup \{a^{(i)}:\ a\in A\}$. Let $\Pi_i$ be the set of all vertices of this type.
\end{itemize}

The main idea behind this typing of certain vertices of $\Gamma$ is to ``simulate'' the $z$-zones and the $p_i$-zones of the graph $\Omega$ within $\Gamma$.
In order to be able to define the graph $\Gamma'$, i.e.\ to coherently describe the process of turning the graph $\Gamma$ into $\Gamma'$, we need to verify several properties of the sets of vertices just introduced, most importantly that they are all pairwise disjoint. We begin by an easy observation.

\begin{lem}\label{lem-inc}
For any $0 \leq i \leq k$ if a vertex $W$ of $\Gamma$ has an incoming edge labelled by $p_i$ if and only if $W \in \mathcal{Z} \cup \Pi_i$.     
\end{lem}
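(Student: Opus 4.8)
The plan is to read everything off directly from the structure of the right Cayley graph. Recall that $\Gamma$ has vertex set $Q$ and a directed edge $g \to g\alpha$ labelled $\alpha$ for each $g\in Q$ and each $\alpha\in\Sigma_0$; consequently a vertex $W$ has an incoming edge labelled $p_i$ if and only if $W = V p_i$ for some $V\in Q$. Moreover, since the walk from the root labelled by a word $\omega\in\Sigma_0^*$ terminates at the vertex $\ol{\omega}$ representing $\omega$ in $Q$, the condition ``$W\in\mathcal{Z}$'' says precisely that $W = \ol{u\,q_j w^{(j)} p_j}$ for some $u\in\Sigma_0^*$, $w\in A^*$ and some index $j$, while ``$W\in\Pi_i$'' says that $W = \ol{v\,w^{(i)} p_i}$ for some $w\in A^*$ and some $v\in\Sigma_0^*$ that is either empty or ends in a letter outside $\{q_i\}\cup\{a^{(i)}:a\in A\}$. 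Thus both directions reduce to a purely syntactic question about which words over $\Sigma_0$ represent the element $W$.

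For the ($\Leftarrow$) direction I would argue as follows. If $W\in\Pi_i$ then $W$ is already represented by a word $v\,w^{(i)} p_i$ ending in $p_i$, whence $W = \ol{v w^{(i)}}\,p_i$ has an incoming $p_i$-edge. If instead $W\in\mathcal{Z}$, then $W = \ol{u\,q_j w^{(j)} p_j}$ for some $j$; here I invoke the defining relation $q_j w^{(j)} p_j = q_0 w^{(0)} p_0 = q_i w^{(i)} p_i$ of $Q$ (valid for the same $w$ and all indices) to rewrite $W = \ol{u\,q_i w^{(i)} p_i}$, which again ends in $p_i$ and hence exhibits an incoming $p_i$-edge. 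This direction is essentially immediate, the only point being the index conversion already built into the definition of $\mathcal{Z}$.

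For the ($\Rightarrow$) direction, suppose $W = V p_i$ and choose any word $\sigma\in\Sigma_0^*$ with $\ol{\sigma} = V$, so that $\sigma p_i$ represents $W$. I would factor $\sigma = \sigma'\, w^{(i)}$, where $w^{(i)}$ is the longest suffix of $\sigma$ consisting entirely of letters from $\{a^{(i)}:a\in A\}$ (possibly empty, in which case $w$ is the empty word). By maximality the last letter of $\sigma'$, if any, is not of the form $a^{(i)}$. Now I split on this last letter: if $\sigma' = \sigma'' q_i$ ends in $q_i$, then $\sigma p_i = \sigma''\,q_i w^{(i)} p_i$ is of type-$z$ form, so $W\in\mathcal{Z}$; otherwise $\sigma'$ is empty or ends in a letter lying outside $\{q_i\}\cup\{a^{(i)}:a\in A\}$, so $\sigma p_i = \sigma'\,w^{(i)} p_i$ is of type-$p_i$ form and $W\in\Pi_i$. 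In either case $W\in\mathcal{Z}\cup\Pi_i$.

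I do not expect a serious obstacle here; the proof is a short combinatorics-on-words analysis of the suffix of a representing word. The only two places requiring a little care are the use of the relation $q_j w^{(j)} p_j = q_i w^{(i)} p_i$ to repair the index mismatch in the $\mathcal{Z}$ case of ($\Leftarrow$), and the verification that the case split in ($\Rightarrow$) is exhaustive. The latter rests on the observation that maximality of the $\{a^{(i)}\}$-suffix forces the preceding letter to be distinct from every $a^{(i)}$, so that the case complementary to ``ends in $q_i$'' lands exactly in the type-$p_i$ template.
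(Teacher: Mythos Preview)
Your proof is correct and follows essentially the same approach as the paper: for the forward direction you take a word representing $W$ ending in $p_i$, strip off the maximal $\{a^{(i)}:a\in A\}$-suffix preceding the final $p_i$, and case on whether the letter immediately before is $q_i$ (giving $W\in\mathcal{Z}$) or not (giving $W\in\Pi_i$). Your treatment is in fact slightly more careful than the paper's in explicitly handling the case where the entire prefix before $p_i$ lies in $\{a^{(i)}:a\in A\}^*$, and in spelling out the index conversion $q_j w^{(j)} p_j = q_i w^{(i)} p_i$ for the $(\Leftarrow)$ direction.
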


\begin{proof}
The converse implication is immediate from the definitions of $\mathcal{Z}$ and $\Pi_i$, so assume that we have an edge $U\xrightarrow{p_i}W$. Choose an arbitrary walk from the root vertex of $\Gamma$ to $U$ and let $u_0\in\Sigma_0^*$ be the label of that walk. If $u_0$ is empty then clearly $U\in\Pi_i$. Otherwise, consider the rightmost letter $\alpha$ of $u_0$ not belonging to the set $\{a^{(i)}:\ a\in A\}$. If $\alpha=q_i$ then we can write $u_0=uq_iw^{(i)}$ for some words $u\in\Sigma_0^*$ and $w\in A^*$, whence $W\in\mathcal{Z}$. If this is not the case then $W\in\Pi_i$.
\end{proof}

In order to be able to coherently define the graph $\Gamma'$, it will be necessary to show that the classes $\mathcal{Z},\Pi_0,\Pi_1,\dots,\Pi_k$ of vertices of $\Gamma$ are pairwise disjoint. For this purpose, we prove two auxiliary results.

\begin{lem}\label{lem:calc}
Let $w\in A^*$, $u,v\in\Sigma_0$ and $0\leq i\leq k$ be arbitrary. Then:
\begin{enumerate}
\item $w^{(i)}\psi = p_i w p_i^{-1}$,
\item $(uq_iw^{(i)}p_i)\psi = (u\psi)zw$,
\item $(vw^{(i)}p_i)\psi = (v\psi)p_iw$
\end{enumerate}
holds in $\mathrm{RU}(M_{S,T})$.
\end{lem}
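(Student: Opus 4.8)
The plan is to prove the three identities in order, deriving (2) and (3) from (1), with the single engine throughout being \cite[Corollary 3.2]{Gr-Inv}, used in the same way as in the proof of Proposition~\ref{pro:psi}: a word over $\ol{\Sigma}$ that represents a right unit of $M_{S,T}$ is equal in $M_{S,T}$ to its free reduction. I would combine this with two elementary facts. First, the words $p_i$, $zp_i^{-1}$, $p_iap_i^{-1}$ and $zbz^{-1}$ of Lemma~\ref{lem:mst-gen} all represent right units (in particular $p_ip_i^{-1}=1$ and $zz^{-1}=1$ hold in $M_{S,T}$, since $p_i$ and $z$ are prefixes of relators). Second, the right units form a submonoid, so any product of words each representing a right unit again represents a right unit, and is therefore amenable to the reduction above. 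Since equality in $\mathrm{RU}(M_{S,T})$ is equality in $M_{S,T}$, it suffices to work in $M_{S,T}$ throughout.

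For (1) I would induct on the length $n$ of $w=a_1\cdots a_n\in A^*$. The base case $n=0$ is exactly $p_ip_i^{-1}=1$. For the inductive step, writing $u=a_1\cdots a_{n-1}$, the definition of $\psi_0$ together with the inductive hypothesis gives
\[
w^{(i)}\psi=(u^{(i)}\psi)(a_n^{(i)}\psi)=(p_iup_i^{-1})(p_ia_np_i^{-1})=p_iup_i^{-1}p_ia_np_i^{-1}.
\]
Each factor $p_ia_jp_i^{-1}$ is a right unit by Lemma~\ref{lem:mst-gen}, so this word represents a product of right units and is itself a right unit; applying \cite[Corollary 3.2]{Gr-Inv} it equals its free reduction. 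As $A$ is disjoint from $\{p_0,\dots,p_k\}$, the only reducible factor is the central $p_i^{-1}p_i$, and removing it yields exactly $p_iua_np_i^{-1}=p_iwp_i^{-1}$, as required.

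Parts (2) and (3) follow by the same mechanism. Substituting $q_i\mapsto zp_i^{-1}$ and $p_i\mapsto p_i$, and invoking (1) for $w^{(i)}\psi$, I compute
\[
(uq_iw^{(i)}p_i)\psi=(u\psi)(zp_i^{-1})(p_iwp_i^{-1})(p_i)=(u\psi)\,z\,(p_i^{-1}p_i)\,w\,(p_i^{-1}p_i),
\]
and likewise $(vw^{(i)}p_i)\psi=(v\psi)\,p_i\,w\,(p_i^{-1}p_i)$. In each case the portion after $u\psi$ (respectively $v\psi$) is a product of the right-unit generators $zp_i^{-1}$, $p_iwp_i^{-1}$, $p_i$ (respectively $p_iwp_i^{-1}$, $p_i$) of Lemma~\ref{lem:mst-gen}, hence represents a right unit, so \cite[Corollary 3.2]{Gr-Inv} again allows me to replace it by its free reduction. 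Because $w\in A^*$ contains no occurrence of $p_i^{\pm1}$, every copy of $p_i^{-1}p_i$ cancels freely and nothing else does, so the two expressions reduce to $(u\psi)zw$ and $(v\psi)p_iw$ respectively.

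I expect the only genuine subtlety, and hence the main obstacle, to be the bookkeeping required to apply \cite[Corollary 3.2]{Gr-Inv} legitimately: one must verify at each step that the word really does represent a right unit \emph{before} reducing it, since in an inverse monoid $\alpha\alpha^{-1}$ is in general not equal to $1$, and one must check that free reduction collapses precisely the intended occurrences of $p_i^{-1}p_i$ and creates no unintended cancellations. Both points are settled by the two elementary observations above, namely that the words in question are products of the right-unit generators identified in Lemma~\ref{lem:mst-gen}, and that the alphabet $A$ is disjoint from $\{p_0,\dots,p_k\}$.
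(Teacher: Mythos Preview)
Your proposal is correct and follows essentially the same route as the paper's proof: both establish (1) by writing $w^{(i)}\psi$ as the product $(p_ia_1p_i^{-1})\cdots(p_ia_mp_i^{-1})$ and invoking \cite[Corollary~3.2]{Gr-Inv} to freely reduce (you do this by induction, the paper in one step), and both derive (2) and (3) from (1) by substituting, noting that the relevant factors $zp_i^{-1}$, $p_iwp_i^{-1}$, $p_i$ are right units, and applying the same corollary again. One tiny imprecision: $p_iwp_i^{-1}$ is not itself one of the generators from Lemma~\ref{lem:mst-gen} when $|w|>1$, but since by (1) it equals a product of such generators it is still a right unit, so your argument stands.
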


\begin{proof}
(1) If the word $w$ is empty then both the left- and the right-hand side evaluate to $1=p_ip_i^{-1}$. Otherwise, assume that $w=a_1\dots a_m$. Then $w^{(i)} = a_1^{(i)}\dots a_m^{(i)}$. Therefore,
$$
w^{(i)}\psi = (p_i a_1 p_i^{-1})\dots (p_i a_m p_i^{-1}),
$$
which is by \cite[Corollary 3.2]{Gr-Inv} equal in $\mathrm{RU}(M_{S,T})$ to $p_ia_1\dots a_mp_i^{-1} = p_iwp_i^{-1}$.

(2) Bearing in mind (1) and \cite[Corollary 3.2]{Gr-Inv}, we have
$$
(uq_iw^{(i)}p_i)\psi = (u\psi)(q_iw^{(i)}p_i)\psi = (u\psi)zp_i^{-1}p_iwp_i^{-1}p_i = (u\psi)zw,
$$
since $zp_i^{-1}$, $p_iwp_i^{-1}$ and $p_i$ are right units of $M_{S,T}$. 

(3) We have
$$
(vw^{(i)}p_i)\psi = (v\psi)(w^{(i)}p_i)\psi = (v\psi)p_iwp_i^{-1}p_i = (v\psi)p_iw,
$$
with the last step holding for a similar reason as in (2).
\end{proof}

\begin{lem}\label{lem-qi}
Let $\xi:S\Gamma(1)\to\Omega$ be a graph morphism that takes the root of the \Sch graph $S\Gamma(1)$ of $M_{S,T}$ to the root of $\Omega$, let $0\leq i\leq k$, and let $w_0\in\Sigma_0^*$ and $v\in A^*$ be words such that the vertex $(w_0v^{(i)}p_i)\psi\xi$ (where $(w_0v^{(i)}p_i)\psi\in\mathrm{RU}(M_{S,T})$ is identified with the corresponding vertex of $S\Gamma(1)$) belongs to a $z$-zone of $\Omega$. If the last letter of $w_0$ does not belong to $\{a^{(i)}:\ a\in A\}$ then $w_0$ is not empty and its last letter must be $q_i$.
\end{lem}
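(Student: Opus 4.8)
The plan is to analyze the structure of the word $w_0 v^{(i)} p_i$ and use the graph morphism $\xi$ together with the bi-deterministic structure of $\Omega$ (Lemma~\ref{lem:prop-om}) to trace where the claimed vertex lands. First I would observe that by Lemma~\ref{lem:calc}(3) we have $(w_0 v^{(i)} p_i)\psi = (w_0\psi) p_i v$ in $\mathrm{RU}(M_{S,T})$, so the vertex in question is reached in $S\Gamma(1)$ (and hence, via $\xi$, in $\Omega$) by reading $w_0\psi$ and then following an edge labelled $p_i$ followed by a path labelled $v\in A^*$. Since $v\in A^*$ stays within whatever zone we entered after the $p_i$-edge (reading letters of $A$ does not leave a $z$-zone or a $p_i$-zone once we are inside a copy of a Cayley graph), the crucial question is: which zone do we enter immediately after traversing the $p_i$-edge?

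The key step is to examine the vertex $W$ reached just before the final $p_i$-edge, i.e.\ the image under $\xi$ of $(w_0\psi)$ (more precisely the vertex from which the $p_i$-edge departs). By Lemma~\ref{lem:prop-om}(iv), the target of a $p_i$-edge lies in a $z$-zone precisely when it has incoming edges labelled by two distinct $p_j$'s; by the construction of $\Omega$, an edge labelled $p_i$ entering a $z$-zone must emanate from a vertex which is itself the image of a $q_i$-type walk, since in $\Omega$ the $z$-zones are entered ``from below'' via the $q_i$ generators (reflecting the relation $q_i\mapsto zp_i^{-1}$). Concretely, I would argue that if the last letter of $w_0$ is some $a^{(i)}$ then reading $p_i$ from $(w_0\psi)\xi$ would land us inside a $p_i$-zone rather than a $z$-zone, contradicting the hypothesis; and if $w_0$ is empty then $(v^{(i)}p_i)\psi = p_i v$ lands in the $p_i$-zone attached to the root, again not a $z$-zone. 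Ruling these out forces the last letter of $w_0$ to exist and to be exactly $q_i$, since $q_i$ is the unique generator whose $\psi$-image $zp_i^{-1}$ sets up an incoming $p_i$-edge into a $z$-zone (the $z$-edge creates the $z$-zone, and the $p_i^{-1}$ positions us to re-enter it along $p_i$).

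I expect the main obstacle to be handling the case analysis on the rightmost letter $\alpha$ of $w_0$ cleanly, in particular showing that no letter other than $q_i$ can produce a $p_i$-edge landing in a $z$-zone. The difficulty is that $w_0$ is an arbitrary word over $\Sigma_0$, so one must rule out spurious ways of arriving at a $z$-zone through compositions of generators; the controlled structure here is the bi-determinism of $\Omega$ (Lemma~\ref{lem:prop-om}(i)), which guarantees that the path $w_0\psi$ traces a unique walk in $\Omega$ and hence the incoming $p_i$-edge is uniquely determined. The cleanest route is likely to track, along the walk labelled $w_0\psi$ in $\Omega$, which zone the current vertex occupies, and to note that the only generator image that, when followed by $p_i$, can deposit us into a $z$-zone (as opposed to a $p_i$-zone or an $x^{-1}$-zone) is $q_i\psi = zp_i^{-1}$. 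Invoking Lemma~\ref{lem-inc}, which already characterises incoming $p_i$-edges as exactly those entering $\mathcal{Z}\cup\Pi_i$, would provide the analogous bookkeeping on the $\Gamma$ side and can be transported through $\xi$ to finish the argument.
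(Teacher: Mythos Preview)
Your overall approach matches the paper's: apply Lemma~\ref{lem:calc}(3) to reduce to tracing the walk labelled $(w_0\psi)p_iv$ in $\Omega$, observe that the $A^*$-suffix $v$ cannot change zones, rule out the empty case, and then run a case analysis on the last letter of $w_0$ using the bi-deterministic structure of $\Omega$ and the disjointness of zones (Lemma~\ref{lem:prop-om}). This is exactly what the paper does.

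There are, however, two points where your sketch is off. First, you spend effort on the case where the last letter of $w_0$ is some $a^{(i)}$, but this is already excluded by hypothesis; the actual work is in the remaining cases $a^{(t)}$ with $t\neq i$, $b^{(z)}$, $p_t$, and $q_t$ with $t\neq i$, each of which the paper handles separately by showing the outgoing $p_i$-edge then lands in a fresh $p_i$-zone rather than a $z$-zone. Second, your suggestion to invoke Lemma~\ref{lem-inc} is misplaced: that lemma concerns the Cayley graph $\Gamma$ of $Q$, not $\Omega$, and there is no graph morphism from $\Gamma$ into $\Omega$ available at this stage (the link is only through $\psi$ and $\xi$, which do not preserve the $\Sigma_0$-labelling). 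Moreover, Lemma~\ref{lem-inc} only tells you $W\in\mathcal{Z}\cup\Pi_i$, and the disjointness of $\mathcal{Z}$ and $\Pi_i$ (Lemma~\ref{lem-disj1}) is proved \emph{using} the present lemma, so this route would be circular. The argument has to be carried out entirely inside $\Omega$ by direct inspection of its recursive definition.
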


\begin{proof}
First of all, note that by the item (3) of the previous lemma we have that
$$
(w_0v^{(i)}p_i)\psi = (w_0\psi)p_iv
$$
holds in $\mathrm{RU}(M_{S,T})$. It follows that in $\Omega$ the walk from the vertex $U=(w_0\psi)\xi$ labelled by the word $p_iv$ leads to a vertex $V$ belonging to some $z$-zone $\Omega_z$.

Seeking a contradiction, assume first that $w_0$ is empty. Then there is a walk in $\Omega$ from its root to $V$ labelled by $p_iv$. But then it readily follows that the vertex $V$ belongs to the $p_i$-zone of the root vertex, a contradiction with Lemma \ref{lem:prop-om}(iii). Hence, $w_0$ cannot be the empty word, and we proceed by analysing the possible cases regarding its last letter.

\emph{Case 1: The last letter of $w_0$ is $a^{(t)}$ for some $a\in A$ and $t\neq i$.} This means that $(w_0\psi)p_iv = x\cdot p_tap_t^{-1}\cdot p_iv$ holds in $M_{S,T}$ for some $x\in\mathrm{RU}(M_{S,T})$, and so there is a walk 
$$
U_1\xrightarrow{p_t} U_2\xrightarrow{a} U_3\xleftarrow{p_t} U
$$
in $\Omega$, where $U_1=(x\psi)\xi$. It follows by the construction of $\Omega$ that if $U$ belongs to a $p_j$-zone for some $0\leq j\leq k$ or to a $z$-zone or to a $p_j^{-1}$-zone for some $j\neq t$ or is the root of $\Omega$, then $U_3$ is the root of the $p_t$-zone at the far end of the edge $U\xrightarrow{p_t} U_3$. Otherwise, $U$ is the single vertex in the $p_t^{-1}$-zone of $U_3$. The former case is impossible, since then $U_3$ is the root vertex of a copy of the Cayley graph of $A^*$ and so it cannot have an incoming edge labelled by $a$. In the latter case, the edge labelled by $p_i$ coming out of $U$ takes us to a $p_i$-zone because $i\neq t$, and the walk labelled by the word $v$ keeps us in the same $p_i$-zone. Thus $V$ belongs to that same $p_i$-zone, a contradiction with Lemma \ref{lem:prop-om}(iii).

\emph{Case 2: The last letter of $w_0$ is $b^{(z)}$ for some $b\in B$.} Then in $M_{S,T}$ we have $(w_0'\psi)p_iv = x\cdot zbz^{-1}\cdot p_iv$ for some $x\in\mathrm{RU}(M_{S,T})$, implying the existence of a walk
$$
U_1\xrightarrow{z} U_2\xrightarrow{b} U_3\xleftarrow{z} U
$$
in $\Omega$, where $U_1=(x\psi)\xi$. Hence, $U_2,U_3$ are in the same $z$-zone (and in fact in the ``shaded part'' a copy of the subgraph $\Gamma_T$, with incoming $z$-edges from $U_1$ and $U$). Again by the construction of $\Omega$, this setup implies that there is a $p_i$-zone at the far end of the $p_i$-edge coming out of $U$ and the walk from the root of that zone labelled by $v$ keeps us within that zone, forcing $V$ to be in the same zone, a contradiction with Lemma \ref{lem:prop-om}(iii).

\emph{Case 3: The last letter of $w_0$ is $p_t$ for some $0\leq t\leq k$.} In this case  $(w_0'\psi)p_iv = x\cdot p_t\cdot p_iv$ holds in $M_{S,T}$ for some $x\in\mathrm{RU}(M_{S,T})$. Hence, $U$ has an incoming edge labelled by $p_t$, and now in a similar fashion as in the previous cases a simple inspection of the definition of $\Omega$ puts the vertex $V$ in some $p_i$-zone.

\emph{Case 4: The last letter of $w_0$ is $q_t$ for some $t\neq i$.} Now we have $(w_0'\psi)p_iv = x\cdot zp_t^{-1}\cdot p_iv$ in $M_{S,T}$ for some $x\in\mathrm{RU}(M_{S,T})$ resulting in the existence of the walk
$$
U_1\xrightarrow{z} U_2\xleftarrow{p_t} U
$$
in $\Omega$, where $U_1=(x\psi)\xi$. So, $U_2$ belongs to a $z$-zone (and again to its ``shaded part'') and $U$ then must be the single vertex in the $p_t^{-1}$-zone of $U_2$. Once again, because $i\neq t$, we must conclude that the walk $U\leadsto V$ labelled by $p_iv$ ends in a $p_i$-zone, a contradiction. 

Thus we are left with no other possibility but the last letter of $w_0$ being $q_i$, and the lemma is proved. 
\end{proof}

We are now in a position to state and prove the first part of the announced result about the  disjointness of the considered classes.

\begin{lem}\label{lem-disj1}
For any $0 \leq i \leq k$, $\mathcal{Z}$ and $\Pi_i$ are disjoint.
\end{lem}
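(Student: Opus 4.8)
The plan is to argue by contradiction, transferring the combinatorial descriptions of the two classes of vertices from the Cayley graph $\Gamma$ of $Q$ over to the graph $\Omega$ by means of the homomorphism $\psi$ together with a graph morphism $\xi\colon S\Gamma(1)\to\Omega$ sending root to root, whose existence is guaranteed by Proposition~\ref{pro:mst-om}. So I would fix such a $\xi$ and suppose, for some fixed $i$, that $U\in\mathcal{Z}\cap\Pi_i$. By the definition of $\mathcal{Z}$ I may write $U=uq_jw^{(j)}p_j$ in $Q$ for some $u\in\Sigma_0^*$, $w\in A^*$ and index $j$, and by the definition of $\Pi_i$ I may also write $U=vs^{(i)}p_i$ in $Q$, where $s\in A^*$ and $v\in\Sigma_0^*$ is either empty or ends in a letter $\alpha\notin\{q_i\}\cup\{a^{(i)}:a\in A\}$.

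The key step will be to show that membership of $U$ in $\mathcal{Z}$ forces the vertex $U\psi\xi$ to lie in a $z$-zone of $\Omega$. By Lemma~\ref{lem:calc}(2) we have $U\psi=(u\psi)zw$ in $\mathrm{RU}(M_{S,T})$. Since $U\psi$ is a right unit and every left factor of a right unit is again a right unit (if $st$ is a right unit then $1=(st)(st)^{-1}=stt^{-1}s^{-1}\leq ss^{-1}\leq 1$), every word representing $U\psi$ traces a genuine path from the root of $S\Gamma(1)$; in particular a word of the form $\omega_u z w$, with $\omega_u$ representing $u\psi$, does so. Applying $\xi$ and using that $\Omega$ is bi-deterministic (Lemma~\ref{lem:prop-om}(i)), this lifts to the unique path in $\Omega$ from the root labelled $\omega_u z w$, ending at $U\psi\xi$. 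Tracing it, the $z$-edge leads into a $z$-zone, and, by the construction of a $z$-zone, the terminal factor $w\in A^*$ then keeps us inside the copy of $\Gamma_S$ sitting within that $z$-zone. Hence $U\psi\xi$ lies in a $z$-zone.

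With this in hand the proof concludes quickly. Using the $\Pi_i$-description $U=vs^{(i)}p_i$, we have $(vs^{(i)}p_i)\psi\xi=U\psi\xi$ in a $z$-zone, so Lemma~\ref{lem-qi} applies with $w_0=v$: its hypothesis — that the last letter of $v$ is not of the form $a^{(i)}$, or that $v$ is empty — is exactly what the definition of $\Pi_i$ guarantees, and its conclusion forces $v$ to be nonempty with last letter $q_i$. This contradicts the requirement built into $\Pi_i$ that $v$ be empty or end in a letter outside $\{q_i\}$. Therefore $\mathcal{Z}\cap\Pi_i=\varnothing$, as claimed.

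The main obstacle is the second paragraph: correctly lifting the algebraic identity $U\psi=(u\psi)zw$ to an actual path in $\Omega$ and verifying that its endpoint lands in a $z$-zone, where the only delicate points are the observation that left factors of right units are right units (so the path genuinely exists in $S\Gamma(1)$) and the use of bi-determinism of $\Omega$ to pin down the lifted path. Once this is established, the heavy case analysis has already been carried out inside Lemma~\ref{lem-qi}, which immediately excludes the $\Pi_i$-form and delivers the contradiction.
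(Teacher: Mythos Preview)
Your proof is correct and follows essentially the same approach as the paper: assume a common vertex, push through $\psi$ and the morphism $\xi\colon S\Gamma(1)\to\Omega$ to land in a $z$-zone via the $\mathcal{Z}$-witness, then invoke Lemma~\ref{lem-qi} on the $\Pi_i$-witness to force the forbidden final letter $q_i$. Your explicit justification that prefixes of a word representing a right unit are themselves right units (hence the path genuinely lives in $S\Gamma(1)$) is a detail the paper leaves implicit, and your use of a possibly different index $j$ in the $\mathcal{Z}$-witness is harmless since Lemma~\ref{lem:calc}(2) is index-independent.
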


\begin{proof}
Seeking a contradiction, assume that there is a vertex of $\Gamma$ that is is at the same time type-$z$ and type-$p_i$ for some $0\leq i\leq k$. Then we have $uq_iw_1^{(i)}p_i = vw_2^{(i)}p_i$ in $Q$ for some words $w_1,w_2\in A^*$ and $u,v\in\Sigma_0^*$ such that $v$ is either empty or ends with a letter other than $q_i$ or $a^{(i)}$ ($a\in A$). 
However, this implies that $(uq_iw_1^{(i)}p_i)\psi = (vw_2^{(i)}p_i)\psi$ holds in $M_{S,T}$, which, by Lemma \ref{lem:calc}, takes the following form:
$$
(u\psi)zw_1 = (v\psi)p_iw_2.
$$
Upon applying the graph morphism $\xi: S\Gamma(1)\to\Omega$ from the previous lemma, we obtain two vertices of $\Omega$, namely $U_1=(u\psi)\xi$ and $U_2=(v\psi)\xi$, such that the walks in $\Omega$ starting from $U_1,U_2$ labelled by $zw_1$ and $p_iw_2$, respectively, end in the same vertex $V$. Since the walk $U_1\leadsto V$ starts with a $z$-edge and proceeds by edges labelled by letters from $A$, the vertex $V=(vw_2^{(i)}p_i)\psi\xi$ must belong to some $z$-zone of $\Omega$. But then the previous lemma immediately implies that the word $v$ must end with the letter $q_i$. This is a contradiction with the assumed form of $v$.
\end{proof}

Our next goal is to prove that $\Pi_i\cap\Pi_j=\es$ whenever $i\neq j$. However, first we are going to prove a slightly stronger result about the Cayley graph $\Gamma$ of the right cancellative monoid $Q$, because this result will be of key importance at a later stage of our arguments, too (specifically, in Subsection \ref{subsec:prop-ii}). 

\begin{lem}\label{lem-det}
Let $0\leq i\neq j\leq k$, $v\in A^*$ and $U_1,U_2$ two vertices of $\Gamma$ such that the walks from $U_1,U_2$ labelled by $v^{(i)}p_i$ and $v^{(j)}p_j$, respectively, end in 
the same vertex $V$ of $\Gamma$. Then there exists a vertex $W$ of $\Gamma$ and a word $u\in A^*$ such that the walk from $W$ labelled by $q_iu^{(i)}$ ends in $U_1$ and the walk 
from $W$  labelled by $q_ju^{(j)}$ ends in $U_2$. In particular, each vertex of $\Gamma$ that has incoming edges labelled by $p_i$ and $p_j$ for some $i\neq j$ belongs to $\mathcal{Z}$.
\end{lem}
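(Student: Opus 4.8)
\textbf{Proof strategy for Lemma~\ref{lem-det}.}
The plan is to work inside the Cayley graph $\Gamma$ of the right cancellative monoid $Q$ and exploit bi-determinism together with the defining relations of $Q$, especially the first family $q_i w^{(i)} p_i = q_0 w^{(0)} p_0$. First I would translate the hypothesis into equalities in $Q$: fixing words $\sigma_1,\sigma_2 \in \Sigma_0^*$ labelling walks from the root to $U_1, U_2$, the assumption that both $v^{(i)} p_i$ from $U_1$ and $v^{(j)} p_j$ from $U_2$ reach a common vertex $V$ means that $\sigma_1 v^{(i)} p_i = \sigma_2 v^{(j)} p_j$ holds in $Q$. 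The goal is to produce a common predecessor $W$ together with a word $u \in A^*$ so that reading $q_i u^{(i)}$ from $W$ lands on $U_1$ and reading $q_j u^{(j)}$ from $W$ lands on $U_2$.

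The key algebraic input is the first relation family, which I would read as saying that the element $q_i w^{(i)} p_i$ is independent of the index $i$ for each fixed $w \in A^*$. Setting $u = v$ and using the relation with $w = v$ gives $q_i v^{(i)} p_i = q_j v^{(j)} p_j$ in $Q$. This single equality is precisely what forces the existence of the common predecessor: if I can locate a vertex $W$ admitting an outgoing walk labelled $q_i v^{(i)} p_i$ that ends at $V$, then, since $q_i v^{(i)} p_i$ and $q_j v^{(j)} p_j$ represent the \emph{same} element of $Q$, the walk from $W$ labelled $q_j v^{(j)} p_j$ also ends at $V$. Now I would use bi-determinism of $\Gamma$ (Lemma~\ref{lem:prop-om} for $\Omega$ has its counterpart here, noted at the start of Subsection on ``Tweaking the Cayley graph of $Q$''): reading the suffix $p_i$ backwards from $V$ is deterministic, so the walk from $W$ labelled $q_i v^{(i)}$ must pass through $U_1$ at the penultimate stage, and symmetrically the $q_j v^{(j)}$-walk passes through $U_2$. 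Taking $u = v$ then gives exactly the conclusion. The main obstacle is establishing the existence of $W$ as an actual vertex of $\Gamma$ reachable from the root: I would obtain it by backtracking the $q_i$-edge at the start of the walk, i.e.\ since $U_1$ is reached from the root by $\sigma_1$ and has the common endpoint structure forcing $\sigma_1 = \sigma_1' q_i w_1^{(i)}$ for appropriate factorisation, the vertex $W$ is the one reached by the prefix $\sigma_1'$.

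For the ``In particular'' clause, suppose a vertex $V$ has incoming edges labelled $p_i$ and $p_j$ with $i \neq j$; let $U_1, U_2$ be their sources. By Lemma~\ref{lem-inc} each of $U_1, U_2$ lies in $\mathcal{Z} \cup \Pi_i$ respectively $\mathcal{Z} \cup \Pi_j$, and in particular $V \in (\mathcal{Z}\cup\Pi_i)\cap(\mathcal{Z}\cup\Pi_j)$. Applying the main part of the lemma with $v$ the empty word produces a common predecessor $W$ and a walk from $W$ to $V$ beginning with $q_i$; tracing the label $u\, q_i u^{(i)} p_i$ from the root through $W$ exhibits $V$ as type-$z$, so $V \in \mathcal{Z}$. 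The potential difficulty here is ruling out $V \in \Pi_i \setminus \mathcal{Z}$, which is handled precisely by the disjointness established in Lemma~\ref{lem-disj1}: a $p_j$-incoming edge with $j \neq i$ cannot coexist with membership in $\Pi_i$ unless the vertex is already type-$z$, since otherwise the labelling of the walk from the root would contradict the structure analysed via the morphism $\xi : S\Gamma(1) \to \Omega$ in Lemma~\ref{lem-qi}. I expect the delicate point throughout to be bookkeeping the factorisations of the label words to guarantee that the predecessor $W$ genuinely exists as a vertex and that the index-swapping is justified purely by the first relation family together with bi-determinism.
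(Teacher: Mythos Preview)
Your proposal has a genuine gap at precisely the point you flag as ``the main obstacle'': the existence of the vertex $W$. You write that ``the common endpoint structure forc[es] $\sigma_1 = \sigma_1' q_i w_1^{(i)}$ for appropriate factorisation'', but this is the entire content of the lemma, and you have not given an argument for it. Bi-determinism of $\Gamma$ lets you cancel suffixes \emph{once you know two walks with a common label end at the same place}; it does not by itself tell you that $U_1$ has any incoming $q_i$-edge at all, let alone one from a vertex $W$ that also sends a $q_j$-edge toward $U_2$. The relation $q_i v^{(i)} p_i = q_j v^{(j)} p_j$ is useful only \emph{after} you have located such a $W$, so invoking it first is circular.

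The paper's argument is essentially different and substantially heavier: it pushes the equality $\sigma_1 v^{(i)} p_i = \sigma_2 v^{(j)} p_j$ forward along $\psi:Q\to\mathrm{RU}(M_{S,T})$ (Lemma~\ref{lem:calc}) and then along the graph morphism $\xi:S\Gamma(1)\to\Omega$ (Proposition~\ref{pro:mst-om}). In $\Omega$ one has, by construction, the structural fact (Lemma~\ref{lem:prop-om}(iv)) that a vertex with incoming $p_i$- and $p_j$-edges for $i\neq j$ lies in a $z$-zone; no such fact is yet available in $\Gamma$---indeed, establishing it for $\Gamma$ is the whole point of the ``In particular'' clause you are trying to prove. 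Once the image vertex is known to sit in a $z$-zone, the paper strips off the longest $a^{(i)}$-suffix of $\sigma_1$, writes $\sigma_1 = w_0' u^{(i)}$, and applies the case analysis of Lemma~\ref{lem-qi} to force the last letter of $w_0'$ to be $q_i$. That is where the factorisation $\sigma_1 = w_0 q_i u^{(i)}$ actually comes from; right cancellativity then gives $\sigma_2 = w_0 q_j u^{(j)}$. Note also that the resulting $u$ is the maximal $A^{(i)}$-suffix of $\sigma_1$, not $v$ (and not the empty word in general), so your ``taking $u=v$'' is off as well.
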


\begin{proof}
By the conditions of the lemma, there are words $w',w''\in \Sigma_0^*$ such that $w'v^{(i)}p_i = w''v^{(j)}p_j$ holds in $Q$, as well as walks in $\Gamma$ from its root vertex to $U_1,U_2$ labelled by $w',w''$, respectively. 

By Lemma \ref{lem:calc}(3), we have $(w'v^{(i)}p_i)\psi = (w'\psi)p_iv$ and $(w''v^{(j)}p_j)\psi = (w''\psi)p_jv$, and thus
$$
(w'\psi)p_iv = (w''\psi)p_jv
$$
holds in $M_{S,T}$. Therefore, it follows that in the \Sch graph $S\Gamma(1)$ of $M_{S,T}$ the walks from vertices $w'\psi$ and $w''\psi$ labelled by $p_iv$ and $p_jv$, respectively, lead to the same vertex. But then, by Proposition \ref{pro:mst-om}, we have that if $U'=(w'\psi)\xi$ and  $U''=(w''\psi)\xi$ (where $\xi:S\Gamma(1)\to\Omega$ is the graph morphism from Lemma \ref{lem-qi}) the walks in $\Omega$ from $U',U''$ labelled by $p_iv$ and $p_jv$, respectively, lead to the same vertex $V_1$. However, $\Omega$ is bi-deterministic, so we conclude that there is a vertex $V_0'$ of $\Omega$ with edges 
$$
U'\xrightarrow{p_i} V_0' \xleftarrow{p_j} U''
$$
and a walk $V_0'\leadsto V_1$ labelled by the word $v$. In particular, by Lemma \ref{lem:prop-om}(iv), the vertex $V_0'$ belongs to some $z$-zone $\Omega_z$ and all vertices along the considered walk $V_0'\leadsto V_1$ belong to the same $z$-zone.

Let $u^{(i)}$ now be the longest suffix of $w'$ consisting entirely of letters from $\{a^{(i)}:\ a\in A\}$, so that we can write
$$
w' = w_0' u^{(i)}.
$$
Here, $w_0'\in\Sigma_0^*$ is either empty or its last letter is not of the form $a^{(i)}$ ($a\in A$). Similarly as above, we have
$$
(w'v^{(i)}p_i)\psi = (w_0'u^{(i)}v^{(i)}p_i)\psi = (w_0'\psi)p_iuv.
$$
This implies that in $\Omega$ we have a walk $V_0=((w_0'\psi) p_i)\xi\leadsto V_0'$ labelled by the word $u\in A^*$; however, this is possible only if this latter walk, too, belongs to the same $z$-zone as $V_0'\leadsto V_1$ does. The combined effect is a walk $V_0\leadsto V_1$ labelled by $uv$ that belongs to a single $z$-zone. In particular, the vertex
$$
V_1 = (w_0'u^{(i)}v^{(i)}p_i)\psi\xi,
$$
belongs to a $z$-zone of $\Omega$. Then, by the previous lemma, the word $w_0'$ is not empty and its last letter is $q_i$.

Therefore, we arrive at the conclusion that 
$$
w'v^{(i)}p_i=w_0 q_i u^{(i)} v^{(i)} p_i = w_0 q_j u^{(j)} v^{(j)} p_j
$$
holds in $Q$ for some $w_0\in\Sigma_0^*$. Since $w'v^{(i)}p_i = w''v^{(j)}p_j$ holds in $Q$ and $Q$ is right cancellative, we must have that $w''=w_0 q_j u^{(j)}$ holds in $Q$. Upon taking $W$ of $\Gamma$ to be the terminal vertex of the walk in $\Gamma$ starting at its root vertex labelled by $w_0$, the lemma is proved. The last statement is just a special case when $v$ is the empty word.
\end{proof}

\begin{lem}\label{lem-disj2}
For any $0 \leq i\neq j \leq k$, $\Pi_i$ and $\Pi_j$ are disjoint.   
\end{lem}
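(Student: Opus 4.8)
The plan is to derive this as an immediate consequence of Lemma~\ref{lem-det} together with the disjointness already established in Lemma~\ref{lem-disj1}, by a short argument by contradiction.

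First I would suppose, for a contradiction, that some vertex $U$ of $\Gamma$ lies in $\Pi_i\cap\Pi_j$ with $i\neq j$. The key elementary observation is that membership in $\Pi_i$ forces an incoming edge labelled $p_i$ at $U$: by the definition of a type-$p_i$ vertex there is a walk in $\Gamma$ from the root to $U$ labelled by a word of the form $v\,w^{(i)}p_i$, and since this word terminates in the single letter $p_i$, the final edge of that walk is a $p_i$-labelled edge ending at $U$. In exactly the same way, $U\in\Pi_j$ yields an incoming edge at $U$ labelled $p_j$. Thus $U$ carries incoming edges labelled by both $p_i$ and $p_j$ with $i\neq j$.

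Next I would invoke the final (``In particular'') assertion of Lemma~\ref{lem-det}, which states that any vertex of $\Gamma$ possessing incoming edges labelled $p_i$ and $p_j$ for some $i\neq j$ must belong to $\mathcal{Z}$. Hence $U\in\mathcal{Z}$. But then $U\in\mathcal{Z}\cap\Pi_i$, which directly contradicts Lemma~\ref{lem-disj1}. Therefore no such vertex $U$ exists, and $\Pi_i\cap\Pi_j=\es$, as required.

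I do not expect a genuine obstacle in this argument, since all the substantive work has already been carried out in Lemma~\ref{lem-det} (whose proof analyses the $z$-zones and $p_i$-zones of $\Omega$ via the graph morphism $\xi\colon S\Gamma(1)\to\Omega$). The only point requiring a moment's care is the opening reduction: one must confirm that the defining walk witnessing $U\in\Pi_i$ really does end in a $p_i$-labelled edge into $U$, which is immediate because the relevant word always terminates in the letter $p_i$ (even in the degenerate case when $w^{(i)}$ is empty). Once that reduction is in place, this lemma is a purely formal corollary of the two earlier results.
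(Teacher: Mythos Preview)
Your proof is correct and follows essentially the same approach as the paper's own proof: assume a common vertex, observe it has incoming $p_i$- and $p_j$-edges, apply Lemma~\ref{lem-det} to place it in $\mathcal{Z}$, and contradict Lemma~\ref{lem-disj1}. You have simply spelled out in slightly more detail why membership in $\Pi_i$ forces an incoming $p_i$-edge, which the paper leaves implicit.
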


\begin{proof}
Assume that $V\in\Pi_i\cap\Pi_j$. Then $V$ has incoming edges both labelled by $p_i$ and $p_j$, so by the previous result $V\in\mathcal{Z}$. However, this contradicts Lemma \ref{lem-disj1}.
\end{proof}

\begin{cor}
\begin{enumerate} 
\item 
A vertex $W$ of $\Gamma$ belongs to $\mathcal{Z}$ if and only if it has an incoming edge labelled by $p_i$ for all $0\leq i\leq k$.
\item 
A vertex $W$ of $\Gamma$ belongs to $\Pi_i$ for some $0\leq i\leq k$ if and only if it has an incoming edge labelled by $p_i$ but no incoming edges labelled by $p_j$ for any $0\leq j\leq k$, $j\neq i$.
\end{enumerate} 
\end{cor}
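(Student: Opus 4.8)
The plan is to derive this corollary as a purely logical bookkeeping exercise from the four lemmas just established, with no further recourse to the graph $\Omega$; all of the geometric work has already been carried out in Lemma~\ref{lem-det}. The two tools I will lean on throughout are Lemma~\ref{lem-inc}, which states that a vertex $W$ has an incoming $p_i$-edge precisely when $W \in \mathcal{Z} \cup \Pi_i$, together with the disjointness facts Lemma~\ref{lem-disj1} (giving $\mathcal{Z} \cap \Pi_i = \es$) and Lemma~\ref{lem-disj2} (giving $\Pi_i \cap \Pi_j = \es$ for $i \neq j$). I will assume throughout that $k \geq 1$, so that at least two distinct indices are available; this is the situation of interest, and it is what lets me invoke the final assertion of Lemma~\ref{lem-det}.

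For part~(1), the forward implication is immediate: if $W \in \mathcal{Z}$ then $W \in \mathcal{Z} \cup \Pi_i$ for every $i$, so by Lemma~\ref{lem-inc} the vertex $W$ has an incoming $p_i$-edge for each $0 \leq i \leq k$. For the converse, if $W$ has an incoming $p_i$-edge for every $i$, then in particular it has incoming edges labelled $p_0$ and $p_1$, and since these indices are distinct the last assertion of Lemma~\ref{lem-det} places $W$ in $\mathcal{Z}$.

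For part~(2), I will first handle the forward implication. If $W \in \Pi_i$ then $W \in \mathcal{Z} \cup \Pi_i$, so Lemma~\ref{lem-inc} yields an incoming $p_i$-edge. Were $W$ also to have an incoming $p_j$-edge for some $j \neq i$, Lemma~\ref{lem-inc} would force $W \in \mathcal{Z} \cup \Pi_j$; but $W \in \mathcal{Z}$ contradicts Lemma~\ref{lem-disj1} and $W \in \Pi_j$ contradicts Lemma~\ref{lem-disj2}, so no such edge can exist. Conversely, suppose $W$ has an incoming $p_i$-edge but none labelled $p_j$ for any $j \neq i$. By Lemma~\ref{lem-inc} we have $W \in \mathcal{Z} \cup \Pi_i$. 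If $W$ were in $\mathcal{Z}$, then the forward direction of part~(1) would supply an incoming $p_j$-edge for \emph{every} index, in particular for some $j \neq i$, contradicting the hypothesis; hence $W \in \Pi_i$, as required.

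I expect essentially no obstacle here: the entire content is a clean combination of the preceding lemmas, and the only points requiring care are the bookkeeping of which disjointness statement rules out which alternative, and the harmless standing assumption that at least two distinct indices are available. The genuine difficulty was already absorbed into Lemma~\ref{lem-det}, whose proof relied on the bi-determinism of $\Omega$ and the morphism $\xi \colon S\Gamma(1) \to \Omega$.
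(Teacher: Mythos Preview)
Your proof is correct and matches the paper's intent: the corollary is stated there without proof, as an immediate consequence of Lemmas~\ref{lem-inc}, \ref{lem-disj1}, \ref{lem-disj2}, and the final assertion of Lemma~\ref{lem-det}, and your argument spells out precisely that derivation. Your explicit remark that $k\geq 1$ is needed is apt---without a second index the converse in part~(1) and the forward implication in part~(2) break down, and the paper tacitly relies on this (cf.\ the later example in Section~\ref{sec:fp} where a trivial relation is added specifically to force $k=1$).
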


So now, here is how we transform the graph $\Gamma$. 
\begin{itemize}
\item For any three $U_1,U_2,U_3\in Q$ such that there is an edge $U_1\to U_2$ labelled $q_0$ and an edge $U_2\to U_3$ labelled $p_0$ (by applying the first relation in the presentation of $Q$ in the case $w$ is the empty word, for any $1\leq i\leq k$ there are walks from $U_1$ to $U_3$ labelled by $q_ip_i$), add an edge $U_1\to U_3$ labelled $z$.
After adding these $z$-edges note that every vertex will have a unique $z$-edge coming out of it because from every vertex in $\Gamma$ there is a unique path labelled $q_0p_0$.
\begin{center}
\begin{tikzpicture} 
\filldraw[color=black, fill=black] (-2,0) circle (1.5pt) node[anchor=east]{$U_1$\,};
\filldraw[color=black, fill=black] (0,1) circle (1.5pt) node[anchor=south]{$U_2$};
\filldraw[color=black, fill=black] (2,0) circle (1.5pt) node[anchor=west]{\,$U_3$};
\draw[thick,->] (-2,0) -- (-0.03,0.985) node[midway,above left]{$q_0$};
\draw[thick,->] (0,1) -- (1.97,0.015) node[midway,above right]{$p_0$};
\draw[thick,dotted,->] (-2,0) -- (1.985,0) node[midway,above]{$z$};
\end{tikzpicture}
\end{center}
\item For any $U_1,U_2\in Q$ such that there is an edge $U_1\to U_2$ labelled $a^{(i)}$ (for some $a\in A$, $0\leq i\leq k$), add an edge $V_1\to V_2$ labelled $a$, where 
$V_1,V_2$ are the endpoints of the unique edges labelled $p_i$ originating from $U_1,U_2$, respectively. 
\begin{center}
\begin{tikzpicture} 
\filldraw[color=black, fill=black] (-1,0) circle (1.5pt) node[anchor=east]{$U_1$\,};
\filldraw[color=black, fill=black] (1,0) circle (1.5pt) node[anchor=west]{\,$U_2$};
\draw[thick,->] (-1,0) -- (0.97,0) node[midway,above]{$a^{(i)}$};
\filldraw[color=black, fill=black] (-1,-2) circle (1.5pt) node[anchor=east]{$V_1$\,};
\draw[thick,->] (-1,0) -- (-1,-1.97) node[midway,left]{$p_i$};
\filldraw[color=black, fill=black] (1,-2) circle (1.5pt) node[anchor=west]{\,$V_2$};
\draw[thick,->] (1,0) -- (1,-1.97) node[midway,right]{$p_i$};
\draw[thick,dotted,->] (-1,-2) -- (0.97,-2) node[midway,above]{$a$};
\end{tikzpicture}
\end{center}
\item For any $U_1,U_2\in Q$ such that there is an edge $U_1\to U_2$ labelled $b^{(z)}$ (for some $b\in B$), add an edge $V_1\to V_2$ labelled $b$, where $V_1,V_2$ are 
the endpoints of 
the unique edges labelled $z$ originating from $U_1,U_2$, respectively.
\begin{center}
\begin{tikzpicture} 
\filldraw[color=black, fill=black] (-1,0) circle (1.5pt) node[anchor=east]{$U_1$\,};
\filldraw[color=black, fill=black] (1,0) circle (1.5pt) node[anchor=west]{\,$U_2$};
\draw[thick,->] (-1,0) -- (0.97,0) node[midway,above]{$b^{(z)}$};
\filldraw[color=black, fill=black] (-1,-2) circle (1.5pt) node[anchor=east]{$V_1$\,};
\draw[thick,->] (-1,0) -- (-1,-1.97) node[midway,left]{$z$};
\filldraw[color=black, fill=black] (1,-2) circle (1.5pt) node[anchor=west]{\,$V_2$};
\draw[thick,->] (1,0) -- (1,-1.97) node[midway,right]{$z$};
\draw[thick,dotted,->] (-1,-2) -- (0.97,-2) node[midway,above]{$b$};
\end{tikzpicture}
\end{center}
\item Once the previous steps are completed, remove all edges labelled by any of the letters $q_i$ ($0\leq i\leq k$), $a^{(i)}$ ($a\in A$, $0\leq i\leq k$), $b^{(z)}$ ($b\in B$).
This results in a graph whose edges are labelled by $\Sigma\setminus\{d\}$.
\item Finish off the modification by adding certain $d$-labelled edges. First of all, add a $d$-labelled loop at any vertex $U\in \mathcal{Z}\cup \Pi_0$.
Furthermore, for any $U,V,W\in\Pi_i$ ($1\leq i\leq k$) such that there is a walk from $W$ to $U$ labelled by $u_i$ and a walk from $W$ to $V$ labelled by $v_i$, where $u_i=v_i$ is the defining relation of $S$ labelled by $i$, add a $d$-edge 
$V\to U$.
\begin{center}
\begin{tikzpicture} 
\filldraw[color=black, fill=black] (0,0) circle (1.5pt) node[anchor=north]{$W$};
\filldraw[color=black, fill=black] (-1,2) circle (1.5pt) node[anchor=east]{$U$\,};
\filldraw[color=black, fill=black] (1,2) circle (1.5pt) node[anchor=west]{\,$V$};
\draw[thick,->] (0,0) .. controls (-1,0.66) .. (-1,1.97) node[midway,left]{$u_i$};
\draw[thick,->] (0,0) .. controls (1,0.66) .. (1,1.97) node[midway,right]{$v_i$};
\draw[thick,dotted,->] (1,2) -- (-0.97,2) node[midway,above]{$d$};
\end{tikzpicture}
\end{center}
\end{itemize}
The result of this process is the graph $\Gamma'$.

\subsection{The central idea}\label{idea}

Here is the gist of the plan of our proof. Assume that we were able to show the following two properties of the graph $\Gamma'$:
\begin{itemize}
\item[(i)] for any word $r\in\overline{\Sigma}^*$ that occurs as a relator word in the presentation of $M_{S,T}$ and any $U\in Q=V(\Gamma')$ there is a closed walk labelled by $r$ 
originating at $U$;
\item[(ii)] $\Gamma'$ is bi-deterministic.
\end{itemize}
We claim that these two properties imply that the homomorphism $\psi:Q\to\mathrm{RU}(M_{S,T})$ from Proposition \ref{pro:psi} must be injective. In the rest of this subsection we explain why this is indeed the case.

First of all, \cite[Lemma 3.3]{GK} implies that for any $W\in Q$ there is a $\Sigma$-labelled graph morphism $S\Gamma(1)\to\Gamma'$ (where $S\Gamma(1)$ is taken with respect 
to $M_{S,T}$) that takes the root vertex of $S\Gamma(1)$ to $W$. In particular, let $\phi$ be one such morphism that takes the root of $S\Gamma(1)$ to the vertex of $\Gamma'$ 
representing the identity element of $Q$.

So, assume now that $U\psi=V\psi=X\in\mathrm{RU}(M_{S,T})$ for some $U,V\in Q$. Then there are words $u,v\in\Sigma_0^*$ such that in the Cayley graph $\Gamma$ there are walks 
from the vertex representing $1$ to $U$ and $V$ labelled by $u$ and $v$, respectively. But then in $S\Gamma(1)$ there are two (identical or not) walks from the root to $X$ labelled 
by the words $u\ol\psi,v\ol\psi\in\Sigma^*$, respectively, where $\ol\psi:\Sigma_0^*\to\Sigma^*$ is a morphism of free monoids induced by the same substitution \eqref{subst} that 
defines $\psi$. Consequently, in $\Gamma'$, there are walks from $1$ to $Y=X\phi$ labelled by $u\ol\psi$ and $v\ol\psi$. Bearing in mind the form of the mapping $\ol\psi$, 
the construction of the graph $\Gamma'$ and its bi-determinism, it follows that in $\Gamma'$ there are walks from $1$ to $Y$ labelled by $u$ and $v$. Since $\Gamma'$ is assumed to be bi-deterministic, it follows that $U=Y=V$, showing that $\psi$ is injective.

Therefore, to prove Theorem \ref{main-GK} it suffices to verify that the graph $\Gamma'$ indeed has the properties (i) and (ii).

\subsection{The proof of property (i)} 

Throughout, let $U$ be an arbitrary vertex of $\Gamma'$. We analyse five types of relators of $M_{S,T}$.

\begin{itemize}
\item[(1)] $r=p_i a p_i^{-1} p_i a^{-1} p_i^{-1}$ ($0\leq i\leq k$) \\
Clearly, there is a unique edge of the form $U\to V$ labelled by $a^{(i)}$ in $\Gamma$. If $U',V'$ are, respectively,
the endpoints of the edges 
in $\Gamma$ 
originating from $U,V$ labelled by $p_i$, then by construction $\Gamma'$ contains an edge $U'\to V'$ labelled by $a$. But then
$$
U\xrightarrow{p_i} U'\xrightarrow{a} V'\xleftarrow{p_i} V \xrightarrow{p_i} V'\xleftarrow{a} U'\xleftarrow{p_i} U
$$
is a closed walk in $\Gamma'$ based at $U$ and labelled by $r$.

\item[(2)] $r=p_i u_i d^{-1} v_i^{-1} p_i^{-1}$ ($1\leq i\leq k$) \\ 
First of all, in $\Gamma$ there are unique walks $U\to U_1\to\dots\to U_q$ and $U\to V_1\to\dots\to V_s$ labelled by $u_i^{(i)}$ and $v_i^{(i)}$, respectively. 
Let $W$, $U_1',\dots,U_q'$, $V_1',\dots,V_s'$ be the endpoints of edges in $\Gamma$ labelled by $p_i$ and originating, respectively, from $U$, $U_1,\dots,U_q$, $V_1,\dots,V_s$.
Then $W$ has an incoming edge labelled by $p_i$, which implies by Lemma~\ref{lem-inc} that $W\in\mathcal{Z}\cup\Pi_i$. Furthermore, by the construction of $\Gamma'$, there are walks $W\to U_1'\to\dots\to U_q'$ 
and $W\to V_1'\to\dots\to V_s'$ (in $\Gamma'$) labelled by $u_i$ and $v_i$, respectively. 
\begin{itemize}
    \item[(a)] Assume first that $W\in\mathcal{Z}$. Then, by definition, there is a walk in $\Gamma$ whose final vertex is $W$ and which is labelled by a word of the form $q_iw^{(i)}p_i$ for some word $w\in A^*$. Since $\Gamma$ is the Cayley graph of a right cancellative monoid and there is an edge in $\Gamma$ from $U$ to $W$ labelled by $p_i$, it follows that in $\Gamma$ there is a walk whose final vertex is $U$ labelled by $q_iw^{(i)}$. But then it immediately follows that for each $1\leq j\leq q$ there is a walk in $\Gamma$ with final vertex $U_j$ labelled by $q_i(wy_j)^{(i)}$ where $y_j$ is the prefix of $u_i$ of length $j$. We conclude that there is a walk in $\Gamma$ labelled by $q_i(wy_j)^{(i)}p_i$ with final vertex $U_j'$, showing that $U_j'\in\mathcal{Z}$. Similarly, we show that all vertices $V_1',\dots,V_s'$ are also of type-$z$. 
    Since $wu_i=wv_i$ holds in $S$ we have that $q_i(wu_i)^{(i)}=q_i(wv_i)^{(i)}$ holds in $Q$, and thus we conclude that $U_q=V_s$ and $U_q'=V_s'$, whilst this latter vertex has a $d$-loop. 
    \item[(b)] Now let $W\in\Pi_i$. Then there is a walk in $\Gamma$ from a vertex $W_0$ to $W$ labelled by a word of the form $w^{(i)}p_i$ for some word $w\in A^*$ such that $W_0$ can be reached from the root of $\Gamma$ by a word $v$ which is either empty or does not end with either $q_i$ or any of $a^{(i)}$ ($a\in A$). Then, by an analogous argument as in (a), there is a walk from $W_0$ to $U$ labelled by $w^{(i)}$ and thus there is a walk from $W_0$ to $U_j$, $1\leq j\leq q$, labelled by $(wy_j)^{(i)}$ (where $y_j$ has the same meaning as in (a)). This shows that $U_1',\dots,U_q'\in\Pi_i$ and, similarly, $V_1',\dots,V_s'\in\Pi_i$. 
    By the construction of $\Gamma'$, there is an edge $V_s'\to U_q'$ labelled by $d$. 
\end{itemize}
In both cases, we have completed a closed walk in $\Gamma'$ based at $U$ and labelled by the relator word $r$. 

\item[(3)] $r=p_0dp_0^{-1}$ \\
Let $U\to V$ be the unique edge from $U$ labelled by $p_0$. Then by Lemma~\ref{lem-inc} we have $V\in\mathcal{Z}\cup\Pi_0$ and so by construction of $\Gamma'$ there is a loop at $V$ 
labelled by $d$, so 
$$U\xrightarrow{p_0} V\xrightarrow{d} V\xleftarrow{p_0} U$$ 
is the closed walk we are looking for here. 

\item[(4)] $r=zbz^{-1}zb^{-1}z^{-1}$ ($b\in B$) \\
Consider the unique edge of the form $U\to V$ labelled by $b^{(z)}$ in $\Gamma$, and let $U\to U'\to U''$ and $V\to V'\to V''$ be the walks
in $\Gamma$ labelled by $q_0p_0$. Then $\Gamma'$ will contain edges $U\to U''$ and $V\to V''$ labelled by $z$, and, furthermore, an edge $U''\to V''$ labelled by $b$. Hence, 
$$U\xrightarrow{z} U''\xrightarrow{b} V''\xleftarrow{z} V\xrightarrow{z} V''\xleftarrow{b} U''\xleftarrow{z} U$$ 
constitutes the required closed walk in $\Gamma'$ labelled by $r$. 

\item[(5)] $r=z \left(\prod_{i=0}^k p_i^{-1}p_i\right) z^{-1}$ \\
Let $U\to U'\to U''$ be the walk in $\Gamma$ labelled by $q_0p_0$. Then there is an edge $U\to U''$ in $\Gamma'$ labelled by
$z$. However, since $q_0p_0=q_ip_i$ holds in $Q$ for all $1\leq i\leq k$, the vertex $U''$ has an incoming edge labelled by $p_i$ for all $0\leq i\leq k$. So, going from $U$ to $U''$ via
the considered edge labelled by $z$, traversing the said incoming edges back and forth, and then going back to $U$ constitutes a closed walk in $\Gamma'$ labelled by $r$.
\end{itemize}

\subsection{The proof of property (ii)} \label{subsec:prop-ii} 

Let us first notice that $\Gamma'$ is bi-deterministic with respect to the letters $p_i$, $0\leq i\leq k$: no vertex has more than one outgoing edge or more than one incoming edge labelled by each of these letters, since $\Gamma$ has this property and we have not added or deleted any edges with such labels.

The argument that $\Gamma'$ is bi-deterministic with respect to $z$ is straightforward. Indeed, assume there exist two $z$-edges $U\to V_1$ and $U\to V_2$. Then there are two 
vertices $V_1'$ and $V_2'$ such that in $\Gamma$ there are $q_0$-edges $U\to V_1'$ and $U\to V_2'$ and $p_0$-edges $V_1'\to V_1$ and $V_2'\to V_2$. The bi-determinism of $\Gamma$ first 
implies $V_1'=V_2'$ and then, consequently, $V_1=V_2$. Similarly, we cannot have two $z$-edges $U_1\to V$ and $U_2\to V$.

\begin{figure}[b]
\begin{center}
\begin{tikzpicture}[scale=1.5]
\filldraw[color=black, fill=black] (-2,2) circle (1.5pt) node[anchor=east]{$U$\,};
\filldraw[color=black, fill=black] (-2,1) circle (1.5pt) node[anchor=east]{$U'$\,};
\filldraw[color=black, fill=black] (0,2) circle (1.5pt) node[anchor=west]{\,$V_1$};
\filldraw[color=black, fill=black] (2,2) circle (1.5pt) node[anchor=west]{\,$V_2$};
\filldraw[color=black, fill=black] (0.5,1) circle (1.5pt);
\filldraw[color=black, fill=black] (2.5,1) circle (1.5pt);
\filldraw[color=black, fill=black] (-1,0) circle (1.5pt) node[anchor=east]{$W_1$};
\filldraw[color=black, fill=black] (1,0) circle (1.5pt) node[anchor=west]{$W_2$};
\filldraw[color=black, fill=black] (-1,-1) circle (1.5pt) node[anchor=north]{$W_1'$};
\filldraw[color=black, fill=black] (1,-1) circle (1.5pt) node[anchor=north]{$W_2'$};
\draw[thick,->] (-1,-1) .. controls (-2,-0.33) .. (-2,0.97) node[midway,left]{$u_i^{(i)}$};
\draw[thick,->] (-1,0) .. controls (-1,1.33) .. (-1.97,2) node[midway,left]{$u_i$};
\draw[thick,->] (1,0) .. controls (0,0) and (-1,2) .. (-1.97,2) node[midway,left]{$u_i$};
\draw[thick,->] (-1,-1) -- (-1,-0.03) node[midway,right]{{\small $p_i$}};
\draw[thick,->] (1,-1) -- (1,-0.03) node[midway,right]{{\small $p_i$}};
\draw[thick,->] (-2,1) -- (-2,1.97) node[midway,left]{{\small $p_i$}};
\draw[thick,->] (0.5,1) -- (0,1.97) node[midway,right]{{\small $p_i$}};
\draw[thick,->] (2.5,1) -- (2,1.97) node[midway,right]{{\small $p_i$}};
\draw[thick,->] (-1,0) .. controls (-0.3,1) .. (0,1.97) node[midway,right]{$v_i$};
\draw[thick,->] (1,0) .. controls (1.7,1) .. (2,1.97) node[midway,right]{$v_i$};
\draw[thick,->] (-1,-1) .. controls (0.3,0) .. (0.5,0.97) node[midway,left]{$v_i^{(i)}$};
\draw[thick,->] (1,-1) .. controls (2.3,0) .. (2.5,0.97) node[midway,right]{$v_i^{(i)}$};
\draw[thick,->] (1,-1) .. controls (-1,-1) .. (-2,0.97) node[midway,right]{\ \ \ \ $u_i^{(i)}$};
\draw[thick,dotted,->] (0,2) -- (-1.97,2) node[midway,below]{$d$};
\draw[thick,dotted,->] (2,2) .. controls (0,2.3) .. (-1.97,2) node[midway,above]{$d$};
\end{tikzpicture}
\end{center}
\caption{An illustration of the proof of bi-determinism of $\Gamma'$ with respect to $d$-edges}
\label{fig3}
\end{figure}
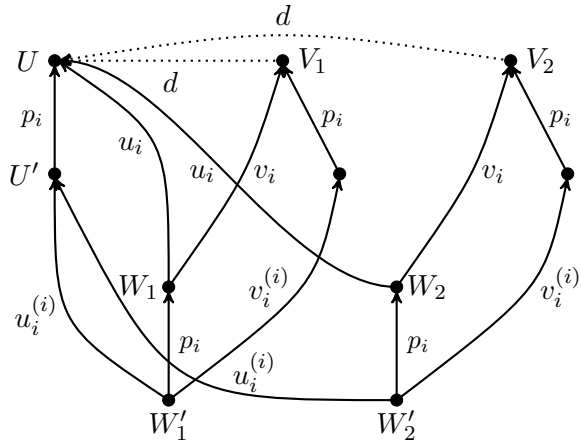

Regarding the letter $d$ note that because of the construction of $\Gamma'$ and Lemmas \ref{lem-disj1} and \ref{lem-disj2}, there are two possibilities for $\Gamma'$ not to be bi-deterministic with respect to 
this letter. One possibility is that there are distinct $d$-edges $V_1\to U$ and $V_2\to U$ for some vertices $U,V_1,V_2\in V(\Gamma)=V(\Gamma')$. 
Then we conclude that $U,V_1,V_2\in\Pi_i$ for some $1\leq i\leq k$  and that there exist $W_1,W_2\in\Pi_i$ such that there are walks in $\Gamma'$ from $W_1,W_2$ to $U$ both labelled by $u_i$.
From the bi-determinism of the $p_i$-labelled edges in $\Gamma'$ and the definition of $\Gamma'$ it follows that there are walks in $\Gamma$ labelled by $p_i^{-1}v_i^{(i)}p_i$ both from $W_1$ to $V_1$ and from $W_2$ to $V_2$. 
Similarly, there are walks in $\Gamma$ labelled by $p_i^{-1} u_i^{(i)} p_i$ from both $W_1$ and $W_2$ to $U$.
Because $\Gamma$ is bi-deterministic, them assumption that $V_1,V_2$ are distinct implies that $W_1,W_2$ must also be distinct.
Let $W_1'$, $W_2'$ and $U'$ be the unique vertices in $V(\Gamma)=V(\Gamma')$ such that $W_1' \xrightarrow{p_i} W_1$, $W_2' \xrightarrow{p_i} W_2$ and $U' \xrightarrow{p_i} U$. 
Because $W_1,W_2$ distinct, so are $W_1',W_2'$. 
Then there are walks in $\Gamma$ from $W_1',W_2'$ to $U'$ both labelled by $u_i^{(i)}$. However, the latter contradicts the bi-determinism of $\Gamma$. The other possibility, that there are $d$-edges $V\to U_1$ and $V\to U_2$ for some vertices $U_1,U_2,V\in V(\Gamma)=V(\Gamma')$, is handled analogously. 
See Figure \ref{fig3} for a visual represenatation of this argument.

So, the only remaining challenge to the bi-determinism of $\Gamma'$ comes from the edges labelled by letters $a\in A$. Bearing in mind how these edges arise in $\Gamma'$, we have 
six subcases.
\begin{itemize}
\item[(1)] \emph{There are two $a$-edges $U\to V_1$ and $U\to V_2$ in $\Gamma'$, and they arise from the existence of vertices $U_1,U_2,V_1',V_2'$ and edges $U_1\to U$, $V_1'\to V_1$ 
labelled by $p_i$, $U_1\to V_1'$ labelled by $a^{(i)}$, $U_2\to U$, $V_2'\to V_2$ labelled by $p_j$, $U_2\to V_2'$ labelled by $a^{(j)}$.} 
Clearly, if $i=j$ then it quickly follows from the bi-determinism of $\Gamma$
that $U_1=U_2$, and then $V_1'=V_2'$ and so $V_1=V_2$; thus we may assume $i\neq j$. Now Lemma \ref{lem-det}, applied to the pair of walks $U_1\xrightarrow{p_i}U$ and $U_2\xrightarrow{p_j}U$, guarantees the existence of a vertex $W$ such that there are walks 
from $W$ to $U_1,U_2$ labelled by $q_iw^{(i)}$ and $q_jw^{(j)}$, respectively, for some $w\in A^*$. But then there are walks in $\Gamma$ from $W$ to $V_1,V_2$ labelled by 
$q_iw^{(i)}a^{(i)}p_i$ and $q_jw^{(j)}a^{(j)}p_j$, respectively. However, since the latter two words represent the same element of $Q$, we conclude that $V_1=V_2$.
\item[(2)] \emph{There are two $a$-edges $U_1\to V$ and $U_2\to V$ in $\Gamma'$, and they arise from the existence of vertices $U_1',U_2',V_1,V_2$ and edges $U_1'\to U_1$, $V_1\to V$ 
labelled by $p_i$, $U_1'\to V_1$ labelled by $a^{(i)}$, $U_2'\to U_2$, $V_2\to V$ labelled by $p_j$, $U_2'\to V_2$ labelled by $a^{(j)}$.} 
Again, the case $i=j$ is clear, so let $i\neq j$. Lemma \ref{lem-det}, applied to the pair of walks 
\[U_1'\xrightarrow{a^{(i)}}V_1\xrightarrow{p_i}V \text{\ \ and\ \ } 
U_2'\xrightarrow{a^{(j)}}V_2\xrightarrow{p_j}V,\]
implies that there is a vertex $W$ together with $\Gamma$-walks towards $U_1'$ and $U_2'$ labelled by $q_iw^{(i)}$ and $q_jw^{(j)}$, respectively. 
However, then there are walks from $W$ to $U_1,U_2$ labelled by $q_iw^{(i)}p_i$ and $q_jw^{(j)}p_j$, respectively. Since in $Q$ these two words are equivalent, we conclude $U_1=U_2$. 
\item[(3)] \emph{$a=b\in B$ and there exist $b$-edges $U\to V_1$ and $U\to V_2$ in $\Gamma'$ because there are vertices $U_1,U_2,V_1',V_2'$, $z$-edges $U_1\to U$, $V_1'\to V_1$, $U_2\to U$, $V_2'\to V_2$ and $b^{(z)}$-edges $U_1\to V_1'$ and $U_2\to V_2'$.} 
First of all, this immediately implies $U_1=U_2$ (because of the already proved bi-determinism of $\Gamma'$ with respect to $z$), 
which in turn forces $V_1'=V_2'$ (because of the bi-determinism of $\Gamma$ with respect to the letter $b^{(z)}$) and, finally, $V_1=V_2$ (again because of the bi-determinism of $\Gamma'$ with respect to $z$).
\item[(4)] \emph{$a=b\in B$ and there exist $b$-edges $U_1\to V$ and $U_2\to V$ in $\Gamma'$ because there are vertices $U_1',U_2',V_1,V_2$, $z$-edges $U_1'\to U_1$, $V_1\to V$, 
$U_2'\to U_2$, $V_2\to V$ and $b^{(z)}$-edges $U_1'\to V_1$ and $U_2'\to V_2$.} 
This is analogous to the previous subcase.
\item[(5)] \emph{$a=b\in B$ and there exist $b$-edges $U\to V_1$ and $U\to V_2$ in $\Gamma'$ because there are vertices $U_1,U_2,V_1',V_2'$, $z$-edges $U_1\to U$, $V_1'\to V_1$, 
$p_i$-edges $U_2\to U$, $V_2'\to V_2$, a $b^{(z)}$-edge $U_1\to V_1'$ and a $b^{(i)}$-edge $U_2\to V_2'$.} 
Let $U_1'$ be the endpoint of the edge in $\Gamma$ labelled by $q_i$ going out of $U_1$. Since $q_ip_i=q_0p_0$ holds in $Q$ and because of the edge $U\xrightarrow{z}U_1$ and the already proved determinism with respect to $z$, the endpoint of the (unique) $p_i$-edge originating from $U_1'$ is $U$. However, because of the already proven bi-determinism of $\Gamma'$ with respect to the letter $p_i$, we conclude that $U_1'=U_2$.
Therefore, the walk $U_1\to U_2\to V_2'$ in $\Gamma$ is labelled by $q_ib^{(i)}$. 
On the other hand, let $V_1''$ be the endpoint of the unique edge in $\Gamma$ labelled by $q_i$ going out of $V_1'$. By a completely analogous argument to the one just presented, the endpoint of the (unique) $p_i$-edge going out of $V_1''$ is $V_1$. Also, the walk $U_1\to V_1'\to V_1''$ in $\Gamma$ is labelled by $b^{(z)}q_i$. Since $q_ib_i=b^{(z)}q_i$ holds in $Q$, we must have $V_1''=V_2'$. But then $V_2'$ has edges labelled by $p_i$ going out to $V_2$ and $V_1$, which, by the bi-determinism of $\Gamma'$ with respect to the letter $p_i$, implies that $V_1=V_2$.
\item[(6)] \emph{$a=b\in B$ and there exist $b$-edges $U_1\to V$ and $U_2\to V$ in $\Gamma'$ because there are vertices $U_1',U_2',V_1,V_2$, $z$-edges $U_1'\to U_1$, $V_1\to V_1$, 
$p_i$-edges $U_2'\to U_2$, $V_2\to V$, a $b^{(z)}$-edge $U_1'\to V_1$ and a $b^{(i)}$-edge $U_2'\to V_2$.} 
This is analogous to the previous case.
\end{itemize}

It is at this point that we can conclude that the homomorphism $\psi:Q\to\mathrm{RU}(M_{S,T})$ is injective and therefore an isomorphism, implying (as explained in Subsection \ref{idea}) the first part of Theorem \ref{main-GK}.

\subsection{Embedding $T$ into the right units}

\begin{lem}\label{lem-T}
The elements $b^{(z)}\psi=zbz^{-1}$, $b\in B$, generate a submonoid of $\mathrm{RU}(M_{S,T})$ isomorphic to $T$.
\end{lem}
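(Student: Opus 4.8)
The plan is to reduce everything to the isomorphism $\psi\colon Q\to\mathrm{RU}(M_{S,T})$ already established in the first part of Theorem~\ref{main-GK}, and then to work entirely inside the right cancellative monoid $Q$ and inside $S$. Since $\psi$ is an isomorphism and $b^{(z)}\psi=zbz^{-1}$, the submonoid of $\mathrm{RU}(M_{S,T})$ generated by $\{zbz^{-1}:b\in B\}$ is precisely the image under $\psi$ of the submonoid $Q_B=\lb b^{(z)}:b\in B\rb$ of $Q$, and $\psi$ restricts to an isomorphism between them. Thus the whole statement reduces to proving that $Q_B\cong T$.

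First I would build a homomorphism $\eta\colon Q\to S$ that collapses the ``scaffolding'' generators and recovers $T$. By Lemma~\ref{lem:rc} (applicable because $S$, being finitely RC-presented, is right cancellative) it suffices to define $\eta$ on the generating symbols of $Q$ and check that every defining relation of $Q$ becomes a valid equality in $S$. The natural choice is $p_i,q_i\mapsto 1$, $a^{(i)}\mapsto a$ and $b^{(z)}\mapsto b$ for all the relevant $i,a,b$. A routine verification then settles the three families: the relation $q_iw^{(i)}p_i=q_0w^{(0)}p_0$ sends both sides to $w$ read in $S$; the relation $q_iu^{(i)}=q_iv^{(i)}$, imposed only when $u=v$ holds in $S$, sends both sides to equal elements of $S$; and $q_ib^{(i)}=b^{(z)}q_i$ sends both sides to $b$. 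Hence $\eta$ exists, and since $\eta(b^{(z)})=b$ it maps $Q_B$ onto $T$.

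It remains to show $\eta$ is injective on $Q_B$, and this is where I expect the only real content to lie. Every element of $Q_B$ has the form $u^{(z)}$ for some $u\in B^*$, and $\eta(u^{(z)})$ is just $u$ read in $S$; so I must prove that $u=v$ in $S$ (for $u,v\in B^*$) already forces $u^{(z)}=v^{(z)}$ in $Q$. The key device is the commutation relation $q_ib^{(i)}=b^{(z)}q_i$, which iterates immediately to $q_iw^{(i)}=w^{(z)}q_i$ for every $w\in B^*$. Applying this with $i=0$ gives $u^{(z)}q_0=q_0u^{(0)}$ and $v^{(z)}q_0=q_0v^{(0)}$; since $u=v$ in $S$, the second family of defining relations of $Q$ yields $q_0u^{(0)}=q_0v^{(0)}$, whence $u^{(z)}q_0=v^{(z)}q_0$, and right cancellativity of $Q$ delivers $u^{(z)}=v^{(z)}$, as required.

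Combining these, $\eta$ restricts to a bijective homomorphism $Q_B\to T$, hence an isomorphism, and composing with $\psi$ gives $\lb zbz^{-1}:b\in B\rb=\psi(Q_B)\cong Q_B\cong T$. The main obstacle is exactly the injectivity step: the assignment $b\mapsto b^{(z)}$ must be shown to be well defined on $T$, and it is the interplay of the commutation relations with right cancellativity of $Q$ that converts an equality in $S$ into an equality in $Q$; everything else is bookkeeping with $\eta$ and the already-proved isomorphism $\psi$.
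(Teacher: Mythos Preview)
Your proof is correct and takes a genuinely different route from the paper's. The paper argues directly in $M_{S,T}$, proving the biconditional $zuz^{-1}=zvz^{-1}\Leftrightarrow u=v$ in $T$: for the ($\Leftarrow$) direction it uses essentially your idea (expressing $zu$ as $(q_0u^{(0)}p_0)\psi$ and invoking the defining relation $q_0u^{(0)}=q_0v^{(0)}$ of $Q$), but for the ($\Rightarrow$) direction it goes back to the auxiliary graph $\Omega$ and the morphism $S\Gamma(1)\to\Omega$ to read off equality inside the copy of $\Gamma_T$ sitting in a $z$-zone. Your approach is cleaner and purely algebraic: having the isomorphism $\psi$ already in hand you transfer the whole question to $Q$, build the collapsing homomorphism $\eta:Q\to S$ via Lemma~\ref{lem:rc}, and obtain injectivity from the iterated commutation $q_0w^{(0)}=w^{(z)}q_0$ plus right cancellativity of $Q$. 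This avoids revisiting any of the graph machinery; the paper's approach, by contrast, has the minor virtue that its ($\Rightarrow$) direction does not depend on the full strength of $\psi$ being injective, only on Proposition~\ref{pro:mst-om}.
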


\begin{proof}
Note that for any $u,v\in B^*$ we have $zuz^{-1}zvz^{-1} = zuvz^{-1}$ holding in $M_{S,T}$. This implies that $w\mapsto zwz^{-1}$ is a monoid homomorphism of $B^*$ into $\mathrm{RU}(M_{S,T})$. Now the statement we need to prove is as follows:
\begin{equation}\label{t-equiv}
\text{we have }zuz^{-1}=zvz^{-1}\text{ in }M_{S,T}\text{ if and only if }u=v\text{ in }T
\end{equation}
for all $u,v\in B^*$.

($\Ra$) Starting from the root vertex of $\Omega$, the edge labelled by $z$ takes us into a $z$-zone, 
which is a copy of the Cayley graph $\Gamma_S$ of $S$. More precisely, we are now in the root vertex $U_0$ of this copy of $\Gamma_S$. By reading words $u$ and $v$ respectively, 
we entirely remain within $\Gamma_T$, the Cayley graph of $T$ with respect to $B$, which appears here as a subgraph of $\Gamma_S$. If this results in two different vertices 
$U_1\neq U_2$ of $\Gamma_T$, then, by traversing the incoming edges into $U_1$ and $U_2$ labelled by $z$ backwards, we would arrive at two different vertices of $\Omega$, say $V_1$ and $V_2$, respectively, by reading $zuz^{-1}$ and $zvz^{-1}$ from its root vertex. However, since by Proposition \ref{pro:mst-om} there is a labelled graph morphism from $S\Gamma(1)$ into $\Omega$ that takes 
the root vertex of the former to the root vertex of the latter, this would imply that $zuz^{-1}\neq zvz^{-1}$ (as both are right units), contradicting our assumption. Thus we must 
have $U_1=U_2$. But this means that within $\Gamma_T$, starting from its root vertex, the endpoints of the walks labelled by the words $u$ and $v$ are the same; that is, $u=v$ holds in $T$.

($\La$) Assume that $u,v\in B^*$ are such that $u=v$ holds in $T$. Then, bearing in mind \cite[Corollary 3.2]{Gr-Inv}, $zu=zuz^{-1}\cdot z$ and $zv=zvz^{-1}\cdot z$ are right units of $M_{S,T}$. Applying \cite[Corollary 3.2]{Gr-Inv} again, as well as Lemma \ref{lem:calc}(i), we have that
$$
zu = zp_0^{-1} \cdot p_0up_0^{-1}\cdot p_0 = q_0\psi\cdot u^{(0)}\psi\cdot p_0\psi = 
(q_0u^{(0)}p_0)\psi
$$
holds in $\mathrm{RU}(M_{S,T})$. Analogously, $zv=(q_0v^{(0)}p_0)\psi$. However, by the given condition, $q_0u^{(0)}=q_0v^{(0)}$ is a defining relation in the presentation of $Q$. 
Therefore, $zu=zv$ holds in $M_{S,T}$ and so $zuz^{-1}=zvz^{-1}$.
\end{proof}

This lemma completes the proof of Theorem \ref{main-GK}.

\subsection{The proof of Theorem \ref{thm:dense}}

\begin{lem}\label{lem-U}
An element is a unit of $M_{S,T}$ if and only if it is represented by a word $zuz^{-1}$ where $u\in B^*$ represents a unit of $T$.
\end{lem}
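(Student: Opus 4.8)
The plan is to deduce the lemma from the structural results already established, reducing it to a single geometric confinement statement. First I would reduce the problem to the monoid of right units. Recall that the group of units of any monoid equals the group of units of its submonoid of right units: if $m$ is a unit of $M_{S,T}$ then $m$ and $m^{-1}$ are both right units and are mutually inverse inside $\mathrm{RU}(M_{S,T})$, while conversely a two-sided invertible element of $\mathrm{RU}(M_{S,T})$ is clearly a unit of $M_{S,T}$. Via the isomorphism $\psi\colon Q\to\mathrm{RU}(M_{S,T})$ of Theorem~\ref{main-GK} this transports to the group of units of $Q$. Since $Q$ is right cancellative its only idempotent is $1$ (from $ee=1e$ one cancels $e$ on the right), so any element with a right inverse is already two-sided invertible; thus the units of $Q$ are exactly the elements admitting a right inverse. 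Writing $N_T$ for the submonoid $\{\,zwz^{-1}:w\in B^*\,\}$, which is isomorphic to $T$ by Lemma~\ref{lem-T}, the lemma becomes the conjunction of: $(\mathrm A)$ every unit of $M_{S,T}$ lies in $N_T$; and $(\mathrm B)$ for $w\in B^*$ the element $zwz^{-1}$ is a unit if and only if $w$ represents a unit of $T$.

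Assertion $(\mathrm B)$ is a short computation once $(\mathrm A)$ is known. For the ``if'' part, let $w,w'\in B^*$ satisfy $ww'=w'w=1$ in $T$; using the multiplication rule $zuz^{-1}\cdot zvz^{-1}=zuvz^{-1}$ from the proof of Lemma~\ref{lem-T}, together with the fact that $z=(zp_0^{-1})p_0$ is a product of right units and hence $zz^{-1}=1$, I obtain
\[
(zwz^{-1})(zw'z^{-1}) = z(ww')z^{-1} = zz^{-1} = 1,
\]
and symmetrically $(zw'z^{-1})(zwz^{-1})=1$, so $zwz^{-1}$ is a unit. For the ``only if'' part, suppose $zwz^{-1}$ is a unit; by $(\mathrm A)$ its inverse again lies in $N_T$ and so equals $zw'z^{-1}$ for some $w'\in B^*$, and the same computation gives $z(ww')z^{-1}=1=zz^{-1}$, whence $ww'=1$ in $T$ by the equivalence \eqref{t-equiv} of Lemma~\ref{lem-T}; symmetrically $w'w=1$, so $w$ represents a unit of $T$.

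The substance of the proof is assertion $(\mathrm A)$, and this is the step I expect to be the main obstacle. Here I would argue geometrically in the graph $\Omega$, exploiting the labelled graph morphism $\xi\colon S\Gamma(1)\to\Omega$ from Proposition~\ref{pro:mst-om} taking root to root. Let $m$ be a unit; then both $m$ and $m^{-1}$ are right units, hence each is represented by a product of the right-unit generators of Lemma~\ref{lem:mst-gen}, and reading these words from the root traces paths in $\Omega$ ending at $\xi(m)$ and $\xi(m^{-1})$, with the relations $mm^{-1}=m^{-1}m=1$ forcing these paths to close up at the root. The plan is to use the tree-of-zones architecture of $\Omega$ (Lemma~\ref{lem:prop-om}: the zones are pairwise disjoint, $\Omega$ is bi-deterministic, and a vertex lies in a $z$-zone exactly when it has two incoming $p$-edges of distinct colours) to show that the only configuration compatible with both $m$ and $m^{-1}$ being right units that cancel to $1$ is the one in which the word for $m$ enters the root $z$-zone along the initial $z$-edge, then moves entirely within the shaded copy of $\Gamma_T$ along edges labelled from $B$, and finally leaves along a $z$-edge read backwards; that is, $m$ is represented by a word $zwz^{-1}$ with $w\in B^*$. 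Any excursion into a $p_i$-zone, or into the unshaded part $\Gamma_S\setminus\Gamma_T$ of a $z$-zone, would create an incoming $p_i$-edge or a vertex from which the descending word cannot be matched by its inverse reading, contradicting bi-determinism and the disjointness of zones. Making this dichotomy precise, along the lines of the case analysis in Lemmas~\ref{lem-qi} and~\ref{lem-det}, is the technical heart of the argument.

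Combining $(\mathrm A)$ and $(\mathrm B)$ yields the lemma, and in particular shows that the group of units of $M_{S,T}$ is contained in the copy $N_T\cong T$ and coincides with the image of the group of units of $T$ — precisely the containment needed to verify condition~(b) in Theorem~\ref{thm:dense}.
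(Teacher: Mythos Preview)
Your decomposition into (A) and (B) is correct, and your treatment of (B) is fine and matches the paper. The detour through $Q$ is harmless but unused: you announce a transport to the units of $Q$ and then never exploit the presentation of $Q$, reverting instead to $\Omega$ for (A).

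The gap is in (A). What you offer is a sketch --- ``any excursion into a $p_i$-zone\ldots would create an incoming $p_i$-edge or a vertex from which the descending word cannot be matched'' --- together with an explicit admission that the ``technical heart'' remains to be done. This is not a proof, and it is not clear your proposed route succeeds. A unit $m$ gives, via $\xi$, vertices $\xi(m),\xi(m^{-1})\in\Omega$ with walks both to and from the root; but the existence of such closed walks at the root does \emph{not} by itself pin down $\xi(m)$, nor does it force the representing word into the shape $zwz^{-1}$. For instance, nothing in your argument rules out words that descend into a $p_i$-zone, wander, and return via $p_i^{-1}$: you need to explain why such words cannot represent units, and ``bi-determinism and disjointness of zones'' is not a reason on its own.

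The paper's route to (A) is different and avoids this open-ended path analysis. It invokes the structural fact (from \cite{GR} and \cite{IMM}) that the group of units of a special inverse monoid is generated by the factors in some decomposition of the relator words into invertible pieces. It then uses the natural homomorphism $M_{S,T}\to N$ onto the auxiliary special inverse monoid $N$ of \cite{GK}, whose group of units is known to be trivial, to conclude that any invertible factor must map to $1$ in $N$. A direct inspection of the relators then leaves only two candidate factor types: $p_iap_i^{-1}$ and $zbz^{-1}$ (and their formal inverses). A short argument in $\Omega$ shows $p_iap_i^{-1}$ is never left-invertible (the root of a $p_i$-zone has no incoming $A$-edge), while for $zbz^{-1}$ a clean walk-tracing in the $z$-zone of the root shows that invertibility forces $b$ to be invertible in $T$, and that $zb^{-1}z^{-1}$ then equals $zxz^{-1}$ for some $x\in B^*$. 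This reduces (A) to a finite case analysis rather than a global confinement argument.
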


\begin{proof}
($\La$) Assume that the word $u\in B^*$ represents a unit of $T$. Hence, there is a word $v\in B^*$ such that $uv=vu=1$ holds in $T$. But then, by \cite[Corollary 3.2]{Gr-Inv} and the converse implication in \eqref{t-equiv},
$$
(zuz^{-1})(zvz^{-1}) = zuvz^{-1}=zz^{-1}=1
$$
holds in $M_{S,T}$, as well as $(zvz^{-1})(zuz^{-1})=1$, showing that $zuz^{-1}$ represents a unit in $M_{S,T}$. 

($\Ra$)
By \cite[Theorem 1.3]{GR} and \cite[Proposition 4.2]{IMM}, for every special inverse monoid (and so, in particular, for $M_{S,T}$) there exists a factorisation of its relator words such that the factors represent units in the inverse monoid in question and generate its entire group of units.
So, to achieve our aim, it suffices to show that if a relator word can be written as $v_1wv_2$ where $v_1,w,v_2$ represent units of $M_{S,T}$ then $w=zuz^{-1}$ holds in $M_{S,T}$ for some word $u\in B^*$ representing a unit of $T$.

To analyse the possibilities for such factorisations, just as in \cite[Theorem 4.1]{GK}, we consider the special inverse monoid $N$ generated by $p_0,\dots,p_k,z$ subject to the relations
$$
p_ip_i^{-1}=1\ (0\leq i\leq k), \quad zz^{-1}=1, \quad z \left(\prod_{i=0}^k p_i^{-1}p_i\right) z^{-1} = 1.
$$
It was shown in the proof of \cite[Theorem 4.1]{GK} that the group of units of $N$ is trivial and that none of its generators represents a unit of $N$. 
In addition, it is immediate to see (by identifying $d$ and all generators from $A$ with $1$) that there is a natural homomorphism from $M_{S,T}$ onto $N$. Since units of $M_{S,T}$ must clearly map to units of $N$, it follows that all units of $M_{S,T}$ map to $1$ under such a homomorphism.

Hence, it suffices to show that for any factorisation $v_1wv_2$ of a relator word of $M_{S,T}$ (with $v_1,v_2$ possibly empty) such that $v_1,w,v_2$ all map to the identity element of $N$, the element represented by $w$ is either not a unit of $M_{S,T}$ or $w=zuz^{-1}$ holds for some $u\in B^*$ representing a unit in $T$. 
A direct inspection of the defining relations of $M_{S,T}$ show that the only such factorisations are $(p_iap_i^{-1})(p_ia^{-1}p_i^{-1})=1$ and $(zbz^{-1})(zb^{-1}z^{-1})=1$, stemming from the first and penultimate type of relations of $M_{S,T}$, respectively.

We claim that $p_iap_i^{-1}$ cannot represent a unit of $M_{S,T}$.
Note that in the graph $\Omega$ the word $ap_i^{-1}$ cannot be read into root vertex of $\Omega$ since the root vertex of any $p_i$-zone has no incoming edges labelled by any letter from $A$.
Since $\Omega$ is a morphic image of $S\Gamma(1)$, the same is true in the latter graph. 
Therefore, the element represented by $ap_i^{-1}$ cannot be left invertible, thus preventing $p_iap_i^{-1}$ from being a unit of $M_{S,T}$.

On the other hand, assume that $zbz^{-1}$ represents a unit of $M_{S,T}$ for some $b\in B$.
Then, in particular, $zbz^{-1}$ represents a left invertible element, implying that there is a walk in $S\Gamma(1)$ labelled by this word terminating at its root. 
However, because of the morphism $S\Gamma(1)\to\Omega$ taking the root of the former graph to the root of the latter (whose existence is guaranteed by Proposition \ref{pro:mst-om}), there is a walk $U_3\xrightarrow{z}U_2\xrightarrow{b}U_1\xleftarrow{z}U_0$ in $\Omega$, where $U_0$ is the root of $\Omega$. 
Since $U_2$ has an incoming edge labelled by $z$, it belongs to the $z$-zone whose root vertex is $U_1$; in fact, it belongs to the copy of the Cayley graph $\Gamma_T$ of $T$ within that $z$-zone. 
Therefore, there is a walk $U_1\leadsto U_2$ within this copy of $\Gamma_T$ labelled by a word $x\in B^*$, say. 
This walk, taken together with the edge $U_2\xrightarrow{b}U_1$, forms a closed walk in $\Gamma_T$ starting at $U_1$ and labelled by $xb$. Thus, $xb=1$ holds in $T$.
By the converse implication of \eqref{t-equiv}, we have that
$$
(zxz^{-1})(zbz^{-1}) = zxbz^{-1} = zz^{-1} = 1
$$
holds in $M_{S,T}$. Upon multiplying the previous equation by $zb^{-1}z^{-1}$ from the right, we obtain that $zb^{-1}z^{-1}=zxz^{-1}$ holds in $M_{S,T}$.
So, we deduce
$$
z\cdot 1\cdot z^{-1} = 1 = (zbz^{-1})(zb^{-1}z^{-1}) = (zbz^{-1})(zxz^{-1}) = zbxz^{-1},
$$
which, by the direct implication of \eqref{t-equiv}, means that $bx=1$ holds in $T$.
Hence, $b$ must be an invertible element of $T$.

Since $zb^{-1}z^{-1}$ is a unit of $M_{S,T}$ if and only if $zbz^{-1}$ represents a unit, the assumption that $zb^{-1}z^{-1}$ is invertible in $M_{S,T}$ also implies that $b$ is invertible in $T$. 
Therefore, there is a word $b_1\dots b_r\in B^*$ representing the inverse of $b$ in $T$, i.e.\ such that $b\cdot b_1\dots b_r = b_1\dots b_r \cdot b = 1$ holds in $T$. 
By \cite[Proposition 4.2(i)]{DG23}, the complement of the group of units in a right cancellative monoid is always an ideal, so all $b_1,\dots,b_r$ must represent units of $T$.
We have already proved that then all $zb_1z^{-1},\dots,zb_rz^{-1}$ must represent units of $M_{S,T}$.
The converse implication of \eqref{t-equiv} and \cite[Corollary 3.2]{Gr-Inv} yield 
$$
(zbz^{-1})(zb_1\dots b_rz^{-1}) = (zb_1\dots b_rz^{-1})(zbz^{-1}) = zz^{-1} = 1.
$$
From this it quickly follows that $zb^{-1}z^{-1} = zb_1\dots b_rz^{-1} = (zb_1z^{-1})\dots (zb_rz^{-1})$. 
 
We have already seen that this implies all $b_s$ to be invertible in $T$. Thus,
$$
\{zbz^{-1}:\ b\text{ is invertible in }T\}
$$
is a monoid generating set of the group of units of $M_{S,T}$. 
From this, the direct implication of the lemma follows straightforwardly.
\end{proof}

\begin{proof}[Proof of Theorem \ref{thm:dense}]
Let $S$ be a finitely RC-presented monoid and let $T$ be a finitely generated submonoid of $S$. Upon choosing a presentation for $S$ which satisfies the assumptions made in Definition \ref{def:mST}, let $M=M_{S,T}$. 
If $N$ is the submonoid of $M$ generated by the elements represented by $zbz^{-1}$, $b\in B$, then by Lemma \ref{lem-T} we have $T\cong N$. 
By combining Lemmas \ref{lem-T} and \ref{lem-U} we obtain that $N$ contains the group of units of $M$. The theorem is now proved.
\end{proof}


\section{The question of finite RC-presentability}\label{sec:fp}

\subsection{(Non)finite presentability of monoids from the class $\mathcal{RU}$}

Upon looking at Theorem \ref{main-GK} and the RC-presentation of the monoid of right units of $M_{S,T}$, the following general question springs to mind naturally.

\begin{prb}
For what pairs $(S,T)$ consisting of a finitely RC-presented right cancellative monoid $S$ and its finitely generated submonoid $T$ is $\mathrm{RU}(M_{S,T})$ not finitely
RC-presented?
\end{prb}

Given the form of this RC-presentation, it is reasonable to conjecture that its failure to be equivalent to any finite RC-presentation should be quite a frequent occasion.
In this subsection we exhibit one such example which, in addition, has a trivial group of units.

\begin{thm}
    There is a finitely presented special inverse monoid whose submonoid of right units is not finitely RC-presented.  
\end{thm}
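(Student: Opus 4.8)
The plan is to obtain the example by specialising the construction $M_{S,T}$ of Definition~\ref{def:mST}. I would fix a finitely RC-presented monoid $S=\MonRC\pre{A}{u_i=v_i\ (1\le i\le k)}$ with $k\ge 1$ and a finitely generated submonoid $T$ of $S$ with trivial group of units, and set $M=M_{S,T}$, a finitely presented special inverse monoid. The trivial-group-of-units feature advertised before the theorem then comes for free: by Lemma~\ref{lem-U} the units of $M_{S,T}$ are exactly the elements $zuz^{-1}$ with $u\in B^*$ representing a unit of $T$, so if $T$ has no nontrivial units then neither does $M$. (One convenient way to guarantee this is to choose $S$ and $T$ so that no generator in $B$ is invertible in $S$.)

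By Theorem~\ref{main-GK} we have $\mathrm{RU}(M_{S,T})\cong Q$, where $Q$ is the right cancellative monoid with the explicit infinite RC-presentation on generators $p_i,q_i,a^{(i)},b^{(z)}$ given there. The entire problem thus reduces to choosing $(S,T)$ so that this RC-presentation of $Q$ is not equivalent to any finite one, i.e.\ so that the defining RC-congruence of $Q$ is not finitely generated as an RC-congruence. It is worth stressing that this is genuinely stronger than showing $Q$ is not finitely presented as an ordinary monoid: as recorded in Section~\ref{sec:summary}, a right cancellative monoid that is finitely presented as a monoid is automatically finitely RC-presented, so non-finite-RC-presentability must be established in the more flexible RC-calculus.

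My approach to the reduction is to isolate an explicit infinite family of relations valid in $Q$ and to prove it irredundant. The natural candidates are the relations $q_iw^{(i)}p_i=q_0w^{(0)}p_0$ $(w\in A^*,\ 1\le i\le k)$ of the first family, graded by the length $|w|$; the crucial structural point is that $Q$ has no left inverses for the letters $q_i$, so a right cancellative $\mathfrak{R}$-chain witnessing such a relation cannot simply strip off the leading $q_i$, and the $S$-relations can only be brought to bear ``behind'' $q_i$. Exploiting this rigidity, for each bound $n$ I would construct a right cancellative monoid $Q_n$, together with a generator-preserving surjection $\Sigma_0^*\to Q_n$, in which every defining relation of $Q$ of complexity at most $n$ holds but some single relation $q_iw^{(i)}p_i=q_0w^{(0)}p_0$ with $|w|$ large fails. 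Such a separating sequence of right cancellative quotients shows that no finite subset of the defining relations can RC-generate the full congruence, so $Q$, and hence $\mathrm{RU}(M_{S,T})$, is not finitely RC-presented; this is exactly the assertion of the theorem.

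The step I expect to be the main obstacle is the construction and verification of the monoids $Q_n$. Passing to the RC-closure can force unanticipated identifications, so one must certify simultaneously that each $Q_n$ really is right cancellative and that the targeted long relation really does fail in it, uniformly in $n$. I would try to achieve both at once by realising $Q_n$ through a faithful action on a set (making right cancellativity transparent) and using the right-cancellative $\mathfrak{R}$-chain calculus from Section~\ref{sec:summary} to bound how far a chain built from relations of complexity at most $n$ can reach; controlling this ``reach'' against the length $|w|$ is the delicate combinatorial heart of the argument.
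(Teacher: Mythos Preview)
Your proposal follows essentially the same strategy as the paper: specialise $M_{S,T}$, invoke Theorem~\ref{main-GK} to obtain the explicit RC-presentation $Q$, and then exhibit a sequence of right cancellative ``truncated'' quotients $Q_n$ in which the long relations $q_iw^{(i)}p_i=q_0w^{(0)}p_0$ fail. The paper carries this out with $S=\{a\}^*$, $T=\{1\}$ (so $B=\varnothing$), and the trivial relation $a=a$ to force $k=1$; the truncations are exactly the monoids $M_r=\Mon\pre{a_0,a_1,p_0,p_1,q_0,q_1}{q_0a_0^mp_0=q_1a_1^mp_1\ (0\le m\le r)}$.

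The one point worth flagging is that the paper dissolves, rather than confronts, what you call the ``main obstacle''. You propose to certify right cancellativity of each $Q_n$ via a faithful action and to control the reach of RC-chains combinatorially. The paper instead chooses $(S,T)$ so simple that the relator words $q_0a_0^mp_0$ and $q_1a_1^mp_1$ are pairwise non-overlapping; this makes the \emph{ordinary} monoid presentation already (left and right) cancellative, so the RC-closure adds nothing and $M_r$ is automatically right cancellative. The word problem then has a completely explicit description (two words are equal iff they agree after forgetting indices and differ only in the indices of maximal $qa^kp$-blocks with $k\le r$), from which the failure of $q_0a_0^{r+1}p_0=q_1a_1^{r+1}p_1$ in $M_r$ is immediate. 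Your more general plan is sound, but the paper's specific choice buys a dramatic simplification: no actions, no chain-length estimates, just non-overlapping rewriting.
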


\begin{proof}
Set $S=\{a\}^*$ and $B=\es$ so that $T$ is the trivial submonoid of $S$. We want to have $k=1$, and for this it suffices to define the free monoid $S$ by a trivial
relation, say $a=a$. Then the generators of $M_{S,T}$ are $a,p_0,p_1,z,d$ (although one can easily see that $d$ will play no significant role here,
as all $d$-edges in $S\Gamma(1)$ will be loops, and most of the vertices will have such loops -- in fact, all of them except the root and the roots of $p_i$-zones).
Then, by our Theorem \ref{main-GK}, 
$$
\mathrm{RU}(M_{S,T}) = \MonRC\pre{a_0,a_1,p_0,p_1,q_0,q_1}{q_0a_0^mp_0=q_1a_1^mp_1\; (m\geq 0)},
$$
where $a_0$ stands for $p_0ap_0^{-1}$, $a_1$ stands for $p_1ap_1^{-1}$, and $q_0,q_1$ are $zp_0^{-1}$ and $zp_1^{-1}$, respectively. (From this, it is fairly 
straightforward to deduce that the group of units of $M_{S,T}$ is trivial, either from \cite[Corollary 2.15]{ruvskuc1999presentations} or from our Lemma \ref{lem-U}.) 
We claim that the above RC-presentation is not equivalent to any finite RC-presentation. 

To see this, note that the rewriting rules stemming from the relations $q_0a_0^mp_0=q_1a_1^mp_1$ are necessarily non-overlapping. Starting from any word $w$ over
the alphabet $\{a_0,a_1,p_0,p_1,q_0,q_1\}$ all we can do with these rules at our disposal is to take a subword based on a word of the form $qa^mp$ (in the sense of the
index-forgetting map $\xi$ from the proof of Lemma \ref{lem-det}) and change the index, from 0 to 1 or vice versa, of all the letters within that subword. (In fact,
this completely describes the word problem for the considered presentation.) This reasoning shows that the monoid given by the same presentation as above, namely
$$
\Mon\pre{a_0,a_1,p_0,p_1,q_0,q_1}{q_0a_0^mp_0=q_1a_1^mp_1\; (m\geq 0)}
$$
is already (right) cancellative, so in this case $\mathrm{RU}(M_{S,T})$ is given by the same presentation as an ordinary monoid.

Now consider the following sequence of finitely presented monoids:
$$
M_r = \Mon\pre{a_0,a_1,p_0,p_1,q_0,q_1}{q_0a_0^mp_0=q_1a_1^mp_1\ (0\leq m\leq r)}
$$
for $r\geq 0$. In a similar fashion as above we show that each of these monoids is (right) cancellative, so the same generators and relations serve
as a finite RC-presentation for the $M_r$. It follows from \cite{CRR} (i.e.\ the definition of a right cancellative chain) that should $\mathrm{RU}(M_{S,T})$ be 
finitely RC-presented, we would then have that it is isomorphic to $M_r$ for some $r$. However---while referring to plain monoid presentations---it is quite easy 
(based on above remarks) to show that 
$$
q_0a_0^{r+1}p_0 \neq q_1a_1^{r+1}p_1
$$
in $M_r$, as we can also describe the word problem for $M_r$: $u=v$ if and only if $u\xi=v\xi$ and $u$ and $v$ only differ by the indices of some blocks
based on $qa^kp$ with $k\leq r$. This is a contradiction.
\end{proof}

\subsection{The monoid of right units in the Gray-Ru\v skuc construction}

Let us fix an alphabet $A=\{a_1,\dots,a_n\}$ and a letter $t\not\in \ol{A}$. Let $Q=\{r_i:\ i\in I\}$ be a (not necessarily finite) set of words from 
$\ol{A}^*$, and let $K_Q=\Gp\pre{A}{r_i=1\; (i\in I)}$ and $G_Q=K_Q\ast FG(t)$. 
Throughout, we assume that $I$ is not empty.

Further, let $W=\{w_j:\ 1\leq j\leq k\}$ be a finite subset of $\ol{A}^*$, and denote by $T_W$ the submonoid of the group $K_Q$ generated by elements represented by
words from $W$. For a list of words $u_1,\dots,u_m\in\ol{A}^*$ define
$$
e(u_1,\dots,u_m) = u_1u_1^{-1}\cdots u_mu_m^{-1}.
$$
Define the following special inverse monoid:
\begin{equation}\label{mqw}
M_{Q,W} = \Inv\pre{A,t}{fr_1=1,\; r_i=1\; (i\in I\setminus\{1\})},
\end{equation}
where $1\in I$ is a distinguished index, and
$$
f=e(a_1,\dots,a_n,tw_1t^{-1},\dots,tw_kt^{-1},a_1^{-1},\dots,a_n^{-1}).
$$
It was remarked in \cite{GR} that this is always an $E$-unitary inverse monoid, with greatest group image $G_Q$, and that it can be also equivalently given by the following 
defining relations:
\begin{align*}
& r_i=1 & (i\in I), \\
& a_pa_p^{-1}=a_p^{-1}a_p=1 & (1\leq p\leq n), \\
& tw_jt^{-1}tw_j^{-1}t^{-1} =1 & (1\leq j\leq k). 
\end{align*}

With appropriate choices of parameters, this construction was utilised in \cite{GR} to show the following results:
\begin{itemize}
\item[(a)] There exists a one-relator special inverse monoid whose group of units is not a one-relator group (in a sharp contrast to the plain monoid case).
\item[(b)] There exists a one-relator special inverse monoid whose group of units is finitely presented but the monoid of its right units is not (as a plain monoid). 
\item[(c)] There exists a finitely presented special inverse monoid whose group of units is not finitely presented.
\end{itemize}

Here is the main result of this subsection.

\begin{thm}\label{main-GR}
Let $M_{Q,W}$ be the special inverse monoid defined in \eqref{mqw}. Let $B=\{b_1,\dots,b_k\}$ be an alphabet disjoint from $\ol{A}\cup\{t\}$, and let 
\begin{align*}
R_{Q,W} = \MonRC\langle \ol{A},B,t\,|\, & aa^{-1}=a^{-1}a=1\; (a\in A), \\
& r_i=1\; (i\in I),\\
&tw_j=b_jt\; (1\leq j\leq k)\rangle .  
\end{align*}
Then the monoid of right units of $M_{Q,W}$ is isomorphic to $R_{Q,W}$.
\end{thm}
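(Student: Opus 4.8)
The plan is to exploit the fact, recorded in \cite{GR}, that $M_{Q,W}$ is $E$-unitary with greatest group image $G_Q$. As explained after Definition~\ref{defn:ru}, for an $E$-unitary special inverse monoid the restriction of the natural homomorphism $M_{Q,W}\to G_Q$ to $\mathrm{RU}(M_{Q,W})$ is an isomorphism onto the prefix monoid $P\leq G_Q$, and by \cite{IMM} the monoid $\mathrm{RU}(M_{Q,W})$ is generated by the elements represented by prefixes of the defining relators. Hence it suffices to prove $P\cong R_{Q,W}$. Working with the equivalent list of relators $r_i$, $a_pa_p^{-1}$, $a_p^{-1}a_p$ and $tw_jt^{-1}tw_j^{-1}t^{-1}$, I would first read off a generating set for $P$: the prefixes of $a_pa_p^{-1}$ and $a_p^{-1}a_p$ yield all of $\ol{A}$; the prefixes of each $r_i$ already lie in $\langle\ol{A}\rangle=K_Q$; and the prefixes of $tw_jt^{-1}tw_j^{-1}t^{-1}$ yield $t$, elements of $\langle\ol{A},t\rangle$, and the single genuinely new element $\beta_j=tw_jt^{-1}$. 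Thus $P=\langle\ol{A},t,\beta_1,\dots,\beta_k\rangle\leq G_Q$.

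Next I would set up the comparison homomorphism. Define $\theta$ on generators by $a\mapsto a$ $(a\in\ol{A})$, $t\mapsto t$ and $b_j\mapsto\beta_j=tw_jt^{-1}$, and check that the three families of defining relations of $R_{Q,W}$ hold in $G_Q$: the relations $aa^{-1}=a^{-1}a=1$ and $r_i=1$ hold in $K_Q\leq G_Q$, while $tw_j=b_jt$ holds since $(tw_jt^{-1})t=tw_j$ in $G_Q$. As $G_Q$ is a group, and in particular right cancellative, Lemma~\ref{lem:rc} shows that $\theta$ extends to a homomorphism $R_{Q,W}\to G_Q$; by the generator computation its image is exactly $P$, so $\theta\colon R_{Q,W}\to P$ is surjective.

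It remains to prove that $\theta$ is injective, and this is where the real work lies. My plan is to produce a normal form for elements of $R_{Q,W}$ by orienting the relation $tw_j=b_jt$ as a rewriting rule $b_jt\to tw_j$, together with the group rewriting coming from $K_Q$ on the $\ol{A}$-letters. Applying this rule pushes every occurrence of $t$ leftwards (the sum of the positions of the $t$'s strictly decreases, giving termination), and a confluence analysis should show that each element of $R_{Q,W}$ has a canonical representative containing no factor $b_jt$. I would then match such canonical words against the Britton/free-product normal form of their images in $G_Q=K_Q\ast FG(t)$: each maximal $t\cdots t^{-1}$ block appearing in the image arises precisely from a maximal block of $b$'s, whose ``interior'' group element is forced to lie in $T_W=\langle w_1,\dots,w_k\rangle$. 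Tracking this nesting of matched $t$'s and $t^{-1}$'s, and using right cancellativity to discard auxiliary letters, should show that the free-product normal form of $\theta(x)$ determines the canonical form of $x$ uniquely, yielding injectivity.

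The main obstacle is exactly this last step: verifying confluence of the rewriting system and showing that distinct canonical forms have distinct images in $G_Q$. The difficulty is that the $w_j$ are arbitrary words over $\ol{A}$, so the interplay between the free-product structure of $G_Q$ and the submonoid $T_W$ of $K_Q$ must be handled with care; morally this is the assertion that $R_{Q,W}$ is the greatest right cancellative image of the Otto--Pride extension of $K_Q$ along the embedding $T_W\to K_Q$, and a Britton-type normal form for that extension is what governs the matching. Once injectivity is established, $R_{Q,W}\cong P\cong\mathrm{RU}(M_{Q,W})$ follows at once.
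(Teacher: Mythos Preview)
Your route via $E$-unitarity and the prefix monoid is sound up to and including the construction of the surjection $\theta:R_{Q,W}\to P\cong\mathrm{RU}(M_{Q,W})$, but the injectivity step is genuinely incomplete: you propose a rewriting system and a Britton-style matching, yet neither confluence nor the ``distinct canonical forms have distinct images'' claim is actually established, and you yourself flag this as the main obstacle. With arbitrary $w_j\in\ol{A}^*$ and the unspecified group relations $r_i=1$ of $K_Q$ interacting with the rule $b_jt\to tw_j$, a clean normal-form argument is not straightforward, and in any case nothing in your sketch explains why right cancellativity of $R_{Q,W}$ is what makes the matching go through.

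The paper avoids this entirely by a sandwich argument through the \emph{inverse hull}. One checks directly that the defining relations of $M_{Q,W}$ hold in $IH(R_{Q,W})$ under $a\mapsto\rho_a$, $t\mapsto\rho_t$ (using that $tw_j=b_jt$ in $R$ forces $\rho_t\rho_{w_j}=\rho_{b_j}\rho_t$, whence $tw_jt^{-1}$ lands on the right unit $\rho_{b_j}$), giving $\mu:M\to IH(R)$; and conversely, via Lemma~\ref{lem:rc}, the defining RC-relations of $R_{Q,W}$ hold in $\mathrm{RU}(M)$ under $b_j\mapsto tw_jt^{-1}$, giving $\nu:R\to\mathrm{RU}(M)$. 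On generators $\nu\mu$ is exactly the canonical identification $R\cong\mathrm{RU}(IH(R))$, so $\nu\mu$ is an isomorphism and hence $\mu|_{\mathrm{RU}(M)}$ is injective; this yields $\mathrm{RU}(M)\cong R$ with no normal-form analysis whatsoever. Note in particular that the paper's proof does not use the $E$-unitary property at all. Your approach, if completed, would give an explicit description of elements of $R_{Q,W}$, which the paper's proof does not; but as it stands the hard half is missing, and the inverse-hull trick shows it can be bypassed.
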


\begin{rmk}
Note that the monoids given by \emph{ordinary} monoid presentations of this type are called \emph{Otto-Pride extensions} \cite{GS} (after the paper \cite{PO}), where
the input is a monoid morphism. When this morphism is injective, as is here the case with the inclusion map $T_W\hookrightarrow  K_Q$, such extension is called \emph{HNN-like}.
So, the previous theorem shows that the monoid of right units of $M_{Q,W}$ is isomorphic to $R_{Q,W}$, the greatest right cancellative image of the HNN-like Otto-Pride extension arising from the group $K_Q$ and the submonoid $T_W$.
\end{rmk}

First we are going to prove that the map $w_j\mapsto b_j$ ($1\leq j\leq k$) extends to a homomorphism $T_W\to R_{Q,W}$ that is actually an embedding, so that
the submonoid of $R_{Q,W}$ generated by $B$ forms an isomorphic copy of $T_W$.

\begin{pro}
$\gen{B}_{R_{Q,W}} \cong \gen{W}_{K_Q} = T_W$.
\end{pro}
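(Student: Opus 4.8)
The plan is to realise the asserted isomorphism concretely as the map $\eta$ induced by $w_j \mapsto b_j$, and to prove well-definedness and injectivity by two complementary cancellation arguments: one internal to $R_{Q,W}$, and one passing through the group $G_Q = K_Q \ast FG(t)$. First I would record the canonical homomorphism $\kappa \colon K_Q \to R_{Q,W}$ fixing each $a \in A$. This exists because the relations $aa^{-1}=a^{-1}a=1$ make the letters of $\ol{A}$ invertible in $R_{Q,W}$ and the relators $r_i$ become trivial, so $\gen{\ol{A}}_{R_{Q,W}}$ is a group satisfying the defining relations of $K_Q$; by the universal property of the group presentation $\kappa$ is well-defined. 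The only consequence I need is that words of $\ol{A}^*$ that are equal in $K_Q$ remain equal in $R_{Q,W}$.

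Next I would iterate the relation $t w_j = b_j t$ to show that, for any factorisation $u = w_{i_1}\cdots w_{i_m}$ of an element of $T_W$, one has $t\,\kappa(u) = b_{i_1}\cdots b_{i_m}\, t$ in $R_{Q,W}$. Since $\kappa(u)$ depends only on $u$ and not on the chosen factorisation, if $w_{i_1}\cdots w_{i_m} = w_{j_1}\cdots w_{j_l}$ in $K_Q$ then $b_{i_1}\cdots b_{i_m}\,t = t\,\kappa(u) = b_{j_1}\cdots b_{j_l}\,t$, and right cancellativity of $R_{Q,W}$ lets me cancel the trailing $t$ to obtain $b_{i_1}\cdots b_{i_m} = b_{j_1}\cdots b_{j_l}$. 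This single computation simultaneously shows that $\eta \colon T_W \to R_{Q,W}$, $u \mapsto b_{i_1}\cdots b_{i_m}$, is a well-defined monoid homomorphism whose image is precisely $\gen{B}_{R_{Q,W}}$.

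For injectivity of $\eta$—which I expect to be the main obstacle, since nothing manifestly internal to $R_{Q,W}$ separates the various $b$-products—I would construct an auxiliary homomorphism $\Theta \colon R_{Q,W} \to G_Q$. As $G_Q$ is a group and hence right cancellative, Lemma~\ref{lem:rc} reduces this to checking that the assignment $a \mapsto a$, $t \mapsto t$, $b_j \mapsto t w_j t^{-1}$ sends every defining relator of $R_{Q,W}$ to a valid equality in $G_Q$; the relations $aa^{-1}=a^{-1}a=1$ and $r_i=1$ hold since $K_Q$ embeds into $G_Q$, and the only genuine computation is $t w_j = b_j t \mapsto t w_j = (t w_j t^{-1})\,t$. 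Then for $u \in T_W$ one computes $\Theta(\eta(u)) = t\,u\,t^{-1}$, viewing $u \in T_W \subseteq K_Q \subseteq G_Q$. Because the free factor $K_Q$ embeds in the free product $G_Q$ and conjugation by $t$ is an inner automorphism of the group $G_Q$, the map $u \mapsto t u t^{-1}$ is injective on $T_W$; hence $\Theta \circ \eta$ is injective, so $\eta$ is injective.

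Finally I would conclude: $\eta$ is a homomorphism from $T_W$ onto $\gen{B}_{R_{Q,W}}$ that is injective, hence an isomorphism, giving $\gen{B}_{R_{Q,W}} \cong T_W = \gen{W}_{K_Q}$ with $b_j \leftrightarrow w_j$. I expect the internal cancellation step and the passage to $G_Q$ to be short; the conceptual point is that right cancellativity of $R_{Q,W}$ is exactly what supplies well-definedness, while the external group $G_Q$ (where $t$ is genuinely invertible) is exactly what supplies injectivity, and neither is available on its own.
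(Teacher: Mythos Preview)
Your proof is correct and follows essentially the same approach as the paper: well-definedness via right cancellation of the trailing $t$ in $R_{Q,W}$, and injectivity by mapping into $G_Q = K_Q \ast FG(t)$ where $b_j$ becomes $tw_jt^{-1}$ and conjugation by $t$ is injective on $K_Q$. The only cosmetic difference is that the paper factors through the greatest group image of $R_{Q,W}$ and then identifies it with $G_Q$ via Tietze transformations, whereas you construct the map $\Theta\colon R_{Q,W}\to G_Q$ directly using Lemma~\ref{lem:rc}; your route is marginally more direct but the content is the same.
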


\begin{proof}
First of all, note that if 
$$w_{j_1}\cdots w_{j_m} = w_{l_1}\cdots w_{l_r}$$
holds in $K_Q$ for some $w_{j_1},\dots,w_{j_m},w_{l_1},\cdots,w_{l_r}\in W$ then in $R_{Q,W}$ we have
$$
b_{j_1}\cdots b_{j_m} t = tw_{j_1}\cdots w_{j_m} = tw_{l_1}\cdots w_{l_r} = b_{l_1}\cdots b_{l_r} t,
$$ 
which, by the right cancellative property of $R_{Q,W}$, implies $b_{j_1}\cdots b_{j_m} = b_{l_1}\cdots b_{l_r}$. 
Hence, it follows that the map $w_j\mapsto b_j$ ($1\leq j\leq k$) naturally extends to a well-defined homomorphism $T_W\to R_{Q,W}$ given by 
$$w_{j_1}\cdots w_{j_m} \mapsto b_{j_1}\cdots b_{j_m}.$$

It remains to prove that the considered homomorphism is injective, i.e.\ that $b_{j_1}\cdots b_{j_m} = b_{l_1}\cdots b_{l_r}$ implies $w_{j_1}\cdots w_{j_m} = w_{l_1}\cdots w_{l_r}$ holds in $T_W$.
So, assume that $b_{j_1}\cdots b_{j_m} = b_{l_1}\cdots b_{l_r}$ holds in $R_{Q,W}$.
Let $G_{Q,W}$ be the greatest group image of $R_{Q,W}$, and let $\eta:R_{Q,W}\to G_{Q,W}$ be the corresponding canonical homomorphism. Then $G_{Q,W}$ is defined, as a group, by the same presentation as $R_{Q,W}$ (and, in particular, by the same set of generators).
Upon applying $\eta$, we have that $b_{j_1}\cdots b_{j_m} = b_{l_1}\cdots b_{l_r}$ holds in $G_{Q,W}$.
However, the defining relations of $G_{Q,W}$ imply that for all $1\leq j\leq k$ we have
\begin{equation}\label{bj}
b_j = tw_jt^{-1}.
\end{equation}
Therefore, in $G_{Q,W}$ we have
$$
(tw_{j_1}t^{-1})\cdots (tw_{j_m}t^{-1}) = (tw_{l_1}t^{-1})\cdots (tw_{l_r}t^{-1}),
$$
which immediately implies that $w_{j_1}\cdots w_{j_m} = w_{l_1}\cdots w_{l_r}$ holds in $G_{Q,W}$ .

On the other hand, notice that, in the group presentation for $G_{Q,W}$, the relations \eqref{bj} make the generators $b_j$ ($1\leq j\leq k$) redundant in the sense that the generators $b_j$ can be eliminated via Tietze transformations from the presentation along with these relations (in fact, the group-theoretically equivalent relations $tw_j=b_jt$). 
Hence, the group $G_{Q,W}$ is presented by $\Gp\pre{\ol{A},t}{r_i=1\; (i\in I)}$ and so $G_{Q,W}=K_Q\ast FG(t)=G_Q$. 
By the Normal Form Theorem for free products of groups \cite[Theorem IV.1.2]{LSch}, we arrive at the the required conclusion that $w_{j_1}\cdots w_{j_m} = w_{l_1}\cdots w_{l_r}$ holds in $K_Q$, and thus in $T_W$.
\end{proof}

\begin{proof}[Proof of Theorem \ref{main-GR}]
For a right cancellative monoid $P$, the \emph{inverse hull} $IH(P)$ is the inverse submonoid of the \emph{symmetric inverse monoid} $\mathcal{I}_P$ of all partial injective 
maps on $P$ generated by the right translations $\rho_q: x\mapsto xq$ ($q,x\in P$). Because $P$ is assumed to be right cancellative, any $\rho_q$ is an injection from $P$ onto 
its principal left ideal $Pq$. 
It is folklore in semigroup theory (see e.g.\ \cite[Theorem 1.22]{CP}) that the monoid of right units of $IH(P)$ is isomorphic to $P$ and that it consists of all translations $\rho_r$ such that $r$ is a right unit of $Q$; 
hence, every right cancellative monoid arises as the monoid of right units of some inverse monoid. 

For brevity, write $M=M_{Q,W}$ and $R=R_{Q,W}$. Our aim is to show that there is an isomorphism from the right units if $M$ onto the right units of $IH(R)$, where, by the remarks from the preivous paragraph, the latter form a monoid isomorphic to $R$.
The inverse hull $IH(R)$ is generated (as an inverse monoid) by $\rho_a$, $\rho_b$ ($a\in \ol{A}$, $b\in B$) and $\rho_t$, corresponding to the generators of $R$. All of these are right units in $IH(R)$, and all $\rho_a$, $a\in \ol{A}$, are invertible, that is, permutations on $R$, with $\rho_{a^{-1}}$ being the inverse map of $\rho_a$ for all $a\in A$. 
Clearly, for any word $w$ over $\ol{A}\cup B\cup\{t\}$, if $w$ represents the element $s\in R$ then in $IH(R)$ the word $w$ represents $\rho_s$. Therefore, for any $i\in I$ we have 
$\rho_{r_i} = \rho_1 = \id_R$, and in this sense $IH(R)$ satisfies the relations $r_i=1$ (under the previously described correspondence between the generators of $IH(R)$ and $R$). 
Furthermore, for any $w_j\in W$ we must have $\rho_t\rho_{w_j} = \rho_{b_j}\rho_t$.
Because $IH(R)$ is a submonoid of the symmetric inverse monoid $\mathcal{I}_R$, the latter equality implies
$$
\rho_{b_j} = \rho_{b_j}\rho_t\rho_t^{-1} = \rho_t\rho_{w_j}\rho_{t}^{-1}.
$$
This relation shows that the word $tw_jt^{-1}$ represents a right unit of $IH(R)$, so that $IH(R)$ satisfies all the relations $tw_jt^{-1}tw_j^{-1}t^{-1} =1$.
We have just proved that all defining relations of $M$ hold in $IH(R)$ under the map $a\mapsto \rho_a$ ($a\in \ol{A})$, $t\mapsto \rho_t$. 
Hence, this map extends to a homomorphism $\mu: M \to IH(R)$. 
Furthermore, $\mu\restriction_{\mathrm{RU}(M)}$ maps the right units of $M$ onto the right units of $IH(R)$, because the previous displayed equation shows that all generators $\rho_b$, $b\in B$, of $IH(R)$ are redundant.
In the remainder of the proof we argue that the map $\mu\restriction_{\mathrm{RU}(M)}$ must be injective.

To achieve this, it suffices to show that there exists a monoid homomorphism $\nu: R\to \mathrm{RU}(M)$ such that $\nu\mu = \nu\left(\mu\restriction_{\mathrm{RU}(M)}\right)$ is an
isomorphism between $R$ and its copy in $IH(R)$. 
(Since the domain of $\mu\restriction_{\mathrm{RU}(M)}$ coincides with the codomain of $\nu$, the eventual conclusion that $\nu\mu$ is an isomorphism and thus a bijection would imply that both $\nu$ and $\mu\restriction_{\mathrm{RU}(M)}$ must be injective.)
Note that $\ol{A}\cup \{t\}\cup \{tw_jt^{-1}:\ 1\leq j\leq k\}$ represents a generating set for $\mathrm{RU}(M)$ as any other prefix of a relator of $M$ can be expressed, by using \cite[Corollary 3.2]{Gr-Inv}, as a product of these words. 
Consider the identity map on $\ol{A}\cup \{t\}$ along with $b_j\mapsto tw_jt^{-1}$, $1\leq j\leq k$. This map, which we denote by $\nu_0$, assigns to each generator of $R$ an element of the generating set of $\mathrm{RU}(M)$. 
As the relations $r_i=1$ involve only the letters from $\ol{A}$ representing invertible elements in $M$, all of these relations are satisfied in $\mathrm{RU}(M)$. 
Furthermore, for all $1\leq j\leq k$ we have
$$
(b_j\nu_0)(t\nu_0) = tw_jt^{-1}t = tw_j = (t\nu_0)(w_j\nu_0),
$$
where $w_j\nu_0$ is a short-hand for applying $\nu_0$ to each letter in $w_j$ individually. 
The second equation here follows because in $M$ the word $tw_jt^{-1}t$ represents a right unit of $M$, since it is a prefix of a relator word, so \cite[Corollary 3.2]{Gr-Inv} applies once again. 
Thus we conclude that all defining relations of $R$ hold in $\mathrm{RU}(M)$ as well, in the sense of the correspondence $\nu_0$ between the generating sets of these monoids; hence, by Lemma \ref{lem:rc}, $\nu_0$ extends to a homomorphism $\nu: R\to \mathrm{RU}(M)$.
Note that $\nu\mu$ restricted to the generators of $R$ yields precisely the standard correspondence between the generators of $R$ and those of the right units of $IH(R)$.
Therefore, $\nu\mu$ is an isomorphism, as required.
\end{proof}

\begin{cor}
If $K_Q$ is a finitely presented group (i.e.\ when $|Q|$ is finite), then $\mathrm{RU}(M_{Q,W})$ is finitely RC-presented.
\end{cor}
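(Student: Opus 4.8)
The plan is to derive the corollary directly from Theorem~\ref{main-GR}. By that theorem we have an isomorphism $\mathrm{RU}(M_{Q,W}) \cong R_{Q,W}$, so it suffices to verify that the RC-presentation exhibited for $R_{Q,W}$ in the statement of Theorem~\ref{main-GR} becomes a finite RC-presentation as soon as $|Q|$ is finite. In other words, the whole corollary reduces to an inspection of the displayed presentation of $R_{Q,W}$.

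First I would check the generating set. The alphabet of the presentation is $\ol{A} \cup B \cup \{t\}$. Since $A = \{a_1,\dots,a_n\}$ is finite, the set $\ol{A} = A \cup A^{-1}$ has $2n$ letters; the set $B = \{b_1,\dots,b_k\}$ is finite because $W = \{w_j : 1 \leq j \leq k\}$ is finite by hypothesis; and $t$ is a single letter. Hence the generating set is finite regardless of the cardinality of $Q$.

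Next I would count the defining relations. There are the $2n$ relations $aa^{-1} = a^{-1}a = 1$ ($a \in A$); the relations $tw_j = b_j t$ for $1 \leq j \leq k$, of which there are only $k = |W|$ many; and the relations $r_i = 1$ for $i \in I$. The only family whose cardinality depends on $Q$ is the last one, and it has exactly $|Q| = |I|$ members. Therefore, when $|Q|$ is finite, the entire collection of defining relations is finite, and so $R_{Q,W}$ is finitely RC-presented by Definition of $\MonRC$.

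Combining these two observations with the isomorphism $\mathrm{RU}(M_{Q,W}) \cong R_{Q,W}$ furnished by Theorem~\ref{main-GR} immediately yields that $\mathrm{RU}(M_{Q,W})$ is finitely RC-presented. There is no genuine obstacle here: the corollary is a direct consequence of the explicit RC-presentation already obtained, the only points requiring (entirely routine) verification being the finiteness of the alphabet and of each of the three families of defining relations.
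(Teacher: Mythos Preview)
Your argument is correct and matches the paper's intended reasoning: the corollary is stated without proof precisely because it follows immediately from inspecting the explicit RC-presentation of $R_{Q,W}$ given in Theorem~\ref{main-GR}, and you have spelled out that inspection accurately.
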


\begin{cor}
There is a finitely presented $E$-unitary special inverse monoid $M$ with the following properties:
\begin{itemize}
\item[(a)] The submonoid $\mathrm{RU}(M)$ of right units of $M$ is finitely RC-presented;
\item[(b)] The group of units $U(M)$ of $M$ is not finitely presented, and, consequently, $\mathrm{RU}(M)$ is not finitely presented as a monoid.
\end{itemize}
\end{cor}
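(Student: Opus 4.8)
The plan is to realise $M$ as a single instance $M=M_{Q,W}$ of the construction in \eqref{mqw} and then read off the three required properties from Theorem~\ref{main-GR}, the corollary immediately preceding this statement, and the catalogue of results recalled from \cite{GR}. Concretely, I would invoke item~(c) above: there is a finitely presented special inverse monoid of the form $M=M_{Q,W}$ whose group of units $U(M)$ is not finitely presented. Since $M_{Q,W}$ is finitely presented exactly when the index set $I$ (equivalently $Q$) and the word set $W$ are finite, this example automatically has $Q$ finite, so $K_Q$ is a finitely presented group; and every $M_{Q,W}$ is $E$-unitary, as recalled after \eqref{mqw}. Thus $M$ is a finitely presented $E$-unitary special inverse monoid, and it remains only to verify (a) and the implication hidden in (b).

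With this $M$ fixed, property~(a) is immediate: because $K_Q$ is finitely presented, the preceding corollary (which uses Theorem~\ref{main-GR}) gives that $\mathrm{RU}(M)\cong R_{Q,W}$ is finitely RC-presented. The first assertion of~(b), that $U(M)$ is not finitely presented, holds by the very choice of $M$.

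The only substantive step is the word ``consequently'' in~(b): I must deduce that $\mathrm{RU}(M)$ is not finitely presented as a plain monoid from the fact that its group of units fails to be finitely presented. First I would note that the group of units of $\mathrm{RU}(M)$, regarded as a monoid in its own right, coincides with $U(M)$, since an element of $\mathrm{RU}(M)=R_1$ is invertible inside $\mathrm{RU}(M)$ if and only if it is invertible in $M$, its inverse then again lying in $R_1$. The key structural observation is that $\mathrm{RU}(M)$ is right cancellative, and in a right cancellative monoid a right-invertible element is already invertible: if $bs=1$ then $(sb)s=s=1\cdot s$, and right-cancelling $s$ yields $sb=1$. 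Consequently the $\R$-class of $1$ in $\mathrm{RU}(M)$ is precisely its group of units, a single group $\H$-class, so the maximal subgroup $U(M)$ has (trivial, hence finite) index in its $\R$-class in the sense of \cite{ruvskuc1999presentations}. Were $\mathrm{RU}(M)$ finitely presented as a monoid, the finite-index criterion \cite[Corollary~4.7]{ruvskuc1999presentations} would then force $U(M)$ to be finitely presented, contradicting~(b). This contradiction shows $\mathrm{RU}(M)$ is not finitely presented as a monoid.

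I expect the only delicate point to be the collapse of the $\R$-class of the identity onto the group of units in the right cancellative monoid $\mathrm{RU}(M)$, which is what makes the single-$\H$-class (finite-index) hypothesis of \cite{ruvskuc1999presentations} available; once that is in place, every other ingredient is a direct appeal to Theorem~\ref{main-GR}, the preceding corollary, and the listed properties of $M_{Q,W}$.
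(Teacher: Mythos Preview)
Your proposal is correct. Both you and the paper instantiate $M$ as an $M_{Q,W}$ with $Q$ finite, so $E$-unitarity and property~(a) follow identically from the remarks after \eqref{mqw} and the preceding corollary. The difference lies in the ``consequently'' of (b). The paper makes the example explicit---taking $W=W^{-1}$ so that $T_W=H_W$ is a non-finitely-presented subgroup of the finitely presented group $K_Q$---and then deduces that $\mathrm{RU}(M_{Q,W})$ is not finitely presented from the specific criterion \cite[Theorem~6.3(vi)]{GR}, which says that finite presentability of $\mathrm{RU}(M_{Q,W})$ would force $T_W$ to be finitely presented. Your route is more conceptual and avoids this citation: you observe that in any right cancellative monoid the $\R$-class of $1$ collapses to the group of units, so Ru\v{s}kuc's finite-index result applies directly. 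This has the virtue of working for an arbitrary right cancellative monoid, not only those of the form $R_{Q,W}$; indeed the paper itself invokes exactly this mechanism (via \cite[Proposition~3.1]{ruvskuc1999presentations} and \cite[Proposition~4.2(i)]{DG23}) in the paragraph following Corollary~\ref{cor:finRC-nonfinU}. The paper's approach, by contrast, makes visible the sharper information that $T_W$ is the concrete obstruction.
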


\begin{proof}
By \cite[Theorem 6.3(vi)]{GR} if $\mathrm{RU}(M_{Q,W})$ is finitely presented as a monoid then both the group $K_Q$ and the monoid $T_W$ must be finitely presented.
Similarly, it follows from item (v) of the same theorem that the group of units of $M_{Q,W}$ is isomorphic to $U(T_W)*K_Q$, where $U(T_W)$ denotes the 
group of units of $T_W$ (which is a subgroup of $K_Q$). As shown in \cite[Subsection 7.3]{GR}, if we take $W$ such that $W^{-1}=W$ we then have that $T_W$ coincides with $H_W$,
the subgroup of $K_Q$ generated by $W$.
So, we can choose $Q$ and $W$ such that $K_Q$ is a finitely presented group such that its subgroup $H_W$ is finitely generated but not finitely presented.
For such a choice of $Q,W$, we have that $\mathrm{RU}(M_{Q,W})$ is finitely RC-presented. 
On the other hand, $U(\mathrm{RU}(M_{Q,W}))=U(M_{Q,W})=K_Q * H_W$ is not finitely presented, which follows, as in the proof of \cite[Theorem 6.3(vi)]{GR}, from the fact that $H_W$ is a retract of $K_Q * H_W$. Also, as already remarked, the non-finite presentability of $H_W$ implies that $\mathrm{RU}(M_{Q,W})$ is not finitely presented as a monoid.
\end{proof}

\begin{cor}\label{cor:finRC-nonfinU}
There exists a finitely RC-presented monoid whose group of units is not finitely presented.
\end{cor}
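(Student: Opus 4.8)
The plan is to take the monoid $\mathrm{RU}(M)$ produced in the preceding corollary and verify directly that it already witnesses the desired statement. By part (a) of that corollary, $\mathrm{RU}(M)$ is finitely RC-presented, so the only thing left to do is to identify its group of units and to confirm that this group is not finitely presented.

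The key step I would carry out is the general semigroup-theoretic observation that for any inverse monoid $M$ the group of units of its submonoid of right units coincides with the group of units of $M$ itself, that is, $U(\mathrm{RU}(M)) = U(M)$. This is immediate from the definitions recalled in Section~\ref{sec:summary}: every unit of $M$ is in particular a right unit, so $U(M) \subseteq \mathrm{RU}(M)$, and since the (two-sided) inverse of a unit is again a unit and hence again lies in $\mathrm{RU}(M)$, every element of $U(M)$ is invertible within the submonoid $\mathrm{RU}(M)$, giving $U(M) \subseteq U(\mathrm{RU}(M))$. Conversely, any element invertible inside $\mathrm{RU}(M)$ is \emph{a fortiori} invertible in $M$, so it is a unit of $M$; this yields the reverse inclusion $U(\mathrm{RU}(M)) \subseteq U(M)$, and the two groups coincide.

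Combining these facts completes the proof: $\mathrm{RU}(M)$ is finitely RC-presented by part~(a), while $U(\mathrm{RU}(M)) = U(M)$ is not finitely presented by part~(b). Hence $\mathrm{RU}(M)$ is a finitely RC-presented monoid whose group of units is not finitely presented, as required. Since the entire construction, the verification of finite RC-presentability, and the analysis of the group of units were all carried out in the preceding corollary (ultimately relying on the properties of $M_{Q,W}$ and the results of \cite{GR}), there is no genuine obstacle remaining at this stage; the only point requiring any attention is the routine verification above that restricting from $M$ to its monoid of right units leaves the group of units unchanged.
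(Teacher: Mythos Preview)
Your argument is correct and is essentially the paper's own approach: the corollary is stated without proof because it is immediate from the preceding corollary, and indeed in that proof the paper already records explicitly that $U(\mathrm{RU}(M_{Q,W}))=U(M_{Q,W})$. Your one added contribution---spelling out the general fact $U(\mathrm{RU}(M))=U(M)$ for any monoid $M$---is a routine observation that the paper took for granted.
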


Note that by \cite[Proposition 4.2(i)]{DG23}, the complement of the group of units in a right cancellative monoid is always an ideal. Hence, the previous corollary is 
in sharp contrast with \cite[Proposition 3.1]{ruvskuc1999presentations}, a result telling us that if the complement of the group of units $U$ of a finitely presented monoid $M$ 
is an ideal then $U$ must be finitely presented as well.


\small
\begin{ackn}
We thank Nik Ru\v{s}kuc for   helpful conversations and important contributions that he made to the ideas that led to the proof of Theorem~\ref{thm_monhybrid_inv}.
\end{ackn}
\normalsize



\begin{thebibliography}{99}
\frenchspacing

\bibitem{Adj}
S. I. Adjan,
Defining relations and algorithmic problems for groups and semigroups (in Russian),
\emph{Trudy Mat. Inst. Steklov.} \textbf{85} (1966), 123 pp.

\bibitem{AO}
S. I. Adyan, G. U. Oganesyan,
On the word and divisibility problems for semigroups with one relation (in Russian),
\emph{Mat. Zametki} \textbf{41} (1987), 412--421.

\bibitem{Bel}
V. Y. Belyaev, Imbeddability of recursively defined inverse semigroups in finite presented semigroups (in Russian),
\emph{Sibirsk. Mat. Zh.} \textbf{25} (1984), 50--54.

\bibitem{BridsonHaefliger}
M. R. Bridson, A. Haefliger,
\emph{Metric Spaces of Non-Positive Curvature},
Springer, Berlin, Heidelberg, 1999.

\bibitem{Cain}
A. J. Cain,
\emph{Presentations of Subsemigroups of Groups},
Ph.D. thesis, University of St Andrews, 2005.

\bibitem{CRR}
A. J. Cain, E. F. Robertson, N. Ru\v skuc,
Cancellative and Malcev presentations for finite Rees index subsemigroups and extensions, 
\emph{J. Austral. Math. Soc.} \textbf{84} (2008), 39--61.

\bibitem{CRRT}
C. M. Campbell, E. F. Robertson, N. Ru\v skuc, R. M. Thomas,
Reidemeister-Schreier type rewriting for semigroups,
\emph{Semigroup Forum} \textbf{51} (1995), 47--62.

\bibitem{CP}
A. H. Clifford, G. B. Preston,
\emph{The Algebraic Theory of Semigroups, Vol. I \& II},
Mathematical Surveys No. 7, American Mathematical Society, Providence, R.I., 1961 \& 1967.

\bibitem{de2000topics}
P. de la Harpe,
\emph{Topics in Geometric Group Theory},
University of Chicago Press, 2000.

\bibitem{DG21}
I. Dolinka, R. D. Gray, 
New results on the prefix membership problem for one-relator groups, 
\emph{Trans. Amer. Math. Soc.} \textbf{374} (2021), 4309--4358.

\bibitem{DG23}
I. Dolinka, R. D. Gray, 
Prefix monoids of groups and right units of special inverse monoids,
\emph{Forum of Mathematics, Sigma} \textbf{11} (2023), Article e97, 19 pp.

\bibitem{ghysinfinite}
E. Ghys, P. de la Harpe,
Infinite groups as geometric objects,
in: \emph{Ergodic theory, symbolic dynamics and hyperbolic spaces} (eds. T. Bedford, M. Keane, C. Series), 
pp. 299--314, Oxford University Press, 1991.

\bibitem{Gr-Inv}
R. D. Gray,
Undecidability of the word problem for one-relator inverse monoids via right-angled 
Artin subgroups of one-relator groups,
\emph{Invent. Math.} \textbf{219} (2020), 987--1008.

\bibitem{gray2013groups}
R. D. Gray, M. Kambites, 
Groups acting on semimetric spaces and quasi-isometries of monoids,
\emph{Trans. Amer. Math. Soc.} \textbf{365} (2013), 555--578.

\bibitem{GK}
R. D. Gray, M. Kambites, 
Maximal subgroups of finitely presented special inverse monoids,
\emph{J. Eur. Math. Soc.} (to appear), \texttt{arXiv:\ 2212.04204}.

\bibitem{gray2023subgroups}
R. D. Gray, M. Kambites,
Subgroups of {$E$}-unitary and {$R_1$}-injective special inverse monoids,
\emph{Math. Z.} \textbf{311} (2025), Article {\#}23, 32 pp.

\bibitem{GR}
R. D. Gray, N. Ru\v skuc, 
On groups of units of special and one-relator inverse monoids,
\emph{J. Inst. Math. Jussieu} \textbf{23} (2024), 1875--1918.

\bibitem{GS}
R. D. Gray, B. Steinberg, 
Topological finiteness properties of monoids. Part 2: special monoids, one-relator monoids, amalgamated free products, and HNN extensions,
\emph{Documenta Math.} \textbf{29} (2024), 511--560.

\bibitem{Guba}
V. S. Guba,
On a relation between the word problem and the word divisibility problem for semigroups with one defining relation (in Russian),
\emph{Izv. Ross. Akad. Nauk Ser. Mat.} \textbf{61 (6)} (1997), 27--58.

\bibitem{How}
J. M. Howie,
\emph{Fundamentals of Semigroup Theory}, 
London Math. Soc. Monographs Vol. 12, Clarendon Press, Oxford, 1995.

\bibitem{IMM}
S. V. Ivanov, S. W. Margolis, J. C. Meakin,
On one-relator inverse monoids and one-relator groups,
\emph{J. Pure Appl. Algebra} \textbf{159} (2001), 83--111.

\bibitem{JZL1}
A. Jaikin-Zapirain, M. Linton, 
The strong Atiyah and L\"uck approximation conjectures for one-relator groups, 
\emph{Math. Ann.} \textbf{376} (2020), 1741--1793.

\bibitem{JZL2}
A. Jaikin-Zapirain, M. Linton, 
On the coherence of one-relator groups and their group algebras, 
\emph{Ann. Math.} \textbf{201} (2025), 909--959.

\bibitem{Lal}
G. Lallement,
On monoids presented by a single relation,
\emph{J. Algebra} \textbf{32} (1974), 370--388.

\bibitem{Law}
M. V. Lawson,
\emph{Inverse Semigroups: The Theory of Partial Symmetries},
World Scientific, Singapore, 1998.

\bibitem{LW1}
L. Louder, H. Wilton,
Negative immersions for one-relator groups,
\emph{Duke Math. J.} \textbf{171} (2022), 547--594.

\bibitem{LW2}
L. Louder, H. Wilton,
Uniform negative immersions and the coherence of one-relator groups,
\emph{Invent. Math.} \textbf{236} (2024), 673--712.

\bibitem{LSch}
R. C. Lyndon, P. E. Schupp,
\emph{Combinatorial Group Theory},
Springer-Verlag, Berlin, 1977. 

\bibitem{Ma1}
W. Magnus, 
\"Uber diskontinuierliche Gruppen mit einer definierenden Relation. (Der Freiheitssatz),
\emph{J. reine angew. Math.} \textbf{163} (1930), 141--165.

\bibitem{Ma2}
W. Magnus, 
Das Identit\"atsproblem f\"ur Gruppen mit einer definierenden Relation, 
\emph{Math. Ann.} \textbf{106} (1932), 295--307.

\bibitem{Mak}
G. S. Makanin, On the identity problem in finitely defined semigroups (in Russian),
\emph{Dokl. Akad. Nauk SSSR} \textbf{171} (1966), 285--287.

\bibitem{Mea}
J. Meakin, 
Groups and semigroups: connections and contrasts,
in: \emph{Groups St Andrews 2005, Vol. 2}, pp. 357--400,
London Math. Soc. Lecture Note Ser., Vol. 340, Cambridge Univ. Press, Cambridge, 2007.

\bibitem{Munn}
W. D. Munn,
Free inverse semigroups, 
\emph{Proc. London Math. Soc. (3)} \textbf{29} (1974), 385--404.

\bibitem{Mur}
V. L. Murski\u{\i}, Isomorphic embeddability of a semigroup with enumerable set of defining relations 
to a finitely presented semigroup (in Russian), 
\emph{Mat. Zametki} \textbf{1} (1967), 217--224.

\bibitem{CF}
C.-F. Nyberg Brodda,
The word problem for one-relation monoids: a survey,
\emph{Semigroup Forum} \textbf{103} (2021), 297--355.

\bibitem{Pet}
M. Petrich,
\emph{Inverse Semigroups},
Wiley, 1984.

\bibitem{PO}
S. J. Pride, F. Otto, 
For rewriting systems the topological finiteness conditions FDT and FHT are not equivalent,
\emph{J. London Math. Soc. (2)} \textbf{69} (2004), 363--382.

\bibitem{RuskucPhD}
N. Ru{\v{s}}kuc,
\emph{Semigroup Presentations}, 
PhD Thesis, University of St Andrews, 1995.

\bibitem{ruvskuc1999presentations}
N. Ru{\v{s}}kuc,
Presentations for subgroups of monoids,
\emph{J. Algebra} \textbf{220} (1999), 365--380.

\bibitem{Sch}
H. E. Scheiblich,
Free inverse semigroups, 
\emph{Proc. Amer. Math. Soc.} \textbf{38} (1973), 1--7.

\bibitem{Serre}
J.-P. Serre,
\emph{Trees},
Springer-Verlag, Berlin, Heidelberg, 1980.

\bibitem{steinberg2003topological}
B. Steinberg,
A topological approach to inverse and regular semigroups,
\emph{Pacific J. Math.} \textbf{208} (2003), 367--396.

\bibitem{Stephen}
J. B. Stephen,
Presentations of inverse semigroups,
\emph{J. Pure Appl. Algebra} \textbf{63} (1990), 81--112.

\bibitem{Zh1}
L. Zhang, 
A short proof of a theorem of Adjan,
\emph{Proc. Amer. Math. Soc.} \textbf{116} (1992), 1--3.

\bibitem{Zh2}
L. Zhang, 
Applying rewriting methods to special monoids,
\emph{Math. Proc. Cambridge Philos. Soc.} \textbf{112} (1992), 495--505.

\end{thebibliography}
\end{document}